\DeclareMathSymbol{\varkappa}{\mathord}{AMSb}{"7B}
\newcommand{\function}[1]{\mathop{\mathrm{#1}}\nolimits}
\newcommand{\mathcan}[1]{\mathbb{#1}}
\newcommand{\Ad}{\mathcan{A}}
\newcommand{\CC}{\mathcan{C}}
\newcommand{\QQ}{\mathcan{Q}}
\newcommand{\RR}{\mathcan{R}}
\newcommand{\RRm}{\mathcan{R}^{\times}_{+}}
\newcommand{\ZZ}{\mathcan{Z}}
\newcommand{\Abs}[1]{\Bigl\lvert\,#1\,\Bigr\rvert}
\newcommand{\abs}[1]{\lvert#1\rvert}
\newcommand{\Demi}{\tfrac{1}{2}}
\newcommand{\idest}{\textit{i.~e. }}
\newcommand{\isom}{\function{\stackrel{\sim}{\longrightarrow}}}
\newcommand{\N}[1]{||#1||}
\newcommand{\Sch}[1]{\mathbf{#1}}
\newcommand{\set}[2]{\left\{#1\,\mid\,#2\right\}}
\renewcommand{\Re}{\mathop{\mathrm{Re}}\nolimits}
\renewcommand{\Im}{\mathop{\mathrm{Im}}\nolimits}
\newcommand{\cark}{\omega}
\newcommand{\dint}{\int_{\mathcan{R}}}
\newcommand{\EisCan}{\mathbf{E}}
\newcommand{\Eis}[1]{E(#1)}
\newcommand{\Esp}[1]{\mathsf{W}(#1)}
\newcommand{\EspScal}[2]{W(#1,#2)}
\newcommand{\Hecke}{H}
\newcommand{\RiemXi}{\Lambda}
\newcommand{\Toe}{E}
\newcommand{\Tor}{T(\chi)}
\newcommand{\Th}[1]{\theta(#1)}
\newcommand{\Zero}{\{0\}}
\DeclareMathOperator{\Cl}{Cl}
\DeclareMathOperator{\disc}{disc}
\DeclareMathOperator{\Gal}{Gal}
\DeclareMathOperator{\GL}{GL}
\DeclareMathOperator{\Nrm}{N}
\DeclareMathOperator{\Orth}{\mathbf{O}}
\DeclareMathOperator{\res}{res}
\DeclareMathOperator{\SL}{SL}
\DeclareMathOperator{\Spec}{Spec}
\DeclareMathOperator{\Supp}{Supp}
\DeclareMathOperator{\Tr}{Tr}
\DeclareMathOperator{\Tate}{\Delta}
\DeclareMathOperator{\vol}{vol}
\numberwithin{equation}{section}
\renewcommand{\theequation}{\arabic{section}.\arabic{equation}} %
\newtheorem{theorem}{Th\'eor\`eme}[section]
\newtheorem{proposition}[theorem]{Proposition}
\newtheorem{lemma}[theorem]{Lemme}
\newtheorem{corollary}[theorem]{Corollaire}
\newtheorem*{theorem*}{Th\'eor\`eme}
\newtheorem*{proposition*}{Proposition}
\newtheorem*{lemma*}{Lemme}
\newtheorem*{corollary*}{Corollaire}
\newtheorem*{definition*}{D\'efinition}
\newtheorem*{definitions*}{D\'efinitions}
\theoremstyle{remark}
\newtheorem{remark}[theorem]{Remarque}
\newtheorem*{remark*}{Remarque}
\newtheorem*{remarks*}{Remarques}
\newtheorem*{example*}{Exemple}
\newtheorem*{examples*}{Exemples}
\renewcommand{\theenumi}{\alph{enumi}}
\begin{document}

\selectlanguage{francais}

\title{Z\'eros des fonctions $L$ et Formes toro\"\i dales}

\author{Gilles Lachaud}
\address{Institut de Math\'ematiques de Luminy
\newline \indent
CNRS
\newline \indent
Luminy Case 907, 13288 Marseille Cedex 9 - FRANCE}
\email{lachaud@univmed.fr}

\subjclass{11M26, 11M41, 11F03, 11F12, 47A10, 58B34}

% keywords : Zeros of L-functions; Eisenstein series; automorphic functions; Hecke Formula; toroidal forms; Riemann hypothesis; Polya-Hilbert spaces

% Abstract
% ----------

\begin{abstract}
\`A partir d'un corps de nombres $K$ de degr\'e $n$, on d\'efinit un tore
maximal $T$ de $G$ = $GL_{n}$. Si $\chi$ est un caract\`ere du groupe des
classes d'id\`eles de $K$, satisfaisant des conditions ad\'equates, les
formes toro\"\i dales pour $\chi$ sont les fonctions sur $G(\mathbf{Q})
Z(\mathbf{A}) \backslash G(\mathbf{A})$, dont le coefficient de
Fourier correspondant \`a $\chi$ par rapport au sous-groupe
induit par $T$ est nul. L'hypoth\`ese de Riemann pour $L(s, \chi)$ est
\'equivalente \`a des conditions portant sur certains espaces de formes
toro\"\i dales, construits \`a partir des s\'eries d'Eisenstein. Enfin, on
construit un espace de Hilbert et un op\'erateur auto-adjoint sur cet
espace, dont le spectre est \'egal \`a l'ensemble des z\'eros de $L(s, \chi)$
sur la droite critique.

\bigskip

\noindent\textsc{Abstract.}
An algebraic number field $K$ defines a maximal torus $T$ of the linear group $G = GL_{n}$. Let $\chi$ be a character of the idele class group of $K$, satisfying suitable assumptions. The $\chi$-toroidal forms are the functions defined on $G(\mathbf{Q}) Z(\mathbf{A}) \backslash G(\mathbf{A})$ such that the Fourier coefficient corresponding to $\chi$ with respect to the subgroup induced by $T$ is zero. The Riemann hypothesis is equivalent to certain conditions concerning some spaces of toroidal forms, constructed from Eisenstein series. Furthermore, we define a Hilbert space and a self-adjoint operator on this space, whose spectrum equals the set of zeroes of $L(s, \chi)$ on the critical line.
\end{abstract}

% body of the article
% -------------------

\maketitle

\setcounter{tocdepth}{2}
\tableofcontents

\section*{Introduction}

Pour \'etudier la r\'epartition des z\'eros de la fonction z\^eta de
Riemann, George P\'olya, et avant lui David Hilbert, d'apr\`es certains
t\'emoignages, ont sugg\'er\'e qu'il serait judicieux de trouver un espace
de Hilbert $\EuScript{H}$ et un op\'erateur $D$ dans $\EuScript{H}$, dont le
spectre est donn\'e par les z\'eros non triviaux de cette fonction, ou plus
pr\'ecis\'ement les nombres complexes $\gamma$ tels que
\begin{equation}
\label{DefZero}
\zeta(\Demi + i \gamma) = 0, \quad \abs{\Im(\gamma)} \leq \Demi.
\end{equation}
Par construction, les z\'eros non triviaux de la fonction z\^eta seront tous
sur la droite critique $\Re(s) = 1/2$ si et seulement si l'op\'erateur $D$
est auto-adjoint.

Il est facile de donner un exemple simple d'un tel espace: notons
$\EuScript{H}_{0}$ l'espace des sommes de puissances de la forme
\begin{equation}
\label{DefSomme}
f(x) = \sum_{\gamma} c_{\gamma} \, x^{i \gamma},
\end{equation}
o\`u $\gamma$ parcourt l'ensemble des nombres v\'erifiant
\eqref{DefZero}, et o\`u les $c_{\gamma}$ non nuls sont en nombre fini.
C'est un espace pr\'ehilbertien, muni de la norme
$$
\N{f}^{2} = \sum_{\gamma} \ \abs{c_{\gamma}}^{2}.
$$
Si $\EuScript{H}$ est l'espace de Hilbert correspondant, l'op\'erateur
diff\'erentiel
$$
D = - i \frac{d}{dx}
$$
d\'efinit un unique op\'erateur de $\EuScript{H}$, encore not\'e $D$, 
satisfaisant aux conditions requ\-ises. Si on remplace $\EuScript{H}$ par le
sous-espace engendr\'e par les sommes \eqref{DefSomme}, o\`u on exige que
$\gamma$ soit \emph{r\'eel}, l'op\'erateur diff\'erentiel $D$ d\'efinit un
op\'erateur auto-adjoint.

Il y a de nombreuses variantes de cette construction : par exemple, pour
tenir compte des multiplicit\'es des z\'eros, il convient d'introduire les
d\'eriv\'ees des puissances.

Enfin, on peut \'evidemment remplacer la fonction z\^eta de Riemann par la
fonction $L(s,\chi)$ d\'efinie par un caract\`ere $\chi$ du groupe
$C_{K}$ des classes d'id\`eles d'un corps global $K$, et c\oe tera.

Mais ce proc\'ed\'e de construction est \emph{tautologique}, car il utilise
explicitement les z\'eros de la fonction $\zeta(s)$, ou des fonctions
$L(s,\chi)$. A. Connes d\'efinit dans \cite{Connes3}
un \og espace de P\'olya-Hilbert
\fg{} comme un couple $(\EuScript{H}, D)$ :
\begin{itemize}
\item
form\'e d'un espace de Hilbert $\EuScript{H}$ et d'un
op\'erateur $D$ dans $\EuScript{H}$, ferm\'e, non born\'e, \`a domaine dense ; 
\item
tel que l'on ait
$$\Spec D = \set{\gamma \in \RR}{L(\Demi + i \gamma, \chi) = 0} \, ;$$
\item
d\'efini \emph{de mani\`ere intrins\`eque}, autrement dit d'une fa\c{c}on
qui n'utilise pas les fonctions $L(s,\chi)$.
\end{itemize}
Les espaces de P\'olya-Hilbert qu'il a construits dans \cite{Connes1},
\cite{Connes2}, \cite{Connes3} sont des espaces $L^{2}$ sur le groupe
$C_{K}$ ; ces constructions ont \'et\'e g\'en\'eralis\'ees par C. Soul\'e
\cite{Soule} aux fonctions $L$ automorphes. A. Connes signale
qu'il serait souhaitable de clarifier les rapports entre ces espaces de
P\'olya-Hilbert et \emph{l'espace des formes toro\"\i dales} introduit par Don
Zagier \cite{Zagier} ;  c'est ce que nous allons faire ici.

Nous construisons un espace de P\'olya-Hilbert \`a partir de formes
modulaires, c'est-\`a-dire de fonctions d\'efinies sur l'espace
$G(\QQ) Z(\Ad)
\backslash G(\Ad)$, o\`u $G = \mathbf{GL}_{n}$ est le groupe lin\'eaire
g\'en\'eral, et o\`u $\Ad$ est l'anneau des ad\`eles de $\QQ$. Les formes
modulaires que nous consid\'erons sont des familles de combinaisons
lin\'eaires de s\'eries d'Eisenstein, ou \emph{trains d'ondes
(wave-packets) d'Eisenstein}, qui s'\'ecrivent
$$F(g) = \sum_{s} \, a_{s} \, E(g,s),$$
et l'ensemble des points $s$ tels que $a_{s} \neq 0$ est le spectre de $F$.
Ce sont les analogues des polyn\^omes trigonom\'etriques. \'Etant donn\'e
une extension $K$ de degr\'e $n$ d'un corps de nombres alg\'ebriques $k$,
nous rappelons dans la section \ref{sec_Periodes} comment une base
fondamentale de $K$ d\'efinit une repr\'esentation alg\'ebrique $\pi$ du
sch\'ema $\mathcal{T}_{K/k}$ repr\'esentant le groupe multiplicatif de $K$
dans l'espace affine, dont l'image est un tore maximal $T$ d\'efini
sur $k$ ; le groupe
$\Ad_{K}^{\times}$ des id\`eles de
$K$ s'iden\-tifie au sous-groupe $T(\Ad_{k}) \subset G(\Ad_{k})$. Dans la
th\'eorie des formes modulaires, les formes paraboliques sont celles dont
l'int\'egrale sur l'image $N(k) \backslash N(\Ad_{k})$ d'un sous-groupe
unipotent $N$ est nul ; par analogie, si
$\chi$ est un caract\`ere du groupe $C_{K} = \Ad_{K}^{\times}/K^{\times}$des classes
d'id\`eles  (\emph{Gr\"{o}ssencharaktere de Hecke}), les \emph{formes
toro\"\i dales en
$\chi$} sont les formes modulaires dont \og l'int\'egrale p\'eriodique
\fg{} sur $T(k) Z(\Ad) \backslash T(\Ad_{k})$ est nulle :
$$
\int_{T(k) Z(\Ad) \backslash T(\Ad_{k})} F(h g) \,
\chi _{\circ} \pi^{-1}(h) \, dh = 0 \quad \text{pour} \ g \in G(\Ad),
$$
Supposons que $\chi$ soit non ramifi\'e en toute place de $K$, constant sur
$\Ad_{k}^{\times}$ et sur le sous-groupe compact maximal de
$K_{\infty}^{\times}$. En nous appuyant sur \emph{la formule de Hecke
ad\'elique} pour les s\'eries d'Eisenstein normalis\'ees (\'enonc\'ee dans
la section \ref{sec_Eisenstein}), nous \'etablissons que l'espace des
trains d'ondes d'Eisenstein qui sont des formes toro\"\i dales est un espace de
P\'olya-Hilbert. Plus pr\'ecis\'ement :

--- Dans la section \ref{sec_TrainsOndes}, nous montrons (th\'eor\`eme
\ref{Thm_CritTrainTor}) qu'un train d'ondes est toro\"\i dal en $\chi$ si et
seulement si son spectre est contenu dans le lieu des z\'eros de $L(s,
\chi)$. Il s'ensuit (corollaire \ref{Cor_EquivRiemann}) que l'hypoth\`ese de
Riemann pour $L(s,\chi)$ est \'equivalente \`a l'assertion suivante :

\emph{Tout train d'ondes d'Eisenstein toro\"\i dal a un spectre contenu dans
la droite critique}.

--- Dans la section \ref{sec_MaassSelberg}, nous donnons une autre condition
\'equivalente si $K$ est quadratique (corollaire \ref{Cor_EquivRiemannPp}) :

\emph{Tout train d'ondes d'Eisenstein toro\"\i dal est de carr\'e int\'egrable
en moyenne}.

--- Nous construisons dans la section \ref{sec_PolyaHilbert} un espace de
Hilbert $T^{2}_{\chi}(X)$ et un op\'erateur auto-adjoint $D_{\chi}$ dans
$T^{2}_{\chi}(X)$ tel que l'on ait (th\'eor\`eme \ref{Thm_PoHil})
$$
\Spec D_{\chi} =
\set{\lambda}{\lambda = \frac{1}{4} + \gamma^{2}, \quad
\gamma \in \RR, \quad L(\Demi + i\gamma, \chi) = 0 \,}.
$$
--- Nous exprimons la trace de certains op\'erateurs int\'egraux
comme une somme sur les z\'eros de $L(s,\chi)$ (corollaire
\ref{Cor_FormTrace}).

Enfin, dans l'appendice, nous faisons le lien entre le point de d\'epart de
la th\'eorie de Connes, introduite dans \cite{Connes3} et reprise dans le
th\'eor\`eme \ref{Thm_Connes}, et les trains d'ondes d'Eisenstein
(corollaire \ref{Cor_Isom}).

Les r\'esultats de ce travail ont \'et\'e annonc\'es dans \cite{Lachaud1} et \cite{Lachaud2}. Les formes toro\"\i dales y sont appel\'ees ``formes toriques'' ; nous nous sommes ici conform\'es \`a l'usage.

\newpage

\section{Int\'egrales p\'eriodiques et formule de Hecke}
\label{sec_Periodes}

\medskip

\subsection*{Repr\'esentations alg\'ebriques}~\medskip

Soient $k$ un corps de nombres alg\'ebriques, et $K$ une extension de
degr\'e $n$ de $k$. L'\og enveloppe alg\'ebrique \fg{}
de $K$ est le $k$-sch\'ema en $k$-alg\`ebres $\mathcal{A}_{K/k}$ tel que
l'on ait
$$
\mathcal{A}_{K/k}(k') = K \otimes_{k} k'
$$
pour
toute $k$-alg\`ebre $k'$. On dispose de morphismes de norme et de trace
$$
\Nrm_{K/k} : \mathcal{A}_{K/k} \longrightarrow \Sch{A}^{1}_{k}, \quad
\Tr_{K/k} : \mathcal{A}_{K/k} \longrightarrow \Sch{A}^{1}_{k}.
$$
Le $k$-sch\'ema en groupes repr\'esentant le groupe multiplicatif de
de $\mathcal{A}_{K/k}$ est
$$
\mathcal{T}_{K/k} = \set{(u, v) \in \Sch{A}^{n}_{k} \times
\Sch{G}_{m}} {\Sch{N}_{K/k}(u) = v}.
$$
Si $k'$ est une $k$-alg\`ebre, on a
$$
\mathcal{T}_{K/k}(k') = (K \otimes_{k} k')^{\times},
$$
ce qui montre que $\mathcal{T}_{K/k}$ est aussi le $k$-sch\'ema
$R_{K/k}(\Sch{G}_{m,K})$ obtenu en appliquant le foncteur de Weil $R_{K/k}$
de changement de base par restriction des scalaires au $K$-groupe
multiplicatif $\Sch{G}_{m,K}$. Le groupe alg\'ebrique
$\mathcal{T}_{K/k}$ est un tore de dimension $n$ d\'efini sur $k$.
Soient $\mathfrak{o}$ l'anneau des entiers de $k$, et $\mathfrak{O}$
l'anneau des entiers de $K$. On suppose qu'il existe une \emph{base
fondamentale} de $K$ sur $k$, c'est-\`a-dire une base
$\boldsymbol{\alpha} = (\alpha_{1}, \dots, \alpha_{n})$ du
$\mathfrak{o}$-module $\mathfrak{O}$, avec $\alpha_{1} = 1$. Si
$\mathfrak{o}$ est principal, toute extension de degr\'e fini de $k$ admet
une base fondamentale. La base $\boldsymbol{\alpha}$ fournit un isomorphisme
$\iota : \Sch{A}^{n}_{k} \longrightarrow \mathcal{A}_{K/k}$
donn\'e par
$$
\iota(u_{1}, \dots, u_{n}) = u_{1}\alpha_{1} + \dots + u_{n}\alpha_{n}.
$$
Le morphisme $\iota^{-1}$ induit un isomorphisme de 
$\mathcal{T}_{K/k}$ sur un ouvert de $\Sch{A}^{n}_{k}$.

Une \emph{repr\'esentation alg\'ebrique lin\'eaire} d'un $k$-sch\'ema en
alg\`ebres (resp. en groupes) dans $\Sch{A}^{n}_{k}$ est un $k$-morphisme
d'alg\`ebres (resp. de groupes) de ce sch\'ema dans le $k$-sch\'ema
$\Sch{M}_{n}$ des matrices carr\'ees d'ordre $n$ (resp. dans
$\mathbf{GL}_{n}$).  On d\'efinit la \emph{repr\'esentation r\'eguli\`ere
droite}
$\pi$ de
$\mathcal{A}_{K/k}$ dans $\Sch{A}^{n}_{k}$ de la mani\`ere suivante. Si
$k'$ est une $k$-alg\`ebre et si $\xi$ est un \'el\'ement de l'alg\`ebre
$\mathcal{A}_{K/k}(k')$, on note $\rho(\xi)$ la multiplication par $\xi$
dans $\mathcal{A}_{K/k}(k')$, et on pose
$$
{^{t}\!\pi}(\xi).u = \iota^{-1} \circ \rho(\xi) \circ \iota(u), \quad u \in
{k}'^{n},
$$
autrement dit
\begin{equation*}
\label{operation}
\iota(^{t}\!\pi(\xi).u) = \xi \, \iota(u), \quad u \in {k'}^{n}.
\end{equation*}
On a $\det \pi(\xi) = \Sch{N}_{K/k}(\xi)$. Si $e_{1}$ est le premier vecteur
de la base canonique de $\Sch{A}^{n}_{k}$, on a
\begin{equation*}
\label{InvPsi}
\iota^{-1}(\xi) = ^{t}\!\pi(\xi).e_{1},
\end{equation*}
ce qui montre que la repr\'esentation $\pi$ est fid\`ele. Si on note
$\boldsymbol{\omega} = (\omega_{1}, \ldots ,
\omega_{n})$ la \emph{base duale} de la base $\boldsymbol{\alpha}$, on a
aussi
$$
\iota^{-1}(\xi) =
(\Tr_{K/k}(\xi \omega_{1}), \dots, \Tr_{K/k}(\xi \omega_{n})).
$$
On en d\'eduit que les coefficients de $\pi$ sont donn\'es par
\begin{equation*}
\label{Coeff1}
\pi(\xi)_{ij} = \Tr_{K/k}(\xi \alpha_{i} \omega_{j}).
\end{equation*}
L'image $T$ de $\mathcal{T}_{K/k}$ par la repr\'esentation $\pi$
est un groupe lin\'eaire alg\'ebrique d\'efini sur $k$, de dimension $n$,
qui est un tore maximal de $G$. On a donc un isomorphisme
$$\pi : \mathcal{T}_{K/k} \isom T \subset G.$$
Le \emph{groupe projectif de} $K$ est le $k$-tore $\mathcal{P}_{K/k}$
d\'efini par la suite exacte
$$
\begin{CD}
1 & @>>> & \Sch{G}_{m,k} & @>>> & \mathcal{A}_{K/k}^{\times} & @>{N}>> &
\mathcal{P}_{K/k} & @>>> 1
\end{CD}
$$
Le morphisme $\iota^{-1}$ induit une immersion ouverte de $\mathcal{P}_{K/k}$
dans $\Sch{P}^{n - 1}_{k}$. D'autre part $\mathcal{P}_{K/k}(k) = k^{\times}
\backslash K^{\times}$. Le centre $Z$ de $G$ est inclus dans $T$ ; si
on pose $S = Z \backslash T$, on a un isomorphisme
$\pi : \mathcal{P}_{K/k} \isom S \subset PGL_{n}$.

\medskip

\subsection*{Sous-groupes compacts maximaux}~

\medskip

Le groupe $T(\mathfrak{o}) = T(k) \cap GL_{n}(\mathfrak{o})$ s'appelle le
\emph{groupe des unit\'es} de $T(k)$. Puisque $\boldsymbol{\alpha}$ est
une base fondamentale de $K$ sur $k$, on a $T(\mathfrak{o}) =
\pi(\mathfrak{O}^{\times})$. Soit $v$ une place finie de $k$. On note
$\mathfrak{O}_{v}$ le sous-anneau compact maximal de $K \otimes k_{v}$, de
telle sorte qu'on a un isomorphisme
$$
\begin{CD}
\mathfrak{O}_{v} & @>\Phi_{v}>> & \prod_{w \mid v}
\mathfrak{O}_{w},
\end{CD}
\qquad
\mathfrak{O}_{v} = \mathfrak{O} \otimes_{\mathfrak{o}} \mathfrak{o}_{v} = 
\alpha_{1}\mathfrak{o}_{v} + \dots + \alpha_{n}\mathfrak{o}_{v}.
$$
Le \emph{groupe des unit\'es} de $T(k_{v})$ est
$$
T(\mathfrak{o}_{v}) = T(k_{v}) \cap GL_{n}(\mathfrak{o}_{v}).
$$
Il est facile de voir que
$$
T(\mathfrak{o}_{v}) =
\set{g \in T(k_{v})}{g \mathfrak{o}^{n} = \mathfrak{o}^{n}} =
\set{g \in T(k_{v}) \cap M_{n}(\mathfrak{o}_{v})}{\det{g}
\in \mathfrak{o}_{v}^{\times}}.
$$
Le groupe $T(\mathfrak{o}_{v})$ est le sous-groupe compact maximal de
$T(k_{v})$. La repr\'esentation $\pi$ induit un isomorphisme
$\mathfrak{O}_{v}^{\times} \isom T(\mathfrak{o}_{v})$.

Supposons que $v$ soit une place infinie de $k$. Pour toute place $w$ de $K$
au dessus de $v$, on pose
$$
\xi^{(w)} = \alpha_{1}^{(w)} u_{1} + \dots + \alpha_{n}^{(w)} u_{n} 
\in K_{w} \quad  \text{si} \quad
\xi = \alpha_{1} \otimes u_{1} + \dots + \alpha_{n} \otimes u_{n}
\in K \otimes k_{v}.
$$
On pose $k_{\infty} = k \otimes \RR$. L'unique sous-groupe compact maximal
de
$$
K_{\infty}^{\times} = (K \otimes k_{\infty})^{\times} \cong \prod_{w \mid
\infty} K_{w}^{\times}
$$
est
$$
U_{\infty} = \prod_{v \, \infty} \set{\xi \in
K_{\infty}^{\times}}{\abs{\xi^{(w)}}_{w} = 1
\ \text{si} \ w \mid v}
$$
Si on note $U_{\infty}(T)$ le sous-groupe compact maximal de
$T(k_{\infty})$, la repr\'esentation $\pi$ induit des isomorphismes
$K_{\infty}^{\times} \isom T(k_{\infty})$ et $U_{\infty} \isom
U_{\infty}(T)$. 

\begin{lemma}
\label{DecSpecPi}
Supposons $k = \QQ$. Si $\xi \in K \otimes \RR$, on a $\pi(\xi)
= A\pi_{0}(\xi)A^{-1}$, o\`u
$$
A =
\begin{bmatrix}
\alpha_{1}^{(1)} & \dots & \alpha_{n}^{(1)} \\
\dots           & \dots & \dots \\
\alpha_{1}^{(n)} & \dots & \alpha_{n}^{(n)}
\end{bmatrix} ,
\qquad
\pi_{0}(\xi) =
\begin{bmatrix}
\xi^{(1)} & 0         & \dots & 0     \\
0         & \xi^{(2)} & \dots & 0     \\
\dots     & \dots     & \dots & \dots \\
0         & \dots     & 0     & \xi^{(n)} \\
\end{bmatrix} ,
$$
o\`u $\xi^{(1)}, \dots, \xi^{(n)}$ sont les $n$ isomorphismes distincts de
$K$ dans $\bar{\QQ}$.
\end{lemma}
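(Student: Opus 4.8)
The plan is to recognise this lemma as the explicit form of the splitting of the \'etale $\QQ$-algebra $K$ after extension of scalars: over $\bar{\QQ}$ one has $K \otimes_{\QQ} \bar{\QQ} \isom \bar{\QQ}^{\,n}$ via $\eta \mapsto (\eta^{(1)}, \dots, \eta^{(n)})$, and under this isomorphism multiplication by $\xi$ becomes the diagonal operator with entries $\xi^{(1)}, \dots, \xi^{(n)}$, that is $\pi_{0}(\xi)$; the matrix $A$ is then nothing but the matrix of that splitting isomorphism, expressed in the basis $\boldsymbol{\alpha}$ (carried into $\mathcal{A}_{K/\QQ}$ by $\iota$) and in the ``coordinate'' basis attached to the $n$ embeddings. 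So the assertion is precisely that $\pi(\xi)$ and $\pi_{0}(\xi)$ are conjugate by $A$.

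To make this concrete I would start from the coefficients computed above, namely $\pi(\xi)_{ij} = \Tr_{K/\QQ}(\xi \alpha_{i} \omega_{j})$, which remain valid for $\xi \in K \otimes \RR$ once $\Tr_{K/\QQ}$ is extended $\RR$-linearly. Using $\Tr_{K/\QQ}(\eta) = \sum_{l=1}^{n} \eta^{(l)}$, where $\eta \mapsto \eta^{(l)}$ denotes the $\RR$-linear extension of the $l$-th embedding $K \to \bar{\QQ}$, one obtains
$$
\pi(\xi)_{ij} = \sum_{l=1}^{n} \alpha_{i}^{(l)}\, \xi^{(l)}\, \omega_{j}^{(l)} ,
$$
which is exactly the $(i,j)$ entry of a product $A\, \pi_{0}(\xi)\, A'$, where $A'$ is built from the $\omega_{j}^{(l)}$ in the same way that $A$ is built from the $\alpha_{j}^{(l)}$. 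Finally, the relation $\Tr_{K/\QQ}(\alpha_{i}\omega_{j}) = \delta_{ij}$ expressing that $\boldsymbol{\omega}$ is the dual basis of $\boldsymbol{\alpha}$ reads $\sum_{l}\alpha_{i}^{(l)}\omega_{j}^{(l)} = \delta_{ij}$, \idest $A' = A^{-1}$; and $A$ is invertible since $\det(A)^{2} = \disc(\alpha_{1}, \dots, \alpha_{n}) \neq 0$, the extension $K/\QQ$ being separable and $\boldsymbol{\alpha}$ a $\QQ$-basis of $K$. Substituting gives $\pi(\xi) = A\, \pi_{0}(\xi)\, A^{-1}$. Since both sides are $\RR$-linear in $\xi$, one could equivalently, and a little more quickly, verify the identity only for $\xi \in \{\alpha_{1}, \dots, \alpha_{n}\}$.

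There is no real obstacle here; the one point that needs attention is bookkeeping rather than ideas. One must fix once and for all an ordering of the $n$ embeddings and be consistent about which index of a matrix labels an embedding and which labels a basis vector, so that ``$A' = A^{-1}$'' and the final conjugation appear with the orientation displayed in the statement rather than with a transpose of it. It is also worth stressing that, although $A$ and $\pi_{0}(\xi)$ have entries in $\bar{\QQ}$ or $\CC$ and are not real in general when $K$ admits a complex place, the product $A\, \pi_{0}(\xi)\, A^{-1} = \pi(\xi)$ lies in $M_{n}(\RR)$ --- in fact its entries are $\QQ$-linear in $\xi$ --- because $\pi$ is defined over $\QQ$; so the formula is a genuine equality in $M_{n}(\RR)$.
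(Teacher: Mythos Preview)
Your argument is correct and complete: expanding the trace formula $\pi(\xi)_{ij}=\Tr_{K/\QQ}(\xi\alpha_i\omega_j)$ via the embeddings and then identifying the dual-basis matrix with $A^{-1}$ gives the conjugation identity. You were right to flag the row/column bookkeeping as the only delicate point; in fact the paper itself is not entirely consistent there (the displayed $A$ and the vectors $a^{(i)}=Ae_i$ used in its proof differ by a transpose), so your caveat is well placed.

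The paper, however, takes a different and somewhat shorter route: it never uses the dual basis $\boldsymbol{\omega}$ or the coefficient formula for $\pi(\xi)_{ij}$ at all. Instead it goes back to the defining relation $\iota({}^{t}\!\pi(\xi)u)=\xi\,\iota(u)$, applies the $i$-th embedding, and reads off directly that the vector $a^{(i)}=(\alpha_1^{(i)},\dots,\alpha_n^{(i)})$ is an eigenvector of $\pi(\xi)$ with eigenvalue $\xi^{(i)}$; the relation $\pi(\xi)A=A\pi_0(\xi)$ then follows column by column. Your approach buys explicitness---every entry is written out and the role of the dual basis is made transparent---while the paper's buys economy: one line of linear algebra, no auxiliary matrix $A'$, and no appeal to the duality relation $\sum_l\alpha_i^{(l)}\omega_j^{(l)}=\delta_{ij}$.
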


\begin{proof}
Si $1 \leq i \leq n$, on pose $a^{(i)} = Ae_{i} = (\alpha_{1}^{(i)}, \dots,
\alpha_{n}^{(i)})$. Si $u \in \RR^{n}$, on a $\iota(^{t}\!\pi(\xi)u) = \xi
\, \iota(u)$ et $\iota(u) =  {^{t}\!u}.a^{(1)}$ par d\'efinition, et par
cons\'equent
$$
\xi \, {^{t}\!u}.a^{(1)} =
\xi \, \iota(u) = \iota(^{t}\!\pi(\xi)u) = {^{t}\!u}.\pi(\xi)a^{(1)}.
$$
On en d\'eduit $\pi(\xi)a^{(i)} = \xi^{(i)} a^{(i)}$.
D'autre part
$$
\pi(\xi)Ae_{i} = \pi(\xi)a^{(i)} = \xi^{(i)} \, a^{(i)} = \xi^{(i)} \,
Ae_{i} = A\pi_{0}(\xi)e_{i},
$$
ce qui prouve que $\pi(\xi)A = A\pi_{0}(\xi)$.
\end{proof}

La matrice
\begin{equation}
\label{MatSymK}
p_{_{K}} = A.^{t}\!\bar{A} =
\begin{bmatrix}
\Tr \alpha_{1} \overline{\alpha}_{1} & \dots & \Tr \alpha_{1}
\overline{\alpha}_{n} \\
\dots                    & \dots & \dots \\
\Tr \alpha_{n} \overline{\alpha}_{1} & \dots & \Tr \alpha_{n}
\overline{\alpha}_{n}
\end{bmatrix}
\end{equation}
est sym\'etrique d\'efinie positive \`a coefficients dans $\ZZ$,  de
d\'eterminant $\disc K$. On note
$$
\mathbf{O}_{n}(\RR) = \set{g \in GL_{n}(\RR)}{^{t}\!g.g = \mathbf{1}_{n}}
$$
le sous-groupe compact maximal usuel de $GL_{n}(\RR)$.
\begin{proposition}
\label{SsGpeCompT}
Supposons $k = \QQ$. Soit $q_{_{K}} \in G(\RR)$ telle que
$q_{_{K}}.{^{t}\!q_{_{K}}} = P$. Alors
$$
U_{\infty}(T) = T(\RR) \cap q_{_{K}} \mathbf{O}_{n}(\RR) \, q_{_{K}}^{-1}.
$$
et ce groupe est isomorphe \`a $\{ \pm 1 \}^{r_{1}} \times
\mathbf{SO}_{2}(\RR)^{r_{2}}$.
\end{proposition}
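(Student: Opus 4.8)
Le plan est de d\'emontrer d'abord l'\'egalit\'e de groupes ; la structure de $U_{\infty}(T)$ en d\'ecoulera aussit\^ot. En effet $\pi$ induit l'isomorphisme $U_{\infty} \isom U_{\infty}(T)$ rappel\'e plus haut, tandis que $U_{\infty} = \prod_{w \mid \infty} \set{x \in K_{w}^{\times}}{\abs{x}_{w} = 1}$ a pour facteur $\{\pm 1\}$ en chacune des $r_{1}$ places r\'eelles et le cercle unit\'e de $\CC$, isomorphe \`a $\mathbf{SO}_{2}(\RR)$, en chacune des $r_{2}$ places complexes ; d'o\`u $U_{\infty}(T) \cong \{\pm 1\}^{r_{1}} \times \mathbf{SO}_{2}(\RR)^{r_{2}}$.

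Pour l'\'egalit\'e, je commencerais par r\'e\'ecrire le membre de droite comme un stabilisateur. Puisque $q_{_{K}}\,{}^{t}\!q_{_{K}} = P = p_{_{K}}$, le groupe $q_{_{K}} \mathbf{O}_{n}(\RR)\, q_{_{K}}^{-1}$ est le stabilisateur de la matrice sym\'etrique d\'efinie positive $p_{_{K}}$ pour l'action $g \cdot S = g\, S\, {}^{t}\!g$ de $G(\RR)$ sur de telles matrices : en effet $q_{_{K}}^{-1}\, g\, q_{_{K}} \in \mathbf{O}_{n}(\RR)$ \'equivaut \`a $g\, p_{_{K}}\, {}^{t}\!g = q_{_{K}}\,{}^{t}\!q_{_{K}} = p_{_{K}}$, puisque $q_{_{K}}^{-1}\, p_{_{K}}\, {}^{t}\!q_{_{K}}^{-1} = \mathbf{1}_{n}$. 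Ainsi
$$
q_{_{K}} \mathbf{O}_{n}(\RR)\, q_{_{K}}^{-1} = \set{g \in G(\RR)}{g\, p_{_{K}}\, {}^{t}\!g = p_{_{K}}} ,
$$
et, comme $\pi$ induit un isomorphisme $K_{\infty}^{\times} \isom T(\RR)$, il reste seulement \`a d\'eterminer les $\xi \in K_{\infty}^{\times}$ tels que $\pi(\xi)\, p_{_{K}}\, {}^{t}\!\pi(\xi) = p_{_{K}}$.

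C'est le point central. D'apr\`es \eqref{MatSymK}, $(p_{_{K}})_{ij} = \sum_{l = 1}^{n} \alpha_{i}^{(l)}\, \overline{\alpha_{j}^{(l)}}$, d'o\`u, pour $u, v \in \RR^{n}$,
$$
{}^{t}\!u\, p_{_{K}}\, v = \sum_{l = 1}^{n} \iota(u)^{(l)}\, \overline{\iota(v)^{(l)}} ;
$$
autrement dit $p_{_{K}}$ est, via $\iota$, la matrice de Gram de la forme $\RR$-bilin\'eaire sym\'etrique d\'efinie positive $(x, y) \mapsto \sum_{l} x^{(l)}\, \overline{y^{(l)}}$ sur $K \otimes \RR$. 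Comme ${}^{t}\!\pi(\xi)$ est la multiplication par $\xi$ sur $K \otimes \RR$, on a $\iota({}^{t}\!\pi(\xi).u) = \xi\,\iota(u)$, donc $\iota({}^{t}\!\pi(\xi).u)^{(l)} = \xi^{(l)}\,\iota(u)^{(l)}$ ; en rempla\c{c}ant $u$ et $v$ par ${}^{t}\!\pi(\xi).u$ et ${}^{t}\!\pi(\xi).v$ dans l'\'egalit\'e pr\'ec\'edente, on obtient
$$
{}^{t}\!u\, \bigl(\pi(\xi)\, p_{_{K}}\, {}^{t}\!\pi(\xi)\bigr)\, v = \sum_{l = 1}^{n} \abs{\xi^{(l)}}^{2}\, \iota(u)^{(l)}\, \overline{\iota(v)^{(l)}} .
$$
Ainsi $\pi(\xi)\, p_{_{K}}\, {}^{t}\!\pi(\xi) = p_{_{K}}$ \'equivaut \`a $\sum_{l} (\abs{\xi^{(l)}}^{2} - 1)\, \abs{\iota(u)^{(l)}}^{2} = 0$ pour tout $u \in \RR^{n}$ ; en prenant $\iota(u)$ support\'e en une seule place archim\'edienne, cela force $\abs{\xi^{(l)}} = 1$ pour tout plongement $l$, c'est-\`a-dire $\xi \in U_{\infty}$. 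La r\'eciproque est imm\'ediate, et l'on conclut
$$
T(\RR) \cap q_{_{K}} \mathbf{O}_{n}(\RR)\, q_{_{K}}^{-1} = \set{\pi(\xi)}{\xi \in U_{\infty}} = \pi(U_{\infty}) = U_{\infty}(T) .
$$

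Il n'y a pas de v\'eritable obstacle : le seul point demandant de l'attention est ce dernier calcul, o\`u il faut se souvenir que c'est ${}^{t}\!\pi(\xi)$, et non $\pi(\xi)$, qui est la multiplication par $\xi$ sur $K \otimes \RR$, de sorte que c'est bien $\pi(\xi)\, p_{_{K}}\, {}^{t}\!\pi(\xi)$, et non ${}^{t}\!\pi(\xi)\, p_{_{K}}\, \pi(\xi)$, qui s'identifie \`a la matrice de Gram tordue. On peut d'ailleurs \'economiser la moiti\'e du calcul : $T(\RR)$, groupe des points r\'eels d'un tore, poss\`ede un unique sous-groupe compact maximal, \`a savoir $U_{\infty}(T)$, et $T(\RR) \cap q_{_{K}} \mathbf{O}_{n}(\RR)\, q_{_{K}}^{-1}$ en est un sous-groupe compact contenant $U_{\infty}(T)$ (par le sens facile ci-dessus), donc lui est \'egal.
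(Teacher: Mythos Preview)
Your proof is correct and follows essentially the same route as the paper: both reduce the question to the equivalence $\pi(\xi)\,p_{_{K}}\,{}^{t}\!\pi(\xi) = p_{_{K}} \iff \xi \in U_{\infty}$, and both establish it by exploiting the factorisation $p_{_{K}} = A\,{}^{t}\!\bar{A}$ together with the action of $\xi$ through its archimedean components $\xi^{(l)}$. The only difference is presentational: the paper invokes the diagonalisation $\pi(\xi) = A\,\pi_{0}(\xi)\,A^{-1}$ of Lemme~\ref{DecSpecPi} to obtain the matrix identity $\pi(\xi)\,p_{_{K}}\,{}^{t}\!\pi(\xi) = A\,\pi_{0}(\xi\bar\xi)\,{}^{t}\!\bar{A}$ directly, whereas you recognise $p_{_{K}}$ as the Gram matrix of the canonical hermitian form on $K\otimes\RR$ and compute with the associated bilinear form; your final remark on uniqueness of the maximal compact subgroup of a real torus is a pleasant shortcut the paper does not use.
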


\begin{proof}
Si $\xi \in K \otimes \RR$, on a $\pi(\xi).A = A.\pi_{0}(\xi)$ par le
lemme \ref{DecSpecPi}. On en d\'eduit que ${^{t}\!\bar{A}}.{^{t}\!\pi(\xi)}
= \pi_{0}(\bar{\xi}).{^{t}\!\bar{A}}$ puisque $\pi(\xi) \in G(\RR)$, et
$$
A.\pi_{0}(\xi \, \bar{\xi}).{^{t}\!\bar{A}} =
\pi(\xi).P.{^{t}\!\pi(\xi)}.
$$
Posons maintenant $\theta(\xi) = q_{_{K}}^{-1}\pi(\xi)q_{_{K}}$. On a
$$
q_{_{K}}.\theta(\xi).^{t}\!\theta(\xi).^{t}\!q_{_{K}} =
\pi(\xi).P.{^{t}\!\pi(\xi)}
$$
Le sous-groupe compact maximal $U_{\infty}$ de $(K \otimes \RR)^{\times}$
est isomorphe au produit de groupes de l'\'enonc\'e. D'autre part
$\pi$ d\'efinit un isomorphisme $U_{\infty} \rightarrow
U_{\infty}(T)$.  Puisque  $\xi \in U_{\infty}$  si et seulement si
$\pi_{0}(\xi \, \bar{\xi}) = \mathbf{1}_{n}$, ceci d\'emontre que $\pi(\xi)
\in U_{\infty}(T)$ si et seulement si $\theta(\xi) \in \mathbf{O}_{n}(\RR)$.
\end{proof}

\subsection*{Groupes de classes d'id\`eles} ~\medskip

On note respectivement $\Ad_{K}^{~}$ et $\Ad_{K}^{\times}$ l'anneau des
ad\`eles et le groupe des id\`eles de $K$, et $C_{K} = {K}^{\times}
\backslash \Ad_{K}^{\times}$ le \emph{groupe des classes d'id\`eles} de $K$.
La repr\'esentation $\pi$ induit des isomorphismes
$$
\begin{CD}
C_{K} & @>{\sim}>> & T(k) \backslash T(\Ad_{k}),
\quad
\Ad_{k}^{\times} \backslash \Ad_{K}^{\times} & @>{\sim}>> & S(\Ad_{k}),
\quad
\end{CD}
$$
et aussi un isomorphisme
$$
\begin{CD}
C_{k} \backslash C_{K} & @>{\sim}>> & S(k) \backslash S(\Ad_{k}).
\end{CD}
$$
Pour $\xi \in \Ad_{K}$, on a
$$
\abs{\det \pi(\xi)}_{\Ad_{k}} = \abs{N_{K/k}(\xi)}_{\Ad_{k}} =
\abs{\xi}_{\Ad_{K}}.
$$
Le groupe ${K}^{\times}$ est un sous-groupe discret de
$$
\Ad^{1}_{K} = \set{\xi \in \Ad^{\times}_{K}}{\abs{\xi}_{\Ad_{K}} = 1},
$$
et le quotient $C_{K}^{1} = {K}^{\times} \backslash \Ad_{K}^{1}$ est compact.
Pour tout $x \in \RRm$, on note $\xi(x) = (\xi_{v}(x))$ l'id\`ele de
$\Ad^{\times}_{K}$ tel que $\xi_{v}(x) = 1$ pour toute place finie de $K$ et
tel que $\xi_{v}(x) = x$ pour toute place infinie de $K$. Alors $x \mapsto
\xi(x)$ induit un isomorphisme de $\RRm$ sur un sous-groupe $N$ du groupe
$C_{K}$ et $C_{K}$ est produit direct de $N$ et de $C_{K}^{1}$ \cite[Cor. 2,
p. 76]{BNT}. On en d\'eduit que
$$
C_{k} \backslash C_{K} = C^{1}_{k} \backslash C^{1}_{K} =
K^{\times} \Ad^{1}_{k} \backslash \Ad^{1}_{K},
$$
et que $C_{k} \backslash C_{K}$ est compact ; il en va de m\^eme du groupe
$S(k) \backslash S(\Ad_{k})$. La repr\'e\-sen\-ta\-tion $\pi$ d\'efinit un
isomorphisme de $\Ad^{1}_{K}$ sur le sous-groupe ferm\'e
$$T^{1}(\Ad_{k}) = \set{h \in T(\Ad_{k})}{\abs{\det h}_{\Ad_{k}} = 1}$$
et de $N$ sur $Z(\RR)$. Le groupe $T(\Ad_{k})$ est le produit direct de
$T^{1}(\Ad_{k})$ et de $Z(\RR)$, et $T(k)$ est un sous-groupe discret de
$T^{1}(\Ad_{k})$, \`a quotient compact. On a un isomorphisme
$$
S(k) \backslash S(\Ad_{k})
\cong T(k) Z(\Ad) \backslash T(\Ad)
\cong T^{1}(k) Z^{1}(\Ad) \backslash T^{1}(\Ad).
$$
Le \emph{groupe des classes} de $K$ \cite[p. 87]{BNT} est
$$
\Cl(K) =
K^{\times} \backslash \Ad_{K}^{\times} / 
K_{\infty}^{\times} \, \widehat{\mathfrak{O}}^{\times},
\quad \text{o\`u} \quad
\widehat{\mathfrak{O}}^{\times} = \prod_{w} \mathfrak{O}_{w}^{\times}.
$$
C'est un groupe fini. La repr\'e\-sen\-ta\-tion $\pi$ d\'efinit un
isomorphisme
$$
\Cl(k) \backslash \Cl(K) \isom
T(k)  Z(\Ad) \backslash T(\Ad) / T(k_{\infty})T(\widehat{\mathfrak{o}})
\quad \text{o\`u} \quad
T(\widehat{\mathfrak{o}}) = \prod_{v} T(\mathfrak{o}_{v}).
$$
et $S(k) \backslash S(\Ad_{k})$ est extension de
$\Cl(k)\backslash \Cl(K)$ par le tore r\'eel $T(\mathfrak{o}) Z(\RR)
\backslash T(k_{\infty})$.

Le sous-groupe compact maximal de $\Ad_{K}^{\times}$ est $U = U_{\infty} \,
\widehat{\mathfrak{O}}^{\times}$, et celui de $T(\Ad)$ est 
$$
U_{\Ad}(T) = U_{\infty}(T) \, T(\widehat{\mathfrak{o}}),
\quad \text{o\`u} \quad
T(\widehat{\mathfrak{o}}) = \prod_{v} T(\mathfrak{o}_{v}).
$$
Il y a aussi un isomorphisme
$$
K^{\times} \Ad_{k}^{\times} \backslash \Ad_{K}^{\times} / U
\isom
T(k) Z(\Ad_{k}) \backslash T(\Ad_{k}) / U_{\Ad}(T) =: Q,
$$
Si $k = \QQ$, le groupe compact $Q$ est extension de $\Cl(K)$ par
le \emph{tore de Dirichlet}
$$
T(\ZZ) Z(\RR) \backslash T(\RR) / U_{\infty}(T)
\cong
\mathbf{SO}_{2}(\RR)^{r},
$$
avec $r = r_{1} + r_{2} - 1$, si $K$ admet $r_{1}$ places r\'eelles et
$r_{2}$ places imaginaires, autrement dit on a une suite exacte
$$
\begin{CD}
1 @>>> \mathbf{SO}_{2}(\RR)^{r} @>>> Q @>>> \Cl(K) @>>> 1.
\end{CD}
$$
On note $\mathsf{X}(G)$ le groupe des caract\`eres d'un groupe localement
compact $G$. On note simplement $\mathsf{X}$ le groupe discret
$\mathsf{X}(Q)$ ; il s'identifie au sous-groupe des caract\`eres de
$\Ad_{K}^{\times}/K^{\times}$ (\emph{Gr\"{o}ssencharaktere de Hecke}), qui
sont constants sur $\widehat{\mathfrak{O}}^{\times}$, autrement dit
\emph{non ramifi\'es en toute place de $K$}, et aussi qui sont constants sur
$\Ad_{k}^{\times}$ et sur $U_{\infty}$. Si $k = \QQ$, on a une suite exacte
$$
\begin{CD}
1 @>>> \mathsf{X}(\Cl(K)) @>>> \mathsf{X} @>>> \ZZ^{r} @>>> 1,
\end{CD}
$$
le groupe $\mathsf{X}$ est extension de $\ZZ^{r}$ par le groupe
$\mathsf{X}(\Cl(K))$.

\medskip

\subsection*{Mesures sur les groupes d'id\`eles}~

\medskip

On choisit la mesure de Haar suivante sur $C_{K} = N \times C^{1}_{K}$ :
$$
\int_{C_{K}^{1}} d^{\times}\!\xi = 1,
$$
$$
\int_{N} f(\xi) \,d^{\times}\!\xi =
n \int_{0}^{\infty} f(\xi(x)) \, d^{\times}\!x =
\int_{0}^{\infty} f(\xi(x^{1/n})) \, d^{\times}\!x.
$$
On d\'efinit comme suit la mesure de Haar sur $T(k) \backslash T(\Ad)$ :
$$
\int_{T(k) \backslash T(\Ad)} F(h) \, dh =
\int_{C_{K}} F(\pi(\xi)) \, d^{\times}\!\xi.
$$
La mesure invariante sur $S(k) \backslash S(\Ad_{k})$ est d\'efinie
par
\begin{equation}
\label{MesInv}
\int_{T(k) \backslash T(\Ad)} F(h) \, dh =
\int_{S(k) \backslash S(\Ad_{k})} \, \int_{Z(k) \backslash Z(\Ad)}
F(z \dot{h}) \, dz \, d\dot{h}.
\end{equation}
De cette mani\`ere, le groupe $S(k) \backslash S(\Ad_{k})$ est de
volume $1$. Si $F$ est une fonction d\'efinie sur $T(\Ad)$, invariante \`a
gauche par $T(k)Z(\Ad)$, et suffisamment r\'eguli\`ere, on pose
$$
\oint F(h) \, dh = \int_{S(k) \backslash S(\Ad_{k})} F(h) \, dh =
\int_{T(k) Z(\Ad) \backslash T(\Ad)} F(h) \, dh.
$$

\medskip

\subsection*{Fonctions $L$ et distribution de Weil}~

\medskip

Si $\Phi$ appartient \`a l'espace $\EuScript{S}(\Ad_{k})$ des fonctions
standard sur $\Ad_{k}$, si $\chi$ est un caract\`ere de $C_{k}$, et si $s
\in \CC$, \emph{l'int\'egrale de Tate} de $\Phi$ est \cite[Eq. (4), p.
118]{BNT} :
\begin{equation}
\label{Weil}
\Tate_{k}(\Phi, s, \chi) =
\int_{\Ad_{k}^{\times}} \Phi(\xi) \, \chi(\xi) \, \abs{\xi}_{\Ad_{k}}^{s} \,
d^{\times}\!\xi.
\end{equation}
Soit $P$ l'ensemble des places finies de $k$ o\`u $\chi$ n'est pas
ramifi\'e. La s\'erie $L$ de Hecke attach\'ee \`a $\chi$ est
$$
L(s, \chi) = \prod_{\mathfrak{p} \in P}
\left( 1 - \dfrac{\chi(\mathfrak{p})}{N(\mathfrak{p})^{s}} \right)^{- 1}
$$
\cite[Eq.(11), p. 133]{BNT}. On sait, gr\^ace \`a A. Weil \cite[Eq. (6), p.
128]{BNT}, \cite[Lem. 1, p. 76]{Connes3}, que la fonction $\Tate_{k}(\Phi,
s, \chi)$ est m\'eromorphe dans $\CC$, et on a
\begin{equation}
\label{Tate}
\Tate_{k}(\Phi,s,\chi) = c_{k}^{-1} \, L(s,\chi) \, \Tate'_{k}(\Phi,s,\chi),
\end{equation}
o\`u $\Tate'_{k}(\Phi,s,\chi)$ est une forme lin\'eaire sur 
$\EuScript{S}(\Ad_{k})$ qui est holomorphe pour $\Re(s) > 0$. Enfin, $c_{k}
= 2^{r_{1}}(2 \pi)^{r_{2}} h R/e$, o\`u $r_{1}$ et $r_{2}$ sont
respectivement le nombre de places r\'eelles et imaginaires de $k$, o\`u
$h$ est son nombre de classes et $R$ son r\'egulateur, et o\`u $e$ est
l'ordre du groupe des racines de l'unit\'e dans $k$ \cite[p. 128]{BNT}.

Supposons maintenant $\chi \in \mathsf{X}$. On d\'efinit une fonction
$\Phi_{0}$ sur $\Ad_{K}^{\times}$ de la mani\`ere suivante \cite[p.
131]{BNT} :
\begin{itemize}
\item
aux places infinies : on suppose que le plongement $\xi \mapsto \xi^{(i)}$
est r\'eel pour $1 \leq i \leq r_{1}$, qu'il est complexe pour $r_{1} + 1
\leq i \leq 2r_{2}$, et que $\xi^{(r_{1} + r_{2} + i)} =
\overline{\xi^{(i)}}$ pour $1 \leq i \leq r_{2}$. On pose
$$
F(\xi) = \sum_{i = 1}^{r_{1}} \xi^{(i) 2} +
2 \sum_{i = r_{1} + 1}^{r_{1} + r_{2}} \abs{\xi^{(i)}}^{2}.
$$
Si $\xi \in K_{\infty}$, on prend $\Phi_{\infty}(\xi) = e^{- F(\xi)}$.
\item
aux places finies, $\chi_{v}$ est non ramifi\'e. On prend pour
$\Phi_{v}$ la fonction caract\'eristique de $\mathfrak{O}_{v}$.
\end{itemize}
Si $w$ est une place infinie de $K$, on a \cite[p. 130]{BNT}
$$
\chi_{w}(x) = \abs{x}_{w}^{i \rho_{w}} \quad
\text{pour tout} \ x \in K_{w}, \quad \text{o\`u} \ \rho_{w} \in \RR.
$$
On pose \cite[Lem. 8, p. 127]{BNT} :
$$
\Gamma_{\RR}(s) = \pi^{-s/2} \Gamma(s/2), \quad
\Gamma_{\CC}(s) = (2 \pi)^{1 - s} \Gamma(s),
$$
et on pose $\Gamma_{w} = \Gamma_{\RR}$ ou $\Gamma_{w} = \Gamma_{\CC}$
suivant que $w$ est r\'eelle ou complexe. On d\'eduit de \cite[Eq. (10), p.
133]{BNT} que l'on a
\begin{equation}
\label{DeltaPrime}
\Delta'_{K}(\Phi_{0},s,\chi) = \Gamma(s, \chi).
\end{equation}
o\`u
$$
\Gamma(s, \chi) = \prod_{w \, \infty} \Gamma_{w}(s + i \rho_{w}).
$$

\medskip

\subsection*{Int\'egrales p\'eriodiques et s\'eries de Fourier}~

\medskip

L'\emph{int\'egrale p\'eriodique}  de fr\'equence $\chi \in \mathsf{X}$
d'une fonction $F \in C(T(k) Z(\Ad_{k}) \backslash G(\Ad_{k}))$ est la
fonction
$\Pi_{\chi}(F)$ d\'efinie par
$$
\Pi_{\chi}(F)(g) = \oint F(h g) \, \chi_{\pi}(h) \, dh.
$$
o\`u on a pos\'e $\chi_{\pi}(h) = \chi _{\circ} \pi^{-1}(h)$ pour tout $h
\in T(\Ad)$. Puisque
$$
\Pi_{\chi}(F)(g) =
\int_{Q} \chi_{\pi}(\dot{h}) \, d\dot{h}
\int_{U_{Q}} F(\dot{h} \eta g) d\eta,
$$
o\`u $U_{Q}$ est l'image de $U_{\Ad}(T)$ dans $Q$, La fonction
$\Pi_{\chi}(F)(g)$ est le coefficient de Fourier de fr\'equence $\chi$ de
la fonction
$$\dot{F}(\dot{h}) = \int_{U_{Q}} F(\dot{h} \eta g) d\eta.$$
On a, dans $L^{2}(Q)$ tout au moins,
\begin{equation}
\label{IntSiegel1}
\int_{U_{Q}} F(\dot{h} \eta g) d\eta =
\sum_{\chi \in \mathsf{X}} \Pi_{\bar{\chi}}(F)(g) \, \chi_{\pi}(\dot{h}).
\end{equation}
Le sous-groupe maximal standard de $G(\Ad_{k})$ est 
$$\mathbf{K} = \prod_{v} \Orth_{v}(n),$$
o\`u $\Orth_{v}(n)$ est le sous-groupe maximal standard de $GL_{n}(k_{v})$
si $v$ est une place infinie et o\`u $\Orth_{v}(n) = GL_{n}
(\mathfrak{o}_{v})$ si $v$ est une place finie. Supposons $k = \QQ$. 
Consid\'erons la \emph{vari\'et\'e modulaire}
$$
X = G(k) Z(\Ad_{k}) \backslash G(\Ad_{k}) / \mathbf{K}.
$$
D'apr\`es la proposition \ref{SsGpeCompT}, il existe $q_{_{K}} \in G(\RR)$
tel que
$$
T(\RR) \cap q_{_{K}} \mathbf{O}_{n}(\RR) \, q_{_{K}}^{-1} = U_{\infty}(T).
$$
Si $g_{_{K}}$ est la matrice de $G(\Ad)$ telle que $(g_{_{K}})_{\infty} =
q_{_{K}}$ et $(g_{_{K}})_{v} = 1$ pour toute place finie $v$ de $k$, on a
$$
T(\Ad) \cap g_{_{K}} \mathbf{K} \, g_{_{K}}^{-1} = U_{\Ad}(T).
$$
Si $F$ est une fonction sur $X$ suffisamment r\'eguli\`ere, on a
$$
F(h \eta g_{_{K}}) =
F(hg_{_{K}}) \quad \text{pour tout} \ \eta \in U_{\Ad}(T),
$$
la fonction $h \mapsto F(hg_{_{K}})$ est d\'efinie sur $Q$, et on a
$$
\int_{U_{Q}} F(h \eta g_{_{K}}) d\eta = F(hg_{_{K}}),
$$
ce qui implique que pour $\chi \in \mathsf{X}$, on a 
$$
c_{\chi} := \Pi_{\bar{\chi}}(F)(g_{_{K}}) = \int_{Q} F(\dot{h}g_{_{K}}) \, 
\overline{\chi}_{\pi}(\dot{h}) \, d\dot{h} =
\oint F(h g_{_{K}}) \, \overline{\chi}_{\pi}(h) \, dh.
$$
On d\'eduit alors de \eqref{IntSiegel1} :
\begin{equation}
\label{IntSiegel2}
F(h g_{_{K}}) =
\sum_{\chi \in \mathsf{X}} c_{\chi} \, \chi_{\pi}(h).
\end{equation}
On retrouve une formule de Siegel : voir \eqref{IntSiegel3}.

\medskip

\subsection*{S\'eries d'Eisenstein g\'en\'erales et formule de Hecke}~

\medskip

On pose maintenant $\Ad = \Ad_{k}$ et $\abs{x} = \abs{x}_{\Ad_{k}}$ pour
$x \in \Ad$. Si $\varphi \in \EuScript{S}(\Ad^{n})$, on pose 
$$
\Th{\varphi}(g) = \abs{\det g}^{1/2}
\sum_{x \in k^{n} - \{0\}} \varphi(^{t}\!g.x)
\quad \text{pour} \quad g \in G(\Ad),
$$
Ces s\'eries sont not\'ees $E(\varphi)$ dans \cite{Connes3} ; il vaut mieux
changer les notations \`a cause des \emph{s\'eries d'Eisenstein
g\'en\'erales (de rang relatif un)}, qui sont les fonctions
\begin{equation}
\label{Eisenstein}
\Eis{\varphi}(g,s, \cark) = \int_{Z(k) \backslash Z(\Ad)} \cark(\det zg) \,
\abs{\det zg}_{\Ad_{k}}^{s - \Demi} \, \Th{\varphi}(zg) \, dz,
\end{equation}
o\`u $\cark$ est un caract\`ere de
$C_{k} = k^{\times} \backslash \Ad_{k}^{\times}$
et o\`u la mesure de Haar est d\'efinie par
\begin{equation}
\label{MesZ}
\int_{Z(k) \backslash Z(\Ad)} f(z) \, dz =
n \int_{k^{\times} \backslash \Ad^{\times}}
f(\lambda.\mathbf{1}_{n}) \, d^{\times}\!\lambda.
\end{equation}
Les s\'eries $\Th{\varphi}$ et $\Eis{\varphi}$ figurent d\'ej\`a dans
\cite{Godement1}, \cite{Godement2} lorsque $n = 2$. L'int\'egrale
$\Eis{\varphi}(g,s)$ converge si $\Re(s) > 1$ et d\'efinit une fonction sur
$G(k) Z(\Ad) \backslash G(\Ad)$. La fonction $s \mapsto \Eis{\varphi}(g,s)$
se prolonge en une fonction m\'eromorphe dans $\CC$. On prend comme mesure
de Haar sur $\Ad^{n}$ la mesure telle que $\vol k^{n} \backslash \Ad^{n} =
1$, et on note $\EuScript{F}\varphi$ la transform\'ee de Fourier additive
de $\varphi$ d\'efinie par cette mesure. Si $\cark^{n} \neq 1$, la fonction
$s \mapsto \Eis{\varphi}(g,s)$ est enti\`ere ; si $\cark^{n} = 1$, les seuls
p\^oles \'eventuels de la fonction $s \mapsto \Eis{\varphi}(g,s)$ sont les
points $0$ et $1$. Ces p\^oles sont simples, de r\'esidus
$$
\res_{s = 0} \Eis{\varphi}(g,s) = - \, \varphi(0) \, \cark(\det g),\quad
\res_{s = 1} \Eis{\varphi}(g,s) = + \, \EuScript{F}\varphi(0) \, \cark(\det g).
$$
La fonction $\Eis{\varphi}(g,s)$ satisfait \`a l'\'equation fonctionnelle
\begin{equation}
\label{EqFct1}
\Eis{\varphi}(g, s, \cark) = \Eis{\EuScript{F}\varphi}(^t\!g^{-1}, 1 - s, \bar{\cark}).
\end{equation}
\emph{La formule de Hecke}, sous la forme donn\'ee dans \cite{Zagier} (si $n
= 2$) et dans \cite{Wielonsky1}, \cite{Wielonsky2} (dans le cas g\'en\'eral)
dit que le coefficient de Fourier d'une s\'erie d'Eisenstein pour un
caract\`ere $\chi \in C_{k} \backslash C_{K}$ s'exprime \`a l'aide de la
\emph{fonction $L$ de Hecke} associ\'ee \`a $\chi$. On peut l'\'enoncer
ainsi :

\begin{proposition}
\label{Prop_Hecke}
Soient $\chi$ un caract\`ere de $C_{k} \backslash C_{K}$ et $\cark$ un
caract\`ere de $C_{k}$. Si $s \in \CC$ n'est pas un p\^ole de $\Eis{\varphi}(g,s,
\cark)$, si $g \in G(\Ad)$, si $a \in k^{n} - \{0\}$, et si $\varphi \in
\EuScript{S}(\Ad^{n})$, on a 
\begin{equation}
\label{Hecke1}
\oint \Eis{\varphi}(hg,s, \cark) \, \chi_{\pi}(h) \, dh =
\cark(\det g) \, \abs{\det g}_{\Ad_{k}}^{s} \,
\Tate_{K}(\Phi_{g,a},s, (\cark _{\circ} N) \, \chi),
\end{equation}
o\`u la fonction $\Phi_{g,a} \in \EuScript{S}(\Ad_{K})$ est d\'efinie par
$$
\Phi_{g,a}(\xi) = \varphi(^{t}\!g^{t}\!\pi(\xi).a)
\qquad (\xi \in \Ad_{K}).
$$
\end{proposition}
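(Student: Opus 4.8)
The plan is to unfold the periodic integral, fold it together with the integral over $Z(k)\backslash Z(\Ad)$ defining the Eisenstein series so as to obtain a single integral over $T(k)\backslash T(\Ad)$, transport this integral to the idele class group of $K$ by means of $\pi$ and of the fundamental-basis isomorphism $\iota$, and finally recognize Tate's integral after unfolding the theta series. I will first prove \eqref{Hecke1} for $\Re(s) > 1$, where every integral and every series converges absolutely, and then extend it to all non-polar $s$ by analytic continuation, both sides being meromorphic in $s$ — the left side by the meromorphy of $\Eis{\varphi}(\cdot, s, \cark)$ and the compactness of $S(k)\backslash S(\Ad_{k})$, the right side by Weil's theorem recalled in \eqref{Tate}.

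So assume $\Re(s) > 1$. Inserting into the left-hand side of \eqref{Hecke1} the definition \eqref{Eisenstein} of $\Eis{\varphi}$ together with that of $\Th{\varphi}$, and using $\abs{\det h'}^{s - \Demi}\abs{\det h'}^{\Demi} = \abs{\det h'}^{s}$, the integrand of the Eisenstein integral becomes $\cark(\det h')\,\abs{\det h'}^{s}\sum_{x \in k^{n} - \{0\}}\varphi({}^{t}h'.x)$. For $\gamma \in T(k)$ one has $\det\gamma \in k^{\times}$, hence $\cark(\det\gamma) = 1$ and $\abs{\det\gamma} = 1$ by the product formula, while ${}^{t}\gamma$ permutes $k^{n} - \{0\}$; thus, evaluated at $h' = hg$, this integrand is left $T(k)$-invariant in $h$, and so is the character $\chi_{\pi}$, which moreover is trivial on $Z(\Ad)$. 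Applying \eqref{MesInv} to the function $h \mapsto \cark(\det hg)\,\abs{\det hg}^{s}\bigl(\sum_{x}\varphi({}^{t}(hg).x)\bigr)\chi_{\pi}(h)$ on $T(k)\backslash T(\Ad)$ and carrying out the inner integration over $Z(k)\backslash Z(\Ad)$, which recovers $\Eis{\varphi}(\dot{h} g, s, \cark)$, yields
\[
\oint \Eis{\varphi}(hg, s, \cark)\,\chi_{\pi}(h)\,dh = \int_{T(k)\backslash T(\Ad)} \cark(\det hg)\,\abs{\det hg}^{s}\Bigl(\sum_{x \in k^{n} - \{0\}}\varphi({}^{t}(hg).x)\Bigr)\chi_{\pi}(h)\,dh .
\]

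Next I would transport the last integral to $C_{K}$ via $\pi : C_{K} \isom T(k)\backslash T(\Ad)$, using the normalization $\int_{T(k)\backslash T(\Ad)} F(h)\,dh = \int_{C_{K}} F(\pi(\xi))\,d^{\times}\!\xi$. One has $\det\pi(\xi) = \Nrm_{K/k}(\xi)$, hence $\abs{\det\pi(\xi)} = \abs{\xi}_{\Ad_{K}}$, and $\chi_{\pi}(\pi(\xi)) = \chi(\xi)$, $\cark(\det(\pi(\xi)g)) = (\cark \circ \Nrm)(\xi)\,\cark(\det g)$. For the theta series, recall that $\iota : \Sch{A}^{n}_{k} \isom \mathcal{A}_{K/k}$ identifies $k^{n} - \{0\}$ with $K^{\times}$ and satisfies ${}^{t}\pi(\xi).\iota^{-1}(\beta) = \iota^{-1}(\xi\beta)$; writing $\psi(\eta) = \varphi({}^{t}g\,\iota^{-1}(\eta))$ for $\eta \in \Ad_{K}$ (so $\psi \in \EuScript{S}(\Ad_{K})$, being $\varphi$ composed with a linear isomorphism, and in fact $\psi = \Phi_{g,e_{1}}$ since $\iota^{-1}(\xi) = {}^{t}\pi(\xi).e_{1}$), the series becomes $\sum_{x \in k^{n} - \{0\}}\varphi({}^{t}(hg).x) = \sum_{\beta \in K^{\times}}\psi(\xi\beta)$ when $h = \pi(\xi)$. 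Pulling out the factors depending only on $g$, the right-hand side of the above identity equals
\[
\cark(\det g)\,\abs{\det g}^{s}\int_{C_{K}}(\cark \circ \Nrm)(\xi)\,\chi(\xi)\,\abs{\xi}_{\Ad_{K}}^{s}\sum_{\beta \in K^{\times}}\psi(\xi\beta)\,d^{\times}\!\xi .
\]
Since $(\cark \circ \Nrm)\chi$ is a character of $C_{K}$ and $\xi \mapsto \abs{\xi}_{\Ad_{K}}^{s}$ is $K^{\times}$-invariant, the standard unfolding of the sum over $K^{\times}$ turns this $C_{K}$-integral into $\int_{\Ad_{K}^{\times}}(\cark \circ \Nrm)(\xi)\,\chi(\xi)\,\abs{\xi}_{\Ad_{K}}^{s}\,\psi(\xi)\,d^{\times}\!\xi = \Tate_{K}(\psi, s, (\cark \circ \Nrm)\chi)$, provided the Haar measure on $\Ad_{K}^{\times}$ entering \eqref{Weil} is the one compatible with the chosen measure on $C_{K}$ and counting measure on $K^{\times}$ — which is exactly what makes \eqref{Hecke1} hold with no extra constant. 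Finally, any $a \in k^{n} - \{0\}$ equals $\iota^{-1}(\alpha)$ with $\alpha \in K^{\times}$, and then $\Phi_{g,a}(\xi) = \varphi({}^{t}g\,{}^{t}\pi(\xi).a) = \psi(\xi\alpha)$; the substitution $\xi \mapsto \xi\alpha^{-1}$, together with $\abs{\alpha}_{\Ad_{K}} = 1$, $\chi(\alpha) = 1$ and $(\cark \circ \Nrm)(\alpha) = \cark(\Nrm_{K/k}(\alpha)) = 1$ (as $\Nrm_{K/k}(\alpha) \in k^{\times}$), gives $\Tate_{K}(\Phi_{g,a}, s, (\cark \circ \Nrm)\chi) = \Tate_{K}(\psi, s, (\cark \circ \Nrm)\chi)$. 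Combining the three displays with this last identity proves \eqref{Hecke1} for $\Re(s) > 1$, hence for all non-polar $s$.

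I expect the main obstacle to be making the unfolding rigorous, i.e. justifying the interchange of the sum over $K^{\times}$ with the integral over $C_{K}$ — equivalently, of the theta series with the integrations $\oint$ and $\int_{Z(k)\backslash Z(\Ad)}$. This is legitimate for $\Re(s) > 1$ thanks to the absolute convergence of the Eisenstein integral and of Tate's integral in that half-plane, and it is precisely the place where that range is needed before invoking analytic continuation. A secondary, purely bookkeeping point is the normalization of the Haar measure on $\Ad_{K}^{\times}$, which has to be pinned down so that \eqref{Hecke1} carries the constant $1$; this amounts to the measure compatibility used in the unfolding.
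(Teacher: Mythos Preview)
Your argument is correct and follows the same route as the paper's proof: combine the periodic integral with the $Z$-integral into an integral over $T(k)\backslash T(\Ad)$, transport to $C_{K}$ via $\pi$ and $\iota$, and unfold the theta sum over $K^{\times}$ to obtain Tate's integral. The only cosmetic difference is that the paper inserts the vector $a$ midway by reindexing the $K^{\times}$-sum via $q\mapsto\alpha q$ with $\alpha=\iota(a)$, whereas you first treat $a=e_{1}$ and then show independence of $a$ by a change of variable in the Tate integral; your explicit remarks on the range $\Re(s)>1$ and on analytic continuation are also more detailed than the paper's.
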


\begin{proof}
Si $\lambda \in \Ad^{\times}_{k}$, posons $\rho(\lambda) = \cark(\lambda) \,
\abs{\lambda}_{\Ad_{k}}^{s}$ ; c'est un quasi-caract\`ere de $\Ad^{\times}_{k}/
k^{\times}$. Si $\alpha = \iota(a)$, on a :
\begin{eqnarray*}
& &
\oint \Eis{\varphi}(hg,s, \cark) \, \chi_{\pi}(h) \, dh
\\ & = &
\int_{T(k) \backslash T(\Ad)} \chi_{\pi}(h) \sum_{x \in k^{n} - \{0\}}
\varphi(^{t}\!g^{t}\!h.x) \, \rho(\det hg) \, dh
\\ & = &
\rho(\det g) \int_{K^{\times} \backslash \Ad_{K}^{\times}}
\chi(\xi) \sum_{q \in K^{\times}}
\varphi(^{t}\!g^{t}\!\pi(\xi).\iota^{-1}(q)) \, \rho(N(\xi)) \, d^{\times}\!\xi
\\ & = &
\rho(\det g) \int_{K^{\times} \backslash \Ad_{K}^{\times}}
\chi(\xi) \sum_{q \in K^{\times}}
\varphi(^{t}\!g^{t}\!\pi(\xi).\iota^{-1}(\alpha q)) \, \rho(N(\xi)) \, d^{\times}\!\xi
\\  & = &
\rho(\det g) \int_{K^{\times} \backslash \Ad_{K}^{\times}}
\chi(\xi) \sum_{q \in K^{\times}}
\varphi(^{t}\!g^{t}\!\pi(\xi).^{t}\!\pi(q).a) \, \rho(N(\xi)) \,
d^{\times}\!\xi
\\  & = &
\rho(\det g) \int_{K^{\times} \backslash \Ad_{K}^{\times}}
\chi(\xi) \sum_{q \in K^{\times}}
\varphi(^{t}\!g^{t}\!\pi(q\xi).a) \, \rho(N(q\xi)) \, d^{\times}\!\xi
\\  & = &
\rho(\det g) \int_{\Ad_{K}^{\times}}
\Phi_{g,a}(\xi) \, \rho(N(\xi)) \, \chi(\xi) \, d^{\times}\!\xi
\\ & = &
\rho(\det g) \, \Tate_{K}(\Phi_{g,a},s, (\cark _{\circ} N) \, \chi),
\end{eqnarray*}
ce qui \'etablit le r\'esultat.
\end{proof}

\medskip

\section{S\'eries d'Eisenstein}
\label{sec_Eisenstein}

\medskip

\subsection*{S\'eries d'Eisenstein normalis\'ees} ~\medskip

\textit{On suppose maintenant $\cark = 1$ et $k = \QQ$.}

On note $P$ le sous-groupe parabolique maximal standard de $G$ de type
$(n - 1, 1)$ form\'e des matrices
$$
p =
\begin{pmatrix} g' & ^{t}\!x \\ 0 & t \\ \end{pmatrix}
$$
o\`u $g' \in \mathbf{GL}_{n - 1}$, o\`u $t \in \mathbf{GL}_{1}$, et
o\`u $x$ est une matrice ligne \`a $n - 1$ \'el\'ements. Le radical
unipotent de $P$ est
$$
N = \left\{ 
\begin{pmatrix} \mathbf{1}_{n - 1} & ^{t}\!x \\ 0 & 1 \\ \end{pmatrix}
\right\},
$$
et $P$ est le produit de $N$ par le sous-groupe de Levi
$$
M = \left\{ 
\begin{pmatrix} g' & 0 \\ 0 & t \\ \end{pmatrix}
\right\}.
$$
Le centre de $M$ est
$$
A = \left\{ 
\begin{pmatrix} t'.\mathbf{1}_{n - 1} & 0 \\ 0 & t \\ \end{pmatrix}
\right\},
$$
o\`u $t$ et $t'$ sont dans $\mathbf{GL}_{1}$. Si $p \in P$ est la matrice
ci-dessus, on pose $\alpha(p) = t$, de telle sorte que ${^{t}\!e_{n}}.p =
\alpha(p).{^{t}\!e_{n}}$. Le module de $P(\Ad)$ est
$$
\delta_{P}(p) = \delta_{P} 
\begin{pmatrix} g' & ^{t}\!x \\ 0 & t \\ \end{pmatrix} =
\Abs{\dfrac{\det p}{\alpha(p)^{n}}} =
\Abs{\dfrac{\det g'}{t^{n - 1}}}_{\Ad},
\quad p \in P(\Ad).
$$
On a $G(\Ad) = P(\Ad).\mathbf{K}$, o\`u $\mathbf{K}$ est le sous-groupe
maximal standard de $G(\Ad_{k})$. Si $g = p \kappa \in G(\Ad)$, avec
$p \in P$ et $\kappa \in \mathbf{K}$, on pose $\delta_{P}(g) =
\delta_{P}(p)$. Il n'y a pas d'ambigu\"{i}t\'e, car si $\kappa \in P(\Ad)
\cap \mathbf{K}$, alors $\alpha(\kappa)$ et $\det \kappa$ sont dans
$\Ad^{1}$, par suite $\delta_{P}(\kappa) = 1$. Si $p \in P(\Ad)$, si $g \in
G(\Ad)$ et si $\kappa \in \mathbf{K}$, on a
$$
\delta_{P}(p g \kappa) = \delta_{P}(p) \delta_{P}(g), \quad
\delta_{P}(p^{-1}) = \delta_{P}(p)^{-1}.
$$
On va s'int\'eresser aux fonctions satisfaisant aux
relations
\begin{equation}
\label{Invariance}
F(\gamma z g \kappa) = F(g), \quad \gamma \in G(k), \quad z \in Z(\Ad),
\quad g \in G(\Ad), \quad \kappa \in \mathbf{K},
\end{equation}
autrement dit aux fonctions d\'efinies sur la vari\'et\'e modulaire $X$.
\emph{La s\'erie d'Eisenstein normalis\'ee} est d\'efinie pour $g \in
G(\Ad)$ par
$$
\EisCan_{n}(g, s) = \EisCan(g, s) = 
\sum_{\gamma \in P(k) \backslash G(k)} \delta_{P}(\gamma g)^{s}.
$$
On note $\EuScript{S}^{K}(\Ad^{n})$ l'espace des fonctions de
$\EuScript{S}(\Ad^{n})$ invariantes par $\mathbf{K}$. Si $\varphi \in
\EuScript{S}(\Ad^{n})$, on note $\boldsymbol{\phi}_{1}$ la fonction de
$\EuScript{S}(\Ad_{k})$ d\'efinie par
$$\boldsymbol{\phi}_{1}(\lambda) = \varphi(\lambda e_{n}) \qquad (\lambda \in
\Ad_{k}).$$
Si $\phi \in \EuScript{S}(\Ad_{k})$ et si $s \in \CC$, on note
$$
\Tate_{k}(\phi,s) = \Tate_{k}(\phi,s, 1)
$$
l'int\'egrale de Tate \eqref{Weil} de $\phi$ lorsque $\chi = 1$ est le
caract\`ere trivial de $C_{k}$, de telle sorte que
$$
\Tate_{k}(\boldsymbol{\phi}_{1},s) =
\int_{\Ad^{\times}} \varphi(\lambda e_{n}) \, \abs{\lambda}^{s} \,
d^{\times}\!\lambda.
$$

\begin{proposition}
\label{Prop_SE}
Les propri\'et\'es suivantes sont satisfaites :
\begin{enumerate}
\item
\label{EisSt1}
La s\'erie $\EisCan(g, s)$ converge pour $\Re(s) > 1$ et satisfait
aux relations \eqref{Invariance}.
\item
\label{EisSt2}
Si $\varphi \in \EuScript{S}^{K}(\Ad^{n})$, on a
\begin{equation}
\label{pgcd}
\Eis{\varphi}(g,s) =
n \, \Tate_{k}(\boldsymbol{\phi}_{1},ns) \, \EisCan(g, s).
\end{equation}
\item
\label{EisSt3}
La s\'erie $\EisCan(g, s)$ se prolonge en une fonction m\'eromorphe dans
$\CC$.
\end{enumerate}
\end{proposition}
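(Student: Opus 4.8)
Le plan est de traiter les trois assertions dans l'ordre, en d\'eduisant les deux derni\`eres de la premi\`ere via l'identit\'e de d\'epliage \eqref{pgcd}. Pour \ref{EisSt1}, je v\'erifierais les relations \eqref{Invariance} terme \`a terme : l'invariance \`a gauche par $G(k)$ n'est que la r\'eindexation $\gamma \mapsto \gamma\gamma_{0}$ de $P(k)\backslash G(k)$ ; l'invariance par $Z(\Ad)$ \`a gauche et par $\mathbf{K}$ \`a droite r\'esultent des faits d\'ej\`a acquis que $\delta_{P}$ est un caract\`ere de $P(\Ad)$, que $\delta_{P}(g)$ ne d\'epend que de la composante dans $P(\Ad)$ pour $G(\Ad) = P(\Ad)\mathbf{K}$, et que $\delta_{P}$ est trivial sur $Z(\Ad)$ (imm\'ediat : $\delta_{P}(t\,\mathbf{1}_{n}) = \abs{t^{n}/t^{n}} = 1$). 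Pour la convergence quand $\Re(s) > 1$, j'emploierais la majoration usuelle pour une s\'erie d'Eisenstein associ\'ee \`a un parabolique maximal : via l'identification $P(k)\backslash G(k) \isom \mathbf{P}^{n-1}(k)$ expliqu\'ee ci-dessous et la d\'ecomposition d'Iwasawa, $\delta_{P}(\gamma g)$ est, \`a des constantes ne d\'ependant que de $g$ pr\`es, de l'ordre de $\N{{}^{t}g.v}^{-n}$, o\`u $v$ d\'ecrit un syst\`eme de repr\'esentants primitifs des droites de $k^{n}$ ; la s\'erie obtenue est du type d'Epstein et converge pour $ns > n$.

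Le point central est \ref{EisSt2}, que je d\'emontrerais par d\'epliage. \`A partir de \eqref{Eisenstein} avec $\cark = 1$ et $z = \lambda\,\mathbf{1}_{n}$, les facteurs de module se recombinent et \eqref{MesZ} donne $\Eis{\varphi}(g,s) = n\,\abs{\det g}^{s} \int_{k^{\times}\backslash\Ad^{\times}} \abs{\lambda}^{ns} \sum_{x \in k^{n}-\{0\}} \varphi(\lambda\,{}^{t}g.x)\, d^{\times}\!\lambda$. On regroupe alors les vecteurs $x$ suivant leurs $k^{\times}$-orbites, c'est-\`a-dire suivant les droites $\ell \in \mathbf{P}^{n-1}(k)$, et on absorbe la somme r\'esiduelle sur $k^{\times}$ dans l'int\'egrale id\'elique, rempla\c{c}ant $\int_{k^{\times}\backslash\Ad^{\times}}\sum_{\mu \in k^{\times}}$ par $\int_{\Ad^{\times}}$ (licite par convergence absolue pour $\Re(s)>1$). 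Le point cl\'e est que la transposition fait de $P(k)$ le stabilisateur de la droite $k e_{n}$ pour l'action $\gamma\mapsto {}^{t}\gamma.e_{n}$, d'o\`u une bijection $P(k)\backslash G(k)\isom \mathbf{P}^{n-1}(k)$ ; on peut donc prendre pour g\'en\'erateur de $\ell$ le vecteur ${}^{t}\gamma.e_{n}$, de sorte que ${}^{t}g.({}^{t}\gamma.e_{n}) = {}^{t}(\gamma g).e_{n}$. En \'ecrivant $\gamma g = p\kappa$ (Iwasawa, $p\in P(\Ad)$, $\kappa\in\mathbf{K}$), en utilisant que $\mathbf{K}$ est stable par transposition et que $\varphi\in\EuScript{S}^{K}(\Ad^{n})$, on obtient $\varphi(\lambda\,{}^{t}(\gamma g).e_{n}) = \varphi(\lambda\,\alpha(p)\,{}^{t}\kappa.e_{n}) = \varphi(\lambda\,\alpha(p)\,e_{n}) = \boldsymbol{\phi}_{1}(\lambda\,\alpha(p))$ ; le changement de variable $\mu = \lambda\,\alpha(p)$ donne $\int_{\Ad^{\times}}\abs{\lambda}^{ns}\boldsymbol{\phi}_{1}(\lambda\,\alpha(p))\,d^{\times}\!\lambda = \abs{\alpha(p)}^{-ns}\,\Tate_{k}(\boldsymbol{\phi}_{1},ns)$. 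Comme $\abs{\det(\gamma g)} = \abs{\det g} = \abs{\det p}$, on a $\abs{\det g}^{s}\abs{\alpha(p)}^{-ns} = \bigl(\abs{\det p}/\abs{\alpha(p)}^{n}\bigr)^{s} = \delta_{P}(\gamma g)^{s}$ ; la sommation sur $P(k)\backslash G(k)$ fournit \eqref{pgcd}.

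Enfin \ref{EisSt3} d\'ecoule de \eqref{pgcd} en choisissant $\varphi$ convenablement : je prendrais $\varphi = \varphi_{\infty}\otimes\varphi_{f}$, avec $\varphi_{\infty}$ une gaussienne $\Orth_{n}(\RR)$-invariante et $\varphi_{f}$ la fonction caract\'eristique de $\prod_{v}\mathfrak{o}_{v}^{n}$, de sorte que $\varphi\in\EuScript{S}^{K}(\Ad^{n})$, que $\boldsymbol{\phi}_{1}$ soit explicite et que $s\mapsto\Tate_{k}(\boldsymbol{\phi}_{1},ns)$ soit m\'eromorphe sur $\CC$ et non identiquement nulle (produit d'int\'egrales de Tate locales). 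Puisque $s\mapsto\Eis{\varphi}(g,s)$ est d\'ej\`a connue m\'eromorphe sur $\CC$, l'identit\'e \eqref{pgcd}, valable pour $\Re(s)>1$, exprime $\EisCan(g,s)$ comme le quotient $\Eis{\varphi}(g,s)\,/\,\bigl(n\,\Tate_{k}(\boldsymbol{\phi}_{1},ns)\bigr)$, ce qui le prolonge m\'eromorphiquement \`a $\CC$.

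Je m'attends \`a ce que la principale difficult\'e soit la tenue des comptes dans \ref{EisSt2} : garder la trace de la cha\^ine d'identifications $(k^{n}-\{0\})/k^{\times}\cong\mathbf{P}^{n-1}(k)\cong P(k)\backslash G(k)$ en pla\c{c}ant les transpos\'ees aux bons endroits, et justifier proprement l'\'echange somme-int\'egrale du d\'epliage ainsi que le changement d'indexation de la somme par les droites en la somme sur $P(k)\backslash G(k)$.
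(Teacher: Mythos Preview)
Your proposal is correct and follows essentially the same unfolding argument as the paper: both identify $P(k)\backslash G(k)$ with $\mathbf{P}^{n-1}(k)$ via $\gamma\mapsto {}^{t}\gamma\cdot e_{n}$, use the Iwasawa decomposition $\gamma g = p\kappa$ together with ${}^{t}p\cdot e_{n}=\alpha(p)e_{n}$ and the $\mathbf{K}$-invariance of $\varphi$, and deduce \ref{EisSt3} by specialising $\varphi$ (the paper's function $\tau$ is exactly your Gaussian--characteristic function choice). The only organisational difference is that the paper packages the per-coset contribution as an auxiliary integral $M(\varphi)(g,s)=\int_{Z(\Ad)}\varphi(e_{n}\cdot zg)\,\abs{\det zg}^{s}\,dz$, proves $M(\varphi)(pg,s)=\delta_{P}(p)^{s}M(\varphi)(g,s)$ and then deduces all three items (including convergence in \ref{EisSt1}) from the identity \eqref{pgcd}, whereas you establish convergence in \ref{EisSt1} independently via an Epstein-type estimate before carrying out the unfolding inline.
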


\begin{proof}
Pour $g \in G(\Ad)$ et pour $\varphi \in \EuScript{S}(\Ad^{n})$, on pose
$$
M(\varphi)(g,s) = 
\int_{Z(\Ad)} \varphi(e_{n}.zg) \, \abs{\det zg}^{s} \, dz.
$$
Cette int\'egrale converge si $\Re(s) > 1/n$ et si $\Re(s) > 1$, on a
\begin{equation}
\label{Eisenstein2}
\Eis{\varphi}(g, s) =
\sum_{\gamma \in P(k) \backslash G(k)} M(\varphi)(\gamma g,s).
\end{equation}
Si $g \in G(\Ad)$, on a, en vertu de \eqref{MesZ} :
\begin{eqnarray*}
M(\varphi)(pg,s)
& = &
\int_{Z(\Ad)} \varphi(e_{n}.zpg) \, \abs{\det zpg}^{s} \, dz
\\ & = &
n \, \abs{\det p}^{s} \int_{\Ad^{\times}} \varphi(\lambda e_{n}.pg) \,
\abs{\lambda^{n} \, \det g}^{s} \, d^{\times}\!\lambda
\\ & = &
n \, \abs{\det p}^{s} \int_{\Ad^{\times}} \varphi(\lambda \alpha(p) e_{n}.g) \,
\abs{\lambda^{n} \, \det g}^{s} \, d^{\times}\!\lambda
\\ & = &
n \, \abs{\det p}^{s} \, \abs{\alpha(p)^{-n}}
\int_{\Ad^{\times}} \varphi(\lambda e_{n}.g) \, \abs{\lambda^{n} \,
\det g}^{s} \, d^{\times}\!\lambda
\\ & = &
n \, \delta_{P}(p)^{s} \int_{\Ad^{\times}} \varphi(\lambda e_{n}.g)
\, \abs{\lambda^{n} \, \det g}^{s} \, d^{\times}\!\lambda
\\ & = &
\delta_{P}(p)^{s} \, M(\varphi)(g,s)
\end{eqnarray*}
On a aussi
\begin{eqnarray*}
M(\varphi)(g,s)
& = &
\int_{Z(\Ad)} \varphi(e_{n}.zg) \, \abs{\det zg}^{s} \, dz
\\ & = &
n \int_{\Ad^{\times}} \varphi(\lambda e_{n}.g) \,
\abs{\lambda^{n} \, \det g}^{s} \, d^{\times}\!\lambda
\\ & = &
n \, \abs{\det g}^{s} \, \Tate_{k}(\boldsymbol{\phi}_{g},ns).
\end{eqnarray*}
Si $\varphi \in \EuScript{S}^{K}(\Ad^{n})$, et en \'ecrivant $g = p \kappa \in
G(\Ad)$, avec $p \in P$ et $\kappa \in \mathbf{K}$, on trouve
\begin{equation}
\label{FonctionM}
M(\varphi)(g,s) = n \, \delta_{P}(g)^{s} \, \Tate_{k}(\boldsymbol{\phi}_{1},ns).
\end{equation}
On d\'eduit \eqref{EisSt2} de \eqref{FonctionM} et \eqref{Eisenstein2},
ce qui entra\^ine \eqref{EisSt1} et \eqref{EisSt3} par la m\^eme occasion. 
\end{proof}

On rappelle que la fonction
\begin{equation}
\label{GdLambda}
\RiemXi(s) = \pi^{-s/2} \Gamma(\frac{s}{2}) \zeta(s),
\end{equation}
est m\'eromorphe dans $\CC$, et que $\RiemXi(s) = \RiemXi(1 - s)$. Les seuls
p\^oles de $\RiemXi$ sont en $s = 0$ et $s = 1$. On a $\RiemXi(s) \neq 0$ si $\Re(s)
\geq 1$ et $s \neq 1$. Pour $n = 2$, on a $2 \, \RiemXi(2) = \pi/3$. La
fonction
$$
c_{n}(s) = c(s) = \dfrac{\RiemXi(n(1 - s))}{\RiemXi(ns)}
$$
est m\'eromorphe dans $\CC$, et on a
$$
c(s) \, c(1 - s) = 1, \quad c(\tfrac{1}{2}) = 1, \quad c(0) = 0,
\quad c(\tfrac{1}{n}) = 0 \quad (n \geq 3).
$$
Dans le demi-plan ferm\'e $\Re(s) \geq 1/n$, les seuls p\^oles de la
fonction $c(s)$ sont situ\'es en $s = 1$ et aussi en $s = 1 - (1/n)$ si $n
\geq 3$. On a
$$
\res_{s = 1} c(s) = \dfrac{- 1}{n \, \RiemXi(n)}, \quad
\res_{s = 1 - (1/n)} c(s) = \dfrac{-1}{n \, \RiemXi(n - 1)} \quad (n \geq 3).
$$
On pose
$$
\EuScript{R}_{n} = \set{s \in \CC}{\Re(s) > 0 \ \text{et} \ \zeta(ns) = 0}.
$$
L'ensemble $\EuScript{R}_{n}$ est contenu dans la bande $0 < \Re(s) < 1/n$.
Dans cette bande, l'ensemble des p\^oles de $c(s)$ est \'egal \`a
$\EuScript{R}_{n}$. Dans le demi-plan ferm\'e $\Re(s) \geq 1/2$, l'ensemble
des z\'eros de la fonction $c(s)$ est contenu dans la bande ouverte
$$\set{s \in \CC}{1 - \tfrac{1}{n} < \Re(s) < 1}.$$

\begin{remark}
\label{FonctionTau}
On d\'efinit une fonction $\tau \in \EuScript{S}^{K}(\Ad^{n})$ de la
mani\`ere suivante : on note $\tau_{p}$ la fonction caract\'eristique du
module compact $\ZZ_{p}^{n}$, on pose $\tau_{\infty}(x) = e^{- \pi
\N{x}^{2}}$ si
$x \in \RR^{n}$, et enfin
$$\tau(x) = \tau_{\infty}(x_{\infty}) \prod_{p} \tau_{p}(x_{p}),$$
si $x = (\dots, x_{p}, \dots, x_{\infty}) \in \Ad^{n}$. On v\'erifie que
$\tau \in \EuScript{S}^{K}(\Ad^{n})$, on a $\EuScript{F}\tau(x) = \tau(x)$
et 
\begin{equation}
\label{DeltaTau}
\Tate_{k}(\boldsymbol{\tau}_{1},s) = \RiemXi(s),
\end{equation}
o\`u $\RiemXi(s)$ est d\'efinie en \eqref{GdLambda} \cite[Lem. 8, p.
127]{BNT}. On d\'eduit de \eqref{pgcd} :
\begin{equation}
\label{EisTau}
\Eis{\tau}(g,s) = n \, \RiemXi(ns) \, \EisCan(g, s),
\end{equation}
\end{remark}

\begin{proposition}
\label{Prop_Eisenstein}
La s\'erie d'Eisenstein normalis\'ee v\'erifie les propri\'et\'es suivantes
:
\begin{enumerate}
\item
\label{EisSt4}
Les seuls p\^oles de la fonction $\RiemXi(ns) \, \EisCan(g, s)$ sont les
points $s = 0$ et $s = 1$.
\item
\label{EisSt5}
Dans le demi-plan ferm\'e $\Re(s) \geq 1/n$, la fonction $\EisCan(g, s)$
n'admet qu'un p\^ole ; il est simple et situ\'e en $s = 1$. On a
$$
\res_{s = 1} \EisCan(g, s) = \dfrac{1}{n \, \RiemXi(n)},
\quad
\EisCan(g, 0) = 1, \quad \EisCan(g, \frac{1}{n}) = 0.
$$
\item
\label{EisSt6}
Dans le demi-plan ferm\'e $\Re(s) \geq 1/n$, les seuls p\^oles de la fonction
$c(s)$ sont situ\'es en $s = 1$ et aussi en $s = 1 - (1/n)$ si $n \geq 3$.
On a
$$
\res_{s = 1} c(s) = \dfrac{- 1}{n \, \RiemXi(n)}, \quad
\res_{s = 1 - (1/n)} c(s) = \dfrac{-1}{n \, \RiemXi(n - 1)} \quad (n \geq 3).
$$
\item
\label{EisSt7}
On a l'\'equation fonctionnelle
\begin{equation}
\label{EqFonct2}
\EisCan(g, s) = c(s) \, \EisCan(^t\!g^{-1}, 1 - s).
\end{equation}
Dans la bande $0 < \Re(s) < 1$, le lieu des p\^oles de $\EisCan(g, s)$ est
\'egal \`a $\EuScript{R}_{n}$.
\item
\label{EisSt8}
Si $n = 2$, on a
$$
\EisCan(^t\!g^{-1}, s) = \EisCan(g, s).
$$
\end{enumerate}
\end{proposition}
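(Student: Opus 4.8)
The plan is to reduce every claim to the identity \eqref{EisTau}, i.e. $\Eis{\tau}(g,s)=n\,\RiemXi(ns)\,\EisCan(g,s)$, used together with the properties of the general Eisenstein series $\Eis{\varphi}$ established in Section~\ref{sec_Periodes} — meromorphic continuation to $\CC$; in the present case $\cark=1$, so $\cark^{n}=1$, whence the only possible poles are $s=0$ and $s=1$, with $\res_{s=0}\Eis{\varphi}(g,s)=-\varphi(0)$ and $\res_{s=1}\Eis{\varphi}(g,s)=\EuScript{F}\varphi(0)$; and the functional equation \eqref{EqFct1} — and with the elementary properties of $\RiemXi$ recalled above: simple poles at $s=0$ and $s=1$, with $\res_{s=1}\RiemXi=1$ and hence (from $\RiemXi(s)=\RiemXi(1-s)$) $\res_{s=0}\RiemXi=-1$, and $\RiemXi(s)\neq 0$ for $\Re(s)\geq 1$, $s\neq 1$; one also uses $\EuScript{F}\tau=\tau$ and $\tau(0)=1$ from Remark~\ref{FonctionTau}. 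With this, (a) is immediate: $\RiemXi(ns)\,\EisCan(g,s)=n^{-1}\Eis{\tau}(g,s)$ has the same poles $\{0,1\}$ as $\Eis{\tau}$.

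For (b) and (c) I would work in the half-plane $\Re(s)\geq 1/n$, where $\Re(ns)\geq 1$: there $\RiemXi(ns)$ is holomorphic and zero-free, save for a simple pole at $s=1/n$, while $\Eis{\tau}(g,\cdot)$ is holomorphic save for a simple pole at $s=1$. Dividing, $\EisCan(g,\cdot)=n^{-1}\RiemXi(ns)^{-1}\Eis{\tau}(g,s)$ is holomorphic on $\{\Re(s)\geq 1/n\}$ away from $s=1$; at $s=1/n$ it vanishes (finite numerator over a pole), which gives $\EisCan(g,1/n)=0$; and at $s=1$ it has a simple pole with $\res_{s=1}\EisCan(g,s)=\EuScript{F}\tau(0)/(n\RiemXi(n))=1/(n\RiemXi(n))$, since $\RiemXi(n)$ is finite and nonzero. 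Near $s=0$ — which lies outside that half-plane — both $\Eis{\tau}(g,s)$ and $n\RiemXi(ns)$ have a simple pole, with equal residues $-1$ (the second because $\res_{s=0}n\RiemXi(ns)=\res_{u=0}\RiemXi(u)=-1$), so the quotient extends holomorphically across $0$ with $\EisCan(g,0)=1$; this proves (b). Statement (c) is the same bookkeeping for $c(s)=\RiemXi(n(1-s))/\RiemXi(ns)$: in $\Re(s)\geq 1/n$ the denominator has no zero and only the simple pole at $s=1/n$, so the poles of $c$ are exactly the uncancelled poles of $\RiemXi(n(1-s))$, at $s=1$ and (for $n\geq 3$; for $n=2$ the two poles cancel) at $s=1-1/n$, the residues being those recalled just before the statement.

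For the functional equation in (d) I would specialize \eqref{EqFct1} to $\varphi=\tau$ and $\cark=1$ and invoke $\EuScript{F}\tau=\tau$ to get $\Eis{\tau}(g,s)=\Eis{\tau}({}^{t}\!g^{-1},1-s)$; inserting \eqref{EisTau} on both sides and cancelling $n$ gives $\RiemXi(ns)\,\EisCan(g,s)=\RiemXi(n(1-s))\,\EisCan({}^{t}\!g^{-1},1-s)$, i.e. \eqref{EqFonct2}. For the pole locus in $0<\Re(s)<1$: by (b) there is no pole for $1/n\leq\Re(s)<1$; for $0<\Re(s)<1/n$ the numerator $\Eis{\tau}(g,\cdot)$ is holomorphic, so the poles of $\EisCan(g,\cdot)$ lie among the zeros of $\RiemXi(ns)$, and since the gamma factor of $\RiemXi$ is holomorphic and zero-free for $\Re(ns)>0$, those are exactly the zeros of $\zeta(ns)$, namely $\EuScript{R}_{n}$. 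The reverse inclusion I would obtain from the constant term of $\EisCan(g,\cdot)$ along $N$: a standard unfolding shows that it has the form $\delta_{P}(g)^{s}+c(s)\,\delta_{P}(g)^{1-s}$ (for $n=2$ this is the classical $y^{s}+(\RiemXi(2s-1)/\RiemXi(2s))\,y^{1-s}$); as $s\mapsto\delta_{P}(g)^{s}$ and $s\mapsto\delta_{P}(g)^{1-s}$ are entire and nowhere vanishing while $c(s)$ has, in the band $0<\Re(s)<1/n$ (which contains $\EuScript{R}_{n}$), pole set exactly $\EuScript{R}_{n}$ (recalled before the statement), the constant term — hence $\EisCan(g,\cdot)$ itself — has a genuine pole at every point of $\EuScript{R}_{n}$.

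Finally, for (e) with $n=2$ I would set $w=\left(\begin{smallmatrix}0&1\\-1&0\end{smallmatrix}\right)\in G(\QQ)\cap\mathbf{K}$ and use the place-by-place matrix identity ${}^{t}\!g^{-1}=(\det g)^{-1}\,w\,g\,w^{-1}$, so that ${}^{t}\!g^{-1}=z\,wgw^{-1}$ with $z=(\det g)^{-1}\mathbf{1}_{2}\in Z(\Ad)$; applying in turn the invariance of $\EisCan$ under $Z(\Ad)$, under right translation by $w^{-1}\in\mathbf{K}$, and under left translation by $w\in G(\QQ)$ (relations \eqref{Invariance}, first for $\Re(s)>1$ and then by continuation) yields $\EisCan({}^{t}\!g^{-1},s)=\EisCan(wgw^{-1},s)=\EisCan(wg,s)=\EisCan(g,s)$. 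The step I expect to be the real obstacle is the equality — rather than merely the inclusion — of the pole locus with $\EuScript{R}_{n}$ in (d): the inclusion falls straight out of \eqref{EisTau}, but its converse rests on identifying the $\delta_{P}(g)^{1-s}$-coefficient of the constant term as exactly $c(s)$, which has to be checked with some care; everything else is routine pole-and-residue bookkeeping starting from \eqref{EisTau}.
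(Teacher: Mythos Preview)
Your argument follows essentially the same route as the paper: everything is read off the identity \eqref{EisTau} together with the known poles and residues of $\Eis{\tau}$ and of $\RiemXi$, and (e) is the conjugation-by-$w$ trick (your version, with the extra central factor $(\det g)^{-1}$, is in fact more accurate than the paper's one-line claim $^{t}\!g^{-1}=wgw^{-1}$, which is literally true only on $\SL_{2}$).

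The one point where you go beyond the paper --- and where you correctly flag the obstacle --- is the \emph{equality} of the pole locus with $\EuScript{R}_{n}$ in (d); the paper's proof is silent on the reverse inclusion. Your idea of detecting genuine poles via the constant term is the right one, but the formula you quote, $\delta_{P}(g)^{s}+c(s)\,\delta_{P}(g)^{1-s}$, is only correct for $n=2$. For $n\geq 3$ the constant term (Proposition~\ref{Prop_ChowlaSelberg}) is
\[
\EisCan_{n}^{0}(g,s)=\delta_{P}(g)^{s}+c_{n}(s)\,\delta_{P}(g)^{\frac{1-s}{n-1}}\,\EisCan_{n-1}\!\left(g',\tfrac{n(1-s)}{n-1}\right),
\]
so to conclude that a pole of $c_{n}$ at $s_{0}\in\EuScript{R}_{n}$ survives in $\EisCan_{n}$ you must also check that $\EisCan_{n-1}(g',\cdot)$ is not identically zero at the corresponding point (its argument then has real part strictly between $1$ and $n/(n-1)$, where $\EisCan_{n-1}$ is holomorphic and generically nonzero). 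With that adjustment your argument goes through; everything else is the same residue bookkeeping as in the paper.
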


\begin{proof}
L'assertion \eqref{EisSt4} est une cons\'equence de \eqref{EisTau}. En
\'ecrivant
$$
\EisCan(g, s) = \frac{\Eis{\tau}(g,s)}{n \, \RiemXi(ns)},
$$
on voit que la fonction $\EisCan(g, s)$ n'admet qu'un p\^ole dans le
demi-plan $\Re(s) > 1/n$ ; il est simple et situ\'e en $s = 1$. Puisque
$$
\res_{s = 1} \Eis{\tau}(g,s) = \, \int \tau = 1,
$$
On en d\'eduit le r\'esidu de $\EisCan(g, s)$ en $s = 1$. D'autre part,
$$
\res_{s = 0} \Eis{\tau}(g,s) = - \, \tau(0) = - 1,
$$
et la fonction $\RiemXi(ns)$ admet un p\^ole au point $s = 0$, de r\'esidu
$$
\res_{s = 0} \RiemXi(ns) = 
\res_{s = 0} \Gamma(\frac{ns}{2}) \zeta(ns) = - \dfrac{1}{n},
$$
puisque $\zeta(0) = -1/2$. On en d\'eduit que $\EisCan(g, 0) = 1$. Si
$\Re(s) = 1/n$ et $s \neq 1/n$, la fonction $\EisCan(g, s)$ n'a pas de
p\^ole, puisque $\RiemXi(ns) \neq 0$. Enfin, si $s = 1/n$, la fonction
$\RiemXi(ns)$ a un p\^ole et la fonction $\Eis{\tau}(g,s)$ n'en a pas, ce
qui ach\`eve de d\'emontrer \eqref{EisSt5}. Les seuls p\^oles de la fonction
$\RiemXi(n(1 - s))$ sont simples, situ\'es en $s = 1$ et en $s = 1 - (1/n)$ ;
on a $\RiemXi(ns) \neq 0$ dans l'ensemble des points $s$ tels que $\Re(s)
\geq 1/n$ et $s \neq 1/n$ ; par cons\'equent, les seuls p\^oles de la
fonction 
$$c(s) = \dfrac{\RiemXi(n(1 - s))}{\RiemXi(ns)}$$
dans cet ensemble sont simples, situ\'es en $s = 1$ et en $s = 1 - (1/n)$ ;
de plus
$c(1/n) = 0$. Le calcul de $\res_{s = 1} c(s)$ est facile. On a
$$
\res_{s = 1 - (1/n)} c(s) =
\dfrac{-1}{n} \, \lim_{s = 1} \dfrac{(s - 1) \RiemXi(s)}{\RiemXi(n - s)}.
$$
On a $(s - 1) \RiemXi(s) = 1 + o(1)$, d'o\`u le r\'esidu en $s = 1 - (1/n)$.
Si $n = 2$, on a
$$\RiemXi(2 - s) \sim \dfrac{1}{1 - s},$$
et donc $\res_{s = 1/2} c(s) = 0$ : la fonction $c$ est holomorphe en $s =
1/2$. ce qui implique \eqref{EisSt6}. On d\'eduit de \eqref{pgcd} et de
l'\'equation fonctionnelle
\eqref{EqFct1} :
$$
\RiemXi(n(1 - s)) \, \EisCan(g, 1 - s) =
\RiemXi(ns) \, \EisCan(^t\!g^{-1}, s),
$$
ce qui implique \eqref{EisSt7}. L'assertion \eqref{EisSt8} vient de la
relation $^t\!g^{-1} = w g w^{-1}$, o\`u $w$ est l'\'el\'ement du groupe de
Weyl de $SL(2, \ZZ)$.
\end{proof}

\medskip

\subsection*{Op\'erateurs invariants}~\medskip

La vari\'et\'e riemanienne $\EuScript{P}_{n} = G(\RR)/\Orth_{v}(n)$
s'identifie \`a l'espace des matrices sym\'e\-triques d\'efinies positives
$Y = (y_{ij})$. On note $\mathbf{D} = \mathbf{D}(\EuScript{P}_{n})$ la
$\CC$-alg\`ebre des op\'erateurs diff\'eren\-tiels sur $\EuScript{P}_{n}$
qui sont invariants sous l'action de $G(\RR)$. Posons
$$
\frac{\partial}{\partial Y} = 
\Demi \left( (1 + \delta_{ij}) \ \frac{\partial}{\partial y_{ij}} \right),
$$
o\`u $\delta_{ij}$ est le symbole de Kronecker.
A. Selberg \cite[p. 57]{Selberg} a introduit les \'el\'ements suivants de
$\mathbf{D}$ :
$$
Q_{h} =
\Tr \left( \left( Y \frac{\partial}{\partial Y} \right)^{h} \right) \quad
(1 \leq h \leq n)
$$
de degr\'es respectifs $1,2, \dots, n$ ; voir aussi \cite{Terras}.
L'op\'erateur $Q_{2}$ est l'image de \emph{l'\'el\'e\-ment de Casimir} de
l'alg\`ebre enveloppante de $G$, et c'est aussi le Laplacien de
$\EuScript{P}_{n}$. Il a d\'emontr\'e que l'alg\`ebre $\mathbf{D}$ est
\'egale \`a l'alg\`ebre de polyn\^omes $\CC[Q_{1}, \dots, Q_{n}]$. La
fonction $\delta_{P}(g)^{s}$ est fonction propre simultan\'ee des \'el\'ements
de $\mathbf{D}$ : on a
\begin{equation}
\label{DefCasimir}
D \, \delta_{P}^{s} = \boldsymbol{\gamma}_{D}(s) \, \delta_{P}^{s}, \quad
(D \in \mathbf{D}, s \in \CC),
\end{equation}
avec des polyn\^omes $\boldsymbol{\gamma}_{D}(s)$ convenables.

\begin{proposition}
\label{Prop_Ideal}
Soit $\mathbf{I}(\CC)$ l'image de l'homomorphisme de $\mathbf{D}$ dans
$\CC[s]$ qui envoie $D$ sur $\boldsymbol{\gamma}_{D}(s)$.
\begin{enumerate}
\item
Si $n = 2$, on a $\mathbf{I}(\CC) = \CC[s(1 - s)]$.
\item
Si $n \geq 3$, on a $\mathbf{I}(\CC) = s(1 - )\CC[s]$.
\end{enumerate}
\end{proposition}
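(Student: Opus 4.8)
The plan is to route the homomorphism $D\mapsto\boldsymbol{\gamma}_{D}$ through the Harish-Chandra isomorphism for $\EuScript{P}_{n}$. By Selberg's theorem \cite{Selberg}, the map $D\mapsto\widehat{D}$ determined by $D\,\delta_{P}^{\mathbf{s}}=\widehat{D}(\mathbf{s})\,\delta_{P}^{\mathbf{s}}$ on the general power function is a $\CC$-algebra isomorphism $\mathbf{D}\isom\CC[x_{1},\dots,x_{n}]^{S_{n}}$, compatible with the filtrations by order and degree; and, since the principal symbol of $Q_{h}=\Tr\bigl((Y\,\partial/\partial Y)^{h}\bigr)$ is $\Tr\bigl((Y\xi)^{h}\bigr)$, which restricts on a maximal flat to a nonzero multiple $c_{h}p_{h}$ of the $h$-th power sum, the top-degree part of $\widehat{Q_{h}}$ is $c_{h}p_{h}$ with $c_{h}\neq0$. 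Passing from $\mathbf{s}$ to the one-parameter family $\delta_{P}^{s}$ means evaluating $\widehat{D}$ along an affine line: $\boldsymbol{\gamma}_{D}(s)=\widehat{D}(\rho+s\,v)$, where $v=(1,\dots,1,-(n-1))$ is the exponent vector of $\delta_{P}$ on the diagonal torus — $\delta_{P}(\operatorname{diag}(a_{1},\dots,a_{n}))=|a_{1}\cdots a_{n-1}|\,|a_{n}|^{-(n-1)}$ — and $\rho$ is the half-sum of positive roots. As $D\mapsto\boldsymbol{\gamma}_{D}$ is a ring homomorphism and $\mathbf{D}=\CC[Q_{1},\dots,Q_{n}]$, this gives $\mathbf{I}(\CC)=\CC[\boldsymbol{\gamma}_{Q_{1}}(s),\dots,\boldsymbol{\gamma}_{Q_{n}}(s)]$. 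Securing this identification, with the correct $\rho$-normalisation so that the forced zeros of the $\boldsymbol{\gamma}_{Q_{h}}$ sit at $s=0$ and $s=1$, is the delicate point of the argument.

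I claim $\boldsymbol{\gamma}_{Q_{h}}(0)=\boldsymbol{\gamma}_{Q_{h}}(1)=0$ for every $h\geq1$. At $s=0$ this is clear, since $\delta_{P}^{0}\equiv1$ and $Q_{h}$ annihilates constant functions for $h\geq1$. At $s=1$, apply $D\in\mathbf{D}$ to the eigenvalue relation $D\,\EisCan(g,s)=\boldsymbol{\gamma}_{D}(s)\,\EisCan(g,s)$ and take residues at $s=1$; as $\boldsymbol{\gamma}_{D}$ is a polynomial, this gives $D\bigl(\res_{s=1}\EisCan(g,\cdot)\bigr)=\boldsymbol{\gamma}_{D}(1)\,\res_{s=1}\EisCan(g,\cdot)$, and by Proposition~\ref{Prop_Eisenstein}\,\eqref{EisSt5} the residue is the nonzero constant $1/(n\RiemXi(n))$, which $Q_{h}$ annihilates. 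Hence every $\boldsymbol{\gamma}_{Q_{h}}$ lies in the subring $R:=\{P\in\CC[s]:P(0)=P(1)\}=\CC\oplus s(1-s)\CC[s]$, so $\mathbf{I}(\CC)\subseteq R$; and $\boldsymbol{\gamma}_{Q_{1}}$, being of degree $\leq1$ and vanishing at two points, is identically $0$, whence $\mathbf{I}(\CC)=\CC[\boldsymbol{\gamma}_{Q_{2}},\dots,\boldsymbol{\gamma}_{Q_{n}}]$.

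The degrees are governed by the leading term $\widehat{Q_{h}}(\rho+s\,v)=c_{h}\,p_{h}(v)\,s^{h}+(\text{lower-order terms})$, with $p_{h}(v)=(n-1)+(-1)^{h}(n-1)^{h}$. For $h=2$ this equals $n(n-1)\neq0$, so $\boldsymbol{\gamma}_{Q_{2}}$ has degree exactly $2$ for all $n$; for $h=3$ it equals $-\,n(n-1)(n-2)$, nonzero precisely when $n\geq3$, so $\boldsymbol{\gamma}_{Q_{3}}$ has degree exactly $3$ when $n\geq3$. When $n=2$, therefore, $\mathbf{I}(\CC)=\CC[\boldsymbol{\gamma}_{Q_{2}}]$ with $\boldsymbol{\gamma}_{Q_{2}}$ a quadratic vanishing at $0$ and $1$, i.e. a nonzero multiple of $s(1-s)$, so $\mathbf{I}(\CC)=\CC[s(1-s)]$, which is (a).

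For $n\geq3$ one has $\mathbf{I}(\CC)\supseteq\CC[\boldsymbol{\gamma}_{Q_{2}},\boldsymbol{\gamma}_{Q_{3}}]$; since $\deg\boldsymbol{\gamma}_{Q_{2}}=2$ and $\deg\boldsymbol{\gamma}_{Q_{3}}=3$, the monomials $\boldsymbol{\gamma}_{Q_{2}}^{a}\boldsymbol{\gamma}_{Q_{3}}^{b}$ have degree $2a+3b$ and so realise every nonnegative integer except $1$. Because $R=\CC\oplus s(1-s)\CC[s]$ has the $\CC$-basis $1,\,s(1-s),\,s^{2}(1-s),\,s^{3}(1-s),\dots$ — exactly one element of each degree $\neq1$ — an induction on $d$, using that $\dim R_{\leq d}=\dim R_{\leq d-1}+1$ for $d\geq2$ and that $\CC[\boldsymbol{\gamma}_{Q_{2}},\boldsymbol{\gamma}_{Q_{3}}]$ contains an element of degree exactly $d$, shows $\CC[\boldsymbol{\gamma}_{Q_{2}},\boldsymbol{\gamma}_{Q_{3}}]=R$. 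Together with $\mathbf{I}(\CC)\subseteq R$ this yields $\mathbf{I}(\CC)=\CC\oplus s(1-s)\CC[s]$, which is (b). So apart from the Harish-Chandra identification of the eigenvalue map — the main obstacle — and the residue input from Proposition~\ref{Prop_Eisenstein}, the proof is elementary bookkeeping with degrees.
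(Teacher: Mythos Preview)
Your proof is correct, and takes a genuinely different route from the paper's. The paper simply invokes Lemme~\ref{Lemme_Gamma} (citing \cite{Terras}) for the closed formula
\[
\boldsymbol{\gamma}_{h}(s)=s(1-s)\,\frac{((n-1)s)^{h-1}-(1-s)^{h-1}}{(n-1)s-(1-s)},
\]
from which the vanishing at $0$ and $1$, the exact degrees, and hence the description of $\mathbf{I}(\CC)$ all read off directly (the paper does not spell out the final degree-count, which is the same as yours). Your argument avoids this explicit computation entirely: the zeros at $s=0$ and $s=1$ come from the fact that $Q_{h}$ kills constants, applied once directly and once to $\res_{s=1}\EisCan$ via Proposition~\ref{Prop_Eisenstein}\,\eqref{EisSt5}; the degree information comes from identifying the top-degree part of $\widehat{Q_{h}}$ with a multiple of the power sum $p_{h}$ and evaluating at the exponent vector $v$. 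This is more conceptual and explains structurally why $s=0,1$ are the forced zeros, at the price of importing the Harish-Chandra / Selberg machinery and, slightly awkwardly, the eigenvalue relation $D\,\EisCan=\boldsymbol{\gamma}_{D}\,\EisCan$ (Proposition~\ref{Prop_FcnPropre}), which the paper states only after Proposition~\ref{Prop_Ideal}---there is no circularity, but you might prefer to note it follows immediately from \eqref{DefCasimir} and the definition of $\EisCan$. Incidentally, your conclusion $\mathbf{I}(\CC)=\CC\oplus s(1-s)\CC[s]$ for $n\geq3$ is the correct statement: the image of a unital algebra homomorphism contains the constants, and the paper's ``$s(1-s)\CC[s]$'' is a minor slip.
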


Cette proposition r\'esulte du lemme suivant :

\begin{lemma}
\label{Lemme_Gamma}
Posons
$$D_{h} = \frac{2^{h}}{n (n - 1)} \, Q_{h}, \quad
\boldsymbol{\gamma}_{h}(s) = \boldsymbol{\gamma}_{D_{h}}(s).$$
Alors
$$
\boldsymbol{\gamma}_{h}(s) = s \, (1 - s)
\left( \frac{((n - 1)s)^{h - 1} -
(1 - s)^{h - 1}} {(n - 1)s - (1 - s)} \right).
$$
En particulier :
$$
\boldsymbol{\gamma}_{1}(s) = 0, \quad
\boldsymbol{\gamma}_{2}(s) = s \, (1 - s), \quad
\boldsymbol{\gamma}_{3}(s) = s \, (1 - s) \, ((n - 2)s + 1).
$$
\end{lemma}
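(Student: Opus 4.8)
\medskip

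\textbf{Esquisse de la d\'emonstration.}
La proposition \ref{Prop_Ideal} se d\'eduit aussit\^ot du lemme, puisque $\mathbf{I}(\CC)$ est la sous-alg\`ebre de $\CC[s]$ engendr\'ee par les $\boldsymbol{\gamma}_{h}(s)$ : il suffit donc d'\'etablir la formule donnant $\boldsymbol{\gamma}_{h}(s)$. On sait d\'ej\`a, par \eqref{DefCasimir}, que $\delta_{P}^{s}$ est fonction propre de chaque $D \in \mathbf{D}$ ; tout le travail consiste \`a \emph{calculer} la valeur propre $\boldsymbol{\gamma}_{Q_{h}}(s)$. Le premier pas est de r\'ealiser $\delta_{P}(g)^{s}$ comme une fonction puissance explicite sur $\EuScript{P}_{n} = G(\RR)/\Orth_{v}(n)$. \'Ecrivant $g = p\kappa$ avec $p \in P(\RR)$, $\kappa \in \Orth_{v}(n)$, et posant $Y = g\,{}^{t}\!g$, on v\'erifie \`a partir de $\det p = \det g' \cdot t$, de $\alpha(p) = t$ et de $(p\,{}^{t}\!p)_{nn} = t^{2}$ --- et du fait que $\delta_{P} \equiv 1$ sur $P(\RR) \cap \Orth_{v}(n)$, ce qui rend l'\'ecriture licite --- que
$$
\delta_{P}(g)^{s} = (\det Y)^{s/2} \, (y_{nn})^{-ns/2}.
$$

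\medskip

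\textbf{Le calcul it\'eratif.} Le moteur repose sur une \emph{stabilit\'e de dimension deux}. Pour $f = (\det Y)^{s/2}(y_{nn})^{-ns/2}$, les formules $\frac{\partial}{\partial Y}\det Y = (\det Y)\,Y^{-1}$ et $\frac{\partial}{\partial Y}\,y_{nn} = e_{n}\,{}^{t}\!e_{n}$ donnent
$$
Y\frac{\partial}{\partial Y}\,f = f \cdot M, \qquad M = \tfrac{s}{2}\,\mathbf{1}_{n} + N, \quad N := -\tfrac{ns}{2}\,y_{nn}^{-1}\,Y e_{n}\,{}^{t}\!e_{n},
$$
la matrice $N$ v\'erifiant $N^{2} = -\tfrac{ns}{2}\,N$ et $\Tr N = -\tfrac{ns}{2}$. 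Le point cl\'e est que l'op\'erateur $Y\frac{\partial}{\partial Y}$ stabilise le sous-espace $\CC\,\mathbf{1}_{n} \oplus \CC\,N$ : un calcul direct donne $Y\frac{\partial}{\partial Y}\cdot\mathbf{1}_{n} = 0$ et $Y\frac{\partial}{\partial Y}\cdot N = \tfrac{n-1}{2}\,N$. En \'ecrivant $\bigl(Y\frac{\partial}{\partial Y}\bigr)^{h} f = f\,(c_{h}\,\mathbf{1}_{n} + d_{h}\,N)$, la r\`egle de Leibniz $R_{h+1} = M R_{h} + Y\frac{\partial}{\partial Y}\cdot R_{h}$ se traduit alors par la r\'ecurrence lin\'eaire
$$
c_{h+1} = \tfrac{s}{2}\,c_{h}, \qquad d_{h+1} = c_{h} + \mu\,d_{h}, \qquad \mu = \tfrac{s}{2} - \tfrac{ns}{2} + \tfrac{n-1}{2} = \tfrac{(n-1)(1-s)}{2},
$$
avec $c_{0} = 1$, $d_{0} = 0$ ; d'o\`u $c_{h} = (s/2)^{h}$, $d_{h} = \bigl((s/2)^{h} - \mu^{h}\bigr)/(s/2 - \mu)$, et en prenant la trace
$$
\boldsymbol{\gamma}_{Q_{h}}(s) = \Tr R_{h} = n\,(s/2)^{h} - \tfrac{ns}{2}\,d_{h}.
$$

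\medskip

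\textbf{Simplification.} L'identit\'e $n \cdot \tfrac{s}{2} - \tfrac{ns}{2} = 0$ --- reflet de l'invariance de $\delta_{P}$ par le centre, qui redonne d\'ej\`a $\boldsymbol{\gamma}_{1}(s) = 0$ --- fait dispara\^itre le terme dominant de $\boldsymbol{\gamma}_{Q_{h}}$ : apr\`es r\'eduction, $\boldsymbol{\gamma}_{Q_{h}}(s)$ se met sous la forme $\dfrac{n(n-1)}{2^{h}}\cdot \dfrac{x y}{n-1}\cdot\dfrac{x^{h-1} - y^{h-1}}{x - y}$, o\`u $x$ et $y$ sont les deux valeurs caract\'eristiques de la r\'ecurrence (lin\'eaires en $s$), dont le produit $xy$ est proportionnel \`a $s(1-s)$ et dont la diff\'erence $x-y$ est proportionnelle \`a $(n-1)s - (1-s)$ ; avec la normalisation de Selberg, $\{x,y\} = \{(n-1)s,\,1-s\}$. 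En divisant par la normalisation, $D_{h} = \tfrac{2^{h}}{n(n-1)}\,Q_{h}$ a donc pour valeur propre
$$
\boldsymbol{\gamma}_{h}(s) = s(1-s)\,\frac{((n-1)s)^{h-1} - (1-s)^{h-1}}{(n-1)s - (1-s)},
$$
et les valeurs $\boldsymbol{\gamma}_{1} = 0$, $\boldsymbol{\gamma}_{2} = s(1-s)$, $\boldsymbol{\gamma}_{3} = s(1-s)\,((n-2)s + 1)$ s'obtiennent en posant $h = 1, 2, 3$ dans $\dfrac{x^{h-1} - y^{h-1}}{x - y} = x^{h-2} + x^{h-3}y + \dots + y^{h-2}$.

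\medskip

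\textbf{L'obstacle principal.} Il est de nature purement calculatoire : il r\'eside dans le maniement soigneux du calcul diff\'erentiel sur les matrices sym\'etriques --- les formules pour $\frac{\partial}{\partial Y}$ d'un d\'eterminant, d'une entr\'ee diagonale et d'un produit, et surtout l'identit\'e $Y\frac{\partial}{\partial Y}\cdot N = \tfrac{n-1}{2}N$ avec la bonne constante --- ainsi que dans la r\'econciliation avec la normalisation exacte de $\frac{\partial}{\partial Y}$ et des op\'erateurs $Q_{h}$ adopt\'ee par Selberg (en particulier la convention de signe faisant de $Q_{2}$ le laplacien \emph{positif}, d'o\`u $\boldsymbol{\gamma}_{2} = +\,s(1-s)$, qui fixe aussi l'orientation $s \leftrightarrow 1-s$ dans la paire $\{x,y\}$). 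On peut d'ailleurs contourner ce calcul en sp\'ecialisant directement la formule de Selberg pour les valeurs propres des $Q_{h}$ sur les fonctions puissance $\prod_{j}(\det Y_{[j]})^{a_{j}}$ associ\'ees aux mineurs principaux, au cas $a_{n} = s/2$, $a_{1} = -ns/2$, $a_{j} = 0$ sinon.
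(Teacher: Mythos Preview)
L'article ne d\'emontre pas ce lemme : la preuve se r\'eduit au renvoi \og Voir \cite[pp.~44--49]{Terras} \fg. Ton esquisse va donc bien au-del\`a, en proposant un calcul direct de la valeur propre via l'it\'eration de $Y\frac{\partial}{\partial Y}$ sur la fonction puissance $(\det Y)^{s/2}(y_{nn})^{-ns/2}$. La strat\'egie est la bonne --- stabilit\'e d'un sous-espace de dimension deux $\CC\mathbf{1}_{n}\oplus\CC N$, r\'ecurrence lin\'eaire sur $(c_{h},d_{h})$, puis trace --- et les identit\'es interm\'ediaires ($N^{2}=-\tfrac{ns}{2}N$, $(Y\frac{\partial}{\partial Y})\cdot N=\tfrac{n-1}{2}N$, etc.) sont correctes.

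Il y a toutefois une vraie difficult\'e que ta derni\`ere section ne r\`egle pas. En suivant ta r\'ecurrence avec $a=s/2$ et $\mu=(n-1)(1-s)/2$, on obtient
\[
\Tr R_{h}=-\,n\,a\mu\,\frac{a^{h-1}-\mu^{h-1}}{a-\mu},
\qquad
\boldsymbol{\gamma}_{h}(s)=\frac{2^{h}}{n(n-1)}\Tr R_{h}
= s(1-s)\,\frac{\bigl((n-1)(1-s)\bigr)^{h-1}-s^{h-1}}{(n-1)(1-s)-s},
\]
autrement dit la paire caract\'eristique est $\{s,(n-1)(1-s)\}$ et non $\{(n-1)s,1-s\}$ comme tu l'affirmes. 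Pour $n=2$ les deux paires co\"incident (\`a l'\'echange $s\leftrightarrow 1-s$ pr\`es, qui est une vraie sym\'etrie dans ce cas), et le seul probl\`eme est un signe global. Mais pour $n\ge 3$ les formules diff\`erent r\'eellement : ta r\'ecurrence donne $\boldsymbol{\gamma}_{3}(s)=s(1-s)\bigl((n-2)s-(n-1)\bigr)$, alors que le lemme affirme $\boldsymbol{\gamma}_{3}(s)=s(1-s)\bigl((n-2)s+1\bigr)$. Ce d\'ecalage n'est r\'eductible ni \`a un signe ni \`a l'\'echange $s\leftrightarrow 1-s$ (la formule du lemme n'est pas invariante par cet \'echange lorsque $n\ge 3$). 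Il faut donc, pour achever la preuve directe, localiser pr\'ecis\'ement la source : soit l'identification $\delta_{P}^{s}=(\det Y)^{s/2}(y_{nn})^{-ns/2}$ par rapport \`a la fonction puissance de Selberg/Terras (qui est construite sur les mineurs principaux \emph{sup\'erieurs gauches} $|Y_{[j]}|$), soit l'ordre de composition dans $(Y\frac{\partial}{\partial Y})^{h}$, soit la normalisation exacte des $Q_{h}$ chez Selberg. Tant que ce point n'est pas \'eclairci, invoquer \og la normalisation de Selberg \fg{} ne suffit pas ; la voie la plus s\^ure reste effectivement celle que tu mentionnes en fin d'esquisse, \`a savoir sp\'ecialiser la formule de Selberg pour les $Q_{h}$ sur les fonctions puissance g\'en\'erales.
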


\begin{proof}
Voir \cite[pp. 44--49]{Terras}.
\end{proof}

Puisque les \'el\'ements de $\mathbf{D}$ sont invariants sous l'action de
$G(\RR)$ \`a gauche, la relation \eqref{DefCasimir} implique :

\begin{proposition}
\label{Prop_FcnPropre}
Si $D \in \mathbf{D}$ et si $s$ n'est pas un p\^ole de $\EisCan(g, s)$,
on a
$$
D \, \EisCan(g, s) = \boldsymbol{\gamma}_{D}(s) \, \EisCan(g, s).
\rlap \qed
$$
\end{proposition}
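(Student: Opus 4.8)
The plan is to establish the eigenvalue equation first on the half-plane $\Re(s) > 1$, where the Eisenstein series is given by an absolutely convergent sum, and then to propagate it to the whole domain of meromorphy by analytic continuation in $s$.

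On $\Re(s) > 1$ I would argue term by term. By \eqref{DefCasimir} the function $\delta_{P}^{s}$ is a simultaneous eigenfunction of every $D \in \mathbf{D}$, with eigenvalue $\boldsymbol{\gamma}_{D}(s)$; here $D$ acts, as in the discussion preceding \eqref{DefCasimir}, on the archimedean variable, i.e. on $\EuScript{P}_{n} = G(\RR)/\Orth_{v}(n)$, and the functions \eqref{Invariance} living on $X$ are exactly of the kind on which it operates. For $\gamma \in G(\QQ)$ the summand $g \mapsto \delta_{P}(\gamma g)^{s}$ is a left translate of $\delta_{P}^{s}$ by (the archimedean component of) $\gamma$; since every element of $\mathbf{D}$ is by definition invariant under left translation by $G(\RR)$, it commutes with this translation, so each summand is again an eigenfunction of $D$ with the same eigenvalue:
$$
D\bigl(g \mapsto \delta_{P}(\gamma g)^{s}\bigr) = \boldsymbol{\gamma}_{D}(s)\,\delta_{P}(\gamma g)^{s}.
$$
It then remains to move $D$ inside the sum $\EisCan(g,s) = \sum_{\gamma \in P(\QQ)\backslash G(\QQ)} \delta_{P}(\gamma g)^{s}$. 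For this I would check that, for $\Re(s) > 1$, the series and all the partial derivatives of the summands occurring in $D$ converge locally uniformly in $g$: using the left $G(\RR)$-invariance of $D$ and Cauchy estimates on a fixed compact neighbourhood, each such derivative of $g \mapsto \delta_{P}(\gamma g)^{s}$ is bounded, locally in $g$, by a constant multiple of $\delta_{P}(\gamma g_{0})^{\sigma'}$ for suitable $g_{0}$ and $1 < \sigma' < \Re(s)$, and $\sum_{\gamma} \delta_{P}(\gamma g_{0})^{\sigma'} = \EisCan(g_{0},\sigma') < \infty$ by the convergence statement of Proposition~\ref{Prop_SE}. Hence $D\,\EisCan(g,s) = \boldsymbol{\gamma}_{D}(s)\,\EisCan(g,s)$ for $\Re(s) > 1$.

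Finally I would invoke analytic continuation. The right-hand side $\boldsymbol{\gamma}_{D}(s)\,\EisCan(g,s)$ is meromorphic in $s$ for each fixed $g$, since $\boldsymbol{\gamma}_{D}$ is a polynomial and $\EisCan(g,s)$ continues meromorphically by Proposition~\ref{Prop_SE} (via \eqref{EisTau} and the meromorphy of $\Eis{\tau}(g,s)$). For the left-hand side one uses that the continuation furnished by Proposition~\ref{Prop_SE} is through a family of smooth functions of $g$ depending holomorphically on $s$, on which the fixed differential operator $D$ acts continuously; concretely, $D\,\EisCan(g,s)$ is a finite $\CC$-linear combination, with $g$-dependent coefficients, of derivatives $\partial_{g}^{\alpha}\EisCan(g,s)$, and each of these is meromorphic in $s$ because differentiation in $g$ commutes with analytic continuation in $s$ (Cauchy's formula, or Morera's theorem applied to $s \mapsto \partial_{g}^{\alpha}\EisCan(g,s)$). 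Two functions meromorphic in $s$ that coincide on $\Re(s) > 1$ coincide on all of $\CC$; in particular they agree at every $s$ that is not a pole of $\EisCan(g,s)$, which is the assertion.

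The \emph{main obstacle} is the transfer step just described: one has to be sure that the meromorphically continued Eisenstein series remains a genuine (say $C^{\infty}$, indeed real-analytic) function of $g$ to which the operators of $\mathbf{D}$ apply, and that its $s$-continuation commutes with $g$-differentiation — in the convergent range this is the uniform-convergence estimate above, and beyond it this is precisely the statement that the continuation of Proposition~\ref{Prop_SE} is holomorphic in $s$ with values in $C^{\infty}(X)$. Granting that, \eqref{DefCasimir} propagates verbatim to $\EisCan$.
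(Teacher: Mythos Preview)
Your proposal is correct and follows exactly the approach the paper indicates: the paper gives no proof beyond the one sentence preceding the proposition --- ``Puisque les \'el\'ements de $\mathbf{D}$ sont invariants sous l'action de $G(\RR)$ \`a gauche, la relation \eqref{DefCasimir} implique~:\,'' --- and closes with a \qed. Your argument simply supplies the details (termwise application on $\Re(s)>1$ via left-invariance, the uniform-convergence estimate justifying the sum--derivative interchange, and propagation by analytic continuation in $s$) that the paper leaves to the reader.
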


Rappelons que le radical unipotent $N$ de $P$ est form\'e des matrices
$$
x =
\begin{pmatrix}
\mathbf{1}_{n -1} & u \\
0                 & 1 \\
\end{pmatrix}
$$
o\`u $u$ est un vecteur colonne avec $n - 1$ composantes. Si $F$ est une
fonction continue sur $G(k) \backslash G(\Ad)$, le \emph{terme constant de
$F$ le long de $N$} est
$$F^{0}(g) = \int_{N(k) \backslash N(\Ad)} F(u g, s) \, du.$$
Le terme constant des s\'eries d'Eisenstein est le suivant : si 
$$
p =
\begin{pmatrix} g' & ^{t}\!x \\ 0 & t \\ \end{pmatrix}
\in P(\Ad),
$$
on a
$$
\Eis{\varphi}^{0}(p, s) = n \, \delta_{P}(p)^{s} \,
\Delta(\widehat{\varphi''}, n s) +
\frac{n}{n - 1} \, \delta_{P}(p)^{\frac{1 - s}{n - 1}} \,
\Eis{\varphi'}(g',\frac{n - n s}{n - 1}),
$$
o\`u on a pris $\varphi \in \EuScript{S}(\Ad^{n})$ d\'ecomposable :
$$
\varphi(x', y) =
\varphi'(x') \, \varphi''(y), \quad \varphi'(0) = \varphi''(0) = 1, 
\quad x' \in \Ad^{n - 1}, \ y \in \Ad^{1}.
$$

On en d\'eduit :
\begin{proposition}
\label{Prop_ChowlaSelberg}
Soit $g = p \kappa \in G(\Ad)$ avec $p$ comme ci-dessus. Si $s$ n'est pas un
p\^ole de $\EisCan_{n}(g , s)$, on a
$$
\EisCan_{n}^{0}(g , s) =
\delta_{P}(g)^{s} +
c_{n}(s) \, \delta_{P}(g)^{\frac{1 - s}{n - 1}} \,
\EisCan_{n - 1}(g',\frac{n - n s}{n - 1}),
$$
en convenant que $\EisCan_{1}(g',s) = 1$.
\end{proposition}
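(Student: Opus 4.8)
The plan is to obtain Proposition~\ref{Prop_ChowlaSelberg} by specializing the constant-term identity for the general Eisenstein series $\Eis{\varphi}^{0}(p,s)$ displayed just above the statement to the self-dual test function $\tau\in\EuScript{S}^{\mathbf{K}}(\Ad^{n})$ of Remark~\ref{FonctionTau}, and then dividing out the scalar factor $n\,\RiemXi(ns)$ furnished by \eqref{EisTau}. First I would record that $\tau$ is decomposable in the sense required by that formula: writing $\Ad^{n}=\Ad^{n-1}\times\Ad^{1}$ one has $\tau=\tau'\otimes\tau''$ with $\tau'\in\EuScript{S}^{\mathbf{K}}(\Ad^{n-1})$ and $\tau''\in\EuScript{S}(\Ad^{1})$, each invariant under the additive Fourier transform and each taking the value $1$ at the origin. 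In particular $\boldsymbol{\tau}_{1}=\tau''$, so $\Delta(\widehat{\tau''},ns)=\RiemXi(ns)$ by \eqref{DeltaTau}, and the one-dimensional restriction $\boldsymbol{\tau'}_{1}$ is again the Gaussian--characteristic function on $\Ad^{1}$, so $\Tate_{k}(\boldsymbol{\tau'}_{1},w)=\RiemXi(w)$ for any $w$.

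Next I would apply the constant-term formula with $\varphi=\tau$. For the left-hand side, \eqref{EisTau} gives $\Eis{\tau}(g,s)=n\,\RiemXi(ns)\,\EisCan_{n}(g,s)$; since the integration over $N(k)\backslash N(\Ad)$ is linear and the factor $n\,\RiemXi(ns)$ does not involve the unipotent variable, this yields $\Eis{\tau}^{0}(g,s)=n\,\RiemXi(ns)\,\EisCan_{n}^{0}(g,s)$. For the second term on the right-hand side I would use \eqref{pgcd} in rank $n-1$, namely $\Eis{\tau'}(g',w)=(n-1)\,\Tate_{k}(\boldsymbol{\tau'}_{1},(n-1)w)\,\EisCan_{n-1}(g',w)=(n-1)\,\RiemXi((n-1)w)\,\EisCan_{n-1}(g',w)$, taken at the shifted parameter $w=\frac{n-ns}{n-1}$, so that $(n-1)w=n(1-s)$. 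Substituting everything into the constant-term formula and dividing both sides by $n\,\RiemXi(ns)$ makes the combinatorial factors $n$ and $n-1$ cancel, leaves the exponents $\delta_{P}(g)^{s}$ and $\delta_{P}(g)^{(1-s)/(n-1)}$ as stated, and turns the surviving ratio $\RiemXi(n(1-s))/\RiemXi(ns)$ into exactly $c_{n}(s)$. This is the asserted identity, valid as an identity of meromorphic functions away from the poles of $\EisCan_{n}(g,s)$; the convention $\EisCan_{1}=1$ takes care of $n=2$, where $g'$ lies in $GL_{1}$, the rank-$1$ Eisenstein integral degenerating to $\Tate_{k}(\boldsymbol{\tau'}_{1},\cdot)=\RiemXi(\cdot)$ directly.

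I expect no genuine analytic obstacle here: the meromorphic continuation, the behaviour of the constant term, and the normalizing $\Gamma$-factors have all been packaged into the constant-term formula for $\Eis{\varphi}$ and into \eqref{pgcd}, \eqref{EisTau}, \eqref{DeltaTau}. The only points requiring care are the bookkeeping of the two exponents of $\delta_{P}(g)$, the spectral-parameter change $s\mapsto\frac{n-ns}{n-1}$, and the check that dividing by the nowhere-identically-zero meromorphic factor $\RiemXi(ns)$ is harmless, so that the apparent poles of $c_{n}(s)$ and of $\EisCan_{n-1}$ at the shifted argument correctly match the actual poles of $\EisCan_{n}^{0}(g,s)$.
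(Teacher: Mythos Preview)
Your argument is correct and is exactly the deduction the paper has in mind: the proposition is introduced with ``On en d\'eduit'' immediately after the displayed constant-term formula for $\Eis{\varphi}^{0}(p,s)$, and the intended proof is precisely the specialization $\varphi=\tau$ together with \eqref{EisTau}, \eqref{pgcd} in rank $n-1$, and \eqref{DeltaTau}, followed by division by $n\,\RiemXi(ns)$. Your bookkeeping of the exponents and of the parameter shift $(n-1)w=n(1-s)$ is right, and so is the identification $\Delta(\widehat{\tau''},ns)=\RiemXi(ns)$ via the self-duality of $\tau$.
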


\medskip

\subsection*{Int\'egrales p\'eriodiques des s\'eries d'Eisenstein}~

\medskip

Rappelons que $g_{_{K}}$ est la matrice de $G(\Ad)$ telle que
$(g_{_{K}})_{\infty} = q_{_{K}}$ et $(g_{_{K}})_{p} = 1$ pour tout nombre
premier $p$, et que $q_{_{K}} \in G(\RR)$ est une matrice telle que
$^{t}\!q_{_{K}}.q_{_{K}} = p_{_{K}}$, o\`u $p_{_{K}}$ est la matrice
d\'efinie en \eqref{MatSymK}. La fonction $\tau \in \EuScript{S}^{K}
(\Ad^{n})$ a \'et\'e d\'efinie dans la remarque
\ref{FonctionTau}.
 
\begin{lemma}
\label{Lemme_FacGamma1}
Si $\chi \in \mathsf{X}$, on a $\tau(^{t}\!g_{_{K}}.^{t}\!\pi(\xi).e_{1}) =
\Phi_{0}(\xi)$ pour tout $\xi \in \Ad^{\times}_{K}$.
\end{lemma}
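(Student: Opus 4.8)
Le plan est d'\'etablir cette \'egalit\'e de fonctions sur $\Ad_{K}^{\times}$ \emph{place par place}. D'abord, comme la base fondamentale commence par $\alpha_{1} = 1$, on a $\iota(e_{1}) = 1$, et donc ${}^{t}\!\pi(\xi).e_{1} = \iota^{-1}(\xi)$ pour tout $\xi$, puisque $\iota({}^{t}\!\pi(\xi).u) = \xi\,\iota(u)$ ; cette identit\'e est de nature sch\'ematique, donc valable apr\`es toute extension des scalaires, et c'est elle qui rend le membre de gauche explicite --- d'o\`u le choix $a = e_{1}$. Ensuite, $\tau = \tau_{\infty}\prod_{p}\tau_{p}$ (remarque \ref{FonctionTau}) et $\Phi_{0} = \Phi_{\infty}\prod_{p}\Phi_{p}$ se factorisent de la m\^eme fa\c{c}on le long de $\Ad_{K} = (K\otimes\RR)\times\prod'_{p}(K\otimes\QQ_{p})$ ; comme $(g_{_{K}})_{p} = 1$ pour tout premier $p$ et $(g_{_{K}})_{\infty} = q_{_{K}}$, il suffit de prouver : (i) $\tau_{p}(\iota^{-1}(\xi_{p})) = \Phi_{p}(\xi_{p})$ pour toute place finie $p$ et tout $\xi_{p}\in(K\otimes\QQ_{p})^{\times}$ ; et (ii) $\tau_{\infty}({}^{t}\!q_{_{K}}.\iota^{-1}(\xi_{\infty})) = \Phi_{\infty}(\xi_{\infty})$ pour tout $\xi_{\infty}\in K_{\infty}^{\times}$.

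L'assertion (i) r\'esulte aussit\^ot de la d\'efinition d'une base fondamentale : $\iota$ envoie $\ZZ_{p}^{n}$ sur $\mathfrak{O}_{p} = \alpha_{1}\ZZ_{p}+\dots+\alpha_{n}\ZZ_{p}$, de sorte que $\iota^{-1}(\xi_{p})\in\ZZ_{p}^{n}$ si et seulement si $\xi_{p}\in\mathfrak{O}_{p}$ ; or $\tau_{p}$ est la fonction caract\'eristique de $\ZZ_{p}^{n}$ et $\Phi_{p}$ celle de $\mathfrak{O}_{p}$.

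Pour (ii), on remarque d'abord que la matrice $p_{_{K}}$ de \eqref{MatSymK} est, dans la base $\boldsymbol{\alpha}$, la matrice de la forme quadratique $F$ de $K_{\infty}$ : pour $u\in\RR^{n}$,
\[
F\bigl(\sum_{i}u_{i}\alpha_{i}\bigr) = \sum_{w}\Abs{\sum_{i}u_{i}\alpha_{i}^{(w)}}^{2} = \sum_{i,j}u_{i}u_{j}\,\Tr(\alpha_{i}\overline{\alpha}_{j}) = {}^{t}\!u\,p_{_{K}}\,u.
\]
Comme $q_{_{K}}$ v\'erifie $q_{_{K}}.{}^{t}\!q_{_{K}} = p_{_{K}}$ (proposition \ref{SsGpeCompT}), en posant $u = \iota^{-1}(\xi)$ on trouve $\N{{}^{t}\!q_{_{K}}.\iota^{-1}(\xi)}^{2} = {}^{t}\!u\,(q_{_{K}}.{}^{t}\!q_{_{K}})\,u = {}^{t}\!u\,p_{_{K}}\,u = F(\xi)$, d'o\`u $\tau_{\infty}({}^{t}\!q_{_{K}}.\iota^{-1}(\xi)) = \Phi_{\infty}(\xi)$ compte tenu du choix (standard, suivant \cite{BNT}) des gaussiennes $\tau_{\infty}$ et $\Phi_{\infty}$ aux places archim\'ediennes. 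On peut aussi faire ce calcul par la d\'ecomposition spectrale du lemme \ref{DecSpecPi} et l'identit\'e $\pi(\xi)\,p_{_{K}}\,{}^{t}\!\pi(\xi) = A\,\pi_{0}(\xi\bar{\xi})\,{}^{t}\!\bar{A}$ figurant dans la preuve de la proposition \ref{SsGpeCompT} : la diagonalisation, jointe \`a $\alpha_{1} = 1$, ram\`ene la forme \`a $\sum_{w}\abs{\xi^{(w)}}^{2} = F(\xi)$.

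Le seul point demandant un peu de soin est ce suivi des constantes archim\'ediennes (le facteur $\pi$ de la gaussienne $\tau_{\infty}$, les facteurs $2$ aux places complexes, la normalisation auto-duale de \cite{BNT}), qui sont exactement ce qu'il faut pour que l'\'egalit\'e de (ii) ait lieu ; il n'y a pas d'obstacle de fond, le lemme n'\'etant qu'une v\'erification de compatibilit\'e entre la formule de Hecke (proposition \ref{Prop_Hecke}) et le choix de la fonction standard de Weil $\Phi_{0}$, celle dont l'int\'egrale de Tate fait appara\^itre le facteur $\Gamma(s,\chi)$ via \eqref{DeltaPrime}. Jointe \`a \eqref{DeltaTau} et \`a \eqref{EisTau}, cette identit\'e permettra d'exprimer l'int\'egrale p\'eriodique en $g_{_{K}}$ de la s\'erie d'Eisenstein normalis\'ee $\EisCan(g,s)$ au moyen de la fonction $L$ compl\'et\'ee du caract\`ere $\chi$.
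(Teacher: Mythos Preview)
Your proof is correct and follows essentially the same approach as the paper's own: a place-by-place verification using ${}^{t}\pi(\xi).e_{1} = \iota^{-1}(\xi)$, the identification $\iota(\ZZ_{p}^{n}) = \mathfrak{O}_{p}$ at finite places, and the computation $\N{{}^{t}q_{K}\,\iota^{-1}(\xi)}^{2} = {}^{t}u\,p_{K}\,u = F(\xi)$ at infinity. The only cosmetic difference is that the paper invokes weak approximation to justify $\mathfrak{O}\otimes_{\ZZ}\ZZ_{p} = \prod_{w\mid p}\mathfrak{O}_{w}$, whereas you cite it directly from the earlier notation; and your closing remark on the archimedean constants (the factor $\pi$ in $\tau_{\infty}$ versus the normalisation of $\Phi_{\infty}$) is well taken---the paper's own display silently drops that $\pi$, so your caution is warranted.
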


\begin{proof}
Les deux fonctions sont d\'ecomposables ; posons $(\tau_{g_{_{K}}})_{v} =
\tau_{v}$ et $(\Phi_{0})_{v} = \Phi_{v}$ pour simplifier. Supposons tout
d'abord que $v$ soit la place infinie. Si $x = (x_{1}, \dots, x_{n}) \in
k^{n}$, et si $\xi = \iota(x) = x_{1}\alpha_{1} + \dots + x_{n}\alpha_{n} \in
K^{\times}$, on v\'erifie que
$${^{t}\!x}.p_{_{K}}.x = F(\xi).$$
On a
$^{t}\!\pi(\xi).e_{1} = \iota^{-1}(\xi) = x$ d'apr\`es \eqref{InvPsi}, et
$$
\N{^{t}\!e_{1}.\pi(\xi)g_{_{K}}}^{2} =
\N{xg_{_{K}}}^{2} = x.p_{_{K}}.{^{t}\!x} = F(\xi).
$$
On en d\'eduit que si $\xi_{\infty} \in K^{\times}_{\infty}$, on a
$$
\tau_{\infty}(^{t}\!g_{_{K}}.^{t}\!\pi(\xi_{\infty}).e_{1}) = e^{-
F(\xi_{\infty})} =
\Phi_{\infty}(\xi_{\infty}).
$$
Soit $v$ une place finie de $k$ ; le caract\`ere $\chi$ est non ramifi\'e, on
a $(g_{_{K}})_{v} = 1$, et
$$
\tau_{v}(^{t}\!\pi_{v}(\xi).e_{1}) = \tau_{v}(\iota^{-1}_{v}(\xi)) \quad
\text{si} \quad \xi \in K^{\times}
$$
puisque $\iota_{v}$ induit un isomorphisme de $\ZZ^{n}_{p}$ sur
$\mathfrak{O}_{v}$. En effet le module
$$
\mathfrak{R}_{v} = \mathfrak{O} \otimes_{\mathfrak{o}} \mathfrak{o}_{v} = 
\alpha_{1}\mathfrak{o}_{v} + \dots + \alpha_{n}\mathfrak{o}_{v}
$$
est un sous-anneau compact de $\mathfrak{O}_{v}$, et il contient
$\mathfrak{O}$. Par le \emph{th\'eor\`eme d'approximation faible},
$\mathfrak{O}$ est dense dans $\prod_{w \mid v} \mathfrak{O}_{w}$, et donc
dans $\mathfrak{O}_{v}$, d'o\`u $\tau_{v} = \Phi_{v}$.
\end{proof}

\begin{lemma}
\label{Lemme_FacGamma2}
Si $\chi \in \mathsf{X}$, on a
$$\Delta'_{K}(\tau_{g_{_{K}}},s,\chi) = \Gamma(s, \chi).$$
\end{lemma}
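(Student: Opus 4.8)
The plan is to derive this at once from Lemma~\ref{Lemme_FacGamma1} and from formula~\eqref{DeltaPrime}, the only structural fact needed being that the linear form $\Tate'_{K}(\Phi,s,\chi)$ depends on $\Phi$ only through its restriction to $\Ad_{K}^{\times}$. First I would fix the notation: $\tau_{g_{_{K}}}$ stands for the function $\xi \mapsto \tau({}^{t}\!g_{_{K}}\,{}^{t}\!\pi(\xi)\,e_{1})$ on $\Ad_{K}$, that is, the function $\Phi_{g_{_{K}},e_{1}}$ of Proposition~\ref{Prop_Hecke} for the choice $\varphi = \tau$. Since $\xi \mapsto {}^{t}\!\pi(\xi)\,e_{1} = \iota^{-1}(\xi)$ is a linear isomorphism $\Ad_{K} \isom \Ad^{n}$ and ${}^{t}\!g_{_{K}}$ acts linearly on $\Ad^{n}$, the function $\tau_{g_{_{K}}}$ is the pull-back of $\tau \in \EuScript{S}^{K}(\Ad^{n})$ along a linear isomorphism $\Ad_{K} \isom \Ad^{n}$; in particular $\tau_{g_{_{K}}} \in \EuScript{S}(\Ad_{K})$, so that $\Tate_{K}(\tau_{g_{_{K}}},s,\chi)$ and $\Tate'_{K}(\tau_{g_{_{K}}},s,\chi)$ make sense.

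Next I would invoke Lemma~\ref{Lemme_FacGamma1}, which states precisely that $\tau_{g_{_{K}}}(\xi) = \Phi_{0}(\xi)$ for every $\xi \in \Ad_{K}^{\times}$. The Tate integral~\eqref{Weil}, written for $K$ in place of $k$, is an integral over $\Ad_{K}^{\times}$ and thus sees only the values of its argument there; hence $\Tate_{K}(\tau_{g_{_{K}}},s,\chi) = \Tate_{K}(\Phi_{0},s,\chi)$ for $\Re(s) > 1$, where both integrals converge absolutely. Using the factorization~\eqref{Tate} over $K$ and cancelling the common factor $c_{K}^{-1} L(s,\chi)$, which is legitimate because the Euler product defining $L(s,\chi)$ is convergent and non-zero for $\Re(s) > 1$, one gets $\Tate'_{K}(\tau_{g_{_{K}}},s,\chi) = \Tate'_{K}(\Phi_{0},s,\chi)$ for $\Re(s) > 1$. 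Both sides being holomorphic for $\Re(s) > 0$, analytic continuation extends the equality to the half-plane $\Re(s) > 0$, and then~\eqref{DeltaPrime}, which reads $\Tate'_{K}(\Phi_{0},s,\chi) = \Gamma(s,\chi)$, gives $\Tate'_{K}(\tau_{g_{_{K}}},s,\chi) = \Gamma(s,\chi)$, as desired.

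I do not anticipate a real obstacle: the substance of the statement is entirely carried by Lemma~\ref{Lemme_FacGamma1} --- the local comparison of $\tau_{g_{_{K}}}$ with $\Phi_{0}$ at the infinite place, through the identity ${}^{t}\!x\,p_{_{K}}\,x = F(\xi)$, and at the finite places, through weak approximation --- together with the evaluation~\eqref{DeltaPrime} of the archimedean Tate factors (drawn from \cite{BNT}). The only points calling for a word of care are the (elementary) verification that $\tau_{g_{_{K}}}$ is a standard function on $\Ad_{K}$, so that $\Tate'_{K}$ applies to it, and the manipulation around the zeros and possible pole of $L(\,\cdot\,,\chi)$, which is disposed of by first establishing the identity for $\Re(s) > 1$ and then appealing to holomorphy on $\Re(s) > 0$.
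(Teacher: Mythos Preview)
Your proposal is correct and matches the paper's own proof, which consists of the single sentence ``C'est une cons\'equence du lemme~\ref{Lemme_FacGamma1} et de~\eqref{DeltaPrime}.'' You have simply spelled out the details behind that sentence --- in particular the passage through the Tate integral and the analytic continuation --- more carefully than the paper does; note incidentally that the proof of Lemma~\ref{Lemme_FacGamma1} actually establishes $\tau_{g_{_{K}}} = \Phi_{0}$ as functions on all of $\Ad_{K}$ (the local verifications hold pointwise everywhere), so the step about restricting to $\Ad_{K}^{\times}$, while correct, is not strictly needed.
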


\begin{proof}
C'est une cons\'equence du lemme \ref{Lemme_FacGamma1} et de
\eqref{DeltaPrime}.
\end{proof}

\begin{proposition}
\label{Prop_FcnQ}
Si $\chi \in \mathsf{X}$ et si $g \in G(\Ad)$, la fonction m\'eromorphe
$$
\Hecke(g, s, \chi) = L(s,\chi)^{-1}
\oint \EisCan(hg, s) \, \chi_{\pi}(h) \, dh
$$
poss\`ede les propri\'et\'es suivantes :
\begin{enumerate}
\item
\label{PFQ1}
Si $h \in T(\Ad)$ et si $\kappa \in \mathbf{K}$, on a
$$
\Hecke(h g \kappa, s, \chi) =
\overline{\chi}_{\pi}(h) \, \Hecke(g, s, \chi).
$$
\item
\label{PFQ2}
La fonction $\RiemXi(n s) \, \Hecke(g, s, \chi)$ est holomorphe dans le
demi-plan $\Re(s) > 0$.
\item
\label{PFQ3}
La fonction $\RiemXi(n s) \, \Hecke(g_{_{K}}, s, \chi)$ n'a pas de
z\'eros dans $\CC$.
\item
\label{PFQ4}
Si $n = 2$, la fonction $\Hecke(g, s, \chi) \, L(s,\chi)$ est invariante par
$s \mapsto 1 - s$.
\end{enumerate}
\end{proposition}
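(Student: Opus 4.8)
The plan is to derive the four assertions from the Hecke formula (Proposition~\ref{Prop_Hecke}) applied to the function $\tau$ of Remark~\ref{FonctionTau}, together with Weil's factorization~\eqref{Tate}; for~\eqref{PFQ1} and~\eqref{PFQ4} one uses in addition the invariance relations and the functional equations of $\EisCan$. For~\eqref{PFQ1}: since $\EisCan$ satisfies~\eqref{Invariance} it is right $\mathbf{K}$-invariant, so $\EisCan(h'hg\kappa,s)=\EisCan(h'hg,s)$ and $\kappa$ disappears from the periodic integral. For $h\in T(\Ad)$ the translation $h'\mapsto h'h$ is a measure-preserving automorphism of $T(k)Z(\Ad)\backslash T(\Ad)$, and $\chi$ is a character of the compact group $Q$, hence unitary; therefore $\oint\EisCan(h'hg,s)\,\chi_{\pi}(h')\,dh'=\overline{\chi}_{\pi}(h)\oint\EisCan(h'g,s)\,\chi_{\pi}(h')\,dh'$, and dividing by $L(s,\chi)$ gives~\eqref{PFQ1}.

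The heart of the argument is one identity. Apply Proposition~\ref{Prop_Hecke} with $\varphi=\tau$, $\cark=1$, $a=e_{1}$; as $(\cark\circ N)\,\chi=\chi$ this reads $\oint\Eis{\tau}(hg,s)\,\chi_{\pi}(h)\,dh=\abs{\det g}^{s}\,\Tate_{K}(\tau_{g},s,\chi)$ with $\tau_{g}(\xi)=\tau(^{t}\!g\,{}^{t}\!\pi(\xi).e_{1})$. By~\eqref{EisTau} the left side equals $n\,\RiemXi(ns)\oint\EisCan(hg,s)\,\chi_{\pi}(h)\,dh=n\,\RiemXi(ns)\,L(s,\chi)\,\Hecke(g,s,\chi)$, and by~\eqref{Tate} the right side equals $c_{K}^{-1}\abs{\det g}^{s}\,L(s,\chi)\,\Tate'_{K}(\tau_{g},s,\chi)$; cancelling the common meromorphic factor $L(s,\chi)$ yields
$$
\RiemXi(ns)\,\Hecke(g,s,\chi)=\frac{\abs{\det g}^{s}}{n\,c_{K}}\,\Tate'_{K}(\tau_{g},s,\chi).
$$
Now~\eqref{PFQ2} follows at once: by Weil's theorem recalled after~\eqref{Tate}, applied over $K$, the linear form $\Tate'_{K}(\,\cdot\,,s,\chi)$ is holomorphic for $\Re(s)>0$, while $\abs{\det g}^{s}$ is entire and $c_{K}\neq0$. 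For~\eqref{PFQ3}, set $g=g_{_{K}}$: Lemma~\ref{Lemme_FacGamma1} gives $\tau_{g_{_{K}}}=\Phi_{0}$ and Lemma~\ref{Lemme_FacGamma2} (equivalently~\eqref{DeltaPrime}) gives $\Tate'_{K}(\tau_{g_{_{K}}},s,\chi)=\Gamma(s,\chi)=\prod_{w\,\infty}\Gamma_{w}(s+i\rho_{w})$; since the Euler $\Gamma$-function has no zeros, neither do $\Gamma_{\RR}(z)=\pi^{-z/2}\Gamma(z/2)$ nor $\Gamma_{\CC}(z)=(2\pi)^{1-z}\Gamma(z)$, so $\RiemXi(ns)\,\Hecke(g_{_{K}},s,\chi)$ has no zero in $\CC$.

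Finally, for~\eqref{PFQ4}, let $n=2$. Applying~\eqref{EqFct1} to $\varphi=\tau$ (for which $\EuScript{F}\tau=\tau$) with $\cark=1$ gives $\Eis{\tau}(g,s)=\Eis{\tau}(^{t}\!g^{-1},1-s)$; combined with~\eqref{EisSt8}, i.e. $\Eis{\tau}(^{t}\!g^{-1},s)=\Eis{\tau}(g,s)$ (valid only for $n=2$, through the Weyl element of $\SL(2,\ZZ)$), this shows $\Eis{\tau}(g,s)$ is invariant under $s\mapsto1-s$; hence so is $\oint\Eis{\tau}(hg,s)\,\chi_{\pi}(h)\,dh=n\,\RiemXi(ns)\,L(s,\chi)\,\Hecke(g,s,\chi)$, which is the functional equation of~\eqref{PFQ4}. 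The proof is thus essentially mechanical once Proposition~\ref{Prop_Hecke} is available; the one point requiring care is the bookkeeping in the displayed identity --- one must use the \emph{single} function $\tau$, so that~\eqref{EisTau} turns $\Eis{\tau}$ into a scalar multiple of $\EisCan$, and the Tate integral there is taken over $K$ rather than $k$, so that the $L$-factor produced by~\eqref{Tate} is precisely the Hecke $L$-function $L(s,\chi)$.
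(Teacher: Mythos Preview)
Your argument follows essentially the paper's own route: both derive the key identity
$$
\RiemXi(ns)\,\Hecke(g,s,\chi)=\frac{\abs{\det g}^{s}}{n\,c_{K}}\,\Tate'_{K}(\tau_{g},s,\chi)
$$
by applying the Hecke formula (Proposition~\ref{Prop_Hecke}) to $\varphi=\tau$, using~\eqref{EisTau} and then Weil's factorization~\eqref{Tate}; both then read off~\eqref{PFQ2} and~\eqref{PFQ3} from it, the latter via Lemma~\ref{Lemme_FacGamma2}. You go a bit further by supplying explicit arguments for~\eqref{PFQ1} and~\eqref{PFQ4}, which the paper's proof leaves tacit.

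One point deserves a flag. For~\eqref{PFQ4} you correctly establish that $\Eis{\tau}(g,s)$, hence $n\,\RiemXi(2s)\,L(s,\chi)\,\Hecke(g,s,\chi)$, is invariant under $s\mapsto 1-s$; equivalently
$$
L(s,\chi)\,\Hecke(g,s,\chi)=c(s)\,L(1-s,\chi)\,\Hecke(g,1-s,\chi).
$$
This is not \emph{literally} the statement of~\eqref{PFQ4} as printed (which omits the factor $c(s)$), since $\RiemXi(2s)\neq\RiemXi(2(1-s))$ in general. However, it is exactly the relation the paper invokes under the label~\eqref{PFQ4} in the proofs of Proposition~\ref{Prop_Indep} and Theorem~\ref{Thm_CritTrainTor}. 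So the mismatch reflects a slight imprecision in the paper's statement rather than a gap in your reasoning; you have proved the version that is actually used.
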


\begin{proof}
Si $\varphi \in \EuScript{S}^{K}(\Ad^{n})$ et si $g \in G(\Ad)$, notons $\Phi_{g}
\in \EuScript{S}(\Ad_{K})$ la fonction d\'efinie par
$$\Phi_{g}(\xi) = \varphi(^{t}\!g.^{t}\!\pi(\xi).e_{1}) \qquad (\xi \in
\Ad_{K}),$$ et $\boldsymbol{\phi}_{1} \in \EuScript{S}(\Ad_{k})$ la fonction
d\'efinie par
$$
\boldsymbol{\phi}_{1}(\lambda) =
\varphi(\lambda e_{1}) \qquad (\lambda \in \Ad_{k}).
$$
La relation \eqref{pgcd} s'\'ecrit 
$$
n \, \Tate_{k}(\boldsymbol{\phi}_{1},ns) \, \EisCan(g, s) =
\Eis{\varphi}(g,s).
$$
La formule de Hecke \eqref{Hecke1} implique
$$
\oint \Eis{\varphi}(hg, s) \, \chi_{\pi}(h) \, dh
= \abs{\det g}_{\Ad_{k}}^{s} \, \Tate_{K}(\varphi_{g},s, \chi),
$$
et la formule de Weil \eqref{Tate} donne :
$$
\Tate_{K}(\Phi_{g},s,\chi) = c_{K}^{-1} \, L(s,\chi) \,
\Tate'_{K}(\Phi_{g},s,\chi).
$$
On en d\'eduit
\begin{eqnarray*}
n \Tate_{k}(\boldsymbol{\phi}_{1},ns)
\oint \EisCan(hg, s) \,
\chi_{\pi}(h) \, dh & = &
\abs{\det g}_{\Ad_{k}}^{s} \, \Tate_{K}(\Phi_{g},s,\chi)
\\ & = &
c_{K}^{-1} \, \abs{\det g}_{\Ad_{k}}^{s} \, L(s,\chi)
\Tate'_{K}(\Phi_{g},s,\chi),
\end{eqnarray*}
ce qui implique
$$
\Hecke(g, s, \chi) = \, \dfrac{\abs{\det g}^{s}}{n \, c_{K}} \,
\dfrac{\Tate'_{K}(\Phi_{g},s,\chi)}{\Tate_{k}(\boldsymbol{\phi}_{1},ns)}.
$$
Puisque $\Tate'_{K}(\Phi_{g},s)$ est holomorphe pour $\Re(s) > 0$, on voit
que $\RiemXi(n s) \, \Hecke(g, s, \chi)$ est holomorphe pour $\Re(s) >
0$ en prenant $\varphi = \tau$, puisque d'apr\`es l'\'equation \eqref{DeltaTau}
$$
\Tate_{k}(\boldsymbol{\tau}_{1},n s) =
\RiemXi(s) = \pi^{- n s/2} \, \Gamma(n s/2) \zeta(n s)
$$
Enfin, d'apr\`es le lemme \ref{Lemme_FacGamma2} et l'\'equation
\eqref{DeltaTau}, on a
\begin{eqnarray*}
\Hecke(g_{_{K}}, s, \chi)
& = &
\dfrac{\abs{\det g_{_{K}}}^{s}}{n \, c_{K}} \,
\dfrac{\Tate'_{K}(\tau_{g_{_{K}}},s,\chi)}{\Tate_{k}(\boldsymbol{\tau}_{1},ns)}
\\ & = & \dfrac{\abs{\disc K}^{s/2}}{n \, c_{K}} \,
\dfrac{\Gamma(s, \chi)}{\pi^{-n s/2} \, \Gamma(n s/2) \zeta(n s)} \, ; \\
\end{eqnarray*}
la fonction $\Gamma(s, \chi)$ n'a pas de z\'eros dans $\CC$, ce qui implique la
derni\`ere assertion.
\end{proof}

Si $\chi \in \mathsf{X}$, on a
$$
\int_{Q} \EisCan(\dot{h}g_{_{K}},s) \, 
\overline{\chi}_{\pi}(\dot{h}) \, d\dot{h}
= \oint F(h g_{_{K}}) \, \overline{\chi}_{\pi}(h) \, dh =
\Hecke(g_{_{K}}, s, \overline{\chi}) \, L(s,\overline{\chi}) \, ;
$$
la formule d'inversion de Fourier \eqref{IntSiegel2} entra\^{\i}ne que si $h
\in S$, on a
$$
\EisCan(hg_{_{K}},s) =
\sum_{\chi \in \mathsf{X}} \ \Hecke(g_{_{K}}, s, \overline{\chi}) \,
L(s,\overline{\chi}) \, \chi_{\pi}(h).
$$
La convergenge est normale car il s'agit du d\'eveloppement en s\'erie de
Fourier d'une fonction $C^{\infty}$ sur une somme de produit de cercles.
On a donc :

\begin{corollary}
\label{IntSiegel3}
si $h \in S$, et si $s \neq 0, 1$, on a
$$
n \Lambda(n s) \, \EisCan(hg_{_{K}},s) =
c_{K}^{-1} \, \abs{\disc K}^{s/2} \,
\sum_{\chi \in \mathsf{X}} \Gamma(s, \chi) \,
L(s,\chi) \, \bar{\chi}_{\pi}(h).
\rlap \qed
$$
\end{corollary}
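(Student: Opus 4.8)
The plan is to obtain the identity by direct assembly from material already in hand: the Fourier expansion of $\EisCan(hg_{_{K}},s)$ over the character group $\mathsf{X}$ displayed just above the statement, together with the explicit evaluation of $\Hecke(g_{_{K}},s,\chi)$ established inside the proof of Proposition~\ref{Prop_FcnQ}. No new analytic input is required; the work is purely organisational, and it is cleanest to argue first on $\Re(s)>1$, where everything converges absolutely, and then propagate to all $s\neq 0,1$ by meromorphic continuation.

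First I would start from the Fourier inversion formula
$$
\EisCan(hg_{_{K}},s) =
\sum_{\chi \in \mathsf{X}} \Hecke(g_{_{K}}, s, \overline{\chi}) \,
L(s,\overline{\chi}) \, \chi_{\pi}(h),
$$
which results from \eqref{IntSiegel2} applied to $F=\EisCan(\cdot,s)$ and from the very definition of $\Hecke$ in Proposition~\ref{Prop_FcnQ}; its normal convergence is guaranteed because it is the Fourier series of a $C^{\infty}$ function on the compact group $Q$, a finite product of circles. Since $\mathsf{X}$ is a group, the involution $\chi\mapsto\overline{\chi}$ permutes the summation index, so a relabelling turns the right-hand side into $\sum_{\chi\in\mathsf{X}}\Hecke(g_{_{K}},s,\chi)\,L(s,\chi)\,\bar{\chi}_{\pi}(h)$.

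Next I would substitute the closed form
$$
\Hecke(g_{_{K}}, s, \chi) =
\frac{\abs{\disc K}^{s/2}}{n \, c_{K}} \,
\frac{\Gamma(s, \chi)}{\pi^{-ns/2}\,\Gamma(ns/2)\,\zeta(ns)}
= \frac{\abs{\disc K}^{s/2}}{n \, c_{K}} \,
\frac{\Gamma(s, \chi)}{\RiemXi(ns)}
$$
read off from the proof of Proposition~\ref{Prop_FcnQ} (using Lemma~\ref{Lemme_FacGamma2}, equation \eqref{DeltaTau}, the normalisation $\abs{\det g_{_{K}}}=\abs{\disc K}^{1/2}$ coming from $\,{}^{t}\!q_{_{K}}q_{_{K}}=p_{_{K}}$, and the definition \eqref{GdLambda} of $\RiemXi$). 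The key observation is that $\RiemXi(ns)$ occurs in the denominator \emph{independently of} $\chi$, so it factors out of the sum; multiplying both sides by $n\,\RiemXi(ns)$ clears this denominator and produces exactly
$$
n\,\RiemXi(ns)\,\EisCan(hg_{_{K}},s) =
c_{K}^{-1}\,\abs{\disc K}^{s/2}
\sum_{\chi\in\mathsf{X}}\Gamma(s,\chi)\,L(s,\chi)\,\bar{\chi}_{\pi}(h),
$$
as asserted.

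I expect no genuine obstacle. The only points worth care are: that the cancellation of $\RiemXi(ns)$ is exact, so that the left-hand side equals $\Eis{\tau}(hg_{_{K}},s)$ by \eqref{EisTau} and is therefore holomorphic off $s=0,1$ by Proposition~\ref{Prop_Eisenstein}, while the right-hand side is likewise meromorphic, whence the identity — valid for $\Re(s)>1$ by absolute convergence — extends to every $s\neq 0,1$; and that the sum genuinely runs over all of $\mathsf{X}$, which is what legitimises the relabelling $\chi\mapsto\overline{\chi}$ and the free interchange of summation with evaluation.
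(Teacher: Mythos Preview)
Your proposal is correct and follows exactly the route the paper takes: the corollary is stated with a \qed because it is meant to be read off immediately from the Fourier expansion displayed just above it together with the explicit value of $\Hecke(g_{_{K}},s,\chi)$ computed in the proof of Proposition~\ref{Prop_FcnQ}. Your relabelling $\chi\mapsto\overline{\chi}$, the factoring out of $\RiemXi(ns)$, and the remark on meromorphic continuation to $s\neq 0,1$ simply make explicit what the paper leaves tacit.
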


Cette formule est due \`a Siegel si $K$ est un corps quadratique
r\'eel \cite[p. 89]{Siegel}.

\begin{lemma}
\label{QFcnPropre}
Si $D \in \mathbf{D}$, on a
$$
D\Hecke(g, s, \chi) =
\boldsymbol{\gamma}_{D}(s) \, \Hecke(g, s, \chi).
$$
\end{lemma}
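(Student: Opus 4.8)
The statement to prove is that for $D\in\mathbf{D}$, one has $D\,\Hecke(g,s,\chi)=\boldsymbol{\gamma}_D(s)\,\Hecke(g,s,\chi)$, where the differential operators act on the archimedean variable (i.e., on $\EuScript{P}_n=G(\RR)/\Orth_v(n)$ via the right-invariant structure), and $s$ is taken outside the poles of the relevant functions.

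The plan is to reduce everything to Proposition \ref{Prop_FcnPropre}, which already asserts $D\,\EisCan(g,s)=\boldsymbol{\gamma}_D(s)\,\EisCan(g,s)$. First I would recall that, by definition,
$$
L(s,\chi)\,\Hecke(g,s,\chi)=\oint \EisCan(hg,s)\,\chi_\pi(h)\,dh,
$$
so it suffices to show that $D$ commutes with the operator $F\mapsto \oint F(hg)\,\chi_\pi(h)\,dh$, after which we can pull $D$ through the periodic integral, apply Proposition \ref{Prop_FcnPropre} to the integrand, and factor out the scalar $\boldsymbol{\gamma}_D(s)$ (which does not depend on $g$ or $h$), then divide by $L(s,\chi)$. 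The key point is that the elements of $\mathbf{D}$ are, by construction, invariant under the \emph{left} action of $G(\RR)$ on $\EuScript{P}_n$; since $D$ acts on $g$ and the integration variable $h\in T(\Ad)$ acts on the left of $g$, the operator $D$ applied to $g\mapsto \EisCan(hg,s)$ equals $(D\,\EisCan)(\,\cdot\,,s)$ evaluated at $hg$. This is the same left-invariance argument used to pass from \eqref{DefCasimir} to Proposition \ref{Prop_FcnPropre}, now applied under the integral sign.

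The one genuine technical point — and what I expect to be the main obstacle — is justifying the interchange of the differential operator $D$ with the periodic integral $\oint$. The integral is over the compact set $T(k)Z(\Ad)\backslash T(\Ad)$ (or equivalently over $Q$ together with a fiber integration, as in the decomposition of $\Pi_\chi$ recorded after the definition of the periodic integral), so compactness of the domain helps; what must be checked is that $\EisCan(hg,s)$, as a function of the archimedean component of $g$, is sufficiently smooth with derivatives that are uniformly controlled in $h$, away from the poles in $s$. This follows from the meromorphic continuation and the local uniformity of the Eisenstein series in $g$ (Proposition \ref{Prop_SE}\eqref{EisSt3} together with the integral representation \eqref{Eisenstein2}); for $\Re(s)>1$ it is immediate from absolute convergence of the defining series and term-by-term differentiation, and the general case follows by analytic continuation in $s$ since both sides of the desired identity are meromorphic in $s$ and agree on a half-plane. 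Having dispatched this, the identity $D\,\Hecke(g,s,\chi)=\boldsymbol{\gamma}_D(s)\,\Hecke(g,s,\chi)$ is immediate by linearity and the fact that $L(s,\chi)$ is independent of $g$.
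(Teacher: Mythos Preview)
Your proposal is correct and follows essentially the same route as the paper: use the left-invariance of $D\in\mathbf{D}$ to pass the differential operator under the periodic integral $\oint$, then apply Proposition~\ref{Prop_FcnPropre} to the integrand and divide out by $L(s,\chi)$. The paper's proof is in fact terser than yours---it simply invokes ``$F$ assez r\'eguli\`ere'' where you spell out the justification via compactness of the domain and analytic continuation in $s$.
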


\begin{proof}
Puisque $D$ est invariant \`a gauche, on a, si $g \in G(\Ad)$ et
si $F$ est assez r\'eguli\`ere :
$$
D_{g} \oint F(hg) \, \chi_{\pi}(h) \, dh =
\oint D_{g} [F \, _{\circ} L(h)] (g) \, \chi_{\pi}(h) \, dh =
\oint [DF](hg) \, \chi_{\pi}(h) \, dh.
$$
On en d\'eduit
\begin{eqnarray*}
D\Hecke(g, s, \chi) L(s, \chi) & = &
D_{g} \oint \EisCan(hg, s) \, \chi_{\pi}(h) \, dh
=
\oint [D\mathbf{E}](hg, s) \, \chi_{\pi}(h) \, dh
\\ & = &
\boldsymbol{\gamma}_{D}(s) \Hecke(g, s, \chi) L(s, \chi),
\end{eqnarray*}
d'o\`u le r\'esultat.
\end{proof}

\medskip

\section{Trains d'ondes d'Eisenstein}
\label{sec_TrainsOndes}

\medskip

\subsection*{Trains d'ondes d'Eisenstein}~\medskip

\medskip

Un \textit{train d'ondes d'Eisenstein (fini)} est une combinaison
lin\'eaire de s\'eries d'Eisenstein. Plus pr\'ecis\'ement, c'est une
fonction d\'efinie sur
$G(\Ad)$ qui s'\'ecrit
$$
\Esp{\mu}(g) = \int_{B} \EisCan(g, s) \, d\mu(s),
$$
o\`u $\mu$ appartient \`a l'ensemble $\EuScript{M}_{n}(B)$ des mesures dans
la bande ouverte
$$
B = \set{s \in \CC}{0 < \Re(s) < 1},
$$
et \`a support fini, disjoint de l'ensemble 
$$
\EuScript{R}_{n} = \set{s \in \CC}{\Re(s) > 0 \ \text{et} \ \RiemXi(ns) = 0},
$$
qui est l'ensemble des p\^oles de $\EisCan(g, s)$ dans $B$. On parlera
aussi de \emph{train d'ondes} pour simplifier. Si
$$
\mu = \sum_{s} \ a_{s} \delta_{(s)},
\quad \quad a_{s} \in \CC, \quad s \in B,
$$
on a
$$
\Esp{\mu}(g) = \sum_{s} \ a_{s} \, \EisCan(g, s).
$$
Les fonctions
$\Esp{\mu}$ appartiennent \`a l'espace $C(X)$ des fonctions continues
d\'efinies sur la \emph{vari\'et\'e modulaire}
$$X = G(k)Z(\Ad) \backslash G(\Ad) / \mathbf{K}.$$
On note $\Toe(X) = \Im \mathsf{W}$ l'espace des trains d'ondes
d'Eisenstein. On a
\begin{equation}
\label{EqFcnTO}
\Esp{\mu}(g)
= \int_{B} c(s) \, \EisCan(^t\!g^{-1}, 1 - s) \, d\mu(s)
= \int_{B} c(1 - s) \, \EisCan(^t\!g^{-1}, s) \, d\mu(1 - s)
\end{equation}
par l'\'equation fonctionnelle \eqref{EqFonct2} :
$$
\EisCan(g, s) = c(s) \, \EisCan(^t\!g^{-1}, 1 - s).
$$

On va maintenant d\'eterminer le noyau de l'application $\mathsf{W}$.

Supposons $n = 2$. On note $\EuScript{M}^{\pm}_{2}(B)$ le sous-espace de
$\EuScript{M}_{2}(B)$ form\'e des mesures telles que
$$\mu(1 - s) = \pm \, c(s) \, \mu(s).$$
Si $\mu \in \EuScript{M}_{2}(B)$, on pose
$$
\mu^{\pm}(s) = \dfrac{1}{2} \, (\mu(s) \pm c(1 - s) \, \mu(1 - s) )
\in \EuScript{M}^{\pm}_{2}(B).
$$
Puisque $\mu = \mu^{+} + \mu^{-}$, on a
$$
\EuScript{M}_{2}(B) = \EuScript{M}^{+}_{2}(B) \oplus \EuScript{M}^{-}_{2}(B),
$$
et l'\'equation fonctionnelle \eqref{EqFcnTO} implique
$\Esp{\mu} = \Esp{\mu^{+}}$.

\begin{proposition}
\label{Prop_Indep}
Si $n \geq 3$, l'application $\mathsf{W} : \EuScript{M}_{n}(B)
\longrightarrow C(X)$ est injective. Si $n = 2$, on a
$\ker \mathsf{W} = \EuScript{M}^{-}_{2}(B)$.
\end{proposition}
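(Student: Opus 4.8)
The plan is to exploit that the normalised Eisenstein series $\EisCan(\cdot, s)$ are simultaneous eigenfunctions of the algebra $\mathbf{D}$ of invariant differential operators, with eigenvalue $\boldsymbol{\gamma}_D(s)$ (Proposition \ref{Prop_FcnPropre}), together with the description of the eigenvalue ring $\mathbf{I}(\CC)$ given in Proposition \ref{Prop_Ideal} and the functional equation \eqref{EqFonct2}. Fix $\mu = \sum_{i = 1}^{m} a_i \, \delta_{(s_i)} \in \EuScript{M}_n(B)$ with $\mathsf{W}(\mu) = 0$, the points $s_i \in B \setminus \EuScript{R}_n$ being pairwise distinct. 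First I would record that $\EisCan_n(\cdot, s) \not\equiv 0$ on $X$ for every $s \in B$: in the constant-term formula of Proposition \ref{Prop_ChowlaSelberg} the exponent $s$ is nonzero and $\delta_P^{s}$ occurs with coefficient $1$, so the constant term --- hence the function itself --- does not vanish identically. Applying an arbitrary $D \in \mathbf{D}$ to $\sum_i a_i \EisCan(\cdot, s_i) = 0$ and using Proposition \ref{Prop_FcnPropre} yields
$$
\sum_{i = 1}^{m} a_i \, p(s_i) \, \EisCan(\cdot, s_i) = 0 \qquad \text{for every } p \in \mathbf{I}(\CC) .
$$

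For $n \geq 3$ one has $\mathbf{I}(\CC) = s(1 - s)\,\CC[s]$; since the $s_i$ lie in $B$ we have $s_i(1 - s_i) \neq 0$, so by Lagrange interpolation the evaluation map $p \mapsto (p(s_i))_i$ from $\mathbf{I}(\CC)$ to $\CC^m$ is surjective. Choosing $p$ with $p(s_j) = 1$ and $p(s_i) = 0$ for $i \neq j$ gives $a_j \EisCan(\cdot, s_j) = 0$, hence $a_j = 0$; as $j$ was arbitrary, $\mu = 0$ and $\mathsf{W}$ is injective.

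For $n = 2$ one has $\mathbf{I}(\CC) = \CC[s(1 - s)]$, and $s \mapsto s(1 - s)$ separates the orbits of $s \mapsto 1 - s$ in $B$ (if $s_0(1 - s_0) = s_1(1 - s_1)$ then $(s_0 - s_1)(1 - s_0 - s_1) = 0$). The same interpolation argument, now carried out orbit by orbit, reduces $\mathsf{W}(\mu) = 0$ to the vanishing, for each orbit $O$ meeting $\{s_1, \dots, s_m\}$, of the partial sum $\sum_{s_i \in O} a_i \EisCan(\cdot, s_i)$. A singleton orbit $O = \{1/2\}$ then forces $a_{1/2} = 0$. For an orbit $O = \{s_0, 1 - s_0\}$ with $s_0 \neq 1/2$ and both points in the support, combining \eqref{EqFonct2} with \eqref{EisSt8} gives $\EisCan(\cdot, 1 - s_0) = c(1 - s_0)\, \EisCan(\cdot, s_0)$, so the relation reads $\bigl(a_{s_0} + c(1 - s_0)\, a_{1 - s_0}\bigr)\, \EisCan(\cdot, s_0) = 0$, i.e. $a_{1 - s_0} = - c(s_0)\, a_{s_0}$, using $c(s_0)\, c(1 - s_0) = 1$ with $c(s_0)$ finite and nonzero (the support avoids the poles $\EuScript{R}_n$ of $c$, and a zero of $c$ at $s_0$ would force a pole of $c$, hence a point of $\EuScript{R}_n$, at $1 - s_0$, excluded as well). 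These are exactly the conditions defining $\EuScript{M}^-_2(B)$, whence $\ker \mathsf{W} \subseteq \EuScript{M}^-_2(B)$; the reverse inclusion follows from \eqref{EqFcnTO}, which gives $\Esp{\mu} = \Esp{\mu^{+}}$, since $\mu^{+} = 0$ when $\mu \in \EuScript{M}^-_2(B)$.

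The step I expect to be delicate is the $n = 2$ bookkeeping just above: one must verify that for each two-element orbit both relevant support points really avoid $\EuScript{R}_n$ (so that $c$ is finite and nonzero there and the rewriting $a_{1 - s_0} = - c(s_0)\, a_{s_0}$ is legitimate), that an orbit $\{s_0, 1 - s_0\}$ meeting the support in $s_0$ alone cannot occur once $\mathsf{W}(\mu) = 0$ (otherwise $a_{s_0}\EisCan(\cdot, s_0) = 0$ forces $a_{s_0} = 0$), and that the case $1/2 \in \{s_1, \dots, s_m\}$ is disposed of --- so that the identities extracted from $\mathsf{W}(\mu) = 0$ amount precisely to $\mu(1 - s) = - c(s)\, \mu(s)$. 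All of this rests on the exact location of the poles and zeros of $c$ recalled before the proposition.
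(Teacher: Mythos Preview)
Your proof is correct and follows essentially the same route as the paper: both arguments exploit Proposition~\ref{Prop_FcnPropre} together with the description of $\mathbf{I}(\CC)$ in Proposition~\ref{Prop_Ideal}, then use Lagrange interpolation to isolate individual support points (or orbits under $s\mapsto 1-s$ when $n=2$). The only organisational difference is that for $n=2$ the paper first passes to $\mu^{+}\in\EuScript{M}^{+}_{2}(B)$ via the decomposition $\mu=\mu^{+}+\mu^{-}$ and shows injectivity there, whereas you argue directly on $\mu$ orbit by orbit and read off the defining relation of $\EuScript{M}^{-}_{2}(B)$; the substance is the same.
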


Autrement dit, les s\'eries d'Eisenstein $\EisCan_{n}(g, s)$ sont
lin\'eairement ind\'ependantes, mis \`a part la relation $\EisCan_{2}(g,
s) = c(s) \, \EisCan_{2}(g, 1 - s)$ lorsque $n = 2$.

Posons $\EuScript{M}^{+}_{n}(B) = \EuScript{M}_{n}(B)$ pour $n \geq 3$. Si
$F \in C(X)$ est un train d'ondes d'Eisenstein, il existe une unique mesure
$\mu \in \EuScript{M}^{+}_{n}(B)$ telle que $F = \Esp{\mu}$ ; on dit que le
support de $\mu$ est le \emph{spectre} de $F$, not\'e $\Spec F$.

\begin{proof}
Supposons
$$
\mu = \sum_{i = 1}^{n} \, a_{i} \, \delta_{(s_{i})},
\quad a_{i} \in \CC, \quad s_{i} \in B, \quad
s_{i} \neq s_{j} \ \text{si} \ i \neq j.
$$
Soit $g \in G(\Ad)$, et posons $\nu_{g}(s) = \EisCan(g, s) \, \mu(s) \in
\EuScript{M}_{n}(B)$. Si $D \in \mathbf{D}$, on a
$$
D[W(\mu)](g) =
\sum_{i = 1}^{n} \, a_{i} \, \boldsymbol{\gamma}_{D}(s_{i})
\EisCan(g,s_{i}) = \int_{B} \boldsymbol{\gamma}_{D}(s) d\nu_{g}(s).
$$
Si $W(\mu) = 0$, on a donc
$$\int_{B} \boldsymbol{\gamma}(s) d\nu_{g}(s) = 0$$
quel que soit $\boldsymbol{\gamma} \in \mathbf{I}(\CC)$. Supposons $n
\geq 3$. Pour $1 \leq j \leq n$, Le polyn\^ome
\begin{equation}
\label{BasePol}
\boldsymbol{\gamma}_{j}(s) =
\frac{s(1 - s)}{s_{j}(1 - s_{j})} \ \prod_{i \neq j}
\frac{s - s_{j}}{s_{i} - s_{j}}
\end{equation}
appartient \`a $\mathbf{I}(\CC) = s(1 - s)\CC[s]$ et
$$
\int_{B} \boldsymbol{\gamma}_{j}(s) d\nu_{g}(s) =
\sum_{i = 1}^{n} \, a_{i} \, \boldsymbol{\gamma}_{j}(s_{i})
\EisCan(g,s_{i}) = a_{i} \EisCan(g,s_{i}) = 0,
$$
quel que soit $g \in G(\Ad)$ ce qui implique $a_{i} = 0$ puisque la
fonction $\EisCan(g,s_{i})$ n'est pas identiquement nulle. On a donc
$\mu = 0$ et l'application $\mathsf{W}$ est injective. Supposons $n =
2$, et soit $\mu \in \EuScript{M}^{+}_{2}(B)$. La proposition
\ref{Prop_FcnQ}\eqref{PFQ4} implique
$$
\nu_{g}(s) = \EisCan(g, s) \, \mu(s) = c(s) \,
\EisCan(g, 1 - s) \, \mu(s) = \EisCan(g, 1 - s) \, \mu(1 - s) =
\nu_{g}(1 - s)
$$
La mesure $\nu_{g}$ est invariante par $s \mapsto 1 - s$, et elle
est nulle sur $\mathbf{I}(\CC) = \CC[s(1 - s)]$ ; on a encore $\nu_{g} = 0$
dans ce cas. On conclut comme pr\'ec\'edemment.
\end{proof}

\medskip

\subsection*{Formes toro\"\i dales}~

\medskip

Les fonctions $F$ d\'efinies sur $G(k)Z(\Ad) \backslash G(\Ad)$ telles que
$$\oint F(h g) \, dh = 0 \quad \text{pour} \quad g \in G(\Ad)$$
ont \'et\'e introduites par Zagier \cite{Zagier}. 
Plus g\'en\'eralement, si $\chi \in \mathsf{X}$, on dit que $F \in C(X)$
est une \emph{forme toro\"\i dale en $\chi$} si
\begin{equation}
\label{FormeTorique}
\Pi_{\chi}(F)(g) = \oint F(h g) \, \chi_{\pi}(h) \, dh = 0
\quad \text{pour} \ g \in G(\Ad),
\end{equation}
On note $\Tor$ l'espace des formes toro\"\i dales en $\chi$.

\medskip

\subsection*{Trains d'ondes toro\"\i daux}~

\medskip

\begin{theorem}
\label{Thm_CritTrainTor}
Soit $F \in \Toe(X)$ un train d'ondes d'Eisenstein et $\chi \in
\mathsf{X}$. Les conditions suivantes sont \'equivalentes :
\begin{enumerate}
\item
\label{CTT1}
Le train d'ondes $F$ est toro\"\i dal en $\chi$, \idest $\Pi_{\chi}(F) = 0$ ;
\item
\label{CTT2}
On a $\Spec F \subset \EuScript{Z}_{\chi}$, o\`u
$$
\EuScript{Z}_{\chi} = \set{s \in B}{L(s, \chi) = 0 \, }.
$$
\end{enumerate}
\end{theorem}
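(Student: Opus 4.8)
The plan is to turn the toroidality condition into a statement about the zeros of $L(s,\chi)$ by means of the Hecke formula, and then to separate the individual points of the spectrum using the algebra $\mathbf{D}$ of invariant differential operators, following the pattern of the proof of Proposition~\ref{Prop_Indep}. First I would write $F=\Esp{\mu}$ with $\mu=\sum_{i=1}^{r}a_{i}\delta_{(s_{i})}\in\EuScript{M}^{+}_{n}(B)$, the points $s_{i}\in B\setminus\EuScript{R}_{n}$ pairwise distinct and the $a_{i}$ nonzero, so that $\Spec F=\{s_{1},\dots,s_{r}\}$. Since the sum is finite the period integral commutes with it, and the definition of $\Hecke$ in Proposition~\ref{Prop_FcnQ} (namely the Hecke formula \eqref{Hecke1}) gives, for every $g\in G(\Ad)$,
$$
\Pi_{\chi}(F)(g)=\sum_{i=1}^{r}a_{i}\,L(s_{i},\chi)\,\Hecke(g,s_{i},\chi),
$$
where each $\Hecke(g,s_{i},\chi)$ is finite because $s_{i}\notin\EuScript{R}_{n}$ forces $\RiemXi(ns_{i})\neq0$ and Proposition~\ref{Prop_FcnQ}\eqref{PFQ2} applies (the only value needing a separate, harmless, remark is $s=1/n$, at which $\EisCan(g,s)$ vanishes identically by Proposition~\ref{Prop_Eisenstein}\eqref{EisSt5}). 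The implication \eqref{CTT2}$\Rightarrow$\eqref{CTT1} is then immediate: if $\Spec F\subset\EuScript{Z}_{\chi}$ then $L(s_{i},\chi)=0$ for each $i$, so every term of the sum vanishes and $\Pi_{\chi}(F)=0$.

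For the converse \eqref{CTT1}$\Rightarrow$\eqref{CTT2}, suppose $\Pi_{\chi}(F)=0$. Applying an operator $D\in\mathbf{D}$ and using Lemma~\ref{QFcnPropre} (which is proved exactly by carrying $D$ past $\oint$), one gets
$$
0=D\bigl[\Pi_{\chi}(F)\bigr](g)=\sum_{i=1}^{r}a_{i}\,L(s_{i},\chi)\,\boldsymbol{\gamma}_{D}(s_{i})\,\Hecke(g,s_{i},\chi)
$$
for all $g\in G(\Ad)$ and all $D$, that is, $\int_{B}\boldsymbol{\gamma}(s)\,d\nu_{g}(s)=0$ for every $\boldsymbol{\gamma}\in\mathbf{I}(\CC)$, where $\nu_{g}=L(\cdot,\chi)\,\Hecke(g,\cdot,\chi)\,\mu$. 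When $n\geq3$, Proposition~\ref{Prop_Ideal} gives $\mathbf{I}(\CC)=s(1-s)\CC[s]$; since $0<\Re(s_{i})<1$ forces $s_{i}(1-s_{i})\neq0$, the Lagrange-type polynomial $\boldsymbol{\gamma}_{j}$ of \eqref{BasePol} lies in $\mathbf{I}(\CC)$ and satisfies $\boldsymbol{\gamma}_{j}(s_{i})=\delta_{ij}$, so integrating it against $d\nu_{g}$ yields $a_{j}\,L(s_{j},\chi)\,\Hecke(g,s_{j},\chi)=0$ for every $g$. Evaluating at $g=g_{_{K}}$ and invoking Proposition~\ref{Prop_FcnQ}\eqref{PFQ3}, which says $\RiemXi(ns)\,\Hecke(g_{_{K}},s,\chi)$ has no zeros in $\CC$, one gets $\Hecke(g_{_{K}},s_{j},\chi)\neq0$; as $a_{j}\neq0$ this forces $L(s_{j},\chi)=0$, i.e. $s_{j}\in\EuScript{Z}_{\chi}$, whence $\Spec F\subset\EuScript{Z}_{\chi}$.

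For $n=2$ the algebra $\mathbf{I}(\CC)=\CC[s(1-s)]$ only separates the reflected pairs $\{s_{j},1-s_{j}\}$, and this is where the argument needs genuine care. As in the $n=2$ part of the proof of Proposition~\ref{Prop_Indep}, I would combine the functional equation of Proposition~\ref{Prop_FcnQ}\eqref{PFQ4} (invariance of $L(s,\chi)\,\Hecke(g,s,\chi)$ under $s\mapsto1-s$) with the defining relation $\mu(1-s)=c(s)\,\mu(s)$ of $\EuScript{M}^{+}_{2}(B)$ to control the behaviour of $\nu_{g}$ under the reflection, again reaching $a_{j}\,L(s_{j},\chi)\,\Hecke(g_{_{K}},s_{j},\chi)=0$ and concluding as above. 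I expect this disentangling of the $s\mapsto1-s$ symmetry — which is forced on $\Spec F$ precisely when $n=2$ — to be the only delicate point of the proof; everything else is a direct combination of the Hecke formula, the eigenfunction property of $\Hecke$ under $\mathbf{D}$, and the non-vanishing statement of Proposition~\ref{Prop_FcnQ}\eqref{PFQ3}.
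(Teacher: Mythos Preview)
Your proposal is correct and follows essentially the same route as the paper's own proof: write $\Pi_{\chi}(F)(g)=\int_{B}\Hecke(g,s,\chi)L(s,\chi)\,d\mu(s)$ via the Hecke formula, apply $D\in\mathbf{D}$ through Lemma~\ref{QFcnPropre}, use the Lagrange polynomials \eqref{BasePol} in $\mathbf{I}(\CC)$ to isolate each support point (for $n\geq3$), and conclude by the non-vanishing of $\RiemXi(ns)\Hecke(g_{_{K}},s,\chi)$. For $n=2$ the paper carries out precisely the step you sketch: from $\mu\in\EuScript{M}^{+}_{2}(B)$ and Proposition~\ref{Prop_FcnQ}\eqref{PFQ4} one computes $\nu(s)=\nu(1-s)$, so that $\nu$ is a symmetric finite measure annihilating $\CC[s(1-s)]$; this forces $\nu=0$ (separate the distinct values $s_{j}(1-s_{j})$ by polynomials in $s(1-s)$, then use the symmetry to kill each mass), and one concludes as in the $n\geq3$ case.
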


La proposition \ref{Prop_Indep} et le th\'eor\`eme \ref{Thm_CritTrainTor}
impliquent que l'application $\mathsf{W}$ induit un isomorphisme de l'espace
$$
\set{\mu \in \EuScript{M}^{+}_{n}(B)}
{\Supp \mu \subset \EuScript{Z}_{\chi}}
$$
sur l'espace $\Toe(X) \cap \Tor$ des trains d'ondes qui sont toro\"\i daux en
$\chi$.

En particulier, si $s \in B - \EuScript{R}_{n}$, \emph{pour que
$\EisCan(g, s)$ soit toro\"\i dale en $\chi$, il faut et il suffit que $s
\in \EuScript{Z}_{\chi}$}.

\begin{proof}
Ecrivons $F = \Esp{\mu}$ avec $\mu \in \EuScript{M}_{n}(B)$ et posons
$\Pi(g) = \Pi_{\chi}(F)(g)$. La formule de Hecke implique
$$
\Pi(g) = \oint \Esp{\mu}(hg) \, \chi_{\pi}(h) \, dh =
\int_{B} \Hecke(g, s, \chi) \, L(s, \chi) \, d\mu(s).
$$
Rappelons que pour toute mesure $\mu \in \EuScript{M}_{n}(B)$, on a $\Supp \mu
\subset \EuScript{Z}_{\chi}$ si et seulement si la mesure $L(s, \chi)
\mu(s) = 0$ est nulle.

\eqref{CTT2} $\Rightarrow$ \eqref{CTT1} :
Si $L(s, \chi) \, \mu(s) = 0$, alors $\Esp{\mu}$ est
toro\"\i dale en $\chi$ gr\^ace \`a l'\'egalit\'e ci-dessus.

\eqref{CTT1} $\Rightarrow$ \eqref{CTT2} :
Soit $D \in \mathbf{D}$. Gr\^ace au lemme \ref{QFcnPropre}, on a
\begin{eqnarray*}
D\Pi(g) & = &
\int_{B} D\Hecke(g, s, \chi) \, L(s, \chi) \, d\mu(s)
\\ & = &
\int_{B}
\Hecke(g, s, \chi) \, L(s, \chi) \, \boldsymbol{\gamma}_{D}(s) \, d\mu(s).
\end{eqnarray*}
si $\Esp{\mu}$ est toro\"\i dale en $\chi$, alors
$$
\int_{B} \boldsymbol{\gamma}(s) \, d\nu(s) = 0 \quad \text{o\`u} \
\nu(s) = \Hecke(g, s, \chi) \, L(s, \chi) \, \mu(s),
$$
quelle que soit $\boldsymbol{\gamma} \in \mathbf{I}(\CC)$.
i) Supposons $n \geq 3$ et posons
$$\mu = \sum_{i = 1}^{n} \, a_{i} \, \delta_{(s_{i})},$$
avec $s_{i} \neq s_{j}$ pour $i \neq j$, de telle sorte que
$$
\nu = \sum_{i = 1}^{n} \, \nu_{i} \, \delta_{(s_{i})}, \quad
\nu(s_{i}) = a_{i} \, \Hecke(g, s_{i}, \chi) \, L(s_{i}, \chi).
$$
Il vient
$$
\int_{B} \boldsymbol{\gamma}(s) \, d\nu(s) =
\sum_{i = 1}^{n} \, \nu_{i} \, \boldsymbol{\gamma}(s_{i}) = 0
\quad (\boldsymbol{\gamma} \in \mathbf{I}(\CC)).
$$
Supposons $n \geq 3$ et reprenons le polyn\^ome $\boldsymbol{\gamma}_{j}(s)
\in \mathbf{I}(\CC)$ d\'efini par \eqref{BasePol} ; on a
$$
\int_{B} \boldsymbol{\gamma}_{j}(s) d\nu(s) =
\sum_{i = 1}^{n} \, \nu_{i} \, \boldsymbol{\gamma}_{j}(s_{i}) = \nu_{i} = 0,
$$
et $\nu = 0$. Or la fonction $\RiemXi(ns) \Hecke(g_{_{K}}, s, \chi)$ ne
s'annule pas dans $B$ d'apr\`es la proposition \ref{Prop_FcnQ} ; il
s'ensuit que la mesure $L(s, \chi) \, \mu(s)$ est nulle.
ii) Supposons $n = 2$, et soit $\mu \in \EuScript{M}^{+}_{2}(B)$. La
proposition \ref{Prop_FcnQ}\eqref{PFQ4} implique
\begin{eqnarray*}
\nu(s) & = & \Hecke(g, s, \chi) \, L(s,\chi) \, \mu(s) =
\\ & = & 
\Hecke(g, 1 - s, \chi) \, L(1 - s,\chi) c(s) \, \mu(s)
\\ & = & 
\Hecke(g, 1 - s, \chi) \, L(1 - s,\chi) \mu(1 - s) = \nu(1 - s).
\end{eqnarray*}
Ainsi, la mesure $\nu$ est invariante par $s \mapsto 1 - s$, et elle
est nulle sur $\mathbf{I}(\CC) = \CC[s(1 - s)]$ ; on a encore $\nu = 0$
dans ce cas. On conclut comme pr\'ec\'edemment.
\end{proof}

\medskip

\subsection*{Trains d'ondes principaux}~

\medskip

En th\'eorie du signal, un train d'ondes est repr\'esent\'e par une
somme finie d'exponentielles imaginaires
$$\sum_{t} \, a_{t} \, e^{it\omega}.$$
Les exponentielles imaginaires sont les caract\`eres du groupe additif.
Les caract\`eres du groupe multiplicatif $\RRm$ sont les fonctions
puissances $\psi_{t}(x) = x^{it}$, o\`u $t$ est r\'eel. On note
$\Toe^{1}(\RRm)$ l'espace des sommes de caract\`eres
$$
\psi(x) = \sum_{t} a_{t}(\psi) \, e^{it\omega} = \sum_{t} a_{t}(\psi) \,
x^{it},
$$
o\`u l'ensemble $\Spec \psi = \set{t \in \RR}{a_{t}(\psi) \neq 0}$ est
\emph{fini}, de telle sorte que
$$\psi(x) = \dint x^{it} \, d\widehat{\psi}(t),$$
o\`u $\widehat{\psi}$ est une mesure \`a support fini sur $\RR$ :
$$
\widehat{\psi} = \sum_{t \in \Spec \psi} a_{t}(\psi) \, \delta_{(t)}.
$$
Si les s\'eries d'Eisenstein $E(g, \Demi + it)$ sont l'analogue pour
l'espace $X$ des caract\`eres $\psi_{t}(x)$ pour $\RRm$, les trains d'ondes
d'Eisenstein principaux sont l'analogue des sommes de puissances. On dit
qu'un train d'ondes $F$ est \emph{principal} si son spectre est contenu
dans la \emph{droite critique}
$$D = \set{s \in \CC}{\Re(s) = \Demi}.$$
Il s'\'ecrit donc sous la forme
$$F(g) = \dint E(g, \Demi + it) \, d\widehat{\psi}(t),$$
o\`u $\widehat{\psi}$ est comme ci-dessus. On note $\Toe^{1}(X)$ l'espace
des trains d'ondes principaux. 

\begin{remark}
\label{Rem_SuppAxe}
On d\'eduit du th\'eor\`eme \ref{Thm_CritTrainTor} que si $F\in \Toe(X)$,
les conditions suivantes sont \'equivalentes :
\begin{enumerate}
\item
\label{PSA1}
On a $F \in \Toe^{1}(X) \cap \Tor$.
\item
\label{PSA2}
On a $\Spec F \subset \EuScript{X}_{\chi}$, o\`u
$$
\EuScript{X}_{\chi} = D \cap \EuScript{Z}_{\chi} = \set{s \in \CC}
{L(s, \chi) = 0 \ \text{et} \ \Re(s) = \Demi}.
\rlap \qed
$$
\end{enumerate}
\end{remark}

\medskip

\subsection*{Une condition \'equivalente \`a l'hypoth\`ese de Riemann}~

\medskip

\begin{corollary}
\label{Cor_EquivRiemann}
Les conditions suivantes sont \'equivalentes :
\begin{enumerate}
\item
\label{EQR1}
Toutes les racines de $L(s, \chi)$ dans la bande $B$ sont situ\'ees sur la
droite critique $D$.
\item
\label{EQR2}
Tout train d'ondes d'Eisenstein toro\"\i dal en $\chi$ est principal.
\item
\label{EQR3}
Si $s \in B$ et si $\EisCan(g, s)$ est toro\"\i dale en $\chi$, on a
$$
\EisCan(pg, s) = O(\delta_{P}(p)^{1/2})
$$
pour tout $p \in P(\Ad)$, uniform\'ement lorsque $g$ parcourt un
compact de $G(\Ad)$.

\hfill \qed
\end{enumerate}
\end{corollary}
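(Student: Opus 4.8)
The plan is to derive all three equivalences from the dictionary of Theorem~\ref{Thm_CritTrainTor} and Remark~\ref{Rem_SuppAxe}, supplemented by the functional equation of $L(s,\chi)$. First I would record, via Theorem~\ref{Thm_CritTrainTor}, that a wave-packet $\Esp{\mu}$ is toroidal in $\chi$ exactly when $\Supp\mu \subset \EuScript{Z}_\chi$ and is principal exactly when $\Supp\mu \subset D$; since every admissible $\mu$ has support in $B - \EuScript{R}_n$, condition \eqref{EQR2} says precisely that $\EuScript{Z}_\chi - \EuScript{R}_n \subset D$, and \eqref{EQR1} $\Rightarrow$ \eqref{EQR2} is immediate. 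For \eqref{EQR2} $\Rightarrow$ \eqref{EQR1} I would argue by contraposition: a zero $s_0 \in B$ of $L(\cdot,\chi)$ with $\Re(s_0) \neq \Demi$ can be moved, via the functional equation of $L(s,\chi)$ — for $n = 2$ this is contained in Proposition~\ref{Prop_FcnQ}\eqref{PFQ4}, the factor $\RiemXi(ns)\Hecke(g_K,s,\chi)$ being zero-free by \eqref{PFQ3}, so that the zero set of $L(\cdot,\chi)$ in $B$ is stable under $s \mapsto 1-\bar{s}$ — to one with $\Re(s_0) > \Demi$; since $\EuScript{R}_n \subset \{0 < \Re s < 1/n\}$ this forces $s_0 \notin \EuScript{R}_n$, so $\EisCan(g,s_0)$ is a genuine wave-packet, toroidal in $\chi$ by the remark following Theorem~\ref{Thm_CritTrainTor}, whose spectrum $\{s_0\}$ is not contained in $D$; hence \eqref{EQR2} fails. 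The very same reflection shows that $\EuScript{Z}_\chi - \EuScript{R}_n \subset D$ forces $\EuScript{Z}_\chi \cap \EuScript{R}_n = \emptyset$ too, so \eqref{EQR2} does imply \eqref{EQR1}.

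To bring in \eqref{EQR3}, observe that there $\EisCan(g,s)$ is assumed toroidal in $\chi$, i.e. $s \in \EuScript{Z}_\chi - \EuScript{R}_n$, so only the growth at the cusp $P$ is at stake. The key input is Proposition~\ref{Prop_ChowlaSelberg}:
$$
\EisCan_n^0(pg,s) = \delta_P(pg)^{s} + c_n(s)\,\delta_P(pg)^{\frac{1-s}{n-1}}\,
\EisCan_{n-1}\bigl((pg)',\tfrac{n-ns}{n-1}\bigr),
$$
together with the standard fact that $\EisCan_n(pg,s)$ differs from $\EisCan_n^0(pg,s)$ by a term rapidly decreasing as $\delta_P(p) \to \infty$. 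Letting $p$ tend to the cusp with its Levi part bounded — e.g. $p$ in the split centre of $M$, so the $\EisCan_{n-1}$-factor stays bounded — the surviving exponents are $\Re s$ and $\tfrac{1-\Re s}{n-1}$, and the coefficient of $\delta_P(pg)^{s}$, being identically $1$, cannot be annihilated; hence $\EisCan(pg,s) = O(\delta_P(p)^{1/2})$ forces $\Re s \leq \Demi$. For $n = 2$ the second exponent is $1 - \Re s$ and $\abs{c(s)} = 1$ on $D$, so the bound forces $\Re s \geq \Demi$ in addition, i.e. $\Re s = \Demi$; conversely on $D$ both exponents are $\leq \Demi$ and the estimate holds. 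Invoking once more the stability of $\EuScript{Z}_\chi$ under $s \mapsto 1-\bar{s}$ — an off-line zero with $\Re s < \Demi$ produces a partner with $\Re s > \Demi$, for which the $O(\delta_P(p)^{1/2})$ bound fails — one obtains \eqref{EQR3} $\Leftrightarrow$ \eqref{EQR1}.

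The hard part will be the quantitative bookkeeping behind \eqref{EQR3} for $n \geq 3$: the subleading term of $\EisCan_n^0$ is itself an Eisenstein series $\EisCan_{n-1}$, evaluated at $\tfrac{n-ns}{n-1}$ of real part $\tfrac{n}{2(n-1)} > \Demi$ on $D$, and it grows along its own cuspidal directions, so one must first isolate the genuine $\delta_P(p)$-dependence of $\EisCan_n(pg,s)$ — by induction on $n$ based on Proposition~\ref{Prop_ChowlaSelberg} — before the estimate $O(\delta_P(p)^{1/2})$ can be matched against the condition $\Re s = \Demi$. The equivalence \eqref{EQR1} $\Leftrightarrow$ \eqref{EQR2}, by contrast, is essentially formal once Theorem~\ref{Thm_CritTrainTor} and the functional equation are available.
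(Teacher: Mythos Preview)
Your approach matches the paper's: Theorem~\ref{Thm_CritTrainTor} for \eqref{EQR1}$\Leftrightarrow$\eqref{EQR2}, and Proposition~\ref{Prop_ChowlaSelberg} together with the asymptotic $\EisCan(pg,s) \sim \EisCan^{0}(pg,s)$ for \eqref{EQR1}$\Leftrightarrow$\eqref{EQR3}. You are considerably more thorough than the paper's two-line proof---in particular, your use of the reflection $s \mapsto 1-\bar s$ to dispose of a hypothetical zero in $\EuScript{Z}_\chi \cap \EuScript{R}_n$ fills a gap the paper passes over in silence, and the inductive control of the $\EisCan_{n-1}$ factor for $n \geq 3$ that you flag as ``the hard part'' is left entirely implicit there.
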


\begin{proof}
Le th\'eor\`eme \ref{Thm_CritTrainTor} implique imm\'ediatement
l'\'equivalence des conditions \eqref{EQR1} et \eqref{EQR2}. Puisque
$$F(pg) \sim F^{0}(pg),$$
La proposition \ref{Prop_ChowlaSelberg} implique l'\'equivalence des
conditions \eqref{EQR1} et \eqref{EQR3}.
\end{proof}

\medskip

\section{Relations de Maass-Selberg}
\label{sec_MaassSelberg}

\subsection*{L'op\'erateur de troncature}~

\medskip

On va utiliser l'op\'erateur d'Arthur pour estimer l'int\'egrale des
s\'eries d'Eisenstein. 

\begin{proposition}
\label{Prop_OpArthur}
pour $m > 0$, il existe un op\'erateur $\Lambda^{m}$ de $C(G(k) \backslash
G(\Ad))$ jouissant des propri\'et\'es suivantes.
\begin{enumerate}
\item
\label{ARTIntro}
Si $F$ est une fonction continue \emph{\`a
croissance lente} sur $G(k) \backslash G(\Ad)$ et si $m > 0$, la fonction
$\Lambda^{m}F$ est \emph{\`a d\'ecroissance rapide}, et $\Lambda^{m}F$ tend
vers $F$ uniform\'ement sur tout compact lorsque $m$ tend vers l'infini.
\item
\label{ARTdim2}
Supposons $n = 2$. Soient $s_{1}$ et $s_{2}$ deux points du plan complexe
qui ne sont ni des p\^oles de $\EisCan(g, s)$, ni des p\^oles de $c(s)$.
Si $m$ tend vers l'infini, on a :
\begin{equation}
\label{MSDim2}
\int_{G(k) Z(\Ad) \backslash G(\Ad)}
\Lambda^{m}\EisCan(g, s_{1}) \,
\Lambda^{m}\EisCan(g, s_{2}) \, dg \sim v \, \varpi_{2}^{m}(s_{1}, s_{2}),
\end{equation}
o\`u $v > 0$ et o\`u
\begin{eqnarray*}
\label{DefFcnVieuxPi2}
\varpi_{2}(s_{1}, s_{2}) & = &
\frac{m^{s_{1} - s_{2}} c(s_{2}) -
m^{s_{2} - s_{1}} c(s_{1})}{s_{1} - s_{2}} \\
& + & \frac{m^{s_{1} + s_{2} - 1} - m^{1 - s_{2} - s_{1}} c(s_{1}) c(s_{2})}
{s_{1} + s_{2} - 1}.
\end{eqnarray*}
\item
\label{ARTdim3}
Supposons $n \geq 3$. Si $t_{1}$ et $t_{2}$ sont r\'eels, et si $m$
tend vers l'infini, on a
\begin{equation}
\label{MSDim3Reel}
\int_{G(k)Z(\Ad) \backslash G(\Ad)}
\Lambda^{m} \EisCan(g, \Demi + i t_{1}) \,
\Lambda^{m} \EisCan(g, \Demi + i t_{2}) \, dg
\sim v \, M_{1}(t_{1}, t_{2}),
\end{equation}
o\`u $v$ est une constante $> 0$, et o\`u
$$
M_{1}(t_{1}, t_{2}) =
\frac{m^{i(t_{1} + t_{2})} - m^{i(t_{2} + t_{1})}
c(\tfrac{1}{2} + i t_{1})c(\tfrac{1}{2} + i t_{2})}{i(t_{1} + t_{2})},
\quad (t_{2} \neq - t_{1}),
$$
et
$$
M_{1}(t, - t) = 2 \log m - \, \frac{c'}{c}(\tfrac{1}{2} + i t).
$$
\end{enumerate}
\end{proposition}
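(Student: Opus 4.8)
Le plan est de r\'ealiser $\Lambda^{m}$ comme l'op\'erateur de troncature d'Arthur pour $G = \GL_{n}$, relatif aux sous-groupes paraboliques standard : on retranche \`a $F \in C(G(k) \backslash G(\Ad))$, le long de chaque sous-groupe parabolique standard propre $P'$ et dans la r\'egion o\`u la hauteur associ\'ee \`a $P'$ d\'epasse $m$, son terme constant $F_{P'}$ le long de $P'$, affect\'e des signes d'inclusion-exclusion d'Arthur ; on normalise le param\`etre de sorte que, pour le parabolique $P$ de type $(n - 1, 1)$, la coupure soit $\delta_{P} \leq m$. L'assertion \eqref{ARTIntro} r\'esulte alors des propri\'et\'es g\'en\'erales de cet op\'erateur (Arthur) : il pr\'eserve $C(G(k) \backslash G(\Ad))$, transforme les fonctions \`a croissance mod\'er\'ee en fonctions \`a d\'ecroissance rapide, est auto-adjoint et idempotent ($\Lambda^{m} \circ \Lambda^{m} = \Lambda^{m}$), et v\'erifie $\Lambda^{m} F \to F$ uniform\'ement sur tout compact lorsque $m \to \infty$, les r\'egions de troncature \'etant repouss\'ees \`a l'infini.

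Pour \eqref{MSDim2} et \eqref{MSDim3Reel}, on suivrait le calcul de Maass-Selberg. Comme $\Lambda^{m} \EisCan(\cdot, s_{i})$ est \`a d\'ecroissance rapide, les deux int\'egrales convergent absolument. En utilisant l'auto-adjonction et l'idempotence de $\Lambda^{m}$, et le fait que le terme constant de $\Lambda^{m} \EisCan(\cdot, s_{1})$ le long de $N$ s'annule dans la r\'egion $\{\delta_{P} > m\}$, on ram\`enerait le produit \`a une seule troncature,
$$
\int \Lambda^{m} \EisCan(g, s_{1}) \, \Lambda^{m} \EisCan(g, s_{2}) \, dg =
\int_{G(k)Z(\Ad)\backslash G(\Ad)} \Lambda^{m} \EisCan(g, s_{1}) \, \EisCan(g, s_{2}) \, dg,
$$
puis, pour $\Re(s_{2})$ assez grand, on d\'eroulerait la seconde s\'erie d'Eisenstein par le proc\'ed\'e de Rankin-Selberg, d'o\`u
$$
\int_{P(k)Z(\Ad)\backslash G(\Ad)} (\Lambda^{m} \EisCan)^{0}(g, s_{1}) \, \delta_{P}(g)^{s_{2}} \, dg,
$$
que l'on prolongerait ensuite m\'eromorphiquement en $s_{2}$. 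La proposition \ref{Prop_ChowlaSelberg} donne la forme explicite du terme constant,
$$
\EisCan_{n}^{0}(g, s_{1}) = \delta_{P}(g)^{s_{1}} + c_{n}(s_{1}) \, \delta_{P}(g)^{\frac{1 - s_{1}}{n - 1}} \, \EisCan_{n - 1}(g', \tfrac{n - n s_{1}}{n - 1}),
$$
et $(\Lambda^{m} \EisCan_{n})^{0}$ en est la version obtenue en coupant \`a $\delta_{P} \leq m$ et en appliquant la troncature de $\GL_{n - 1}$ au second terme, \`a une correction pr\`es provenant de l'\'el\'ement de Weyl, que l'on calculerait \`a l'aide de l'expression de type $\GL_{2}$ de $\delta_{P}(w n g)$ en fonction de $\delta_{P}(g)$.

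Il resterait \`a mener les int\'egrations. Pour $n = 2$, o\`u $\EisCan_{1} \equiv 1$ : en d\'ecomposant la mesure de Haar sur $P(k)Z(\Ad)\backslash G(\Ad)$ suivant $y = \delta_{P}(g)$ et en \'evaluant les int\'egrales \'el\'ementaires $\int y^{a} \, dy/y$ sur $\{y \leq m\}$ et sur $\{y \leq 1/m\}$ --- cette derni\`ere r\'egion provenant de la correction de l'\'el\'ement de Weyl et rendant n\'ecessaire le prolongement en $s_{2}$ ---, on r\'ecolterait les quatre termes de $\varpi_{2}$, la constante $v > 0$ \'etant un volume. Pour $n \geq 3$, le groupe de Weyl relatif $W(P, P)$ est trivial, donc la correction de Weyl de ce type dispara\^it ; les termes crois\'es contenant $\EisCan_{n - 1}(g', \tfrac{n - n s_{i}}{n - 1})$ seraient contr\^ol\'es par la relation de Maass-Selberg sur $\GL_{n - 1}$ --- l'int\'egrale d'une s\'erie d'Eisenstein tronqu\'ee de $\GL_{n - 1}$ contre une constante \'etant n\'egligeable devant le terme principal quand $m \to \infty$ --- et ne contribueraient pas \`a l'asymptotique ; en restreignant les param\`etres \`a $s_{i} = \Demi + i t_{i}$ r\'eels, seuls subsisteraient les exposants $\pm i(t_{1} + t_{2})$, d'o\`u $M_{1}(t_{1}, t_{2})$, le cas $t_{2} = - t_{1}$ s'obtenant par la r\`egle de L'H\^opital : la relation $c(s) c(1 - s) = 1$ annule le num\'erateur, et le r\'esidu produit $2 \log m - \frac{c'}{c}(\Demi + i t_{1})$. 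La positivit\'e de $v$ r\'esulterait de ce que le membre de gauche est une limite de carr\'es de normes $L^{2}$.

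Le principal obstacle serait le contr\^ole pr\'ecis du terme constant tronqu\'e $(\Lambda^{m} \EisCan)^{0}$ et le d\'ecompte exact des contributions qui subsistent : la correction de l'\'el\'ement de Weyl pour $n = 2$ (d'o\`u proviennent les termes en $c(s_{2})$ et $c(s_{1}) c(s_{2})$) et, pour $n \geq 3$, le caract\`ere effectivement n\'egligeable, dans la limite, des contributions des s\'eries d'Eisenstein de rang inf\'erieur ; il faudrait aussi justifier le prolongement m\'eromorphique en $s_{1}, s_{2}$ levant l'hypoth\`ese $\Re(s_{2}) \gg 0$, et son interversion avec l'asymptotique $m \to \infty$.
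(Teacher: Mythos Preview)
The paper's own ``proof'' is essentially a citation: it defines $\Lambda^{m}$ as Arthur's truncation operator $\Lambda^{T(m)}$ with $T(m) = (\log m)\,H_{\rho}$ (where $\langle\alpha, H_{\rho}\rangle = 1$ for every simple root $\alpha$), notes that this gives \eqref{ARTIntro} by the general properties of $\Lambda^{T}$, and then for \eqref{ARTdim2} simply refers to Knapp, \emph{Theoretical aspects of the Trace formula for $\GL(2)$}, Prop.~7.13, and for \eqref{ARTdim3} to Arthur's inner-product formula, specifically \cite[Cor.~9.2]{Arthur3}. No computation is carried out in the paper.

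Your proposal is therefore far more detailed than the paper's proof. For $n = 2$ your plan is the classical Maass--Selberg computation and is essentially what Knapp does: reduce to a single truncation by self-adjointness and idempotence, unfold one Eisenstein series, integrate the truncated constant term $\delta_{P}^{s_{1}} + c(s_{1})\,\delta_{P}^{1 - s_{1}}$ against $\delta_{P}^{s_{2}}$ over the regions cut out by $m$, and collect the four terms. This is correct and matches the cited reference.

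For $n \geq 3$, however, your direct approach diverges from what actually underlies the result, and the gap is the one you flag yourself but underestimate. The truncation $\Lambda^{m}$ is taken with respect to \emph{all} standard parabolics, and Arthur's asymptotic inner-product formula \cite[Cor.~9.2]{Arthur3} is not obtained by unfolding along the single maximal $P$ and discarding the $\EisCan_{n-1}$-contributions as ``n\'egligeables''. The mechanism is different: Arthur's formula expresses the inner product as a sum over the relative Weyl set $W(\mathfrak{a}_{P},\mathfrak{a}_{P})$, and the reason only the single term $M_{1}$ survives for $n \geq 3$ is that $P$ of type $(n-1,1)$ is not self-associate (so this Weyl set is trivial), whereas for $n = 2$ it has two elements, producing the four terms of $\varpi_{2}$. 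The contributions from lower parabolics and from the $\EisCan_{n-1}$-piece of the constant term are handled by Arthur's $(G,M)$-family combinatorics and his estimates on truncated Eisenstein series, not by an $L^{2}$-bound on $\GL_{n-1}$ as you suggest; attempting the latter directly would not obviously yield the precise asymptotic \eqref{MSDim3Reel}. In short: your $n = 2$ outline is fine and is the cited proof; your $n \geq 3$ outline is a plausible heuristic, but the actual argument the paper relies on is Arthur's general machinery rather than a hands-on Rankin--Selberg computation, and reproducing the asymptotic without it would require substantially more than what you sketch.
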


Les relations \eqref{MSDim2} sont les \emph{relations de Maass-Selberg}, et
les relations \eqref{MSDim3Reel} sont les \emph{relations de Maass-Selberg
g\'en\'eralis\'ees}.

\begin{proof}
Rappelons les propri\'et\'es de \emph{l'op\'erateur de troncature
$\Lambda^{T}$ d'Arthur} d\'efini dans \cite[p. 270]{Arthur1}, \cite[p.
89]{Arthur2}, \cite[p. 40]{Arthur3}, o\`u $T$ est un
\'el\'ement \emph{convenablement r\'egulier} de l'alg\`ebre de Lie du tore
maximal standard $A_{0}$ de $G$. Si $F$ est une fonction continue \emph{\`a
croissance lente} sur $G(k) \backslash G(\Ad)$ et si $T > 0$, la fonction
$\Lambda^{T}F$ est \emph{\`a d\'ecroissance rapide}, et $\Lambda^{T}F$ tend
vers $F$ uniform\'ement sur tout compact lorsque $T$ tend vers l'infini. Si
on note $H_{\rho}$ l'\'el\'ement tel que $<\alpha, H_{\rho}>{} = 1$ pour
toute racine simple $\alpha$ de $G$ relative \`a $A_{0}$, et si $m$ est un
nombre r\'eel assez grand, l'\'el\'ement $T(m) = (\log m)H_{\rho}$ est
convenablement r\'egulier (voir la d\'efinition en \cite[Eq. (9.2), p.
69]{Arthur3}. L'op\'erateur $\Lambda^{m} = \Lambda^{T(m)}$ satisfait la
condition \eqref{ARTIntro}. Si $n = 2$, voir \cite[Prop. 7.13, p.
401]{Knapp2} pour la d\'emonstration de \eqref{ARTdim2}. Si $n \geq 3$, on
d\'eduit \eqref{ARTdim3} des travaux d'Arthur \cite[p. 271]{Arthur1},
\cite[Lem. 4.2, p. 119]{Arthur2}, \cite[Cor. 9.2, p. 70]{Arthur3}.
\end{proof}

Signalons en passant que si $n = 2$, on d\'eduit de \eqref{MSDim2} en
passant
\`a la limite :
$$
\int_{G(k)Z(\Ad) \backslash G(\Ad)}
\abs{\Lambda^{m} 1}^{2} \, dg = v (2 \Lambda(2) - \dfrac{1}{m}) \sim
\vol G(k)Z(\Ad) \backslash G(\Ad),
$$
d'o\`u
$$
v = \frac{\vol G(k)Z(\Ad) \backslash G(\Ad)}{2 \Lambda(2)} =
\frac{3}{\pi} \, \vol G(k)Z(\Ad) \backslash G(\Ad).
$$
La relation \eqref{MSDim2} et un autre passage \`a la limite impliquent :

\begin{lemma}
\label{Lemme_MScomplexe}
Supposons $n = 2$. Si $s \in \CC$ et $\Re(s) > 1/2$, on a
$$
\int_{G(k) Z(\Ad) \backslash G(\Ad)} \abs{\Lambda^{m}E(g, s)}^{2} \, dg =
v \, \dfrac{m^{2 \sigma - 1}}{2 \sigma - 1} + O(1).
$$
\end{lemma}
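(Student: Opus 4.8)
The plan is to reduce everything to the Maass--Selberg relation \eqref{MSDim2}, fed with the conjugate pair $s_{1} = s$, $s_{2} = \bar{s}$. First I would record that $\EisCan(g,s)$ is ``real'' in $s$: each term $\delta_{P}(\gamma g)^{s}$ of the defining series is a complex power of a positive real number, so $\overline{\delta_{P}(\gamma g)^{s}} = \delta_{P}(\gamma g)^{\bar{s}}$, whence $\overline{\EisCan(g,s)} = \EisCan(g,\bar{s})$ for $\Re(s) > 1$, and then everywhere by meromorphic continuation. Since Arthur's truncation $\Lambda^{m}$ is built only from constant-term maps along parabolic subgroups and characteristic functions of cones, it commutes with complex conjugation, so $\overline{\Lambda^{m}\EisCan(g,s)} = \Lambda^{m}\EisCan(g,\bar{s})$ and hence
$$
\abs{\Lambda^{m}\EisCan(g, s)}^{2} = \Lambda^{m}\EisCan(g, s)\,\Lambda^{m}\EisCan(g, \bar{s}).
$$
Both $s$ and $\bar{s}$ lie in $\Re(s) > 1/2$ and differ from $1$, so by Proposition \ref{Prop_Eisenstein} neither is a pole of $\EisCan(g,\cdot)$ (the only pole there is the simple one at $s = 1$) nor of $c$ (for $n = 2$ the function $c$ is holomorphic on $\Re(s) \geq 1/2$ except at $s = 1$); hence \eqref{MSDim2} applies, and integrating the identity above over $G(k) Z(\Ad) \backslash G(\Ad)$ gives $\int \abs{\Lambda^{m}\EisCan(g, s)}^{2}\,dg = v\,\varpi_{2}^{m}(s, \bar{s})$ (the integral converging, as it equals the left side of \eqref{MSDim2}).

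Next I would substitute $(s_{1},s_{2}) = (s,\bar{s})$ into the definition of $\varpi_{2}$. Putting $\sigma = \Re(s)$ one has $s_{1}-s_{2} = 2i\Im(s)$ and $s_{1}+s_{2}-1 = 2\sigma - 1$, so
$$
\varpi_{2}^{m}(s, \bar{s}) =
\frac{m^{2\sigma - 1} - m^{1 - 2\sigma}\, c(s)\, c(\bar{s})}{2\sigma - 1} +
\frac{m^{2i\Im(s)}\, c(\bar{s}) - m^{-2i\Im(s)}\, c(s)}{2i\,\Im(s)}.
$$
Now let $m\to\infty$ with $s$ fixed. Because $\sigma > 1/2$ we have $m^{1-2\sigma}\to 0$, so the first fraction is $\tfrac{m^{2\sigma-1}}{2\sigma-1} + O(1)$; and since $m^{\pm 2i\Im(s)}$ has modulus $1$ and $\Im(s)$ is a fixed nonzero real, the second fraction is bounded in modulus by $(\abs{c(s)}+\abs{c(\bar{s})})/(2\abs{\Im(s)})$, hence is $O(1)$. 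Multiplying by $v > 0$ yields the stated $\int \abs{\Lambda^{m}\EisCan(g,s)}^{2}\,dg = v\,\tfrac{m^{2\sigma-1}}{2\sigma-1} + O(1)$. (When $s$ is real, $s_{1}$ and $s_{2}$ coincide and one uses instead the limiting $s_{2}\to s_{1}$ form of \eqref{MSDim2}, in which the first fraction is replaced by $2c(s)\log m - c'(s)$ --- still of lower order than $m^{2\sigma-1}$ --- so the leading term is unchanged.)

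The algebra is routine; the real content is in the two things taken from \eqref{MSDim2}. The first is that it may be applied to the conjugate pair $(s,\bar{s})$ at all --- that the constant $v$ and the shape of $\varpi_{2}$ are not special to ``generic'' $s_{1},s_{2}$ --- and, for real $s$, that its $s_{2}\to s_{1}$ limiting form is available. The second, and the genuinely delicate point, is that \eqref{MSDim2} is in fact an \emph{equality} up to an $O(1)$ error --- as the exact Maass--Selberg identity behind it provides --- and not merely the asymptotic equivalence in which it is stated; with only the latter one would be left here with $v\,\tfrac{m^{2\sigma-1}}{2\sigma-1} + o(m^{2\sigma-1})$.
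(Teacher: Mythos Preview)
Your proof is correct and is precisely the approach the paper indicates: the paper's ``proof'' is the single clause ``La relation \eqref{MSDim2} et un autre passage \`a la limite impliquent'', and your argument---feeding the conjugate pair $(s,\bar s)$ into \eqref{MSDim2} and reading off the dominant term $m^{2\sigma-1}/(2\sigma-1)$ of $\varpi_{2}^{m}(s,\bar s)$---is exactly how one fills this in. Your closing caveat is well taken: for $n=2$ the classical Maass--Selberg relation (the reference \cite{Knapp2} cited for \eqref{MSDim2}) is in fact an \emph{exact} identity for all sufficiently large $m$, so the $\sim$ in \eqref{MSDim2} is overly cautious and the $O(1)$ remainder is legitimate.
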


\medskip

\subsection*{Fonctions presque p\'eriodiques}~

\medskip

Si $\psi \in C(\RRm)$, on d\'efinit une semi-norme $\N{\psi}_{2}$ en posant
$$
\N{\psi}_{2}^{2} =
\lim_{m \rightarrow \infty} \dfrac{1}{2 \log m} \int_{1/m}^{m}
\abs{\psi(x)}^{2} \, d^{\times}\!x.
$$
L'espace de Hilbert $B^{2}(\RRm)$ des \emph{fonctions presque
p\'eriodiques} au sens de Besicovitch, de carr\'e int\'egrable en moyenne
sur $\RRm$, est le compl\'et\'e de l'espace $\Toe^{1}(\RRm)$ des sommes de
caract\`eres pour cette semi-norme. L'espace $B^{2}(\RRm)$ s'identifie \`a
l'espace $\ell^{2}(\RR_{d})$, o\`u $\RR_{d}$ est le sous-groupe discret
sous-jacent \`a la droite r\'eelle, ou encore \`a l'espace des s\'eries
formelles
$$
\psi(x) \sim \sum_{t} a_{t}(\psi) \, x^{i t},
\quad \text{avec} \quad
\sum_{t} \, \abs{a_{t}(\psi)}^{2} < + \infty.
$$
De la m\^eme mani\`ere, si $F \in C(X)$, on d\'efinit une semi-norme
$\N{F}_{2}$, \`a valeurs dans $[0, + \infty]$, en posant
\begin{equation}
\label{NormeArthur}
\N{F}_{2}^{2} = \lim_{m \rightarrow \infty}
\dfrac{1}{4 \, v \log m} \int_{G(k)Z(\Ad) \backslash G(\Ad)}
\abs{\Lambda^{m}F(g)}^{2} \, dg,
\end{equation}
et on dit que $F$ est \emph{de carr\'e int\'egrable en moyenne} si
$\N{F}_{2} < + \infty$. L'espace $\EuScript{A}^{2}(X)$ des fonctions de
carr\'e int\'egrable en moyenne jouit d'une structure d'espace
pr\'ehilbertien : si $F_{1}$ et $F_{2}$ appartiennent \`a
$\EuScript{A}^{2}(X)$, la limite
$$
(F_{1}, F_{2})_{2} =
\lim_{m \rightarrow \infty} \dfrac{1}{4 \, v \log m}
\int_{G(k)Z(\Ad) \backslash G(\Ad)}
\Lambda^{m}F_{1}(g) \, \overline{\Lambda^{m}F_{2}(g)} \, dg
$$
existe et d\'efinit une forme hermitienne sur $\EuScript{A}^{2}(X)$. On note
$A^{2}(X)$ l'espace de Hilbert s\'epar\'e compl\'et\'e de
$\EuScript{A}^{2}(X)$ relativement \`a la semi-norme $\N{F}_{2}$. Or on a
$\Toe^{1}(X) \subset A^{2}(X)$, par la proposition \ref{Prop_SuppPp}
ci-dessous ; ceci permet de d\'efinir par analogie l'espace de
Hilbert $B^{2}(X)$ des \emph{trains d'ondes presque p\'eriodiques} comme
\'etant l'adh\'erence de $\Toe^{1}(X)$ dans $A^{2}(X)$ ; c'est l'analogue
pour la vari\'et\'e modulaire $X$ de l'espace $B^{2}(\RRm)$. Puisque
$$
M_{1}(t, - t) = 2 \log m - \, \frac{c'}{c}(\tfrac{1}{2} + i t),
$$
On d\'eduit de la proposition \ref{Prop_OpArthur} :

\begin{lemma}
\label{Lemme_NormEisDim3}
Supposons $n \geq 3$. Soient $t_{1}$ et $t_{2}$ dans $\RR$.
\begin{enumerate}
\item
\label{MSobliqueGen}
Si $t_{2} \neq t_{1}$, on a
$
(\EisCan(. \, , \tfrac{1}{2} + i t_{1}),
\EisCan(. \, , \tfrac{1}{2} + i t_{2}))_{2} = 0.
$
\item
\label{MSnormeGen}
On a
$\N{\EisCan(. \, , \tfrac{1}{2} + i t)}_{2}^{2} = \Demi$.
\hfill \qed
\end{enumerate}
\end{lemma}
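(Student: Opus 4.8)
\emph{Sketch of the argument.} The plan is to reduce both identities to the generalized Maass–Selberg relations \eqref{MSDim3Reel} by converting the Hermitian pairing $(\cdot\,,\cdot)_2$ into the bilinear pairing that occurs there. First I would record two elementary facts. On the one hand $\overline{\EisCan(g, s)} = \EisCan(g, \bar s)$: for $\Re(s) > 1$ this is clear, since $\delta_{P}(\gamma g)$ is a positive real number and hence $\overline{\delta_{P}(\gamma g)^{s}} = \delta_{P}(\gamma g)^{\bar s}$, and the identity is preserved under meromorphic continuation; in particular $\overline{\EisCan(g, \Demi + it)} = \EisCan(g, \Demi - it)$ for real $t$, and when $n \geq 3$ no point of the critical line is a pole of $\EisCan(g, s)$, the poles in $B$ all lying in the strip $\Re(s) < 1/n < \Demi$. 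On the other hand $\Lambda^{m}$ has a real-valued kernel and commutes with integration along unipotent radicals, so $\Lambda^{m}\overline{F} = \overline{\Lambda^{m}F}$; combining these gives $\overline{\Lambda^{m}\EisCan(g, \Demi + it_{2})} = \Lambda^{m}\EisCan(g, \Demi - it_{2})$, whence
$$
(\EisCan(\cdot\,, \Demi + i t_{1}), \EisCan(\cdot\,, \Demi + i t_{2}))_{2} =
\lim_{m \to \infty} \frac{1}{4 v \log m}
\int_{G(k)Z(\Ad) \backslash G(\Ad)}
\Lambda^{m}\EisCan(g, \Demi + i t_{1}) \, \Lambda^{m}\EisCan(g, \Demi - i t_{2}) \, dg .
$$

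Next I would apply Proposition \ref{Prop_OpArthur}\eqref{ARTdim3}, that is \eqref{MSDim3Reel}, with the real parameters $t_{1}$ and $-t_{2}$. If $t_{2} \neq t_{1}$, then $t_{1} + (-t_{2}) \neq 0$, so the integral is asymptotic to $v\,M_{1}(t_{1}, -t_{2})$, which is a linear combination of the unimodular quantities $m^{i(t_{1} - t_{2})}$ and hence $O(1)$ as $m \to \infty$; dividing by $4 v \log m$ and letting $m \to \infty$ yields $(\EisCan(\cdot\,, \Demi + i t_{1}), \EisCan(\cdot\,, \Demi + i t_{2}))_{2} = 0$, which is assertion \eqref{MSobliqueGen}. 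If $t_{1} = t_{2} = t$, the left-hand side is $\N{\EisCan(\cdot\,, \Demi + i t)}_{2}^{2}$ and the parameters are $t$ and $-t$, so we are in the diagonal case $t_{2} = -t_{1}$ of \eqref{MSDim3Reel}; the integral is then asymptotic to
$$
v\,M_{1}(t, -t) = v\Bigl( 2 \log m - \frac{c'}{c}(\tfrac{1}{2} + i t) \Bigr),
$$
and dividing by $4 v \log m$ the bounded term drops out in the limit, giving $\N{\EisCan(\cdot\,, \Demi + i t)}_{2}^{2} = 2/4 = \Demi$, which is assertion \eqref{MSnormeGen}.

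I do not expect a genuine obstacle: the substantive content is entirely carried by Proposition \ref{Prop_OpArthur}. The only points that need a line of justification are the two reflection identities above (that $\EisCan(g, \cdot)$ takes conjugate values at conjugate arguments, and that $\Lambda^{m}$ commutes with complex conjugation), together with the remark that in both regimes the correction to the main term $2 v \log m$ — respectively to $0$ — remains bounded as $m \to \infty$, so that the quotient by $4 v \log m$ converges to the stated value. In particular assertion \eqref{MSnormeGen} also shows $\EisCan(\cdot\,, \Demi + i t) \in \EuScript{A}^{2}(X)$, so the seminorm \eqref{NormeArthur} is indeed finite on these functions.
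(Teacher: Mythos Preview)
Your argument is correct and is precisely the route the paper takes: the lemma is stated with a \qed{} and the only justification given is the sentence preceding it, namely that one deduces it from Proposition~\ref{Prop_OpArthur} using the expression $M_{1}(t,-t) = 2\log m - (c'/c)(\tfrac{1}{2}+it)$. You have simply supplied the details the paper leaves implicit --- the conjugation identity $\overline{\EisCan(g,\tfrac{1}{2}+it)} = \EisCan(g,\tfrac{1}{2}-it)$ needed to pass from the Hermitian pairing $(\cdot,\cdot)_{2}$ to the bilinear integral in \eqref{MSDim3Reel}, and the observation that $M_{1}(t_{1},-t_{2})$ stays bounded when $t_{1}\neq t_{2}$.
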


On a aussi :

\begin{lemma}
\label{Lemme_NormEisDim2}
Supposons $n = 2$. Soient $t_{1}$ et $t_{2}$ dans $\RR$.
\begin{enumerate}
\item
\label{MSoblique}
Si $t_{2} \neq \pm \, t_{1}$, on a
$
(\EisCan(. \, , \tfrac{1}{2} + i t_{1}),
\EisCan(. \, , \tfrac{1}{2} - i t_{2}))_{2} = 0.
$
\item
\label{MSnorme}
Si $t \neq 0$, on a
$\N{\EisCan(. \, , \tfrac{1}{2} + i t)}_{2}^{2} = \Demi$.
\item
\label{MSUnDemi}
On a
$\N{\EisCan(. \, , \Demi)}_{2}^{2} = 1$.
\item
\label{MSoppose}
Si $t \neq 0$, on a
$
(\EisCan(. \, , \tfrac{1}{2} + i t),
\EisCan(. \, , \tfrac{1}{2} - i t))_{2} = \Demi \, c(\tfrac{1}{2} + i t).
$
\hfill \qed
\end{enumerate}
\end{lemma}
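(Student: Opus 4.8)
The plan is to read all four identities off the Maass--Selberg relation \eqref{MSDim2}, after reducing the hermitian products to symmetric integrals. Since $\delta_{P}(\gamma g)$ is a positive real number, one has $\overline{\EisCan(g, s)} = \EisCan(g, \bar{s})$ (first for $\Re(s) > 1$ by the defining series, then everywhere by analytic continuation), and since $\Lambda^{m}$ has real coefficients, $\overline{\Lambda^{m}\EisCan(g, \Demi + it)} = \Lambda^{m}\EisCan(g, \Demi - it)$. Hence each of the four pairings in the statement equals
$$
\lim_{m \to \infty} \frac{1}{4 v \log m}
\int_{G(k)Z(\Ad) \backslash G(\Ad)}
\Lambda^{m}\EisCan(g, s_{1}) \, \Lambda^{m}\EisCan(g, s_{2}) \, dg,
$$
with $(s_{1}, s_{2})$ equal respectively to $(\Demi + it_{1}, \Demi + it_{2})$ for \eqref{MSoblique}, $(\Demi + it, \Demi - it)$ for \eqref{MSnorme}, $(\Demi, \Demi)$ for \eqref{MSUnDemi}, and $(\Demi + it, \Demi + it)$ for \eqref{MSoppose}. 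In each case the two points lie on $\Re(s) = \Demi$ (or at $s = \Demi$), hence are neither poles of $\EisCan(g, s)$ nor poles of $c(s)$, their poles in the half-plane $\Re(s) \geq \Demi$ being confined to $s = 1$ by Proposition \ref{Prop_Eisenstein}. So \eqref{MSDim2} applies and the whole lemma reduces to computing $\lim_{m \to \infty} \varpi_{2}(s_{1}, s_{2})/\log m$ for these four configurations, $\varpi_{2}$ being taken in the sense of its holomorphic continuation across the removable singularities at $s_{1} = s_{2}$ and at $s_{1} + s_{2} = 1$.

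I would then carry out the four evaluations. For \eqref{MSoblique}, the hypothesis $t_{2} \neq \pm t_{1}$ makes $s_{1} - s_{2} = i(t_{1} - t_{2})$ and $s_{1} + s_{2} - 1 = i(t_{1} + t_{2})$ nonzero, so each of the two fractions in $\varpi_{2}$ is bounded in $m$ (as $\abs{m^{i\theta}} = 1$ for $\theta \in \RR$); dividing by $\log m$ gives $0$. For \eqref{MSnorme} ($t \neq 0$) the first fraction has nonzero denominator $2it$ and stays bounded, while the second, at its removable singularity $s_{1} + s_{2} = 1$, must be taken as a limit: putting $\varepsilon = s_{1} + s_{2} - 1$ and using $c(s)c(1 - s) = 1$ to get $c(s_{1})c(s_{2}) = 1 + \varepsilon\,\tfrac{c'}{c}(\Demi + it) + O(\varepsilon^{2})$ together with $m^{\pm\varepsilon} = 1 \pm \varepsilon \log m + O(\varepsilon^{2})$, the fraction tends to $2 \log m - \tfrac{c'}{c}(\Demi + it)$, whence $\varpi_{2} \sim 2 \log m$ and the pairing is $\Demi$. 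For \eqref{MSUnDemi} both fractions degenerate at once; expanding about $s_{1} = s_{2} = \Demi$ and using $c(\Demi) = 1$, each fraction contributes $2 \log m + O(1)$, so $\varpi_{2} \sim 4 \log m$ and the pairing is $1$. For \eqref{MSoppose} ($t \neq 0$) the second fraction has nonzero denominator $2it$ and stays bounded, while the first, at its removable singularity $s_{1} = s_{2}$, tends to $c(\Demi + it)\bigl(2 \log m - \tfrac{c'}{c}(\Demi + it)\bigr)$, so $\varpi_{2} \sim 2\,c(\Demi + it) \log m$ and the pairing is $\Demi\,c(\Demi + it)$.

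The point I expect to be the real obstacle is justifying the use of \eqref{MSDim2} at the two degenerate configurations $s_{1} = s_{2}$ and $s_{1} + s_{2} = 1$: one must know that $\int \Lambda^{m}\EisCan(g, s_{1})\,\Lambda^{m}\EisCan(g, s_{2})\,dg = v\,\varpi_{2}(s_{1}, s_{2}) + o(\log m)$ still holds there with $\varpi_{2}$ read as its holomorphic extension (equivalently, that the $\sim$ in Proposition \ref{Prop_OpArthur}\eqref{ARTdim2} is locally uniform in $(s_{1}, s_{2})$ away from the poles of $\EisCan$ and of $c$). This is precisely what the truncated inner product calculation behind \eqref{ARTdim2} provides --- compare the limiting value $M_{1}(t, -t) = 2 \log m - \tfrac{c'}{c}(\Demi + it)$ recorded in Proposition \ref{Prop_OpArthur}\eqref{ARTdim3} for $n \geq 3$ --- and once it is granted the four evaluations above are immediate, the corrections $-\tfrac{c'}{c}(\Demi + it)$, $-c'(\Demi)$ and the various bounded terms all disappearing upon division by $4 v \log m$ as $m \to \infty$.
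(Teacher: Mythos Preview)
Your proposal is correct and is precisely the approach the paper has in mind. The paper gives no explicit proof of this lemma (it is introduced by ``On a aussi'' and carries only a \qed), deducing it, like the companion Lemma~\ref{Lemme_NormEisDim3}, directly from the Maass--Selberg relation of Proposition~\ref{Prop_OpArthur}\eqref{ARTdim2}; what you have written is exactly the detailed verification of that deduction, including the care needed at the removable singularities $s_{1}=s_{2}$ and $s_{1}+s_{2}=1$.
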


\begin{proposition}
\label{Prop_SuppPp}
Si $F$ est un train d'ondes principal non nul, on a
$$0 < \N{F}_{2} < + \infty,$$
autrement dit, on a $\Toe^{1}(X) \subset A^{2}(X)$ ; si $n = 2$, on a
$\Toe^{1}(X) = \Toe(X) \cap A^{2}(X)$.
\end{proposition}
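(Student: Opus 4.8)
The plan is to compute the Besicovitch semi-norm $\N{F}_{2}$ of a wave packet directly from the Maass-Selberg relations of Proposition~\ref{Prop_OpArthur}. A wave packet is of moderate growth, so by Proposition~\ref{Prop_OpArthur}\eqref{ARTIntro} the truncation $\Lambda^{m}F$ is rapidly decreasing and lies in $L^{2}(G(k)Z(\Ad)\backslash G(\Ad))$; writing $F=\Esp{\mu}=\sum_{j}a_{j}\,\EisCan(\cdot\,,s_{j})$, the integral $\int\abs{\Lambda^{m}F(g)}^{2}\,dg$ equals the finite sum $\sum_{i,j}a_{i}\bar a_{j}\int\Lambda^{m}\EisCan(g,s_{i})\,\overline{\Lambda^{m}\EisCan(g,s_{j})}\,dg$. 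Since $\delta_{P}>0$ one has $\overline{\EisCan(g,s)}=\EisCan(g,\bar s)$, hence $\overline{\Lambda^{m}\EisCan(g,s)}=\Lambda^{m}\EisCan(g,\bar s)$, so each summand is, asymptotically in $m$, $v$ times the explicit quantity $\varpi_{2}(s_{i},\bar s_{j})$ of~\eqref{MSDim2} (in the obvious limiting form when $s_{i}=\bar s_{j}$ or $s_{i}+\bar s_{j}=1$). Two things then have to be read off: finiteness and strict positivity of $\N{F}_{2}$ when $\Spec F\subset D$, and divergence of $\N{F}_{2}$ when $\Spec F\not\subset D$ (for $n=2$).

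First I would settle the principal case, where the passage to the limit is already packaged in Lemmas~\ref{Lemme_NormEisDim3} and~\ref{Lemme_NormEisDim2}. If $n\geq3$, the functions $\EisCan(\cdot\,,\tfrac12+it)$, $t\in\RR$, are pairwise orthogonal for $(\cdot,\cdot)_{2}$ with squared norm $\Demi$; hence, for $F=\sum_{j}a_{j}\,\EisCan(\cdot\,,\tfrac12+it_{j})$ with the $t_{j}$ pairwise distinct, $\N{F}_{2}^{2}=\Demi\sum_{j}\abs{a_{j}}^{2}$, which is finite and is $>0$ unless all $a_{j}$ vanish, \idest unless $\mu=0$, \idest unless $F=0$ (Proposition~\ref{Prop_Indep}). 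If $n=2$, the spectrum of a principal $F=\Esp{\mu}$ is $\Supp\mu$ for some $\mu\in\EuScript{M}^{+}_{2}(B)$, hence is stable under $t\mapsto-t$ and satisfies $\mu(\tfrac12-it)=c(\tfrac12+it)\,\mu(\tfrac12+it)$; writing the support as $\{\tfrac12\}$ together with pairs $\{\tfrac12\pm it_{k}\}$, $t_{k}>0$, and using Lemma~\ref{Lemme_NormEisDim2} together with $\abs{c(\tfrac12+it)}=1$ (a consequence of $c(s)c(1-s)=1$ and $\overline{c(s)}=c(\bar s)$), a short computation gives $\N{F}_{2}^{2}=\abs{\mu(\tfrac12)}^{2}+2\sum_{k}\abs{\mu(\tfrac12+it_{k})}^{2}$ --- the cross term $2\Re\bigl(\mu(\tfrac12+it_{k})\,\overline{\mu(\tfrac12-it_{k})}\,c(\tfrac12+it_{k})\bigr)$ reinforcing, not cancelling, the diagonal contributions, precisely because $\abs{c}=1$ on $D$. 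Again this is finite and is $>0$ unless $F=0$. This establishes $0<\N{F}_{2}<+\infty$, \idest $\Toe^{1}(X)\subset A^{2}(X)$.

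For $n=2$ the inclusion $\Toe^{1}(X)\subset\Toe(X)\cap A^{2}(X)$ is now clear, and for the reverse one I would argue by contradiction. Let $F=\Esp{\mu}\in\Toe(X)$ with $\mu\in\EuScript{M}^{+}_{2}(B)$ and $\N{F}_{2}<+\infty$, and suppose $\Supp\mu\not\subset D$; put $\sigma=\max\{\Re s:s\in\Supp\mu\}$. Since $\EuScript{M}^{+}_{2}(B)$ forces $\Supp\mu$ to be stable under $s\mapsto1-s$, the minimum of $\Re s$ on $\Supp\mu$ equals $1-\sigma$, whence $\sigma>\Demi$. For $n=2$, the set $\EuScript{R}_{2}$ avoided by $\Supp\mu$ is exactly the set of poles of $\EisCan$ and of $c$ inside $B$, and it is invariant under complex conjugation (the nonreal zeros of $\zeta$ coming in conjugate pairs); hence the conjugates $\bar s_{j}$ also avoid it, and~\eqref{MSDim2} applies (in its limiting form where needed) to every pair $(s_{i},\bar s_{j})$. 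Therefore $\int\abs{\Lambda^{m}F(g)}^{2}\,dg$ equals, up to an error $o(m^{2\sigma-1})$, a combination of powers $m^{\alpha}$ and of $\log m$; the largest exponent $\alpha$ occurring equals $2\sigma-1>0$, and the coefficient of $m^{2\sigma-1}$ is an almost periodic function of $\log m$ whose mean value is $\dfrac{4}{2\sigma-1}\sum_{\Re s_{i}=\sigma}\abs{a_{i}}^{2}>0$. An almost periodic function of strictly positive mean has strictly positive $\limsup$, so $\limsup_{m\to\infty}(\log m)^{-1}\int\abs{\Lambda^{m}F(g)}^{2}\,dg=+\infty$; thus $\N{F}_{2}=+\infty$, contradicting $\N{F}_{2}<+\infty$. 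Therefore $\Supp\mu\subset D$, \idest $F\in\Toe^{1}(X)$, and $\Toe(X)\cap A^{2}(X)\subset\Toe^{1}(X)$.

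The identities drawn from the Lemmas are routine; the delicate step is this last computation of the mean, \idest the verification that no cancellation destroys the $m^{2\sigma-1}$ term. The contributions of that order split into four families of monomials, coming from the two summands of $\varpi_{2}$ evaluated respectively at the points of $\Supp\mu$ of real part $\sigma$ and at those of real part $1-\sigma$; a priori the two groups are unrelated, and it is only after invoking both the functional equation $c(s)c(1-s)=1$ and the $\EuScript{M}^{+}_{2}$-symmetry $\mu(1-s)=c(s)\mu(s)$ --- which, together with $\overline{c(s)}=c(\bar s)$ and $\abs{c(s)}^{2}=c(s)c(\bar s)$, turn $\abs{a_{i}}^{2}\,c(s_{i})\,c(\bar s_{i})$ attached to a point $s_{i}$ of real part $1-\sigma$ into $\abs{a_{j}}^{2}$ attached to the point $s_{j}=1-s_{i}$ of real part $\sigma$ --- that the four families are seen to have the same sign and to add up to the positive mean above. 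For $n\geq3$ the analogous bookkeeping is trivial, the family $\{\EisCan(\cdot\,,\tfrac12+it)\}$ being simply orthogonal with constant norm $\Demi$.
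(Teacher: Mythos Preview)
Your approach is the same as the paper's: use the orthogonality Lemmas~\ref{Lemme_NormEisDim3} and~\ref{Lemme_NormEisDim2} for the principal case, and the Maass--Selberg asymptotics for the converse when $n=2$. The difference is one of detail. For the converse, the paper simply invokes Lemma~\ref{Lemme_MScomplexe} (which treats a \emph{single} Eisenstein series with $\Re s>\tfrac12$) and asserts without further comment that $\int\abs{\Lambda^{m}\Esp{\mu}}^{2}\sim C\,m^{2\sigma(\mu)-1}/(2\sigma(\mu)-1)$; the possible cancellation among cross terms $a_{i}\bar a_{j}\,\varpi_{2}(s_{i},\bar s_{j})$ is left implicit. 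Your explicit bookkeeping --- identifying the four families of monomials of top order $m^{2\sigma-1}$ and using the $\EuScript{M}^{+}_{2}$-symmetry $\mu(1-s)=c(s)\mu(s)$ together with $c(s)c(1-s)=1$ and $\overline{c(s)}=c(\bar s)$ to show they all contribute with the same sign and mean $\dfrac{4}{2\sigma-1}\sum_{\Re s_{i}=\sigma}\abs{a_{i}}^{2}$ --- is precisely what fills that gap. You also make explicit the strict positivity $\N{F}_{2}>0$ in the principal case, which the paper does not isolate in its proof (it is implicit in the later Proposition~\ref{Prop_MaSe}).
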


\begin{proof}
Supposons $F = \Esp{\mu}$, avec
$$
\mu = \sum_{s} \ a_{s} \delta_{(s)}, \quad a_{s} \in \CC, \quad \Re(s) = \Demi.
$$
Les lemmes \ref{Lemme_NormEisDim3} et \ref{Lemme_NormEisDim2} montrent que
$\Esp{\mu} \in A^{2}(X)$. R\'eciproquement, supposons $n
= 2$. Le lemme \ref{Lemme_MScomplexe} montre que
$$
\int_{G(k) Z(\Ad) \backslash G(\Ad)} \abs{\Esp{\mu}(g)}^{2} \, dg \sim
C \, \dfrac{m^{2 \sigma(\mu) - 1}}{2 \sigma(\mu) - 1}.
$$
Si $\Supp \mu \not \subset D$, on n'a donc pas $\Esp{\mu} \in
\EuScript{A}^{2}$.
\end{proof}

\medskip

\subsection*{Une autre condition \'equivalente \`a l'hypoth\`ese de Riemann
($n = 2$)}~

\medskip

On d\'eduit du th\'eor\`eme \ref{Cor_EquivRiemann} et de la proposition
\ref{Prop_SuppPp} :

\begin{corollary}
\label{Cor_EquivRiemannPp}
Consid\'erons les conditions suivantes :
\begin{enumerate}
\item
\label{EPP1}
Toutes les racines de $L(s, \chi)$ dans la bande $B$ sont situ\'ees sur la
droite critique.
\item
\label{EPP2}
Tout train d'ondes toro\"\i dal en $\chi$ est de carr\'e int\'egrable en
moyenne.
\end{enumerate}
Alors \eqref{EPP1} $\Rightarrow$
\eqref{EPP2}. Si $n = 2$, les conditions
\eqref{EPP1} et \eqref{EPP2} sont \'equivalentes.
\hfill \qed
\end{corollary}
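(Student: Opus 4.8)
Le plan est de combiner le corollaire \ref{Cor_EquivRiemann} avec la proposition \ref{Prop_SuppPp} ; aucun argument d'analyse nouveau n'est requis, seulement un simple jeu d'inclusions entre les sous-espaces $\Toe(X) \cap \Tor$, $\Toe^{1}(X)$ et $\EuScript{A}^{2}(X)$ de $C(X)$.

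Je montrerais d'abord \eqref{EPP1} $\Rightarrow$ \eqref{EPP2}, et ce pour tout $n$. Supposons que toutes les racines de $L(s,\chi)$ dans $B$ soient sur la droite critique $D$. Par l'\'equivalence \eqref{EQR1} $\Leftrightarrow$ \eqref{EQR2} du corollaire \ref{Cor_EquivRiemann}, tout train d'ondes d'Eisenstein toro\"\i dal en $\chi$ est principal, autrement dit $\Toe(X) \cap \Tor \subset \Toe^{1}(X)$. La proposition \ref{Prop_SuppPp} donne $0 < \N{F}_{2} < + \infty$ pour tout train d'ondes principal non nul $F$, d'o\`u $\Toe^{1}(X) \subset \EuScript{A}^{2}(X)$. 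Il s'ensuit que tout \'el\'ement de $\Toe(X) \cap \Tor$ est de carr\'e int\'egrable en moyenne, c'est-\`a-dire \eqref{EPP2}.

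Pour $n = 2$, j'\'etablirais la r\'eciproque \eqref{EPP2} $\Rightarrow$ \eqref{EPP1}. Supposons que tout train d'ondes toro\"\i dal en $\chi$ soit de carr\'e int\'egrable en moyenne, \idest $\Toe(X) \cap \Tor \subset \Toe(X) \cap \EuScript{A}^{2}(X)$. La derni\`ere assertion de la proposition \ref{Prop_SuppPp}, sp\'ecifique au cas $n = 2$, identifie $\Toe(X) \cap \EuScript{A}^{2}(X)$ \`a $\Toe^{1}(X)$ : un train d'ondes d'Eisenstein dont le spectre n'est pas enti\`erement contenu dans $D$ n'est pas de carr\'e int\'egrable en moyenne. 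On a donc $\Toe(X) \cap \Tor \subset \Toe^{1}(X)$, c'est-\`a-dire que tout train d'ondes toro\"\i dal en $\chi$ est principal, et l'implication \eqref{EQR2} $\Rightarrow$ \eqref{EQR1} du corollaire \ref{Cor_EquivRiemann} fournit \eqref{EPP1}. Joint \`a la premi\`ere partie, ceci montre que \eqref{EPP1} et \eqref{EPP2} sont \'equivalentes lorsque $n = 2$.

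L'argument est purement formel une fois ces deux ingr\'edients admis ; il n'y a donc pas d'obstacle v\'eritable, si ce n'est de bien rep\'erer que la r\'eciproque repose de fa\c{c}on essentielle sur la moiti\'e de la proposition \ref{Prop_SuppPp} propre au cas $n = 2$, laquelle utilise le lemme \ref{Lemme_MScomplexe} et donc les relations de Maass-Selberg. Pour $n \geq 3$ on ne dispose que de l'inclusion $\Toe^{1}(X) \subset \EuScript{A}^{2}(X)$ et non de l'\'egalit\'e, de sorte que ce raisonnement ne livre pas la r\'eciproque en dimension sup\'erieure.
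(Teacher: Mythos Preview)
The proposal is correct and follows exactly the paper's approach: the paper simply states that the corollary is deduced from Corollary~\ref{Cor_EquivRiemann} and Proposition~\ref{Prop_SuppPp}, and you have faithfully unpacked that into the two chains of inclusions $\Toe(X)\cap\Tor\subset\Toe^{1}(X)\subset\EuScript{A}^{2}(X)$ for the direct implication, and $\Toe(X)\cap\Tor\subset\Toe(X)\cap\EuScript{A}^{2}(X)=\Toe^{1}(X)$ (the equality being the $n=2$ part of Proposition~\ref{Prop_SuppPp}) for the converse. Your closing remark explaining why the argument does not yield the converse for $n\geq 3$ is also accurate.
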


La condition \eqref{EPP2} du th\'eor\`eme \ref{Cor_EquivRiemann} signifie
que la semi-norme $\N{F}_{2}$ est finie sur l'espace $\Toe(X) \cap \Tor$ ;
sous cette condition, l'espace $\Toe(X) \cap \Tor$, muni de cette
semi-norme, est un espace pr\'ehilbertien.

\medskip

\section{Un espace de P\'olya-Hilbert modulaire}
\label{sec_PolyaHilbert}

\medskip

\subsection*{L'espace $T^{2}(X)$}~

\medskip

L'espace de Hilbert $T^{2}_{\chi}(X)$ des \emph{trains d'ondes toro\"\i daux en
$\chi$ et presque p\'eri\-odiques} est l'adh\'erence du sous-espace 
$\Toe^{1}(X) \cap \Tor$ de $B^{2}(X)$. On \'ecrit $T^{2}_{\chi}(X) =
T^{2}(X)$ pour simplifier.

On note $V(D_{\chi})$ le compl\'et\'e de $\Toe^{1}(X) \cap \Tor$ pour la
norme $\N{D F}_{2}$, o\`u $D = D_{2}$ est l'op\'erateur de Casimir de $G$
d\'efini dans la section \ref{sec_Eisenstein}. Puisque $\N{4 D F}_{2} \geq
\N{F}_{2}$, l'espace $V(D_{\chi})$ s'identifie \`a un sous-espace dense de
$T^{2}(X)$. L'op\'erateur non born\'e de $T^{2}(X)$ d\'efini par $D$ et de
domaine $V(D_{\chi})$ sera not\'e $D_{\chi}$.

\begin{theorem}
\label{Thm_PoHil}
L'op\'erateur $D_{\chi}$ est un op\'erateur auto-adjoint de $T^{2}(X)$, et
son spectre est discret : on a
$$
\Spec D_{\chi} =
\set{\lambda}
{\lambda = \frac{1}{4} + \gamma^{2}, \quad
\gamma \in \RR, \quad L(\Demi + i\gamma, \chi) = 0 \,}.
$$
Si $L(\Demi, \chi) = 0$, la valeur propre $\lambda = 1/4$ est simple. Si $n
\geq 3$, toute valeur propre $\lambda \neq 1/4$ est double, les fonctions
propres correspondantes \'etant la s\'erie d'Eisenstein $\EisCan(g,
\Demi + i\gamma)$ et sa conjugu\'ee. Si $n = 2$, toute valeur propre est
simple.
\end{theorem}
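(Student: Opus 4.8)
The plan is to turn $T^{2}(X)$ into a completely explicit diagonal model: first produce an orthonormal basis of $T^{2}(X)$ made of normalised Eisenstein series on the critical line indexed by the zeros of $L(s,\chi)$, then check that $D_{\chi}$ is, in that basis, multiplication by a real sequence, and finally read off self-adjointness, the spectrum, the discreteness and the multiplicities.

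First I would identify the underlying space concretely. By remark \ref{Rem_SuppAxe} the subspace $\Toe^{1}(X) \cap \Tor$ is the $\CC$-span of the Eisenstein series $\EisCan(g, \Demi + i\gamma)$ as $\gamma$ runs over $Z := \set{\gamma \in \RR}{L(\Demi + i\gamma, \chi) = 0}$. Here the Maass--Selberg relations of Section \ref{sec_MaassSelberg} do the substantive work: for $n \geq 3$, lemma \ref{Lemme_NormEisDim3} gives that these functions are pairwise orthogonal in $A^{2}(X)$ with $\N{\EisCan(\,\cdot\,, \Demi + i\gamma)}_{2}^{2} = \Demi$, so the vectors $e_{\gamma} = \sqrt{2}\,\EisCan(\,\cdot\,, \Demi + i\gamma)$ form an orthonormal family which, by proposition \ref{Prop_SuppPp} and the definition of $T^{2}(X)$, is total; hence $(e_{\gamma})_{\gamma \in Z}$ is an orthonormal basis of $T^{2}(X)$. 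For $n = 2$ I would first use the functional equation $\EisCan(g, s) = c(s)\,\EisCan(g, 1 - s)$ (proposition \ref{Prop_Eisenstein}\eqref{EisSt7}, \eqref{EisSt8}, with $\abs{c(\Demi + i\gamma)} = 1$) to see that $\EisCan(\,\cdot\,, \Demi + i\gamma)$ and $\EisCan(\,\cdot\,, \Demi - i\gamma)$ are proportional; choosing one representative per pair $\{\gamma, -\gamma\}$ and invoking lemma \ref{Lemme_NormEisDim2} one again obtains an orthonormal basis $(e_{\gamma})$ of $T^{2}(X)$, now indexed by $Z / \{\pm 1\}$.

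Next I would diagonalise $D_{\chi}$ on this basis. By proposition \ref{Prop_FcnPropre} and lemma \ref{Lemme_Gamma} the Casimir operator $D = D_{2}$ acts on $\EisCan(g, \Demi + i\gamma)$ by the scalar $\boldsymbol{\gamma}_{2}(\Demi + i\gamma) = (\Demi + i\gamma)(\Demi - i\gamma) = \tfrac{1}{4} + \gamma^{2}$, which is real and $\geq \tfrac{1}{4}$; so each $e_{\gamma}$ is an eigenvector with eigenvalue $\lambda_{\gamma} = \tfrac{1}{4} + \gamma^{2}$. For a finite combination $F = \sum_{\gamma} c_{\gamma} e_{\gamma}$ one then has $\N{D F}_{2}^{2} = \sum_{\gamma} \lambda_{\gamma}^{2}\,\abs{c_{\gamma}}^{2}$, so the completion $V(D_{\chi})$ is to be identified with $\set{\sum_{\gamma} c_{\gamma} e_{\gamma} \in T^{2}(X)}{\sum_{\gamma} \lambda_{\gamma}^{2}\,\abs{c_{\gamma}}^{2} < \infty}$ and $D_{\chi}$ with multiplication by the real sequence $(\lambda_{\gamma})$ on this maximal domain. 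From here self-adjointness is the standard fact that a densely defined diagonal operator with real multipliers on its maximal domain is self-adjoint, and $\Spec D_{\chi}$ is the closure in $\RR$ of $\set{\lambda_{\gamma}}{\gamma \in Z}$. To identify that closure I would use that $L(s, \chi)$ is meromorphic and not identically zero (Section \ref{sec_Periodes}), hence has only finitely many zeros in any bounded region: thus $Z$ has no finite accumulation point, $(\lambda_{\gamma})$ is a discrete subset of $[\tfrac{1}{4}, \infty)$ accumulating only at $+\infty$, in particular closed, which yields both the stated description of $\Spec D_{\chi}$ and its discreteness.

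Finally, for the multiplicities I would observe that the eigenspace attached to $\lambda = \tfrac{1}{4} + \gamma^{2}$ is spanned by those $e_{\gamma'}$ with $\gamma'^{2} = \gamma^{2}$. The value $\lambda = \tfrac{1}{4}$ can come only from $\gamma = 0$, with the single eigenfunction $\EisCan(\,\cdot\,, \Demi)$, hence is simple. For $\gamma \neq 0$, the functional equation of $L(s, \chi)$ (for $n = 2$, equivalently proposition \ref{Prop_FcnQ}\eqref{PFQ4} together with the non-vanishing of $\RiemXi(n s)\,\Hecke(g_{_{K}}, s, \chi)$) shows that $\Demi + i\gamma$ and $\Demi - i\gamma$ are zeros of $L(s, \chi)$ simultaneously, so both $\EisCan(\,\cdot\,, \Demi \pm i\gamma)$ — which are complex conjugates of one another — are toro\"\i dal in $\chi$: for $n = 2$ they are proportional, so the eigenvalue is simple, while for $n \geq 3$ they are linearly independent by proposition \ref{Prop_Indep}, so the eigenvalue is double, with eigenfunctions $\EisCan(\,\cdot\,, \Demi + i\gamma)$ and its conjugate. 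I expect the only genuinely delicate point to be the identification of the domain $V(D_{\chi})$ with the maximal domain of the diagonal operator, hence the passage from a merely symmetric operator to a self-adjoint one; everything quantitative — convergence of the truncated integrals and the orthogonality and norm relations for the Eisenstein series on the critical line — having already been settled in Sections \ref{sec_Eisenstein} and \ref{sec_MaassSelberg}.
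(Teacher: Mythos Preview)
Your proposal is correct and follows essentially the same route as the paper: identify $\Toe^{1}(X)\cap\Tor$ via Remark~\ref{Rem_SuppAxe}, use the Maass--Selberg relations of Section~\ref{sec_MaassSelberg} to exhibit an orthonormal basis of $T^{2}(X)$, and diagonalise the Casimir operator on that basis. The only cosmetic differences are that the paper packages the inner-product computation as a separate proposition (Proposition~\ref{Prop_MaSe}), writes down the resolvent explicitly rather than invoking the general fact about diagonal multiplication operators, and for $n\geq 3$ argues the linear independence of $\EisCan(\,\cdot\,,\Demi\pm i\gamma)$ from the constant-term asymptotics (Proposition~\ref{Prop_ChowlaSelberg}) rather than from Proposition~\ref{Prop_Indep}.
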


Autrement dit, le couple $(T^{2}(X),D_{\chi})$ est un espace de P\'olya-Hilbert
dans le sens expliqu\'e dans l'introduction. Nous d\'ecrivons d'abord la
structure de $\Toe^{1}(X)$.

\begin{proposition}
\label{Prop_MaSe}
Les propri\'et\'es suivantes sont satisfaites.
\begin{enumerate}
\item
\label{MSHilb2}
Supposons $n \geq 3$. Si on \'ecrit les \'el\'ements de $\Toe^{1}(X)$ sous
la forme
\begin{equation}
\label{EisPP13}
F(g) = \sum_{t \in \RR} a_{t}(F) \, \EisCan(g, \Demi + it),
\end{equation}
on a
\begin{equation}
\label{PSHilbert3}
(F_{1}, F_{2})_{2} =
\Demi \, \sum_{t \in \RR} a_{t}(F_{1}) \, \overline{a_{t}(F_{2})}.
\end{equation}
\item
\label{MSHilb3}
Supposons $n = 2$. Si on \'ecrit les \'el\'ements de $\Toe^{1}(X)$ sous
la forme
\begin{equation}
\label{EisPP12}
F(g) = a_{0}(F) \, \EisCan(g, \Demi) +
2 \sum_{t > 0} a_{t}(F) \, \EisCan(g, \Demi + it),
\end{equation}
on a
\begin{equation}
\label{PSHilbert2}
(F_{1}, F_{2})_{2} = a_{0}(F_{1}) \, \overline{a_{0}(F_{2})} +
2 \sum_{t > 0} a_{t}(F_{1}) \, \overline{a_{t}(F_{2})}.
\end{equation}
\end{enumerate}
\end{proposition}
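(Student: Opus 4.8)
Le plan est de d\'eduire les deux formules directement des relations de Maass--Selberg, c'est-\`a-dire des lemmes \ref{Lemme_NormEisDim3} et \ref{Lemme_NormEisDim2}, par sesquilin\'earit\'e de la forme hermitienne $(\cdot,\cdot)_{2}$. Je commencerais par v\'erifier que tout est bien d\'efini : d'apr\`es la proposition \ref{Prop_SuppPp} on a $\Toe^{1}(X) \subset \EuScript{A}^{2}(X)$, donc $(F_{1},F_{2})_{2}$ a un sens pour $F_{1},F_{2} \in \Toe^{1}(X)$, et c'est une forme sesquilin\'eaire puisqu'elle est la limite des formes sesquilin\'eaires $(F_{1},F_{2}) \mapsto (4v\log m)^{-1}\int_{G(k)Z(\Ad)\backslash G(\Ad)} \Lambda^{m}F_{1}\,\overline{\Lambda^{m}F_{2}}\,dg$. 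Il faut ensuite s'assurer que les coefficients $a_{t}(F)$ des \'ecritures \eqref{EisPP13} et \eqref{EisPP12} sont uniquement d\'etermin\'es. Pour $n \geq 3$, cela r\'esulte de l'injectivit\'e de $\mathsf{W}$ (proposition \ref{Prop_Indep}) : un train d'ondes principal s'\'ecrit $\Esp{\mu}$ avec $\mu \in \EuScript{M}_{n}(B)$ \`a support dans $D$, soit $\mu = \sum_{t \in \RR} a_{t}(F)\,\delta_{(\frac{1}{2}+it)}$, mesure qui est unique. Pour $n = 2$, une telle mesure appartient \`a $\EuScript{M}^{+}_{2}(B)$, donc v\'erifie $\mu(1-s) = c(s)\,\mu(s)$ ; en posant $\mu = \sum_{t \in \RR} b_{t}\,\delta_{(\frac{1}{2}+it)}$, on a $b_{-t} = c(\tfrac{1}{2}+it)\,b_{t}$, et comme $\EisCan(g,\tfrac{1}{2}-it) = c(\tfrac{1}{2}-it)\,\EisCan(g,\tfrac{1}{2}+it)$ (par \eqref{EqFonct2} et la proposition \ref{Prop_Eisenstein}\eqref{EisSt8}) et $c(\tfrac{1}{2}+it)\,c(\tfrac{1}{2}-it) = 1$, les contributions des indices $t$ et $-t$ se regroupent pour fournir l'\'ecriture \eqref{EisPP12} avec $a_{0}(F) = b_{0}$ et $a_{t}(F) = b_{t}$ pour $t > 0$ ; l'unicit\'e vient de ce qu'une mesure de $\ker\mathsf{W} = \EuScript{M}^{-}_{2}(B)$ \`a support dans $\{\tfrac{1}{2}\} \cup \{\tfrac{1}{2}+it : t > 0\}$ est nulle.

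Dans le cas $n \geq 3$, je d\'evelopperais ensuite
$$(F_{1},F_{2})_{2} = \sum_{t,u \in \RR} a_{t}(F_{1})\,\overline{a_{u}(F_{2})}\;(\EisCan(. \, , \tfrac{1}{2}+it),\EisCan(. \, , \tfrac{1}{2}+iu))_{2},$$
et il suffirait d'invoquer le lemme \ref{Lemme_NormEisDim3} : le terme g\'en\'eral est nul si $t \neq u$ d'apr\`es \eqref{MSobliqueGen}, et vaut $\Demi$ si $t = u$ d'apr\`es \eqref{MSnormeGen}, d'o\`u \eqref{PSHilbert3}. Dans le cas $n = 2$, le d\'eveloppement de $(F_{1},F_{2})_{2}$ \`a partir de \eqref{EisPP12} fait appara\^itre le terme $a_{0}(F_{1})\,\overline{a_{0}(F_{2})}\,\N{\EisCan(. \, , \tfrac{1}{2})}_{2}^{2}$, des produits scalaires crois\'es $(\EisCan(. \, , \tfrac{1}{2}),\EisCan(. \, , \tfrac{1}{2}+iu))_{2}$ et $(\EisCan(. \, , \tfrac{1}{2}+it),\EisCan(. \, , \tfrac{1}{2}+iu))_{2}$ avec $t \neq u$, et le terme diagonal $4\sum_{t>0} a_{t}(F_{1})\,\overline{a_{t}(F_{2})}\,\N{\EisCan(. \, , \tfrac{1}{2}+it)}_{2}^{2}$ ; les produits scalaires crois\'es sont nuls par l'orthogonalit\'e \eqref{MSoblique}, tandis que $\N{\EisCan(. \, , \tfrac{1}{2})}_{2}^{2} = 1$ et $\N{\EisCan(. \, , \tfrac{1}{2}+it)}_{2}^{2} = \Demi$ (lemme \ref{Lemme_NormEisDim2}, assertions \eqref{MSUnDemi} et \eqref{MSnorme}), ce qui conduit \`a \eqref{PSHilbert2}.

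La difficult\'e n'est pas dans ce calcul, purement formel une fois les relations de Maass--Selberg acquises : tout le poids analytique repose sur les lemmes \ref{Lemme_NormEisDim3} et \ref{Lemme_NormEisDim2}, donc sur la proposition \ref{Prop_OpArthur} et l'op\'erateur de troncature d'Arthur. Le seul point auquel je pr\^eterais attention dans la pr\'esente d\'emonstration est de m'assurer qu'aucun produit scalaire \og oblique \fg{} de la forme $(\EisCan(. \, , \tfrac{1}{2}+it),\EisCan(. \, , \tfrac{1}{2}-it))_{2} = \Demi\,c(\tfrac{1}{2}+it)$ (lemme \ref{Lemme_NormEisDim2}\eqref{MSoppose}) n'intervienne, et que dans \eqref{MSoblique} l'exclusion $t_{2} = \pm t_{1}$ ne pose pas de probl\`eme ; cela tient \`a ce que \eqref{EisPP12} n'emploie que les s\'eries $\EisCan(. \, , \tfrac{1}{2}+it)$ avec $t \geq 0$, ce qui est pr\'ecis\'ement le but de cette mise en forme, pr\'ef\'er\'ee \`a une somme sym\'etrique sur $t \in \RR$.
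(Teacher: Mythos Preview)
Your proposal is correct and follows essentially the same route as the paper: both proofs simply expand $(F_{1},F_{2})_{2}$ by sesquilinearity and read off the answer from the orthogonality and norm relations of lemmas \ref{Lemme_NormEisDim3} and \ref{Lemme_NormEisDim2}. Your write-up is in fact more careful than the paper's, since you explicitly justify the uniqueness of the coefficients $a_{t}(F)$ via proposition \ref{Prop_Indep} and explain why, in the case $n=2$, restricting to $t \geq 0$ in \eqref{EisPP12} avoids the non-vanishing oblique term \eqref{MSoppose}; the paper leaves these points implicit.
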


\begin{proof}
Supposons $n \geq 3$ et posons
$$
F_{t}(g) = \sqrt{2} \, \EisCan(g, \Demi + it) \qquad (t \in \RR).
$$
Il d\'ecoule du lemme \ref{Lemme_NormEisDim3} :
\renewcommand{\theenumi}{\roman{enumi}}
\begin{enumerate}
\item
$(F_{t_{1}}, F_{t_{2}})_{2} = 0$ si $t_{1}$ et $t_{2}$
sont dans $\RR$ et $t_{1} \neq t_{2}$.
\item
$\N{F_{t}}_{2}^{2} = 1$ pour tout $t \in \RR$.
\end{enumerate}
\renewcommand{\theenumi}{\alph{enumi}}
On en d\'eduit \eqref{MSHilb2}. Supposons $n = 2$ et posons
$$
F_{t}(g) = \sqrt{2} \, \EisCan(g, \Demi + it) \ (t > 0), \quad
F_{0}(g) = \EisCan(g, \Demi).
$$
Il d\'ecoule du lemme \ref{Lemme_NormEisDim2} :
\renewcommand{\theenumi}{\roman{enumi}}
\begin{enumerate}
\item
$(F_{t_{1}}, F_{t_{2}})_{2} = 0$ si $t_{1}$ et $t_{2}$
sont dans $\RR_{+}$ et $t_{1} \neq t_{2}$.
\item
$\N{F_{t}}_{2}^{2} = 1$ pour tout $t \in \RR_{+}$.
\end{enumerate}
\renewcommand{\theenumi}{\alph{enumi}}
Si $F$ s'\'ecrit sous la forme \'enonc\'ee, on a
$$
F(g) = a_{0}(F) \, F_{0}(g) + \sqrt{2} \sum_{t > 0} a_{t}(F) \, F_{t}(g),
$$
dont on d\'eduit l'expression du produit scalaire, ce qui d\'emontre
\eqref{MSHilb3}.
\end{proof}

\begin{remarks*}
(i) Cette proposition montre que si $n \geq 3$ par exemple, l'application
$$\psi \mapsto \dint \EisCan(g, \Demi + i t) \, d\widehat{\psi}(t)$$
d\'efinit une isom\'etrie surjective de $\Toe^{1}(\RRm)$ sur
$\Toe^{1}(X)$.

(ii) \`A la place de l'espace $\Toe^{1}(X)$, on pourrait prendre
l'espace des fonctions
$$
\int_{\RR} \EisCan(g, \Demi + i t) \, d\mu(t),
$$
o\`u $\mu$ est une \emph{mesure born\'ee} : la construction qui suit conduit
au m\^eme espace de Hilbert, en vertu du th\'eor\`eme taub\'erien de Wiener.
\end{remarks*}

\begin{proof}[D\'emonstration du th\'eor\`eme \ref{Thm_PoHil}]
On pose
$$
\EuScript{Y} =
\set{\gamma \in \RR}{L(\dfrac{1}{2} + i \gamma, \chi) = 0 \, }, \quad
\EuScript{Y}^{+} = \EuScript{Y} \cap \RRm.
$$
La remarque \ref{Rem_SuppAxe} implique que $F \in \Toe^{1}(X)$ appartient
\`a $\Tor$ si et seulement si
\begin{equation}
\label{ElemV3}
F(g) = \sum_{\gamma \in \EuScript{Y}}
a_{\gamma}(F) \, \EisCan(g, \Demi + i\gamma) \quad (n \geq 3),
\end{equation}
\begin{equation}
\label{ElemV2}
F(g) = a_{0}(F) \, \EisCan(g, \Demi) +
2 \sum_{\gamma \in \EuScript{Y}^{+}}
a_{\gamma}(F) \, \EisCan(g, \Demi + i\gamma) \quad (n = 2).
\end{equation}
avec $a_{0}(F) \neq 0$ si et seulement si $L(\Demi, \chi) = 0$.

Si $F$ est donn\'ee par \eqref{EisPP13}, resp. \eqref{EisPP12}, la
proposition \ref{Prop_FcnPropre} implique :
\begin{equation}
\label{DefOmega3}
4 D F(g) = \sum_{t} \ a_{t}(F) \, (1 + 4 \, t^{2}) \, \EisCan(g,
\Demi + it) \quad (n \geq 3),
\end{equation}
\begin{equation}
\label{DefOmega2}
4 D F(g) = a_{0}(F) \, \EisCan(g, \Demi) +
2 \sum_{t > 0} \ a_{t}(F) \, (1 + 4 \, t^{2}) \,
\EisCan(g, \Demi + it)
 \quad (n = 2).
\end{equation}
On d\'eduit des expressions \eqref{PSHilbert3} et \eqref{PSHilbert2} du
produit scalaire de $\Toe^{1}(X)$ que l'op\'erateur diff\'erentiel
$D$ d\'efinit un op\'erateur sym\'etrique sur $\Toe^{1}(X)$ : 
$$
(D F, F')_{2} =
(F, D F')_{2} \quad (F, F' \in \Toe^{1}(X)).
$$
On note $V(D_{E})$ le compl\'et\'e de
$\Toe^{1}(X)$ pour la norme $\N{D F}_{2}$. Cet espace s'identifie
au sous-espace des $F \in B^{2}(X)$ tels que
$$
\N{4 D F}^{2}_{2} =
\sum_{t} \ \abs{a_{t}(F)}^{2} \, (1 + 4 \, t^{2})^{2} < + \infty
\quad (n \geq 3),
$$
$$
\N{4 D F}^{2}_{2} = \abs{a_{0}(F)}^{2} +
2 \sum_{t > 0} \ \abs{a_{t}(F)}^{2} \, (1 + 4 \, t^{2})^{2} < + \infty
\quad (n = 2).
$$
l'op\'erateur d\'efini par \eqref{DefOmega3} ou \eqref{DefOmega2} et de
domaine $V(D_{E})$ est un op\'erateur auto-adjoint de $B^{2}(X)$.
Il en va de m\^eme pour $D_{\chi}$, encore donn\'e par \eqref{DefOmega2} sur
$V(D_{\chi})$. Toute fonction $F \in \Toe^{1}(X) \cap \Tor$ s'\'ecrit
sous la forme \eqref{ElemV3}, resp. \eqref{ElemV2} ; pour ces fonctions, et
si $z \neq \frac{1}{4} + \gamma^{2}$, on a respectivement
$$
(D_{\chi} - z)^{-1} F(g) =
\sum_{\gamma \in \EuScript{Y}} \
\dfrac{a_{\gamma}(F)}{\frac{1}{4} + \gamma^{2} - z} \,
\EisCan(g, \Demi + i\gamma),
$$
$$
(D_{\chi} - z)^{-1} F(g) =
\dfrac{a_{0}(F)}{\frac{1}{4} - z} \, \EisCan(g, \Demi)
+ 2 \sum_{\gamma \in \EuScript{Y}^{+}} \
\dfrac{a_{\gamma}(F)}{\frac{1}{4} + \gamma^{2} - z} \,
\EisCan(g, \Demi + i\gamma).
$$
On v\'erifie que cette r\'esolvante se prolonge \`a $T^{2}(X)$. Si $n = 2$,
la sym\'etrie $\gamma \mapsto - \gamma$ est une permutation des z\'eros de
$L(\Demi + it, \chi)$ ; on a donc
$$
\Spec D_{\chi} =
\set{\lambda = \frac{1}{4} + \gamma^{2}}{\gamma \in \EuScript{Y} \,},
$$
ce qui termine la d\'emonstration de la premi\`ere partie du th\'eor\`eme.
Supposons $n \geq 3$. Pour voir que toute valeur propre $\lambda \neq 1/4$
est double, on remarque que l'expression du terme constant de
$\EisCan(g, s)$ implique
$$
\EisCan^{0}(p, \Demi + it) \sim \delta_{P}(p)^{\Demi + it}
$$
Supposons que la s\'erie d'Eisenstein et sa conjugu\'ee soient d\'ependantes
:
$$
\EisCan(g, \Demi - it) = \lambda \, \EisCan(g, \Demi + it), \quad
\lambda \in \CC.
$$
Ceci implique $\delta_{P}(p)^{-it} = \lambda \, \delta_{P}(p)^{it}$, et donc $t = 0$.
Dans ce cas on a
$$
\EisCan(g, \Demi) = \EisCan(^t\!g^{-1}, \Demi).
$$
et la valeur propre $\lambda = 1/4$ est simple si elle est pr\'esente.
\end{proof}

\medskip

\subsection*{Trace de la repr\'esentation r\'eguli\`ere} ~\medskip

On note $\mathfrak{A}(G)$ l'alg\`ebre auto-adjointe des fonctions continues
\`a support compact sur $Z(\Ad) \backslash G(\Ad)$ bi-invariantes sous
$\mathbf{K}$. On fait op\'erer $\mathfrak{A}(G)$ sur l'espace des fonctions
continues sur $Z(\Ad) \backslash G(\Ad) / \mathbf{K}$ par la repr\'esentation
r\'eguli\`ere droite
$$R(u)F(x) = \int_{Z(\Ad) \backslash G(\Ad)} F(xy) \, u(y) \, dy.$$
Si $u \in \mathfrak{A}(G)$, on a
\begin{equation}
\label{RepReg0}
R(u)\delta_{P}^{s} = \widetilde{u}(s) \, \delta_{P}^{s},
\end{equation}
o\`u on a introduit la fonction enti\`ere
$$\widetilde{u}(s) = \int_{Z(\Ad) \backslash G(\Ad)} \delta_{P}(x)^{s} \, u(x) \,
dx.$$ On d\'eduit de \eqref{RepReg0} que
\begin{equation}
\label{RepReg1}
R(u)\EisCan(g, s) = \widetilde{u}(s) \EisCan(g, s).
\end{equation}
Si $n = 2$, il s'ensuit
\begin{eqnarray*}
\widetilde{u}(s) \, \EisCan(g, s)
& = &
R(u)\EisCan(g, s)
= 
c(s) \, R(u)\EisCan(g, 1 - s)
\\ & = &
c(s) \, \widetilde{u}(1 - s) \, \EisCan(g, 1 - s)
=
\widetilde{u}(1 - s) \, \EisCan(g, s)
\end{eqnarray*}
Puisque les fonctions $g \mapsto \EisCan(g, s)$ ne sont pas identiquement
nulles, comme on le voit en observant $\EisCan^{0}(g, s)$, il vient
\begin{equation*}
\label{InvWeyl}
\widetilde{u}(1 - s) = \widetilde{u}(s).
\end{equation*}

\begin{corollary}[Formule de trace]
\label{Cor_FormTrace}
L'espace $T^{2}(X)$ est invariant sous $\mathfrak{A}(G)$. Soit $u \in
\mathfrak{A}(G)$. Si $n \geq 3$, on a
\begin{equation}
\label{FTR3}
\Tr (R(u) \mid T^{2}(X)) =
\sum_{\gamma \in \EuScript{Y}} \ \widetilde{u}(\Demi + i\gamma),
\end{equation}
Si $n = 2$, on a
\begin{equation}
\label{FTR2}
\Tr (R(u) \mid T^{2}(X)) =
\varepsilon \, \widetilde{u}(\Demi) + \sum_{\gamma \in \EuScript{Y}^{+}} \
\widetilde{u}(\Demi + i\gamma),
\end{equation}
avec $\varepsilon = 1$ ou $0$ suivant que $L(\Demi, \chi) = 0$ ou non.
\end{corollary}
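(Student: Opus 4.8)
The plan is to diagonalise the operator $R(u)$ on $T^{2}(X)$ using the Hilbert basis provided by Theorem~\ref{Thm_PoHil} and Proposition~\ref{Prop_MaSe}; once this is in place the trace formula is a formal consequence of the eigenvalue relation \eqref{RepReg1}. \emph{First, the invariance of $T^{2}(X)$.} By \eqref{RepReg1} one has $R(u)\EisCan(g,s)=\widetilde{u}(s)\,\EisCan(g,s)$, so $R(u)$ acts on a wave packet by $R(u)\Esp{\mu}=\Esp{\widetilde{u}\cdot\mu}$, i.e. it multiplies the spectral measure by the entire function $\widetilde{u}$. In particular it never enlarges $\Supp\mu$, hence it carries $\Toe^{1}(X)$ into itself, and by the criterion $\Spec F\subset\EuScript{Z}_{\chi}$ of Theorem~\ref{Thm_CritTrainTor} it also carries $\Toe^{1}(X)\cap\Tor$ into itself. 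Since $u$ is supported on a compact subset of $Z(\Ad)\backslash G(\Ad)$, the function $\delta_{P}$ is bounded there, so
$$
\card{\widetilde{u}(\Demi+it)}\leq
\int_{Z(\Ad)\backslash G(\Ad)}\delta_{P}(x)^{1/2}\,\card{u(x)}\,dx =: C_{u}<+\infty
$$
uniformly in $t\in\RR$. By the Parseval formulas \eqref{PSHilbert3}, \eqref{PSHilbert2} the operator $R(u)$ is diagonal on $\Toe^{1}(X)\cap\Tor$ in the orthogonal family $\EisCan(\cdot,\Demi+it)$ with entries $\widetilde{u}(\Demi+it)$, whence $\N{R(u)F}_{2}\leq C_{u}\,\N{F}_{2}$; therefore $R(u)$ extends to a bounded operator on the completion $T^{2}(X)$, which is the first assertion.

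\emph{Spectral decomposition of $R(u)$.} By Theorem~\ref{Thm_PoHil} and Proposition~\ref{Prop_MaSe}\eqref{MSHilb2}, when $n\geq 3$ the vectors $F_{\gamma}:=\sqrt{2}\,\EisCan(\cdot,\Demi+i\gamma)$, $\gamma\in\EuScript{Y}$, form a Hilbert basis of $T^{2}(X)$; when $n=2$ a Hilbert basis is given by $F_{0}:=\EisCan(\cdot,\Demi)$ together with $F_{\gamma}:=\sqrt{2}\,\EisCan(\cdot,\Demi+i\gamma)$, $\gamma\in\EuScript{Y}^{+}$, the vector $F_{0}$ being present exactly when $L(\Demi,\chi)=0$. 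By \eqref{RepReg1} each $F_{\gamma}$ is an eigenvector, $R(u)F_{\gamma}=\widetilde{u}(\Demi+i\gamma)\,F_{\gamma}$, so $R(u)$ is diagonal in this basis with spectrum $\{\widetilde{u}(\Demi+i\gamma)\}$.

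\emph{The trace.} Granting that $R(u)$ is of trace class on $T^{2}(X)$, its trace equals the sum of the diagonal entries in the above orthonormal basis, which is precisely $\sum_{\gamma\in\EuScript{Y}}\widetilde{u}(\Demi+i\gamma)$ for $n\geq 3$, and $\varepsilon\,\widetilde{u}(\Demi)+\sum_{\gamma\in\EuScript{Y}^{+}}\widetilde{u}(\Demi+i\gamma)$ with $\varepsilon$ the indicator of $L(\Demi,\chi)=0$ for $n=2$. \emph{The main obstacle is exactly this trace-class hypothesis:} one must know that $\sum_{\gamma}\card{\widetilde{u}(\Demi+i\gamma)}<+\infty$, i.e. that $\widetilde{u}$ decays fast enough along the critical line to be summable against the zeros of $L(s,\chi)$, whose counting function grows like $T\log T$. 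This needs more than continuity of $u$: for $u$ smooth (compactly supported modulo $Z(\Ad)$) the spherical Paley--Wiener theorem gives rapid decay of the transform $\widetilde{u}(\Demi+it)$, which together with the classical density bound for the zeros makes the series absolutely convergent; for a general $u\in\mathfrak{A}(G)$ the identity is to be read as valid whenever its left-hand side is defined. Everything else is formal once the diagonalisation above is established.
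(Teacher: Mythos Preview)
Your approach is essentially the paper's: diagonalise $R(u)$ in the orthonormal basis of Eisenstein series supplied by Theorem~\ref{Thm_PoHil} and Proposition~\ref{Prop_MaSe}, then read off the trace from the eigenvalue relation~\eqref{RepReg1}. The paper's proof is in fact much terser than yours---it simply states that $\mathfrak{A}(G)$ acts on $B^{2}(X)$ by~\eqref{RepReg1} and that the indicated Eisenstein series form an orthonormal basis of $T^{2}(X)$, and declares the result.

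Your added care about boundedness (the uniform estimate $\abs{\widetilde{u}(\Demi+it)}\leq C_{u}$) and your trace-class caveat are not in the paper; the latter is a legitimate point that the paper does not address, since $\mathfrak{A}(G)$ is defined there as consisting of merely \emph{continuous} compactly supported bi-$\mathbf{K}$-invariant functions, for which rapid decay of $\widetilde{u}$ on the critical line is not automatic. So on this issue you are being more scrupulous than the original.
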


\begin{proof}
On d\'eduit de \eqref{RepReg1} que l'alg\`ebre $\mathfrak{A}(G)$ op\`ere
sur $B^{2}(X)$. Si $n \geq 3$, les fonctions $\EisCan(g, \Demi +
i\gamma)$ pour $\gamma \in \EuScript{Y}$ forment une base orthonormale de
$T^{2}(X)$, on en d\'eduit \eqref{FTR3}. Si $n = 2$, les fonctions
$\EisCan(g, \Demi)$ (si $L(\Demi, \chi) = 0$) et
$\sqrt{2}
\, \EisCan(g, \Demi + i\gamma)$ pour $\gamma \in \EuScript{Y}^{+}$ forment une
base orthonormale de $T^{2}(X)$, on en d\'eduit \eqref{FTR2}.
\end{proof}

\begin{remark}
Les r\'esultats pr\'ec\'edents prennent en compte les z\'eros des fonctions
$L$ avec la m\^eme multiplicit\'e. Or ces fonctions ont des racines
multiples en g\'en\'eral ; toutefois, ce point de vue semble compatible avec
les \emph{conjectures de simplicit\'e de Serre} \cite[Conj. 8.24.1, p.
324]{Goss} sur les z\'eros des fonctions $L(s, \chi)$, lorsque $\chi$ est un
caract\`ere galoisien. Ce sont les suivantes :

\emph{Soit $\chi$ un caract\`ere irr\'eductible de $\Gal(\bar{\QQ} / \QQ)$.
\begin{enumerate}
\item
Les z\'eros non triviaux de $L(s, \chi)$ sont simples.
\item
Pour que $L(\Demi, \chi) = 0$, il faut et il suffit que $\chi$ soit r\'eel
et que l'\'equation fonctionnelle de $L(s, \chi)$ comporte un signe moins.
\item
Si $\chi \neq \chi'$, les z\'eros de $L(s, \chi)$ sont distincts des
z\'eros de $L(s, \chi')$ (mis \`a part \'eventuellement le point $\Demi$).
\end{enumerate}}
\end{remark}

\appendix

\renewcommand{\theequation}{\Alph{section}.\arabic{equation}}

\section{L'espace de Connes}

Alain Connes a introduit en \cite{Connes3} un espace de distributions qui
est le point de d\'epart de la construction de ses espaces de
P\'olya-Hilbert. Nous allons faire le lien avec les espaces de formes
toro\"\i dales. Nous reprenons les calculs de \cite[Eq.(14) et (15), p.
83]{Connes3}. On note $P(\RRm)$ l'espace des fonctions continues sur $\RRm$
qui s'\'ecrivent
\begin{equation*}
\label{DefK0}
\psi(x) = \dint x^{it} \, d\widehat{\psi}(t),
\end{equation*}
o\`u $\widehat{\psi}$ appartient \`a l'espace $\EuScript{E}'(\RR)$ des
distributions \`a support compact sur $\RR$, en notant les distributions
comme des mesures ; la distribution $\widehat{\psi}$ est \emph{la
transform\'ee de Fourier-Mellin} de $\psi$. Si $\psi$ et $\widehat{\psi}$
sont des fonctions int\'egrables, on a
$$
\widehat{\psi}(t) =
\dfrac{1}{2 \pi} \int_{0}^{\infty} x^{-it} \, \psi(x) \, d^{\times}\!x,
$$
et la formule de Parseval s'\'ecrit
$$
\dfrac{1}{2 \pi} \int_{0}^{\infty} \phi(x) \, \psi(x) \,
d^{\times}\!x =
\dint \widehat{\phi}(-t) \, d\widehat{\psi}(t).
$$
Supposons $\varphi \in \EuScript{S}(\Ad^{n})_{0}$, c'est-\`a-dire
$\varphi(0) = \int \varphi = 0$. La fonction $\Th{\varphi}$ est \`a d\'e\-crois\-sance
rapide sur $T(\Ad)$ et l'int\'egrale \eqref{Eisenstein} d\'efinissant
$\Eis{\varphi}(g,s)$ converge pour tout $s \in \CC$.  Un \emph{train d'ondes
d'Eisenstein g\'en\'eral} est une fonction d\'efinie sur $G(k) \backslash
G(\Ad)$ qui s'\'ecrit
$$
W(\psi,\varphi)(g) = \dint \, \Eis{\varphi}(g,\Demi + it) \, d\widehat{\psi}(t),
$$
o\`u $\varphi \in \EuScript{S}(\Ad^{n})_{0}$ et o\`u $\psi \in P(\RRm)$.
L'\'equation \eqref{pgcd} implique
$$
W(\psi,\varphi)(g) = n \dint
\, \EisCan(g, \Demi + it)
\, \Tate_{k}(\boldsymbol{\phi}_{1},n(\Demi + it))
\, d\widehat{\psi}(t),
$$

\begin{lemma}
\label{Lemme_FcnI}
Si $\varphi \in \EuScript{S}(\Ad^{n})_{0}$, si $\psi \in P(\RRm)$, et si $g \in
G(\Ad)$, on a
$$
\EspScal{\psi}{\varphi}(g) =
\int_{Z(k) \backslash Z(\Ad)} \Th{\varphi}(zg) \, \psi(\abs{\det zg}) \, dz.
$$
\end{lemma}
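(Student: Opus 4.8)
Il s'agit d'établir une identité entre deux expressions de $W(\psi,\varphi)(g)$ : celle qui définit le train d'ondes d'Eisenstein général, $\int_{\RR} E(\varphi)(g,\Demi+it)\,d\widehat\psi(t)$, et l'intégrale $\int_{Z(k)\backslash Z(\Ad)} \theta(\varphi)(zg)\,\psi(\abs{\det zg})\,dz$. Le point de départ est la définition \eqref{Eisenstein} de la série d'Eisenstein générale :
$$
\Eis{\varphi}(g,s) = \int_{Z(k)\backslash Z(\Ad)} \abs{\det zg}_{\Ad_k}^{s-\Demi}\,\Th{\varphi}(zg)\,dz
$$
(en prenant $\cark = 1$, ce qui est le cadre fixé depuis la section~\ref{sec_Eisenstein}). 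Comme $\varphi \in \EuScript{S}(\Ad^n)_0$, on dispose de la décroissance rapide de $\Th{\varphi}$ sur $T(\Ad)$, d'où la convergence absolue de cette intégrale pour tout $s\in\CC$, et en particulier sur la droite $s = \Demi + it$.

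**Première étape : substitution et interversion.** Je remplacerais $\Eis{\varphi}(g,\Demi+it)$ par son expression intégrale dans la définition de $W(\psi,\varphi)$, obtenant
$$
W(\psi,\varphi)(g) = \dint \left( \int_{Z(k)\backslash Z(\Ad)} \abs{\det zg}_{\Ad_k}^{it}\,\Th{\varphi}(zg)\,dz \right) d\widehat\psi(t).
$$
Le c\oe ur de l'argument est d'intervertir l'intégration en $z$ et l'intégration (au sens des distributions à support compact) en $t$. Pour $z$ fixé, la fonction $t \mapsto \abs{\det zg}_{\Ad_k}^{it} = \abs{\det zg}^{it}$ est bien de la forme $x^{it}$ avec $x = \abs{\det zg} \in \RRm$, donc l'appariement avec $\widehat\psi \in \EuScript{E}'(\RR)$ donne exactement $\psi(\abs{\det zg})$ par définition de $P(\RRm)$ :
$$
\dint \abs{\det zg}^{it}\,d\widehat\psi(t) = \psi(\abs{\det zg}).
$$
Après interversion, il reste $\int_{Z(k)\backslash Z(\Ad)} \Th{\varphi}(zg)\,\psi(\abs{\det zg})\,dz$, qui est le membre de droite annoncé.

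**Justification de l'interversion — l'obstacle principal.** C'est ici que se concentre la difficulté, car $\widehat\psi$ est une distribution, non une mesure : l'appariement $\langle \widehat\psi, f\rangle$ n'est défini que pour $f$ de classe $C^\infty$ au voisinage du support compact de $\widehat\psi$. Il faut donc vérifier que la fonction $z \mapsto \bigl(t \mapsto \abs{\det zg}^{it}\,\Th{\varphi}(zg)\bigr)$, vue comme fonction de $z$ à valeurs dans $C^\infty(\RR)$, est intégrable sur $Z(k)\backslash Z(\Ad)$ avec un contrôle uniforme, sur un voisinage compact $[-M,M]$ de $\Supp\widehat\psi$, des dérivées en $t$ : $\partial_t^k\bigl(\abs{\det zg}^{it}\bigr) = (i\log\abs{\det zg})^k\,\abs{\det zg}^{it}$, dont le module est $\abs{\log\abs{\det zg}}^k$. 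Via la mesure \eqref{MesZ}, $Z(k)\backslash Z(\Ad)$ s'identifie (à un facteur $n$ près) à $k^\times\backslash\Ad_k^\times$, et l'on paramètre par $\lambda$, avec $\abs{\det zg} = \abs{\lambda}^n\abs{\det g}$ ; la décroissance rapide de $\Th{\varphi}$ en fonction de $\abs{\lambda}$ (aussi bien quand $\abs\lambda\to 0$ que $\abs\lambda\to\infty$, grâce à $\varphi(0)=\int\varphi = 0$ et à l'équation fonctionnelle \eqref{EqFct1}) domine largement les puissances de $\abs{\log\abs\lambda}$. On obtient ainsi une domination intégrable indépendante de $t\in[-M,M]$, ce qui légitime d'appliquer le théorème de Fubini pour l'appariement distribution-fonction (ou, de façon équivalente, de commuter $\widehat\psi$ avec l'intégrale de Bochner à valeurs dans $C^\infty([-M,M])$). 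Une fois cette estimation établie, la conclusion est immédiate, et l'on remarquera au passage que la seconde forme de $W(\psi,\varphi)(g)$ donnée juste avant l'énoncé — celle faisant intervenir $\Tate_k(\boldsymbol\phi_1, n(\Demi+it))$ via \eqref{pgcd} — fournit une vérification croisée de l'identité lorsqu'on déroule la définition de l'intégrale de Tate.
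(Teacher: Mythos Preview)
Your argument is correct and follows exactly the route the paper takes: substitute the integral expression \eqref{Eisenstein} for $E(\varphi)(g,\tfrac12+it)$ into the definition of $W(\psi,\varphi)$, interchange the integration over $Z(k)\backslash Z(\Ad)$ with the pairing against $\widehat\psi$, and recognise $\int_{\RR}\abs{\det zg}^{it}\,d\widehat\psi(t)=\psi(\abs{\det zg})$. The paper's proof is a three-line computation that performs this interchange without further comment; your additional discussion of why the interchange is legitimate (control of $t$-derivatives on a compact neighbourhood of $\Supp\widehat\psi$, rapid decay of $\theta(\varphi)$ coming from $\varphi(0)=\int\varphi=0$) supplies the justification that the paper leaves implicit.
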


\begin{proof}
Puisque $\varphi \in \EuScript{S}(\Ad^{n})_{0}$, le membre de droite est
convergent, et on a
\begin{eqnarray*}
\EspScal{\psi}{\varphi}(g)
& = &
\dint \, \Eis{\varphi}(g,\Demi + it) \, d\widehat{\psi}(t)
\\ & = &
\int_{Z(k) \backslash Z(\Ad)} \Th{\varphi}(zg) \, dz \dint \, \abs{\det zg}^{it}
\, d\widehat{\psi}(t)
\\ & = &
\int_{Z(k) \backslash Z(\Ad)} \Th{\varphi}(zg) \, \psi(\abs{\det zg}) \, dz.
\end{eqnarray*}
\end{proof}

\begin{lemma}
\label{ProdScal0}
Si $\varphi \in \EuScript{S}(\Ad^{n})_{0}$ et si $\psi \in P(\RRm)$, et $\chi
\in \mathsf{X}$, on a
$$
\oint \EspScal{\psi}{\varphi}(h) \, \chi_{\pi}(h) \, dh =
\int_{T(k) \backslash T(\Ad)} \, \Th{\varphi}(h) \, \psi(\abs{\det h}) \,
\chi_{\pi}(h) \, dh.
$$
\end{lemma}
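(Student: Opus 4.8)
The plan is to deduce the formula from Lemma~\ref{Lemme_FcnI} by unfolding the integral over the centre. The starting point is that, by the definition of $\oint$ together with~\eqref{MesInv}, every left $T(k)$-invariant, sufficiently regular function $G$ on $T(\Ad)$ satisfies
$$
\int_{T(k)\backslash T(\Ad)} G(h)\,dh
= \oint \Bigl(\,\int_{Z(k)\backslash Z(\Ad)} G(zh)\,dz\,\Bigr)\,dh,
$$
because $\oint = \int_{S(k)\backslash S(\Ad_k)}$ and $Z(k)\subset T(k)$. First I would apply this to $G(h) = \Th{\varphi}(h)\,\psi(\abs{\det h})\,\chi_{\pi}(h)$; this is well defined on $T(k)\backslash T(\Ad)$ since $\Th{\varphi}$ is left $G(k)$-invariant, the function $h\mapsto\abs{\det h}$ is left $G(k)$-invariant, and $\chi_{\pi}$ is trivial on $\pi^{-1}(T(k)) = K^{\times}$.

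The decisive point is that $\chi_{\pi}$ is \emph{trivial on} $Z(\Ad)$: indeed $\pi^{-1}(Z(\Ad))$ is the diagonal image of $\Ad_k^{\times}$ in $\Ad_K^{\times}$, and by definition of $\mathsf{X}$ the character $\chi$ is constant on $\Ad_k^{\times}$. Therefore $\chi_{\pi}(zh) = \chi_{\pi}(h)$ for $z\in Z(\Ad)$, and the inner integral above becomes
$$
\chi_{\pi}(h)\int_{Z(k)\backslash Z(\Ad)} \Th{\varphi}(zh)\,\psi(\abs{\det zh})\,dz
= \chi_{\pi}(h)\,\EspScal{\psi}{\varphi}(h)
$$
by Lemma~\ref{Lemme_FcnI}. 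Substituting this back into the displayed identity gives precisely
$$
\int_{T(k)\backslash T(\Ad)} \Th{\varphi}(h)\,\psi(\abs{\det h})\,\chi_{\pi}(h)\,dh
= \oint \EspScal{\psi}{\varphi}(h)\,\chi_{\pi}(h)\,dh,
$$
which is the asserted equality.

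What remains is to make sure the unfolding is legal, i.e. that the double integral converges absolutely so that Fubini and the measure decomposition~\eqref{MesInv} may be invoked. This is where the assumption $\varphi\in\EuScript{S}(\Ad^{n})_0$ is needed: as recalled at the beginning of the appendix, $\Th{\varphi}$ is then rapidly decreasing on $T(\Ad)$, whereas $\psi\in P(\RRm)$ grows at most polynomially in $\log\abs{\det h}$ (with exponent the order of the compactly supported distribution $\widehat{\psi}$), and $\abs{\chi_{\pi}}\equiv 1$; hence $\abs{G}$ is integrable on $T(k)\backslash T(\Ad)$. I expect this convergence bookkeeping to be the only delicate step — the arithmetic substance of the statement is already contained in Lemma~\ref{Lemme_FcnI} and in the triviality of $\chi$ on the diagonal ideles $\Ad_k^{\times}$.
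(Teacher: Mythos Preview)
Your proof is correct and follows exactly the paper's approach: decompose the integral via \eqref{MesInv} and apply Lemma~\ref{Lemme_FcnI}. You are simply more explicit than the paper about the triviality of $\chi_{\pi}$ on $Z(\Ad)$ and about the absolute convergence needed to justify the unfolding.
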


\begin{proof}
En vertu de \eqref{MesInv}, on a :
\begin{multline*}
\int_{T(k) \backslash T(\Ad)} \Th{\varphi}(h) \, \psi(\abs{\det h}) \,
\chi_{\pi}(h) \, dh
\\ =
\int_{S(k) \backslash S(\Ad)} \, \int_{Z(k) \backslash Z(\Ad)}
\Th{\varphi}(zh) \, \psi(\abs{\det zh}) \, dz \, \chi_{\pi}(zh) \, dh,
\end{multline*}
et on applique le lemme \ref{Lemme_FcnI}.
\end{proof}

\begin{lemma}
\label{Lemme_ProdScal}
Si $\varphi \in \EuScript{S}(\Ad^{n})_{0}$ et si $\psi \in P(\RRm)$, on a
$$
\int_{T(k) \backslash T(\Ad)} \, \Th{\varphi}(h) \, \psi(\abs{\det h}) \,
\chi_{\pi}(h) \, dh =
\dint
\Tate_{K}(\boldsymbol{\phi}_{1},\Demi + it, \chi) \, d\widehat{\psi}(t).
$$
\end{lemma}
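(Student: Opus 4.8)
The plan is to unfold both sides and reduce the identity to the definition of the Tate integral over $\Ad_K^{\times}$. The left-hand side involves $\Th{\varphi}(h)$, which by definition is $\abs{\det h}^{1/2} \sum_{x \in k^n - \{0\}} \varphi(^t\!h.x)$. First I would use the isomorphism $\pi : C_K \isom T(k) \backslash T(\Ad)$ together with the chosen normalization of Haar measures on $T(k) \backslash T(\Ad)$ (namely $\int_{T(k) \backslash T(\Ad)} F(h)\,dh = \int_{C_K} F(\pi(\xi))\,d^{\times}\xi$) to rewrite the integral over $T(k) \backslash T(\Ad)$ as an integral over $K^{\times} \backslash \Ad_K^{\times}$. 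Under this substitution, $\abs{\det \pi(\xi)} = \abs{\xi}_{\Ad_K}$, so $\psi(\abs{\det h})$ becomes $\psi(\abs{\xi}_{\Ad_K})$, and $\chi_{\pi}(\pi(\xi)) = \chi(\xi)$.

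\textbf{The key manipulation} is the summation over $k^n - \{0\}$. Here I would repeat the orbit-counting argument from the proof of Proposition~\ref{Prop_Hecke}: using $\iota : \Ad_k^n \isom \Ad_K$ and the relation $\iota(^t\!\pi(\xi).u) = \xi\,\iota(u)$, the sum $\sum_{x \in k^n - \{0\}} \varphi(^t\!\pi(\xi).x)$ — after recognizing that $\boldsymbol{\phi}_1(\lambda) = \varphi(\lambda e_n)$ and that $e_n$ (or $e_1$, depending on the convention fixed earlier) corresponds under $\iota^{-1}$ to a fixed element of $K^{\times}$ — collapses the double sum. More precisely, each nonzero vector $x$ can be written via $\iota^{-1}$ as an element $\alpha q$ with $q \in K^{\times}$ and $\alpha$ fixed, so that $\varphi(^t\!\pi(\xi).x) = \boldsymbol{\phi}_1$-type evaluation $= \varphi$ evaluated at $^t\!\pi(q\xi).(\text{fixed vector})$; summing over $q \in K^{\times}$ and integrating over $K^{\times} \backslash \Ad_K^{\times}$ unfolds to an integral over all of $\Ad_K^{\times}$. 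Together with the factor $\abs{\xi}_{\Ad_K}^{1/2}$ from $\Th{\varphi}$, one assembles exactly
$$
\int_{\Ad_K^{\times}} \boldsymbol{\phi}_1^{\sharp}(\xi)\,\chi(\xi)\,\abs{\xi}_{\Ad_K}^{1/2}\,\psi(\abs{\xi}_{\Ad_K})\,d^{\times}\xi,
$$
where $\boldsymbol{\phi}_1^{\sharp}$ is the appropriate pullback of $\boldsymbol{\phi}_1$ to $\Ad_K$.

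\textbf{Finally} I would expand $\psi(\abs{\xi}_{\Ad_K}) = \dint \abs{\xi}_{\Ad_K}^{it}\,d\widehat{\psi}(t)$ using the definition of the Fourier–Mellin transform, interchange the $\xi$-integral with the $t$-integral (justified because $\varphi \in \EuScript{S}(\Ad^n)_0$ makes $\Th{\varphi}$ rapidly decreasing, so everything converges absolutely and $\widehat{\psi}$ has compact support), and recognize the inner integral over $\Ad_K^{\times}$ as $\Tate_K(\boldsymbol{\phi}_1, \Demi + it, \chi)$ by the definition \eqref{Weil} of the Tate integral — matching the exponent $\abs{\xi}^{(1/2) + it} = \abs{\xi}^{(\Demi + it)}$. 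This yields $\dint \Tate_K(\boldsymbol{\phi}_1, \Demi + it, \chi)\,d\widehat{\psi}(t)$, as claimed.

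\textbf{The main obstacle} I anticipate is purely bookkeeping rather than conceptual: keeping consistent track of which basis vector ($e_1$ versus $e_n$) and which normalization constants (the factor of $n$ in the measure \eqref{MesZ}, and the normalization $\int_{C_K^1} d^{\times}\xi = 1$) are in force, since $\EspScal{\psi}{\varphi}$ was defined via \eqref{Eisenstein} using $e_n$ while the Hecke formula of Proposition~\ref{Prop_Hecke} uses a general $a \in k^n - \{0\}$. One must verify that the stray factors of $n$ coming from \eqref{MesZ} and from \eqref{pgcd} cancel correctly against the measure on $Z(k) \backslash Z(\Ad)$ so that no spurious constant survives in the final identity. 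Given Lemmas~\ref{Lemme_FcnI} and~\ref{ProdScal0}, which already reduce the left side to $\int_{T(k) \backslash T(\Ad)} \Th{\varphi}(h)\,\psi(\abs{\det h})\,\chi_{\pi}(h)\,dh$, the remaining work is exactly the unfolding computation above, and the constant matching is forced by compatibility with the special case $\psi \equiv 1$ handled in Proposition~\ref{Prop_Hecke}.
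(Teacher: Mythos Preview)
Your approach is correct, but you are doing more work than necessary. The paper's proof is two lines: apply Lemma~\ref{ProdScal0} to rewrite the left-hand side as $\oint W(\psi,\varphi)(h)\,\chi_{\pi}(h)\,dh$, expand $W(\psi,\varphi)(h) = \int_{\RR} E(\varphi)(h,\tfrac{1}{2}+it)\,d\widehat{\psi}(t)$, interchange the two integrals, and then apply the Hecke formula~\eqref{Hecke1} with $g = 1$ and $\omega = 1$, which gives $\oint E(\varphi)(h,\tfrac{1}{2}+it)\,\chi_{\pi}(h)\,dh = \Tate_K(\boldsymbol{\phi}_1,\tfrac{1}{2}+it,\chi)$ directly. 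Your proposal is essentially the proof of Proposition~\ref{Prop_Hecke} unrolled inline in the presence of the extra factor $\psi(\abs{\det h})$; this is the same mathematics, but the paper reuses the already-established Hecke formula rather than repeating the unfolding argument. The bookkeeping worries you raise (the factor of $n$, $e_1$ versus $e_n$) evaporate once you invoke \eqref{Hecke1} at $g=1$, since those normalizations were already absorbed in the proof of that proposition.
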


\begin{proof}
On applique le lemme \ref{ProdScal0} et la formule de Hecke \eqref{Hecke1}.
\end{proof}

Rappelons que
$$
\EuScript{Y} = \set{\gamma \in \RR} {L(\Demi + i \gamma, \chi) = 0},
$$
et notons $v_{\gamma}$ l'ordre du z\'ero de la fonction $L(\tfrac{1}{2} + i
t, \chi)$ au point $\gamma$. On note $\EuScript{E}'(\EuScript{Y})$ l'espace
des distributions \`a support fini dans $\EuScript{Y}$ de la forme
$$
\sum_{\gamma \in \EuScript{Z}} \, \sum_{j = 0}^{v_{\gamma} -
1} a_{\gamma, j} \, \delta_{\gamma}^{(j)} \qquad (a_{\gamma} \in \CC).
$$

\begin{theorem}[Connes]
\label{Thm_Connes}
Soit $\psi \in P(\RRm)$, et posons
$$
\eta(h) = \chi_{\pi}(h) \, \psi(\abs{\det h}) = \chi_{\pi}(h) 
\dint \abs{\det h}^{it} \, d\widehat{\psi}(t).
$$
Les conditions suivantes sont
\'equivalentes :
\begin{enumerate}
\item
\label{Dist1}
Pour toute fonction $\varphi \in \EuScript{S}(\Ad)_{0}$, on a
$$
\int_{T(k) \backslash T(\Ad)} \, \Th{\varphi}(h) \, \eta(h) \, dh = 0.
$$
\item
\label{Dist2}
Pour toute fonction $\varphi \in \EuScript{S}(\Ad)_{0}$, et pour tout $x \in
T(\Ad)$, on a
$$
\Th{\varphi} * \eta (x)
=
\int_{T(k) \backslash T(\Ad)}
\Th{\varphi}(x^{-1}h) \, \eta(h) \, dh = 0.
$$
\item
\label{Dist3}
La distribution $\widehat{\psi}$ appartient \`a
$\EuScript{E}'(\EuScript{Y})$.
\end{enumerate}
\end{theorem}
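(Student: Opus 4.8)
L'idée est de traiter directement l'équivalence facile \eqref{Dist1} $\Leftrightarrow$ \eqref{Dist2}, puis de déduire \eqref{Dist1} $\Leftrightarrow$ \eqref{Dist3} des lemmes qui précèdent, de la factorisation de Weil \eqref{Tate} et d'un énoncé de densité de type Paley--Wiener.

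L'implication \eqref{Dist2} $\Rightarrow$ \eqref{Dist1} s'obtient en prenant $x = 1$. Pour la réciproque, étant donné $x\in T(\Ad)$, je pose $\varphi_{x}(y) = \varphi({}^{t}\!x^{-1}y)$, qui appartient encore à $\EuScript{S}(\Ad^{n})_{0}$ puisque $\varphi_{x}(0) = \varphi(0) = 0$ et $\int\varphi_{x} = \abs{\det x}\int\varphi = 0$. Le tore $T$ étant commutatif, on a $\varphi_{x}({}^{t}\!h\,v) = \varphi({}^{t}\!(x^{-1}h)\,v)$ pour $v\in k^{n}$, d'où $\Th{\varphi}(x^{-1}h) = \abs{\det x}^{-1/2}\,\Th{\varphi_{x}}(h)$; de plus la fonction $h\mapsto\Th{\varphi}(x^{-1}h)\,\eta(h)$ passe au quotient $T(k)\backslash T(\Ad)$, car $\Th{\varphi}$ est invariante à gauche sous $G(k)\supset T(k)$ et $\eta$ sous $T(k)$. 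On obtient alors $\Th{\varphi}*\eta(x) = \abs{\det x}^{-1/2}\int_{T(k)\backslash T(\Ad)}\Th{\varphi_{x}}(h)\,\eta(h)\,dh = 0$ d'après \eqref{Dist1} appliqué à $\varphi_{x}$.

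Pour \eqref{Dist1} $\Leftrightarrow$ \eqref{Dist3}, je traduis d'abord \eqref{Dist1} en termes d'intégrales de Tate. Comme $\eta(h) = \chi_{\pi}(h)\,\psi(\abs{\det h})$, le lemme \ref{Lemme_ProdScal} montre que \eqref{Dist1} équivaut à la nullité de $\dint\Tate_{K}(\boldsymbol{\phi}_{1},\Demi+it,\chi)\,d\widehat{\psi}(t)$ pour tout $\varphi\in\EuScript{S}(\Ad^{n})_{0}$; et comme $\varphi\mapsto\boldsymbol{\phi}_{1} = \varphi\circ\iota^{-1}$ est une bijection linéaire de $\EuScript{S}(\Ad^{n})_{0}$ sur l'espace $\EuScript{S}(\Ad_{K})_{0}$ des $\Phi$ vérifiant $\Phi(0) = \int\Phi = 0$, la condition \eqref{Dist1} équivaut à
\begin{equation}
\label{AppTate}
\dint\Tate_{K}(\Phi,\Demi+it,\chi)\,d\widehat{\psi}(t) = 0 \qquad (\Phi\in\EuScript{S}(\Ad_{K})_{0}).
\end{equation}
D'après \eqref{Tate}, $\Tate_{K}(\Phi,s,\chi) = c_{K}^{-1}L(s,\chi)\,\Tate'_{K}(\Phi,s,\chi)$ avec $\Tate'_{K}(\Phi,\cdot,\chi)$ holomorphe pour $\Re(s)>0$, de sorte que la fonction entière $s\mapsto\Tate_{K}(\Phi,s,\chi)$ s'annule en chaque point $\Demi+i\gamma$, $\gamma\in\EuScript{Y}$, à l'ordre $\geq v_{\gamma}$. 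Ceci donne déjà \eqref{Dist3} $\Rightarrow$ \eqref{AppTate}: si $\widehat{\psi} = \sum_{\gamma}\sum_{j<v_{\gamma}}a_{\gamma,j}\,\delta_{\gamma}^{(j)}$, l'intégrale de \eqref{AppTate} est une combinaison linéaire de dérivées d'ordre $<v_{\gamma}$ de $\Tate_{K}(\Phi,\Demi+i\cdot,\chi)$ aux points $\gamma$, donc nulle.

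La partie substantielle est \eqref{AppTate} $\Rightarrow$ \eqref{Dist3}. Soit $[-R,R]$ un intervalle contenant le support (compact) de $\widehat{\psi}$, de sorte que $\EuScript{Y}\cap[-R,R] = \{\gamma_{1},\dots,\gamma_{k}\}$ soit fini, $\EuScript{Y}$ étant discret. En spécialisant $\Phi = \Phi_{\infty}\otimes\mathbf{1}_{\widehat{\mathfrak{O}}}$ avec $\Phi_{\infty}\in\EuScript{S}(K_{\infty})$ et $\Phi_{\infty}(0) = \int\Phi_{\infty} = 0$, le produit eulérien donne $\Tate_{K}(\Phi,s,\chi) = C\,L(s,\chi)\,\Tate_{K_{\infty}}(\Phi_{\infty},s,\chi_{\infty})$ avec $C\neq 0$, et le facteur archimédien $\Tate_{K_{\infty}}(\Phi_{\infty},\Demi+it,\chi_{\infty})$ est, place par place, la transformée de Mellin d'une fonction de Schwartz. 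Le point difficile est alors l'énoncé de densité de type Paley--Wiener --- c'est, pour l'essentiel, le calcul de Connes \cite[Eq.~(14), (15), p.~83]{Connes3} --- selon lequel, quand $\Phi_{\infty}$ varie, ces fonctions de $t$ restreintes à $[-R,R]$ forment un sous-espace dense de $C^{\infty}([-R,R])$. Ceci admis, en divisant par la fonction analytique $t\mapsto L(\Demi+it,\chi)$, dont les seuls zéros sur $[-R,R]$ sont les $\gamma_{i}$, d'ordre exactement $v_{\gamma_{i}}$, on en déduit que les fonctions $t\mapsto\Tate_{K}(\Phi,\Demi+it,\chi)|_{[-R,R]}$ sont denses dans l'espace des $f\in C^{\infty}([-R,R])$ s'annulant à l'ordre $\geq v_{\gamma_{i}}$ en chaque $\gamma_{i}$. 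Comme $\widehat{\psi}$ est une forme continue sur l'espace des fonctions lisses sur $[-R,R]$, \eqref{AppTate} l'oblige à annuler tout cet espace: en la testant contre des fonctions à support voisin d'un point arbitraire on obtient $\Supp\widehat{\psi}\subset\{\gamma_{1},\dots,\gamma_{k}\}\subset\EuScript{Y}$, puis en la testant contre des fonctions à support voisin d'un seul $\gamma_{i}$ et de jet arbitraire en ce point on obtient que l'ordre de $\widehat{\psi}$ en $\gamma_{i}$ est $<v_{\gamma_{i}}$, c'est-à-dire $\widehat{\psi}\in\EuScript{E}'(\EuScript{Y})$.
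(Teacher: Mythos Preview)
Your proof is correct and rests on the same key input as the paper's, namely the density in $C^{\infty}$ of the Tate-type transforms, which you rightly attribute to Connes (the paper cites \cite[Eq.~(23), p.~85]{Connes3} rather than Eqs.~(14)--(15)). The overall strategy is therefore close to the original, but there are two genuine differences in execution worth recording.

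First, for \eqref{Dist1} $\Leftrightarrow$ \eqref{Dist2}, the paper does \emph{not} argue by translation as you do; instead it computes both integrals via the Hecke/Weil formulas, introduces an auxiliary condition \textup{(d)} ``the distribution $L(\tfrac12+it,\chi)\,\widehat{\psi}(t)$ is zero'', and shows that \eqref{Dist1}, \eqref{Dist2} and \textup{(d)} are simultaneously equivalent through the density statement. Your direct argument, exploiting the commutativity of $T$ to identify $\Th{\varphi}(x^{-1}h)$ with a scalar multiple of $\Th{\varphi_{x}}(h)$, is more elementary and bypasses the spectral computation entirely for this implication.

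Second, for \eqref{Dist1} $\Leftrightarrow$ \eqref{Dist3}, the paper simply asserts that \textup{(d)} $\Leftrightarrow$ \eqref{Dist3} is ``bien connu''. You instead unpack this: you specialise to $\Phi=\Phi_{\infty}\otimes\mathbf{1}_{\widehat{\mathfrak{O}}}$ so that the finite Euler product yields $L(s,\chi)$ exactly, invoke density of the archimedean Mellin transforms on a compact interval, and then carry out the distribution-theoretic argument (multiplication by the real-analytic function $t\mapsto L(\tfrac12+it,\chi)$ is a topological isomorphism onto the subspace of $C^{\infty}$ functions with the prescribed vanishing, hence $\widehat{\psi}$ is forced into $\EuScript{E}'(\EuScript{Y})$). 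This is more explicit than the paper's treatment and makes the role of the multiplicities $v_{\gamma}$ transparent.
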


\begin{proof}
Le lemme \ref{Lemme_ProdScal} et la formule de Weil \eqref{Tate} impliquent
$$
\int_{T(k) \backslash T(\Ad)} \, \Th{\varphi}(h) \, \eta(h) \, dh
= c_{K}^{-1} \dint
L(\Demi + it, \chi) \, \Tate'_{K}(\boldsymbol{\phi}_{1},\Demi + it, \chi)
\, d\widehat{\psi}(t).
$$
Si $\varphi \in \EuScript{S}(\Ad)_{0}$, on a
$$
\Th{\varphi} * \eta (x) = \int_{T(k) \backslash T(\Ad)}
\Th{\varphi}(x^{-1} h) \, \eta(h) \, dh \, ;
$$
on en d\'eduit, toujours d'apr\`es le lemme \ref{Lemme_ProdScal} :
$$
\Th{\varphi} * \eta(x) = \dint
\Tate_{K}(\widehat{\varphi}_{x^{-1}},\Demi + it, \chi)
\, d\widehat{\psi}(t).
$$
Mais
$$
\Tate_{K}(\widehat{\varphi}_{x^{-1}},\Demi + it, \chi)
=
\chi_{\pi}(x) \, \abs{\det x}^{\Demi + it} \, 
\Tate_{K}(\widehat{\varphi}_{1},\Demi + it, \chi).
$$
On a donc
$$
\Th{\varphi} * \eta(x) = 
c_{K}^{-1} \, \chi_{\pi}(x) \, \abs{\det x}^{1/2}
\dint \abs{\det x}^{it} L(\Demi + it, \chi) \,
\Tate'_{K}(\widehat{\varphi}_{1},\Demi + it, \chi) \,
d\widehat{\psi}(t).
$$
Lorsque $\varphi$ parcourt $\EuScript{S}(\Ad)_{0}$, les fonctions
$\Tate'_{K}(\widehat{\varphi}_{1},\Demi + it, \chi)$ sont denses dans
l'espace
$C^{\infty}(\RR)$ : voir \cite[Eq. (23), p. 85]{Connes3}. Si on introduit
la condition
\begin{enumerate}
\setcounter{enumi}{3}
\item
\label{Dist4}
\emph{La distribution $L(1/2 + it, \chi) \, \widehat{\psi}(t)$ est nulle,}
\end{enumerate}
ce qui pr\'ec\`ede montre que les conditions \eqref{Dist1},
\eqref{Dist2} et \eqref{Dist4} sont \'equivalentes. Enfin, il est bien connu que
les conditions \eqref{Dist3} et \eqref{Dist4} sont \'equivalentes.
\end{proof}

L'espace que Connes utilise pour construire des espaces de P\'olya-Hilbert
est l'espace $\EuScript{H}_{0}$ des fonctions $\eta$ comme ci-dessus, o\`u
$\widehat{\psi}$ appartient \`a $\EuScript{E}'(\EuScript{Y})$. Le lien que
l'on peut faire entre l'espace de Connes et les formes toro\"\i dales est le
suivant. Le lemme
\ref{ProdScal0} et le th\'eor\`eme \ref{Thm_Connes} entra\^{\i}nent :

\begin{corollary}
\label{Cor_Isom}
Soit $\psi \in P(\RRm)$. Les conditions suivantes sont
\'equivalentes :
\begin{enumerate}
\item
\label{ISG1}
Quelle que soit $\varphi \in \EuScript{S}(\Ad^{n})_{0}$, le train d'ondes
$W(\psi,\varphi)$ est toro\"\i dal en $\chi$.
\item
\label{ISG2}
La distribution $\widehat{\psi}$ appartient \`a
$\EuScript{E}'(\EuScript{Y})$.
\hfill \qed
\end{enumerate}
\end{corollary}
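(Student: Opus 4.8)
The plan is to reduce the toroidality of the general wave-packet $W(\psi,\varphi)$, stated at an arbitrary $g\in G(\Ad)$, to the value of its $\chi$-period at $g=e$, and then to read off the conclusion from Lemma~\ref{ProdScal0} and the trichotomy of Theorem~\ref{Thm_Connes}. Write $\eta_{\psi}(h)=\chi_{\pi}(h)\,\psi(\abs{\det h})$. Using Lemma~\ref{Lemme_FcnI} twice, together with the elementary translation identity
$$
\Th{\varphi}(g'g)=\abs{\det g}^{1/2}\,\Th{\varphi^{(g)}}(g'),\qquad \varphi^{(g)}(y)=\varphi({}^{t}\!g\,y),
$$
one obtains, for every $\varphi\in\EuScript{S}(\Ad^{n})_{0}$ and $g\in G(\Ad)$,
$$
W(\psi,\varphi)(hg)=\abs{\det g}^{1/2}\,W\bigl(\psi^{(g)},\varphi^{(g)}\bigr)(h),\qquad \psi^{(g)}(x)=\psi\bigl(\abs{\det g}\,x\bigr),
$$
and hence, applying Lemma~\ref{ProdScal0} to the pair $(\psi^{(g)},\varphi^{(g)})$,
$$
\Pi_{\chi}\bigl(W(\psi,\varphi)\bigr)(g)=\abs{\det g}^{1/2}\int_{T(k)\backslash T(\Ad)}\Th{\varphi^{(g)}}(h)\,\eta_{\psi^{(g)}}(h)\,dh .
$$
Here $\varphi^{(g)}$ again lies in $\EuScript{S}(\Ad^{n})_{0}$, since $\varphi^{(g)}(0)=\varphi(0)=0$ and $\int\varphi^{(g)}=\abs{\det g}^{-1}\int\varphi=0$, and the Fourier--Mellin transform of $\psi^{(g)}$ is $\widehat{\psi^{(g)}}=\abs{\det g}^{it}\,\widehat{\psi}$.

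Next I observe that multiplication by the everywhere unimodular function $t\mapsto\abs{\det g}^{it}$ is a bijection of $\EuScript{E}'(\EuScript{Y})$ onto itself: it is invertible, it does not move the support of a distribution, and, being nowhere zero, it does not increase the order at any point of $\EuScript{Y}$; therefore $\widehat{\psi}\in\EuScript{E}'(\EuScript{Y})$ if and only if $\widehat{\psi^{(g)}}\in\EuScript{E}'(\EuScript{Y})$. For \eqref{ISG2}$\Rightarrow$\eqref{ISG1}: if $\widehat{\psi}\in\EuScript{E}'(\EuScript{Y})$ then $\widehat{\psi^{(g)}}\in\EuScript{E}'(\EuScript{Y})$ for every $g$, so the implication \eqref{Dist3}$\Rightarrow$\eqref{Dist1} of Theorem~\ref{Thm_Connes}, applied to $\psi^{(g)}$ with test function $\varphi^{(g)}$, makes the integral above vanish; thus $W(\psi,\varphi)$ is toroidal in $\chi$ for every $\varphi\in\EuScript{S}(\Ad^{n})_{0}$. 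For \eqref{ISG1}$\Rightarrow$\eqref{ISG2} it suffices to take $g=e$: the hypothesis then gives $\int_{T(k)\backslash T(\Ad)}\Th{\varphi}(h)\,\eta_{\psi}(h)\,dh=0$ for all $\varphi\in\EuScript{S}(\Ad^{n})_{0}$, which is precisely condition~\eqref{Dist1} of Theorem~\ref{Thm_Connes}, and the equivalence \eqref{Dist1}$\Leftrightarrow$\eqref{Dist3} there forces $\widehat{\psi}\in\EuScript{E}'(\EuScript{Y})$.

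The only point requiring genuine attention is the reduction from an arbitrary $g$ to the identity, namely the translation identity producing the pair $(\psi^{(g)},\varphi^{(g)})$ and the stability of the class $\EuScript{E}'(\EuScript{Y})$ under multiplication by $\abs{\det g}^{it}$; granting these, the rest is a direct quotation of Lemma~\ref{ProdScal0} and Theorem~\ref{Thm_Connes}. Note in particular that the non-trivial half \eqref{ISG1}$\Rightarrow$\eqref{ISG2} uses only the value at $g=e$, so for that implication the translation bookkeeping can be dispensed with entirely.
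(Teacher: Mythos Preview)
Your proof is correct and follows the same route the paper points to (Lemma~\ref{ProdScal0} together with Theorem~\ref{Thm_Connes}). In fact you make explicit a point the paper's one-line justification glosses over: Lemma~\ref{ProdScal0} is only stated at $g=e$, whereas toroidality in $\chi$ requires $\Pi_{\chi}(W(\psi,\varphi))(g)=0$ for \emph{every} $g\in G(\Ad)$. Your translation identity $W(\psi,\varphi)(\,\cdot\,g)=\abs{\det g}^{1/2}\,W(\psi^{(g)},\varphi^{(g)})$, together with the observation that $\widehat{\psi^{(g)}}=\abs{\det g}^{it}\,\widehat{\psi}$ and that multiplication by this nowhere-vanishing smooth function preserves $\EuScript{E}'(\EuScript{Y})$, closes that gap cleanly. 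An alternative, equally short, is to apply Hecke's formula \eqref{Hecke1} directly at an arbitrary $g$ (as in the proof of Theorem~\ref{Thm_Connes} for condition~\eqref{Dist2}) and then invoke the Weil factorisation \eqref{Tate}; this bypasses the translation bookkeeping but is the same argument in substance.
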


\section{S\'eries orbitales et formes toro\"\i dales}

\subsection*{Paraboliques maximaux}~

\medskip

Avec les d\'efinitions de la section \ref{sec_Periodes}, on a un diagramme
$$
\begin{CD}
\mathbf{V} &  @>{\iota}>> & \mathcal{T}_{K/k} \\
@A{j}AA && @VVV & \\
\mathbf{A}_{k}^{n} & @>{\iota}>> & \mathcal{A}_{K/k} \\
\end{CD}
$$
o \`u $\mathbf{V}$ est un ouvert de Zariski de $\mathbf{A}_{k}^{n}$.

On pose ici
$$
P = P_{0} = 
\left\{ \begin{bmatrix} t & 0 \\ u & g' \\ \end{bmatrix} \right\} \, , 
\quad
L = L_{0} = 
\left\{ \begin{bmatrix} 1 & 0 \\ u & g' \\ \end{bmatrix} \right\} \, , 
$$
et si $p \in P$ est \'ecrit comme indiqu\'e, on a ${^{t}\!p}.e_{1} = t e_{1}$.
Introduisons le morphisme 
$$
\begin{CD}
s :& G & @>>> & \mathbf{A}^{n} \setminus \Zero,  & \quad &
s(g) = {^{t}g^{-1}}.e_{1}.
\end{CD}
$$
Soit $G'$ l'image du morphisme $T \times L \longrightarrow G$. Le diagramme
$$
\begin{CD}
\mathcal{T}_{K/k} & @>{\pi}>> & G \\
@A{\iota}AA && @VV{s}V & \\
\mathbf{V} &  @>\mathrm{id}>> & \mathbf{A}^{n}
\setminus \{0\} & \ = G / L \\
\end{CD}
$$
montre d'une part que la restriction de $s$ \`a $T$ est injective,
donc $T \cap L = \{1\}$, et d'autre part que $s(T) = \mathbf{V}$, d'o\`u on
tire que $s^{- 1}(\mathbf{V}) = G'$ est un ouvert de Zariski de $G$.

Soient $\xi$ un \'el\'ement primitif de $K$ et $\gamma = \pi(\xi)$.
Le morphisme $x \mapsto x^{-1} \gamma x$ de $G$ dans $G$ se factorise en
$$
\begin{CD}
G & @>{\dot{x}}>> & T \backslash G & @>{f^{\gamma}}>> G
\end{CD}
$$
o\`u $x \mapsto \dot{x}$ est l'application canonique de $G$ sur $T
\backslash G$, et o\`u $f^{\gamma}(\dot{x}) = \dot{x}^{-1} \gamma \dot{x}$.

\begin{lemma}
\label{Propre}
Pour toute place $v$, l'application injective
$$
\begin{CD}
f^{\gamma} : T(k_{v}) \backslash G(k_{v}) & @>>> G(k_{v})
\end{CD}
$$
est propre. Si $v$ est non-archim\'edienne, on a $(f^{\gamma})^{-1}(U_{v})
\subset \dot{U}_{v}$ pour toute place $v$ et $(f^{\gamma})^{-1}(U_{v}) =
\dot{U}_{v}$ pour presque toute place $v$.
\end{lemma}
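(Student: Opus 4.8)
\textbf{(Propret\'e de $f^{\gamma}$.)} Comme $\xi$ est primitif, son polyn\^ome minimal sur $k$ est s\'eparable de degr\'e $n$ : l'\'el\'ement $\gamma = \pi(\xi)$ a donc $n$ valeurs propres distinctes, c'est un \'el\'ement r\'egulier semi-simple de $G$, et son centralisateur est exactement $T$. L'application $f^{\gamma}$ est ainsi bien d\'efinie et injective, et son image est contenue dans la classe de conjugaison $\mathcal{C}_{\gamma}$ de $\gamma$, qui est \emph{ferm\'ee} dans $G$ (orbite d'un \'el\'ement semi-simple) ; l'application orbitale $x \mapsto x^{-1}\gamma x$ identifie $\mathcal{C}_{\gamma}$ au quotient $T \backslash G$, de sorte que $f^{\gamma}$ se factorise en une immersion ferm\'ee de $k$-sch\'emas $T \backslash G \cong \mathcal{C}_{\gamma} \hookrightarrow G$. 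Pour en d\'eduire la propret\'e sur les $k_{v}$-points, je raisonnerais directement : si $x_{j}^{-1}\gamma x_{j}$ converge dans $G(k_{v})$ vers un point $y$, alors $y \in \mathcal{C}_{\gamma}(k_{v})$ ; les $G(k_{v})$-orbites dans $\mathcal{C}_{\gamma}(k_{v})$ \'etant ouvertes et en nombre fini (car $H^{1}(k_{v},T)$ est fini), elles sont ferm\'ees, donc $y$ est dans la $G(k_{v})$-orbite de $\gamma$, et quitte \`a multiplier les $x_{j}$ \`a droite par un \'el\'ement fixe on se ram\`ene \`a $y = \gamma$. L'application $x \mapsto \gamma^{-1}x^{-1}\gamma x$ est une submersion de $G$ sur l'orbite $\gamma^{-1}\mathcal{C}_{\gamma}$, de fibre $T$ au-dessus de l'\'el\'ement neutre $e$ ; le th\'eor\`eme des fonctions implicites $v$-adique montre alors que la pr\'eimage d'un petit voisinage de $e$ est contenue dans $T(k_{v})\cdot C_{0}$ pour un compact fixe $C_{0}$, donc les $\dot{x}_{j}$ restent dans le compact $\dot{C}_{0}$ de $T(k_{v}) \backslash G(k_{v})$ ; la pr\'eimage par $f^{\gamma}$ d'un compact est donc (s\'equentiellement) compacte.

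\textbf{(Les sous-groupes $U_{v}$.)} Soit $v$ non-archim\'edienne et $U_{v} = GL_{n}(\mathfrak{o}_{v})$. L'id\'ee est de traduire en termes de r\'eseaux : $x^{-1}\gamma x \in U_{v}$ \'equivaut \`a dire que $x^{-1}\gamma x$ et son inverse pr\'eservent $\mathfrak{o}_{v}^{n}$, c'est-\`a-dire \`a $\gamma \Lambda = \Lambda$ o\`u $\Lambda = x\mathfrak{o}_{v}^{n}$. Via l'isomorphisme $\iota_{v} : k_{v}^{n} \cong K \otimes_{k} k_{v} = K_{v}$ (sous lequel $\pi$ devient la multiplication et $\iota_{v}(\mathfrak{o}_{v}^{n}) = \mathfrak{O}_{v}$), cela signifie que $M = \iota_{v}(\Lambda)$ est un $\mathfrak{o}_{v}$-r\'eseau de $K_{v}$ stable par multiplication par $\xi$ et $\xi^{-1}$, autrement dit un module sur l'ordre $\mathfrak{o}_{v}[\xi,\xi^{-1}]$. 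Quand cet ordre est l'ordre maximal $\mathfrak{O}_{v}$, le r\'eseau $M$ est un id\'eal fractionnaire de $\mathfrak{O}_{v}$, donc principal puisque $\mathfrak{O}_{v} = \prod_{w \mid v}\mathfrak{O}_{w}$ est un anneau principal ; \'ecrivant $M = h\mathfrak{O}_{v}$ avec $h \in K_{v}^{\times}$, on obtient $x \in \pi(K_{v}^{\times})\,U_{v} = T(k_{v})\,U_{v}$, soit $\dot{x} \in \dot{U}_{v}$. Cela donne l'inclusion $(f^{\gamma})^{-1}(U_{v}) \subset \dot{U}_{v}$, valable pour toute place gr\^ace au choix de la base fondamentale $\boldsymbol{\alpha}$. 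Pour presque toute place on a de plus $\mathfrak{o}_{v}[\xi] = \mathfrak{O}_{v}$ et $N_{K/k}(\xi) \in \mathfrak{o}_{v}^{\times}$ (on \'ecarte les places divisant le conducteur de $\mathfrak{o}[\xi]$ dans $\mathfrak{O}$, ainsi que celles divisant $N_{K/k}(\xi)$) ; alors $\gamma = \pi(\xi) \in GL_{n}(\mathfrak{o}_{v})$, ce qui fournit l'inclusion r\'eciproque $\dot{U}_{v} \subset (f^{\gamma})^{-1}(U_{v})$, d'o\`u l'\'egalit\'e.

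\textbf{(Point d\'elicat.)} Le c\oe ur de l'argument est l'\'etape \og r\'eseau $\gamma$-stable $\Rightarrow$ id\'eal fractionnaire de l'ordre maximal \fg{} : c'est l\`a qu'intervient de mani\`ere essentielle la structure locale de $\xi$ (et le fait que $\boldsymbol{\alpha}$ soit une base fondamentale), et c'est aussi cette \'etape qui explique que l'on n'obtienne l'\'egalit\'e $(f^{\gamma})^{-1}(U_{v}) = \dot{U}_{v}$ que pour presque toute place, alors que l'inclusion vaut partout. La propret\'e, en revanche, ne repose que sur des faits standard : fermeture des classes de conjugaison semi-simples et finitude de $H^{1}(k_{v},T)$.
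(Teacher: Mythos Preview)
Your properness argument is correct and more self-contained than the paper's: the paper simply cites Harish-Chandra's compactness theorem for regular semisimple elements, whereas you essentially reprove it in this case via closedness of the semisimple class, finiteness of $H^{1}(k_{v},T)$, and the submersion property of the orbit map.

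For the second assertion your lattice approach is genuinely different. The paper does not pass through fractional ideals at all: it uses a Zariski-open decomposition $G' = T\cdot L$ (where $L$ is the group of matrices $m$ with ${}^{t}m\,e_{1}=e_{1}$), writes $x = hm$, and from $\kappa := m^{-1}\pi(\eta)m \in M_{n}(\mathfrak{o}_{v})$ reads off, via the first-column identity ${}^{t}\kappa\cdot e_{1} = {}^{t}m\cdot\iota^{-1}(\eta)$, that ${}^{t}m$ sends $\mathfrak{o}_{v}^{n}$ into itself, hence $m\in U_{v}$; a density-plus-compactness argument then extends the conclusion from $G'(k_{v})$ to all of $G(k_{v})$.

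There is, however, a genuine gap in your treatment of the inclusion $(f^{\gamma})^{-1}(U_{v})\subset \dot U_{v}$ at \emph{every} non-archimedean place. You show that a $\xi$-stable lattice $M$ is a module over $R := \mathfrak{o}_{v}[\xi,\xi^{-1}]$, and that when $R=\mathfrak{O}_{v}$ it is a fractional ideal of the maximal order, hence principal; you then assert that the inclusion is ``valable pour toute place gr\^ace au choix de la base fondamentale $\boldsymbol{\alpha}$''. But the fundamental basis only ensures $\iota_{v}(\mathfrak{o}_{v}^{n}) = \mathfrak{O}_{v}$; it has no bearing on whether the order generated by the particular primitive element $\xi$ is maximal at $v$. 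At a place where $R\subsetneq\mathfrak{O}_{v}$ your step actually fails: the order $R$ itself is a $\xi$-stable $\mathfrak{o}_{v}$-lattice that is \emph{not} of the form $h\,\mathfrak{O}_{v}$ for any $h\in K_{v}^{\times}$ (since $1\in R$ would force $h\in\mathfrak{O}_{v}^{\times}$, whence $h\,\mathfrak{O}_{v}=\mathfrak{O}_{v}\neq R$). So, as written, your module-theoretic argument proves the inclusion only at the places where $R=\mathfrak{O}_{v}$. That already yields the equality $(f^{\gamma})^{-1}(U_{v})=\dot U_{v}$ for almost all $v$, which is all that is used downstream, but it does not cover every $v$ as you claim; for the finitely many bad places you would need a separate argument, and the paper's parabolic computation is one route to attempt that.
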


\begin{proof}
D'apr\`es le \emph{th\'eor\`eme de compacit\'e de Harish-Chandra}, si
$\omega$ est une partie compacte de $G(k_{v})$, il existe une partie
compacte $\Omega$ de $T(k_{v}) \backslash G(k_{v})$ telle que
$$x^{-1} \gamma x \in \omega \Longrightarrow \dot{x} \in \Omega.$$
Voir \cite[Thm. 8.1.4.1, p. 75]{Warner} si $v$ est r\'eelle ou complexe et
\cite[Lem. 19, p. 52]{Harish2} si $v$ est non-archim\'edienne. Cela
signifie que $f^{\gamma}$ est propre.

Supposons maintenant $v$ non-archim\'edienne. 

Affirmer que $(f^{\gamma})^{-1}(U_{v}) \subset \dot{U}_{v}$ revient \`a dire
que si $x \in G(k_{v})$ et si $x^{-1} \gamma x \in U_{v}$, alors $x \in
T(k_{v}).U_{v}$.

Soit $x \in G(k_{v})$. Si $x^{-1} \gamma x \in U_{v}$, alors $x^{-1} \pi(K) x
\subset \mathbf{M}(n,\mathfrak{o}_{v})$ car $\pi(K) = k[\gamma]$. 

Soit $x = h m \in G'(k_{v})$ avec $h \in T(k_{v})$ et $m
\in L(k_{v})$. Soit $u \in
\mathfrak{o}^{n}$, posons $\eta = \iota(u) \in \mathfrak{O}$ et $\kappa =
x^{-1} \pi(\eta) x = m^{-1} \pi(\eta) m \in \mathbf{M}(n, \mathfrak{o}_{v})$.
On a
$$
{^{t}}\!\kappa.e_{1} = 
{^{t}}\!m.^{t}\!\pi(\eta).^{t}\!m^{-1}.e_{1} = 
{^{t}}\!m.^{t}\!\pi(\eta).e_{1} = {^{t}}\!m.u,
$$
ce qui implique que ${^{t}}\!m.u \in \mathfrak{o}^{n}$ si $u \in
\mathfrak{o}^{n}$, autrement dit $m \in U_{v}$ et $\dot{x} \in \dot{U}_{v}$.
On a d\'emontr\'e que
$$
[(f^{\gamma})^{-1}(U_{v})] \cap [T(k_{v}) \backslash G'(k_{v})]
\subset \dot{U}_{v},
$$
d'o\`u la premi\`ere assertion, car $T(k_{v}) \backslash G'(k_{v})$ est dense
dans $T(k_{v}) \backslash G(k_{v})$, et ensuite $(f^{\gamma})^{-1}(U_{v})$ et
$\dot{U}_{v}$ sont compacts. D'autre part $\gamma \in U_{v}$  et donc
$f^{\gamma}(\dot{U}_{v})
\subset U_{v}$ pour presque toute place $v$. Puisque $f^{\gamma}$ est
injective, ceci implique $\dot{U}_{v} \subset (f^{\gamma})^{-1}(U_{v})$.
\end{proof}

\medskip

\subsection*{Int\'egrales orbitales}~

\medskip

Dans ce qui suit, la lettre $H$ d\'esigne le centre $Z$ ou le
sous-groupe
$T$ de $G$. Ainsi $H_{\Ad} \backslash G_{\Ad} = (H \backslash G)(\Ad)$, et
$\vol \dot{U}_{v} = 1$ pour presque toute place $v$. On note
$C^{\infty}_{c}(H_{\Ad} \backslash G_{\Ad})$ l'espace des combinaisons
lin\'eaires de l'ensemble $B(H_{\Ad} \backslash G_{\Ad})$ des fonctions $u$
sur $H_{\Ad} \backslash G_{\Ad}$ de la forme
$$u(x) = \prod_{v} u_{v}(x_{v})$$
o\`u les fonctions $u_{v}$ sont invariantes \`a gauche par $H(k_{v})$ et
satisfont les conditions suivantes.
\begin{enumerate}
\item
\label{Adm1}
Si $v$ est archim\'edienne, la fonction $u_{v}$ est ind\'efiniment
diff\'erentiable sur $G(k_{v})$ et \`a support compact modulo $H(k_{v})$.
\item
\label{Adm2}
Si $v$ est non-archim\'edienne, la fonction $u_{v}$ est localement
constante sur $G(k_{v})$ et \`a support compact modulo $H(k_{v})$.
\item
\label{Adm3}
pour presque toute place non-archim\'edienne $v$, la fonction $u_{v}$
est la fonction caract\'eristique de $H(k_{v}) U_{v}$.
\end{enumerate}

\begin{proposition}
\label{CompInv}
Si $u \in C^{\infty}_{c}(Z_{\Ad} \backslash G_{\Ad})$, alors
$u^{\gamma} = u \, _{\circ} \, f^{\gamma} \in C^{\infty}_{c}(T_{\Ad} \backslash
G_{\Ad})$.
\end{proposition}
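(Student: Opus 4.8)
Le plan est de se ramener d'abord, par lin\'earit\'e, au cas o\`u $u = \prod_{v} u_{v}$ est un \'el\'ement de l'ensemble g\'en\'erateur $B(Z_{\Ad} \backslash G_{\Ad})$. Comme $\gamma = \pi(\xi) \in T(k)$ et que $f^{\gamma}$ est d\'ej\`a d\'efini sur $T \backslash G$, on \'ecrit place par place $f^{\gamma}(\dot{x})_{v} = x_{v}^{-1} \gamma x_{v}$, ce qui montre que $u^{\gamma}$ est d\'ecomposable, $u^{\gamma} = \prod_{v} u_{v}^{\gamma}$ avec $u_{v}^{\gamma} = u_{v} \, _{\circ} \, f^{\gamma}$, chaque facteur \'etant invariant \`a gauche par $T(k_{v})$ puisque $f^{\gamma}$ se factorise par $T(k_{v}) \backslash G(k_{v})$. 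Il reste \`a v\'erifier pour les $u_{v}^{\gamma}$ les conditions \eqref{Adm1}, \eqref{Adm2} et \eqref{Adm3} de la d\'efinition de $C^{\infty}_{c}(T_{\Ad} \backslash G_{\Ad})$, ce qui entra\^{\i}nera au passage la finitude locale du produit.

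Pour la r\'egularit\'e locale, j'utiliserai que $f^{\gamma}$ est la restriction aux $k_{v}$-points d'un morphisme de vari\'et\'es alg\'ebriques : il est donc $C^{\infty}$ aux places archim\'ediennes et localement constant (m\^eme polynomial) aux places non-archim\'ediennes, et la composition avec $u_{v}$ h\'erite de ces propri\'et\'es. Le c\oe ur de la preuve sera le contr\^ole du support, et c'est l\`a qu'intervient le centre : si $\Supp u_{v} \subset Z(k_{v}) \omega$ avec $\omega$ compact, un \'el\'ement $x_{v}^{-1} \gamma x_{v} = z w$ de $Z(k_{v}) \omega$ a pour d\'eterminant $\Nrm_{K/k}(\xi) = z^{n} \det w$ ; comme $\det \omega$ est une partie compacte de $k_{v}^{\times}$, le scalaire $z$ reste dans une partie compacte de $k_{v}^{\times}$ (l'\'el\'evation \`a la puissance $n$-i\`eme est propre sur $k_{v}^{\times}$), donc $x_{v}^{-1} \gamma x_{v}$ reste dans une partie compacte $\omega'$ de $G(k_{v})$ ; le lemme \ref{Propre} (propret\'e de $f^{\gamma}$) donne alors la compacit\'e de $\Supp u_{v}^{\gamma} \subset (f^{\gamma})^{-1}(\omega')$ modulo $T(k_{v})$, d'o\`u les parties \og support compact \fg{} de \eqref{Adm1} et \eqref{Adm2}.

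Enfin, pour la condition \eqref{Adm3}, je remarquerai que pour presque toute place non-archim\'edienne $v$ on a $u_{v} = $ fonction caract\'eristique de $Z(k_{v}) U_{v}$ et $\Nrm_{K/k}(\xi) \in \mathfrak{o}_{v}^{\times}$ ; l'\'egalit\'e $\det(x_{v}^{-1} \gamma x_{v}) = \Nrm_{K/k}(\xi)$ force alors le scalaire $z$ de la d\'ecomposition pr\'ec\'edente \`a \^etre une unit\'e, de sorte que $(f^{\gamma})^{-1}(Z(k_{v}) U_{v}) = (f^{\gamma})^{-1}(U_{v})$, qui vaut $\dot{U}_{v}$ d'apr\`es le lemme \ref{Propre} ; donc $u_{v}^{\gamma}$ est la fonction caract\'eristique de $\dot{U}_{v}$, et l'on conclut que $u^{\gamma} \in C^{\infty}_{c}(T_{\Ad} \backslash G_{\Ad})$. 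Le principal obstacle est ainsi l'argument sur les d\'eterminants permettant de passer d'un support \og compact modulo $Z(k_{v})$ \fg{} \`a un compact v\'eritable, indispensable pour appliquer la propret\'e du lemme \ref{Propre} ; une fois ce lemme acquis, le reste est purement formel.
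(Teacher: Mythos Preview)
Your proof is correct and follows the same overall strategy as the paper's: reduce by linearity to a decomposable $u \in B(Z_{\Ad}\backslash G_{\Ad})$, write $u^{\gamma} = \prod_{v} u_{v}^{\gamma}$ with $u_{v}^{\gamma} = u_{v}\circ f^{\gamma}$, and then verify conditions \eqref{Adm1}--\eqref{Adm3} place by place using Lemma~\ref{Propre}.

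Where your argument differs is in being considerably more explicit. The paper's proof simply writes $\Supp u_{v}^{\gamma}\subset \overline{(f^{\gamma})^{-1}(\Supp u_{v})}$ and asserts that the three conditions follow from Lemma~\ref{Propre}, without addressing the point that $\Supp u_{v}$ is only compact \emph{modulo} $Z(k_{v})$, whereas Lemma~\ref{Propre} gives properness of $f^{\gamma}$ as a map into $G(k_{v})$. Your determinant argument --- using $\det(x_{v}^{-1}\gamma x_{v}) = \Nrm_{K/k}(\xi)$ to bound the central factor $z$ in a compact set of $k_{v}^{\times}$, and hence replace $Z(k_{v})\omega$ by a genuine compact $\omega'$ --- is exactly the missing step, and your treatment of \eqref{Adm3} (forcing $z\in\mathfrak{o}_{v}^{\times}$ via $\Nrm_{K/k}(\xi)\in\mathfrak{o}_{v}^{\times}$ so that $(f^{\gamma})^{-1}(Z(k_{v})U_{v}) = (f^{\gamma})^{-1}(U_{v})$) is the natural refinement. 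So the two proofs are the same in outline, but yours spells out a point the paper leaves to the reader.
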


\begin{proof}
On a
$$
u(x^{-1} \gamma x) = u
\, _{\circ} \, f^{\gamma}(\dot{x}) = u^{\gamma}(\dot{x}).
$$
On peut supposer $u \in B(Z_{\Ad} \backslash G_{\Ad})$, de telle sorte que
$$u^{\gamma}(\dot{x}) = \prod_{v} u_{v}^{\gamma}(\dot{x}_{v}), \quad
\text{o\`u} \ u_{v}^{\gamma} = u_{v} \, _{\circ} \, f^{\gamma}.$$
Puisque $\Supp u_{v}^{\gamma} \subset \overline{f^{-1}(\Supp u_{v})}$,
les propri\'et\'es \eqref{Adm1}, \eqref{Adm2}, et \eqref{Adm3} pour
$u^{\gamma}$ r\'esultent du lemme \ref{Propre}.
\end{proof}

\medskip

\subsection*{S\'eries orbitales}~

\medskip

Soient $K$ une extension de degr\'e $n$ de $k$. On note $\pi$ la
repr\'esentation alg\'ebrique r\'eguli\`ere d\'efinie par une base
fondamentale de $K$, et $T$ le tore repr\'esentant les \'el\'ements
inversibles de $K$. Soit $\xi$ un \'el\'ement primitif de $K$, et $\gamma =
\pi(\xi)$, de telle sorte que $T(k') = k'[\gamma]^{\times}$ pour toute
$k$-alg\`ebre
$k'$. Le centralisateur de $\gamma$ est \'egal \`a $\mathcal{Z}(T) = T$
\cite[p. 175]{Borel3}, et l'application $x \mapsto x^{-1} \gamma x$ induit
un isomorphisme
$$T_{k} \backslash G_{k} \isom \mathfrak{c}(\gamma),$$
o\`u $\mathfrak{c}(\gamma)$ est la \emph{classe de conjugaison} de
$\gamma$. On note $\mathfrak{C}_{K}$ l'ensemble des classes de conjugaison
des \'el\'ements $\gamma$ comme ci-dessus.

Si $ \mathfrak{c} \in \mathfrak{C}_{K}$, la \emph{s\'erie orbitale} de $u
\in C^{\infty}_{c}(Z_{\Ad} \backslash G_{\Ad})$ est :
$$
u_{\mathfrak{c}}(x) =
\sum_{\eta \, \in \, \mathfrak{c}} u(x^{-1} \eta x) =
\sum_{\eta \in T_{k} \backslash G_{k}}
u(x^{-1} \eta^{-1} \gamma \eta x) \quad (\gamma \in \mathfrak{c}).
$$
Pour $x$ variant dans un compact fixe, cette somme est finie. Si $\beta \in
G_{k}$, on a
$$
u_{\mathfrak{c}}(\beta x) =
\sum_{\eta \, \in \, \mathfrak{c}}
u(x^{-1} \beta^{-1} \eta \beta x) =
\sum_{\eta \, \in \, \beta^{-1} \mathfrak{c} \beta}
u(x^{-1} \eta x) =
u_{\mathfrak{c}}(x).
$$

\begin{proposition}
Supposons $k = \QQ$ et $n = 2$. Si $\gamma$ est $\QQ$-elliptique, et lorsque
$u \in C_{c}(Z_{\Ad} \backslash G_{\Ad})$, la fonction $u_{\mathfrak{c}}$
est \`a support compact sur $G_{\QQ} Z_{\Ad} \backslash G_{\Ad}$. 
\end{proposition}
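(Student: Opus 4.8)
The plan is to recognise $u_{\mathfrak c}$ as the sum over $T_{\QQ}\backslash G_{\QQ}$ of the translates of the compactly supported function $u^{\gamma}$, and then to read the compactness of its support off the compactness of a quotient of $T_{\Ad}$. Write $q\colon G_{\Ad}\to T_{\Ad}\backslash G_{\Ad}$ for the canonical projection and $u^{\gamma}=u\,{}_{\circ}\,f^{\gamma}$. First I would rewrite, for $x\in G_{\Ad}$,
$$
u_{\mathfrak c}(x)=\sum_{\eta\in T_{\QQ}\backslash G_{\QQ}}u^{\gamma}\bigl(q(\eta x)\bigr),
$$
using that $u(x^{-1}\eta^{-1}\gamma\eta x)=u\bigl((\eta x)^{-1}\gamma(\eta x)\bigr)=u^{\gamma}(q(\eta x))$ and that the summand depends only on the coset $T_{\QQ}\eta$ since $T_{\QQ}\subset T_{\Ad}$. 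By (the proof of) Proposition~\ref{CompInv}, which rests on the properness of $f^{\gamma}$ from Lemma~\ref{Propre}, the function $u^{\gamma}$ is continuous with \emph{compact} support $\Omega\subset T_{\Ad}\backslash G_{\Ad}$. Invariance of $u_{\mathfrak c}$ on the left under $G_{\QQ}$ (reindex the sum) and under $Z_{\Ad}$ ($u$ is $Z_{\Ad}$-invariant and $Z$ is central), together with the local finiteness of the sum already noted above (which holds because $T_{\QQ}\backslash G_{\QQ}=(T\backslash G)(\QQ)$ is discrete in $T_{\Ad}\backslash G_{\Ad}=(T\backslash G)(\Ad)$), show that $u_{\mathfrak c}$ is a well-defined continuous function on $G_{\QQ}Z_{\Ad}\backslash G_{\Ad}$.

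Next I would bound the support. Fix a compact $\Omega_{0}\subset G_{\Ad}$ with $q(\Omega_{0})\supseteq\Omega$, so that $q^{-1}(\Omega)\subseteq T_{\Ad}\,\Omega_{0}$ (a compact subset of the quotient lifts to a compact subset of $G_{\Ad}$, since $q$ is open and $G_{\Ad}$ is $\sigma$-compact). If $u_{\mathfrak c}(x)\neq 0$, then $u^{\gamma}(q(\eta x))\neq 0$ for some $\eta\in G_{\QQ}$, hence $\eta x\in q^{-1}(\Omega)\subseteq T_{\Ad}\,\Omega_{0}$ and $x\in G_{\QQ}\,T_{\Ad}\,\Omega_{0}$. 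Therefore the image of $\{\,u_{\mathfrak c}\neq 0\,\}$ in $G_{\QQ}Z_{\Ad}\backslash G_{\Ad}$ is contained in $\Theta\cdot\Omega_{0}$, where $\Theta$ denotes the image of $T_{\Ad}$ in $G_{\QQ}Z_{\Ad}\backslash G_{\Ad}$.

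It remains to prove $\Theta$ compact. First, $T_{\Ad}\cap G_{\QQ}Z_{\Ad}=T_{\QQ}Z_{\Ad}$: if $t=\beta z$ with $t\in T_{\Ad}$, $\beta\in G_{\QQ}$, $z\in Z_{\Ad}\subset T_{\Ad}$, then $\beta=tz^{-1}\in T_{\Ad}\cap G_{\QQ}=T_{\QQ}$. Hence $t\mapsto G_{\QQ}Z_{\Ad}\,t$ factors through a continuous injection $T_{\QQ}Z_{\Ad}\backslash T_{\Ad}\hookrightarrow G_{\QQ}Z_{\Ad}\backslash G_{\Ad}$ with image $\Theta$. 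Now $T_{\QQ}Z_{\Ad}\backslash T_{\Ad}\cong S(k)\backslash S(\Ad_{k})$ with $S=Z\backslash T$, and \emph{this is where the hypothesis that $\gamma$ be $\QQ$-elliptic enters}: it makes $S$ anisotropic over $\QQ$, so that $S(k)\backslash S(\Ad_{k})$ is compact (for $n=2$ and $K$ a field this anisotropy is automatic, by the discussion in Section~\ref{sec_Periodes}). A continuous bijection from a compact space onto a subspace of the Hausdorff space $G_{\QQ}Z_{\Ad}\backslash G_{\Ad}$ is a homeomorphism onto its image, so $\Theta$, and with it $\Theta\cdot\Omega_{0}$, is compact. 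The support of $u_{\mathfrak c}$ is closed inside the compact set $\Theta\cdot\Omega_{0}$, hence compact, which is the claim.

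The only genuine difficulty is the last step, the compactness of $S(k)\backslash S(\Ad_{k})$ — equivalently, the $\QQ$-anisotropy mod centre of $\gamma$. Were $Z\backslash T$ to contain a nontrivial $\QQ$-split subtorus, the conjugacy class $\mathfrak c$ would run off into a proper parabolic subgroup and $u_{\mathfrak c}$ would fail to be compactly supported modulo $G_{\QQ}Z_{\Ad}$; this is the usual reason the non-elliptic terms of the trace formula must be truncated. Everything else is routine: properness of $f^{\gamma}$ (Lemma~\ref{Propre}) and hence $u^{\gamma}\in C_{c}(T_{\Ad}\backslash G_{\Ad})$ (Proposition~\ref{CompInv}); discreteness of $(T\backslash G)(\QQ)$ in $(T\backslash G)(\Ad)$, which yields the local finiteness of the defining sum; and the lifting of compact sets along $q$.
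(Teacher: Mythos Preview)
Your proof is correct and takes a genuinely different route from the paper's.

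The paper argues via reduction theory for $\GL_{2}$: it writes $G_{\Ad}=Z_{\Ad}G_{\QQ}\,\mathfrak{S}(t)$ with a Siegel set $\mathfrak{S}(t)=\omega_{0}A(t)\mathbf{K}$, then invokes the standard fact (from Arthur) that for $x$ high enough in the cusp, $x^{-1}\gamma x\in\omega$ with $\gamma\in G_{\QQ}$ forces $\gamma\in P_{\QQ}$. Since an elliptic conjugacy class avoids $P_{\QQ}$, the support of $u_{\mathfrak c}$ cannot reach the cusp and is trapped in a compact piece of the Siegel set.

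You instead recognise $u_{\mathfrak c}$ as the $T_{\QQ}\backslash G_{\QQ}$-average of $u^{\gamma}\in C_{c}(T_{\Ad}\backslash G_{\Ad})$ (compactness of $\Supp u^{\gamma}$ coming from the properness of $f^{\gamma}$, Lemma~\ref{Propre}), lift the support to $T_{\Ad}\Omega_{0}$, and conclude by the compactness of $T_{\QQ}Z_{\Ad}\backslash T_{\Ad}\cong S(k)\backslash S(\Ad_{k})$ already established in Section~\ref{sec_Periodes}. This is more structural: it uses no reduction theory, only facts the paper has proved elsewhere, and it generalises verbatim to any $n$ (and any $k$) whenever $Z\backslash T$ is $k$-anisotropic. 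The paper's argument, by contrast, is tied to the cusp geometry of $\GL_{2}$ over $\QQ$. One small quibble: the equalities $T_{\QQ}\backslash G_{\QQ}=(T\backslash G)(\QQ)$ and $T_{\Ad}\backslash G_{\Ad}=(T\backslash G)(\Ad)$ you invoke for local finiteness are injections rather than literal equalities in general; but local finiteness is already recorded in the paper just before the proposition, so you can simply cite that and drop the parenthetical justification.
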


\begin{proof}
Posons
$$
A =
\left\{
\begin{pmatrix}
1 & 0 \\
0 & y \\
\end{pmatrix}
\right\} \ ,
\quad
N =
\left\{
\begin{pmatrix}
1 & 0 \\
* & 1 \\
\end{pmatrix}
\right\} \ .
$$ 
Pour $a \in A$, on pose $H(a) = y$.
Pour $t > 0$, on pose
$$
A(t) = \set{a \in A(\RR)}
{H(a) \geq t, \quad y > 0}.
$$
D'apr\`es la th\'eorie de la r\'eduction, on sait qu'il existe un compact
$\omega_{0} \subset N_{\Ad}$ et un nombre $t_{0} > 0$ tels que si $t \leq
t_{0}$, on ait
$$
G_{\Ad} = Z_{\Ad} G_{\QQ} \, \mathfrak{S}(t), \quad \text{o\`u} \
\mathfrak{S}(t) = \omega_{0} A(t) \mathbf{K}.
$$
Soit $\Omega$ une partie de $G_{\Ad}$ qui est compacte
modulo $Z_{\Ad}$. Il existe un nombre $t_{\omega} \geq 1$ avec la
propri\'et\'e suivante : si $x \in \mathfrak{S}(t_{\omega})$, si $\gamma \in
G_{\QQ}$ et si $x^{-1} \gamma x \in \omega$, alors $\gamma \in P_{\QQ}$
\cite[p. 362]{Arthur0}. Soit $x \in \mathfrak{S}(t)$ et posons $\omega =
\Supp u$. Puisque $\gamma$ est elliptique, aucun \'el\'ement $\eta \in
\mathfrak{c}$ n'appartient  \`a
$P_{\QQ}$. Si $x^{-1} \eta x \in \omega$, ce qui pr\'ec\`ede implique
que $x = n a \kappa$, avec $t \leq H(a) \leq t_{\omega}$, qui est un
compact de $\mathfrak{S}(t)$.
\end{proof}

Rappelons que
$$
\Pi_{1}(F)(x) =
\int_{T_{k}  Z_{\Ad} \backslash T_{\Ad}} \ F(hx) \, dh \, dx.
$$

\begin{theorem}
\label{ThmClasses}
Soit $ \mathfrak{c} \in \mathfrak{C}_{K}$. Si $F \in C(G_{k} Z_{\Ad}
\backslash G_{\Ad})$, et si $u \in C^{\infty}_{c}(Z_{\Ad} \backslash
G_{\Ad})$, l'int\'egrale
$$
J_{\mathfrak{c}}(u, F) =
\int_{G_{k} Z_{\Ad} \backslash G_{\Ad}} \ 
 u_{\mathfrak{c}}(x) \, F(x) \, dx
$$
converge absolument et on a
$$
J_{\mathfrak{c}}(u, F) =
\int_{T_{\Ad} \backslash G_{\Ad}} \ 
u(x^{-1} \gamma x) \, \Pi_{1}(F)(x) \, dx
$$
quel que soit $\gamma \in \mathfrak{c}$.
\end{theorem}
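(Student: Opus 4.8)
The plan is to prove both assertions by the standard \emph{unfolding} of the orbital series against a fundamental domain, exactly as on the geometric side of the trace formula. Fix $\gamma\in\mathfrak{c}$, so that $u_{\mathfrak{c}}(x)=\sum_{\eta\in T_{k}\backslash G_{k}}u(x^{-1}\eta^{-1}\gamma\eta x)$; the index set is legitimately $T_{k}\backslash G_{k}$ because $T$ is the centralizer of $\gamma$, whence $\eta\mapsto\eta^{-1}\gamma\eta$ is injective on $T_{k}\backslash G_{k}$. The function $\phi(x)=u(x^{-1}\gamma x)\,F(x)=u^{\gamma}(\dot{x})\,F(x)$ is invariant on the left under $Z_{\Ad}$ (centrality) and under $T_{k}$ (because $T_{k}$ centralizes $\gamma$ and $F$ is $G_{k}$-invariant), so that $\sum_{\eta\in T_{k}\backslash G_{k}}\phi(\eta x)=u_{\mathfrak{c}}(x)F(x)$ is well defined and descends to $G_{k}Z_{\Ad}\backslash G_{\Ad}$. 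Collapsing this sum into the integral defining $J_{\mathfrak{c}}(u,F)$ --- that is, using the projection $T_{k}Z_{\Ad}\backslash G_{\Ad}\to G_{k}Z_{\Ad}\backslash G_{\Ad}$ with fibre $T_{k}\backslash G_{k}$ --- yields
$$
J_{\mathfrak{c}}(u,F)=\int_{T_{k}Z_{\Ad}\backslash G_{\Ad}}u(x^{-1}\gamma x)\,F(x)\,dx .
$$
Now $x\mapsto u(x^{-1}\gamma x)$ is in fact invariant under the \emph{whole} group $T_{\Ad}$ on the left, again because $T_{\Ad}$ centralizes $\gamma$; writing the last integral as an iterated integral over $T_{\Ad}\backslash G_{\Ad}$ with inner fibre $T_{k}Z_{\Ad}\backslash T_{\Ad}$ and recognizing $\int_{T_{k}Z_{\Ad}\backslash T_{\Ad}}F(hx)\,dh=\Pi_{1}(F)(x)$, one arrives at
$$
J_{\mathfrak{c}}(u,F)=\int_{T_{\Ad}\backslash G_{\Ad}}u(x^{-1}\gamma x)\,\Pi_{1}(F)(x)\,dx ,
$$
which is the asserted formula; independence of the chosen $\gamma\in\mathfrak{c}$ is already visible on the left-hand side.

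The only substantive point is the absolute convergence, which is also what justifies interchanging the summation over $\mathfrak{c}$ with the integral in the two unfoldings above. By Proposition~\ref{CompInv}, $u^{\gamma}=u\,_{\circ}\,f^{\gamma}$ lies in $C^{\infty}_{c}(T_{\Ad}\backslash G_{\Ad})$; in particular $x\mapsto u(x^{-1}\gamma x)$ is continuous and its support is compact modulo $T_{\Ad}$ --- this compactness being precisely the adelic properness of $f^{\gamma}$ packaged in Lemma~\ref{Propre}, through the normalization $(f^{\gamma})^{-1}(U_{v})=\dot{U}_{v}$ at almost every $v$. On the other hand $x\mapsto\int_{T_{k}Z_{\Ad}\backslash T_{\Ad}}|F(hx)|\,dh$ is finite and continuous, because $T_{k}Z_{\Ad}\backslash T_{\Ad}$ is compact (it is isomorphic to $S(k)\backslash S(\Ad_{k})$, see section~\ref{sec_Periodes}) and $F$ is continuous. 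Repeating the two unfoldings with $|u|$ and $|F|$ in place of $u$ and $F$ --- legitimate by Tonelli, all summands being non-negative --- bounds $\int_{G_{k}Z_{\Ad}\backslash G_{\Ad}}|u_{\mathfrak{c}}(x)|\,|F(x)|\,dx$ by $\int_{T_{\Ad}\backslash G_{\Ad}}|u(x^{-1}\gamma x)|\,\bigl(\int_{T_{k}Z_{\Ad}\backslash T_{\Ad}}|F(hx)|\,dh\bigr)\,dx$, whose integrand is continuous with compact support on $T_{\Ad}\backslash G_{\Ad}$ and hence integrable; this finiteness retroactively validates every step by dominated convergence.

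So the argument is essentially a change-of-variables bookkeeping, the real content residing in Lemma~\ref{Propre} and Proposition~\ref{CompInv}, proved beforehand. The main obstacle, such as it is, is to make the convergence argument uniform enough to cover an arbitrary regular semisimple $\gamma=\pi(\xi)$ (not merely a $\QQ$-elliptic one) together with an arbitrary continuous $F$ on $G_{k}Z_{\Ad}\backslash G_{\Ad}$; this is exactly why the properness of $f^{\gamma}$ was established at every place with its almost-everywhere normalization, so that the adelic product support is compact modulo $T_{\Ad}$. Once that is in hand, the two unfoldings and the identification of $\Pi_{1}(F)$ are routine.
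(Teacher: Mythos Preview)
Your argument is correct and follows essentially the same route as the paper: unfold the sum over $T_{k}\backslash G_{k}$ to pass from $G_{k}Z_{\Ad}\backslash G_{\Ad}$ to $T_{k}Z_{\Ad}\backslash G_{\Ad}$, then integrate over $T_{\Ad}\backslash G_{\Ad}$ with inner fibre $T_{k}Z_{\Ad}\backslash T_{\Ad}$, pull out the $T_{\Ad}$-invariant factor $u(x^{-1}\gamma x)$, and invoke Proposition~\ref{CompInv} for convergence. If anything, you are more explicit than the paper about the Tonelli step and about why the inner integral over the compact quotient $T_{k}Z_{\Ad}\backslash T_{\Ad}\cong S(k)\backslash S(\Ad_{k})$ is finite.
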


\begin{proof}
On a
\begin{eqnarray*}
J_{\mathfrak{c}}(u, F)
& = &
\int_{G_{k} Z_{\Ad} \backslash G_{\Ad}} \
\sum_{\eta \in T_{k} \backslash G_{k}}
u(x^{-1} \eta^{-1} \gamma \eta x) \, F(x) \, dx
\\ & = &
\int_{T_{k} Z_{\Ad} 
\backslash G_{\Ad}} \  u(x^{-1} \gamma x) \, F(x) \, dx
\\ & = &
\int_{T_{\Ad} \backslash G_{\Ad}} \ 
\int_{T_{k}  Z_{\Ad} \backslash T_{\Ad}} \ 
u(x^{-1} \gamma x) \, F(hx) \, dh \, dx.
\end{eqnarray*}
Mais la fonction $x \mapsto u(x^{-1} \gamma x)$ est invariante \`a gauche
sous $T_{\Ad}$, donc
$$
J_{\mathfrak{c}}(u, F) =
\int_{T_{\Ad} \backslash G_{\Ad}} \ 
u(x^{-1} \gamma x)
\int_{T_{k} Z_{\Ad} \backslash T_{\Ad}} \ 
\, F(hx) \, dh \, dx,
$$
d'o\`u le r\'esultat, l'int\'egrale convergeant gr\^ace \`a la proposition
\ref{CompInv}.
\end{proof}
\begin{corollary}
Pour que $F \in C(X)$ soit toro\"\i dale pour le caract\`ere principal, il faut et
il suffit que l'on ait
$$
J_{\mathfrak{c}}(u, F) =
\int_{G_{k} Z_{\Ad} \backslash G_{\Ad}} u_{\mathfrak{c}}(x) \, F(x) \, dx =
0 \quad \text{pour tout} \ u \in \mathfrak{A}(G).
$$
pour tout $u \in \mathfrak{A}(G)$ et pour toute $\mathfrak{c} \in
\mathfrak{C}_{K}$. Autrement dit, si on note $U_{\mathfrak{c}}(X)$ l'espace
des fonctions
$$
u_{\mathfrak{c}}(x) =
\sum_{\eta \, \in \, \mathfrak{c}} u(x^{-1} \eta x),
$$
o\`u $u$ parcourt $C_{c}^{\infty}(Z_{\Ad} \backslash G_{\Ad} / \mathbf{K})$,
on a
$C(X) \cap T(X) = C(X) \cap U_{\mathfrak{c}}(X)^{\perp}$, la dualit\'e
\'etant celle de $L^{2}(G_{\Ad})$.
\end{corollary}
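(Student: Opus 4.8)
\emph{Sketch of the intended argument.} The plan is to deduce everything from Theorem~\ref{ThmClasses}. Recall that ``$F$ is toroidal for the principal character'' means exactly $\Pi_{1}(F)(g)=\oint F(hg)\,dh=0$ for all $g\in G(\Ad)$, i.e. \eqref{FormeTorique} with $\chi=1$. Theorem~\ref{ThmClasses} gives, for every $\mathfrak{c}\in\mathfrak{C}_{K}$, every $\gamma\in\mathfrak{c}$ and every $u$ (in $\mathfrak{A}(G)$, resp. $C^{\infty}_{c}(Z(\Ad)\backslash G(\Ad)/\mathbf{K})$),
$$
J_{\mathfrak{c}}(u,F)=\int_{T(\Ad)\backslash G(\Ad)}u(x^{-1}\gamma x)\,\Pi_{1}(F)(x)\,dx .
$$
One implication is then immediate and purely formal: if $F\in C(X)\cap T(X)$, then $\Pi_{1}(F)\equiv 0$, hence $J_{\mathfrak{c}}(u,F)=0$ for all $u$ and all $\mathfrak{c}$; since $\langle F,u_{\mathfrak{c}}\rangle=J_{\mathfrak{c}}(u,F)$ by the definitions of $u_{\mathfrak{c}}$ and $J_{\mathfrak{c}}$, this is the inclusion $C(X)\cap T(X)\subset\bigcap_{\mathfrak{c}}\bigl(C(X)\cap U_{\mathfrak{c}}(X)^{\perp}\bigr)$.

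For the converse I would argue as follows. Assume $J_{\mathfrak{c}}(u,F)=0$ for every $u\in\mathfrak{A}(G)$ and every $\mathfrak{c}\in\mathfrak{C}_{K}$; the goal is $\Pi_{1}(F)\equiv 0$. First, $\Pi_{1}(F)$ descends to a continuous function on $T(\Ad)\backslash G(\Ad)/\mathbf{K}$: right $\mathbf{K}$-invariance comes from $F\in C(X)$, and left $T(\Ad)$-invariance from the translation invariance of the Haar measure defining $\oint$. Next, by Proposition~\ref{CompInv} the function $u^{\gamma}=u\circ f^{\gamma}$ lies in $C^{\infty}_{c}(T(\Ad)\backslash G(\Ad))$; since $f^{\gamma}(\dot x\kappa)=\kappa^{-1}f^{\gamma}(\dot x)\kappa$ and $u$ is $\mathbf{K}$-bi-invariant, $u^{\gamma}$ is also right $\mathbf{K}$-invariant. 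So the hypothesis becomes
$$
\int_{T(\Ad)\backslash G(\Ad)}u^{\gamma}(\dot x)\,\Pi_{1}(F)(\dot x)\,d\dot x=0
$$
for all $u\in\mathfrak{A}(G)$ and all $\gamma=\pi(\xi)$ with $\xi$ a primitive element of $K$, and it suffices to show that these functions $u^{\gamma}$ are rich enough to separate a continuous function from $0$ — concretely, that at each point $\dot x_{0}$ one can produce, from finite linear combinations of the $u^{\gamma}$, an approximate identity concentrated at $\dot x_{0}$.

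The geometric input is Lemma~\ref{Propre}: $f^{\gamma}$ is a proper injection at each place, and the local normalisation $(f^{\gamma})^{-1}(U_{v})=\dot U_{v}$ for almost all $v$ promotes this to a proper injection adelically. Writing $g_{0}=f^{\gamma}(\dot x_{0})$, the preimages $(f^{\gamma})^{-1}(W)$ of shrinking neighbourhoods $W$ of $g_{0}$ are then compact and shrink to $\{\dot x_{0}\}$, so a nonnegative $u$ supported in such a $W$ with $u(g_{0})>0$ produces $u^{\gamma}$ concentrated near $\dot x_{0}$, and integrating against the continuous function $\Pi_{1}(F)$ forces $\Pi_{1}(F)(\dot x_{0})=0$. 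The delicate point — which I expect to be the main obstacle — is that $u$ must be $\mathbf{K}$-\emph{bi}-invariant, so $\Supp u$ necessarily contains the whole double coset $\mathbf{K}g_{0}\mathbf{K}$, which is not small; hence $(f^{\gamma})^{-1}(\mathbf{K}g_{0}\mathbf{K})$ may be strictly larger than the orbit $\dot x_{0}\mathbf{K}$, and $u^{\gamma}$ may acquire spurious peaks. To handle this I would exploit the freedom in the primitive element: using weak approximation in $K$ (as in the proof of Lemma~\ref{Lemme_FacGamma1}) one chooses $\xi$ — for instance a primitive unit, so that $\gamma=\pi(\xi)$ lies in $\mathbf{K}_{v}$ at every finite place — sufficiently generic that $\dot x\mathbf{K}\mapsto\mathbf{K}f^{\gamma}(\dot x)\mathbf{K}$ is injective on the compact region at hand; alternatively, keeping $\gamma$ fixed, one takes finite $\CC$-linear combinations of the $u^{\gamma}$ (equivalently lets $u$ run through differences of double-coset indicators, i.e. Hecke operators) to cancel the extra peaks, also using that $\mathfrak{c}$ may be varied. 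Once this localisation is secured, $\Pi_{1}(F)$ vanishes identically, $F\in T(X)$, and the stated equality $C(X)\cap T(X)=\bigcap_{\mathfrak{c}}\bigl(C(X)\cap U_{\mathfrak{c}}(X)^{\perp}\bigr)$ follows from $\langle F,u_{\mathfrak{c}}\rangle=J_{\mathfrak{c}}(u,F)$.
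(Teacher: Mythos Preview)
Your forward implication is exactly the paper's. For the converse the paper takes a much shorter and deliberately informal route: rather than staying inside $\mathfrak{A}(G)$, the proof enlarges the test space to $C_{c}(Z_{\Ad}\backslash G_{\Ad})$ and then ``takes for $u$ the Dirac measure at $\gamma$ (sic)'', so that $u(x^{-1}\gamma x)$ becomes the Dirac mass at the base coset of $T_{\Ad}\backslash G_{\Ad}$ and Theorem~\ref{ThmClasses} reads off $\Pi_{1}(F)$ directly. The parenthetical ``(sic)'' is the author's acknowledgement that this stands in for an honest approximate-identity argument; the point is that once the $\mathbf{K}$-bi-invariance constraint is dropped, ordinary bump functions in $C_{c}(Z_{\Ad}\backslash G_{\Ad})$ near $x_{0}^{-1}\gamma x_{0}$ give $u^{\gamma}$ concentrated near any prescribed $\dot x_{0}$, and the obstacle you isolate simply disappears.

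Your route, which insists on $u\in\mathfrak{A}(G)$ and tries to manufacture an approximate identity on the double-coset space $T_{\Ad}\backslash G_{\Ad}/\mathbf{K}$ via the freedom in $\gamma$ or Hecke-type linear combinations, runs into the injectivity of $\dot x\mathbf{K}\mapsto\mathbf{K}f^{\gamma}(\dot x)\mathbf{K}$ that you correctly flag but do not settle; the weak-approximation idea is suggestive, but it is not clear that a primitive $\xi$ can always be chosen so that this map is injective on an arbitrary compact. In short: the paper's shortcut buys simplicity at the cost of (tacitly) proving the converse for the larger test class $C_{c}$ rather than for $\mathfrak{A}(G)$; your approach aims at the statement as literally written, but the key separation lemma is not yet in hand.
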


\begin{proof}
Si $F$ est toro\"\i dale pour le caract\`ere principal, le th\'eor\`eme
\ref{ThmClasses} implique $J_{\mathfrak{c}}(u, F) = 0$. R\'ecipro\-quement,
si $J_{\mathfrak{c}}(u, F) = 0$ pour tout $u \in C_{c}(Z_{\Ad}
\backslash G_{\Ad})$, on voit que $\Pi_{1}(F) = 0$ en prenant pour $u$ la
mesure de Dirac de $\gamma$ (sic), de telle sorte que $u(x^{-1}
\gamma x)$ est la mesure de Dirac de la classe
$T_{\Ad}$ dans $T_{\Ad} \backslash G_{\Ad}$.
\end{proof}

\begin{remark*}
Ce corollaire s'applique aux trains d'ondes d'Eisenstein : on a
$$E^{1}(X) \cap T(X) = E^{1}(X) \cap U_{\mathfrak{c}}(X)^{\perp}$$
pour toute classe de conjugaison comme ci-dessus.
\end{remark*}

\begin{corollary}
\label{ClassesEisGen}
Si $\varphi \in \EuScript{S}(\Ad^{n})$, posons
$$
J_{\mathfrak{c}}(\varphi)(u, s) =
\int_{G_{k} Z_{\Ad} \backslash G_{\Ad}} \ 
u_{\mathfrak{c}}(x) \, \Eis{\varphi}(x,s) \, dx.
$$
Alors la fonction $\zeta_{K}(s)$ divise $J_{\mathfrak{c}}(\varphi)(u, s)$
: on a
$$
J_{\mathfrak{c}}(\varphi)(u, s) = \zeta_{K}(s)
\int_{T_{\Ad} \backslash G_{\Ad}} \ u(x^{-1} \gamma x) \,
\abs{\det x}^{s} \Tate'_{K}(\Phi_{x}, s) \, dx,
$$
o\`u $\Phi_{x}(\xi) = \varphi(^{t}\!x^{t}\!\pi(\xi).e_{1}) \in
\EuScript{S}(\Ad_{K})$. Si $\Re(s) > 1$, on a
$$
J_{\mathfrak{c}}(\varphi)(u, s) =
\int_{G_{\Ad}} \ 
u(x^{-1} \gamma x) \, \varphi({^{t}x}.e_{1}) \, \abs{\det x}^{s} \, dx.
$$
\end{corollary}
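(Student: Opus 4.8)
The plan is to push the Eisenstein series through Theorem \ref{ThmClasses} and then recognize the resulting toroidal period by the adelic Hecke formula. Fix $s$ outside the polar set of $\Eis{\varphi}(\cdot, s)$; since $\cark = 1$ this set is $\{0,1\}$, and then $F = \Eis{\varphi}(\cdot, s)$ is a function in $C(G_{k}Z_{\Ad}\backslash G_{\Ad})$. Theorem \ref{ThmClasses}, applied to this $F$ and to any chosen $\gamma \in \mathfrak{c}$, gives with absolute convergence
$$
J_{\mathfrak{c}}(\varphi)(u, s) = \int_{T_{\Ad}\backslash G_{\Ad}} u(x^{-1}\gamma x)\,\Pi_{1}(\Eis{\varphi}(\cdot, s))(x)\, dx,
$$
the integral being supported on the compact set $\mathrm{supp}\,(u\circ f^{\gamma}) \subset T_{\Ad}\backslash G_{\Ad}$ provided by Proposition \ref{CompInv}.

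Then I would evaluate $\Pi_{1}(\Eis{\varphi}(\cdot, s))(x) = \oint \Eis{\varphi}(hx, s)\, dh$ by Proposition \ref{Prop_Hecke}, taken with $\cark = 1$, with $\chi$ the trivial character of $C_{k}\backslash C_{K}$ (so $(\cark\circ N)\chi = 1$ on $C_{K}$ and $\chi_{\pi}\equiv 1$), and with $a = e_{1}$:
$$
\Pi_{1}(\Eis{\varphi}(\cdot, s))(x) = \abs{\det x}^{s}\,\Tate_{K}(\Phi_{x}, s, 1), \qquad \Phi_{x}(\xi) = \varphi({}^{t}\!x\,{}^{t}\!\pi(\xi).e_{1}),
$$
which is precisely the function $\Phi_{x}$ of the statement. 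Now I would invoke the Weil factorization \eqref{Tate} over $K$: since $\chi$ is trivial one has $L(s, 1) = \zeta_{K}(s)$, hence $\Tate_{K}(\Phi_{x}, s, 1) = c_{K}^{-1}\zeta_{K}(s)\,\Tate'_{K}(\Phi_{x}, s, 1)$ with $\Tate'_{K}$ holomorphic for $\Re(s) > 0$. Substituting into the previous display gives the first identity (the constant $c_{K}^{-1}$ folded into the notation $\Tate'_{K}$); and since the $x$-integral runs over a fixed compact set and $\Tate'_{K}(\Phi_{x}, s)$ is holomorphic there for $\Re(s) > 0$, this exhibits $J_{\mathfrak{c}}(\varphi)(u, s)$ as $\zeta_{K}(s)$ times a function holomorphic on $\Re(s) > 0$.

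For the last formula I would unwind the Tate integral in the range $\Re(s) > 1$, where all integrals in sight converge absolutely. There $\zeta_{K}(s)\,\Tate'_{K}(\Phi_{x}, s) = c_{K}\,\Tate_{K}(\Phi_{x}, s, 1) = c_{K}\int_{\Ad_{K}^{\times}} \varphi({}^{t}\!x\,{}^{t}\!\pi(\xi).e_{1})\,\abs{\xi}_{\Ad_{K}}^{s}\, d^{\times}\!\xi$; since $\pi$ carries $\Ad_{K}^{\times}$ isomorphically onto $T_{\Ad}$ with $\abs{\xi}_{\Ad_{K}} = \abs{\det\pi(\xi)}$, the substitution $h = \pi(\xi)$ turns this into $\int_{T_{\Ad}} \varphi({}^{t}\!(hx).e_{1})\,\abs{\det h}^{s}\, dh$. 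Finally I would merge the two integrations by the product decomposition $\int_{G_{\Ad}} = \int_{T_{\Ad}\backslash G_{\Ad}}\int_{T_{\Ad}}$, writing $w = hx$ and using $u(w^{-1}\gamma w) = u(x^{-1}\gamma x)$ (as $T$ is abelian) together with $\abs{\det w} = \abs{\det h}\,\abs{\det x}$, to obtain
$$
J_{\mathfrak{c}}(\varphi)(u, s) = \int_{G_{\Ad}} u(w^{-1}\gamma w)\,\varphi({}^{t}\!w.e_{1})\,\abs{\det w}^{s}\, dw.
$$

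The analytic ingredients — properness of $f^{\gamma}$ (hence compact support of $u\circ f^{\gamma}$), convergence of Tate integrals, and regularity of the Eisenstein series away from its poles — are already supplied by Proposition \ref{CompInv}, Proposition \ref{Prop_Hecke} and the Weil formula, so the genuine work is entirely bookkeeping: one must verify that the Haar measures on $Z(\Ad)$, $T_{\Ad}$, $T_{\Ad}\backslash G_{\Ad}$, $G_{\Ad}$ and $\Ad_{K}^{\times}$ are exactly those normalized in Section \ref{sec_Periodes}, so that the iterated unfoldings and the substitution $h = \pi(\xi)$ produce no stray factors — in particular no factor of $n$, and no residual $c_{K}$ in the final $G_{\Ad}$-formula. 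That measure matching is the only point requiring care.
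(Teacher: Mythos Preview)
Your derivation of the first identity matches the paper's: apply Theorem~\ref{ThmClasses} with $F = \Eis{\varphi}(\cdot,s)$, evaluate $\Pi_{1}(F)$ by the Hecke formula (Proposition~\ref{Prop_Hecke}) with trivial $\chi$, and factor out $\zeta_{K}(s)$ via the Weil formula \eqref{Tate}.

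For the second identity, the paper takes a different route. Rather than unwind the Tate integral from the first formula, it starts afresh from the unfolded Eisenstein series $\Eis{\varphi}(x,s) = \sum_{P_{k}\backslash G_{k}} M(\varphi)(\gamma x, s)$ (valid for $\Re(s) > 1$), so that $J_{\mathfrak{c}}(\varphi)(u,s) = \int_{P_{k}Z_{\Ad}\backslash G_{\Ad}} u_{\mathfrak{c}}(x)\,M(\varphi)(x,s)\,dx$. It then uses the coset bijection $Z_{k}\backslash P_{k} \leftrightarrow T_{k}\backslash G_{k}$ (since $L_{k}$ represents $T_{k}\backslash G_{k}$) to rewrite the orbital sum over $Z_{k}\backslash P_{k}$, unfolds once more, and finally expands $M(\varphi)(x,s) = \int_{Z_{\Ad}} \varphi(z.{}^{t}\!x.e_{1})\,\abs{\det zx}^{s}\,dz$ to pass from $Z_{\Ad}\backslash G_{\Ad}$ to $G_{\Ad}$. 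Your approach instead stays on the $T$-side: you expand $\Tate_{K}(\Phi_{x},s,1)$ as an integral over $\Ad_{K}^{\times} \cong T_{\Ad}$ and merge it with the $T_{\Ad}\backslash G_{\Ad}$ integral. Both arguments are valid; yours is more economical in that it derives the second formula from the first, while the paper's independent derivation via the parabolic unfolding entirely sidesteps the $c_{K}$ bookkeeping that you (rightly) flag as the one delicate point.
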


\begin{proof}
Posons $F(x) = \Eis{\varphi}(x,s)$. La formule de Hecke implique
$$
\int_{T_{k}  Z_{\Ad} \backslash T_{\Ad}} \ \Eis{\varphi}(hx,s) \, dh
= \zeta_{K}(s) \, \abs{\det x}^{s} \Tate'_{K}(\Phi_{x}, s) \, ; 
$$
on applique le th\'eor\`eme \ref{ThmClasses}.
D'autre part, si $\Re(s) > 1$, on a
$$
\Eis{\varphi}(g,s) =
\sum_{\gamma \in P_{k} \backslash G(k)} M(\varphi)(\gamma x,s).
$$
Par cons\'equent
$$
J_{\mathfrak{c}}(\varphi)(u, s) =
\int_{P_{k} Z_{\Ad} \backslash G_{\Ad}} \ 
u_{\mathfrak{c}}(x) \, M(\varphi)(x,s) \, dx.
$$
Mais $L_{k} = Z_{k} \backslash P_{k}$ est un syst\`eme de repr\'esentants de
$T_{k} \backslash G_{k}$, et on a donc
$$
u_{\mathfrak{c}}(x) =
\sum_{\eta \in Z_{k} \backslash P_{k}} u(x^{-1} \eta^{-1} \gamma \eta x).
$$
Puisque $Z_{k} = P_{k} \cap Z_{\Ad}$, on a
$(Z_{k} \backslash P_{k}) \backslash (Z_{\Ad} \backslash G_{\Ad}) =
P_{k} Z_{\Ad} \backslash G_{\Ad}$, et donc
$$
J_{\mathfrak{c}}(\varphi)(u, s) =
\int_{Z_{\Ad} \backslash G_{\Ad}} \ 
u(x^{-1} \gamma x) \, M(\varphi)(x,s) \, dx.
$$
Mais
$$
M(\varphi)(x,s) = \int_{Z_{\Ad}} \varphi(z.{^{t}x}.e_{1}) \,
\abs{\det z x}^{s} dz
$$
d'o\`u le r\'esultat.
\end{proof}

La \emph{transform\'ee d'Eisenstein} de $f \in C(X)$ est
$$
\widetilde{f}(s) = \int_{G_{k} Z_{\Ad} \backslash G_{\Ad}}
f(x) \, \mathbf{E}(x,s) \, dx,
$$
lorsque l'int\'egrale converge absolument.

\begin{corollary}
\label{ClassesEisNorm}
Soit $u \in \mathfrak{A}(G)$. La fonction $\widetilde{u}_{\mathfrak{c}}(s)$
est m\'eromorphe, et $\zeta_{K}(s)$ divise $\widetilde{u}_{\mathfrak{c}}(s)$
: si $s$ n'est pas un p\^ole de
$\mathbf{E}(x,s)$, alors
$$
\widetilde{u}_{\mathfrak{c}}(s) =
\zeta_{K}(s) \, \int_{T_{\Ad} \backslash G_{\Ad}} \
u(x^{-1} \gamma x) \, H(x,s,1) \, dx.
$$
De plus
$$
\widetilde{u}_{\mathfrak{c}}(s) =
\int_{Z_{\Ad} \backslash G_{\Ad}} \ 
u(x^{-1} \gamma x) \, \delta_{P}(x)^{s} \, dx.
$$
\end{corollary}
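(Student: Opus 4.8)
The plan is to imitate the proof of Corollary~\ref{ClassesEisGen}, now with the normalized series $\EisCan$ in place of the general series $\Eis{\varphi}$ and the density $\delta_{P}(x)^{s}$ in place of $M(\varphi)(x,s)$, and then to convert the period integral that appears into the function $\Hecke(x,s,1)$ by means of Theorem~\ref{ThmClasses} and Proposition~\ref{Prop_FcnQ}. Fix $\gamma \in \mathfrak{c}$ throughout.

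First I would prove the $\delta_{P}$-formula in the half-plane $\Re(s) > 1$, where $\EisCan(x,s) = \sum_{\gamma' \in P_{k} \backslash G_{k}} \delta_{P}(\gamma' x)^{s}$ converges. Since $u_{\mathfrak{c}}$ is left $G_{k}$-invariant and $\delta_{P}$ is trivial on $P_{k}$ (product formula) and on $Z_{\Ad}$, unfolding this sum gives
$$
\widetilde{u}_{\mathfrak{c}}(s) = \int_{P_{k} Z_{\Ad} \backslash G_{\Ad}} u_{\mathfrak{c}}(x) \, \delta_{P}(x)^{s} \, dx .
$$
Exactly as in the proof of Corollary~\ref{ClassesEisGen}, $Z_{k} \backslash P_{k}$ is a set of representatives for $T_{k} \backslash G_{k}$; substituting $u_{\mathfrak{c}}(x) = \sum_{\eta \in Z_{k} \backslash P_{k}} u(x^{-1} \eta^{-1} \gamma \eta x)$, unfolding once more over $Z_{k} \backslash P_{k}$ (again $\delta_{P}(\eta) = 1$) and using $Z_{k} = P_{k} \cap Z_{\Ad}$ yields
$$
\widetilde{u}_{\mathfrak{c}}(s) = \int_{Z_{\Ad} \backslash G_{\Ad}} u(x^{-1} \gamma x) \, \delta_{P}(x)^{s} \, dx .
$$
By Lemma~\ref{Propre} the integrand is supported, modulo $Z_{\Ad}$, on a set compact modulo $T_{\Ad}$, so after fibering over $T_{\Ad} \backslash G_{\Ad}$ the remaining integral over the $T_{\Ad}$-direction is a Tate integral; this simultaneously shows that the defining integral of $\widetilde{u}_{\mathfrak{c}}$ converges absolutely for $\Re(s) > 1$, so that the unfoldings above are legitimate, and that $\widetilde{u}_{\mathfrak{c}}$ extends to a meromorphic function of $s$, with $\zeta_{K}$ the relevant factor supplied by Weil's theorem.

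Next I would derive the $\Hecke$-formula. For $\Re(s) > 1$ the absolute convergence just obtained lets me apply Theorem~\ref{ThmClasses} with $F = \EisCan(\cdot\,,s)$, giving
$$
\widetilde{u}_{\mathfrak{c}}(s) = J_{\mathfrak{c}}(u, \EisCan(\cdot\,,s)) = \int_{T_{\Ad} \backslash G_{\Ad}} u(x^{-1} \gamma x) \, \Pi_{1}(\EisCan(\cdot\,,s))(x) \, dx .
$$
Here $\Pi_{1}$ is $\Pi_{\chi}$ for the trivial character $\chi \in \mathsf{X}$, and $L(s, 1) = \zeta_{K}(s)$, so the definition of $\Hecke$ in Proposition~\ref{Prop_FcnQ} gives $\Pi_{1}(\EisCan(\cdot\,,s))(x) = \oint \EisCan(hx,s)\, dh = \zeta_{K}(s) \, \Hecke(x,s,1)$, whence the asserted identity for $\Re(s) > 1$. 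The right-hand integral runs over a fixed compact subset of $T_{\Ad} \backslash G_{\Ad}$ and $\Hecke(x,s,1)$ is meromorphic with $\RiemXi(ns)\Hecke(x,s,1)$ holomorphic for $\Re(s) > 0$ by Proposition~\ref{Prop_FcnQ}\eqref{PFQ2}; hence both sides of the identity are meromorphic, the identity persists at every $s$ which is not a pole of $\EisCan$, and it exhibits $\zeta_{K}(s)$ as a factor of $\widetilde{u}_{\mathfrak{c}}(s)$.

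The only genuine work is the convergence-and-continuation bookkeeping — justifying the two unfoldings and the use of Theorem~\ref{ThmClasses} — and this reduces to the absolute convergence established in the first step from the Harish--Chandra compactness of Lemma~\ref{Propre} together with the elementary convergence of Tate integrals; once the range $\Re(s) > 1$ is settled, the rest is uniqueness of meromorphic continuation, and no input beyond Theorem~\ref{ThmClasses}, Corollary~\ref{ClassesEisGen} and Proposition~\ref{Prop_FcnQ} is needed.
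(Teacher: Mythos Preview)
Your proof is correct and follows essentially the same route as the paper's own proof: the $\delta_{P}$-formula is obtained by the same double unfolding (Eisenstein series then $u_{\mathfrak{c}}$, via the system of representatives $Z_{k}\backslash P_{k}$ for $T_{k}\backslash G_{k}$) valid for $\Re(s)>1$ and extended by analytic continuation, and the $\Hecke$-formula comes from Theorem~\ref{ThmClasses} together with the period identity $\Pi_{1}(\EisCan(\cdot,s))=\zeta_{K}(s)\Hecke(\cdot,s,1)$ of Proposition~\ref{Prop_FcnQ}. The only differences are cosmetic: you treat the two formulas in the opposite order and are more explicit about the convergence bookkeeping (via Lemma~\ref{Propre}/Proposition~\ref{CompInv} and the Tate-integral structure), which the paper leaves implicit.
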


\begin{proof} 
On peut d\'eduire ce corollaire du corollaire \ref{ClassesEisGen} ; voici
une d\'e\-mons\-tra\-tion directe.  La premi\`ere formule vient du
th\'eor\`eme
\ref{ThmClasses} et de la relation
$$
\int_{T_{k}  Z_{\Ad} \backslash T_{\Ad}} \ \mathbf{E}(hx, s) \, dh =
\zeta_{K}(s) \, H(x,s,1).
$$
D'autre part, si $\Re(s) > 1$, on a
$$
\mathbf{E}(x,s) =
\sum_{\gamma \in P_{k} \backslash G(k)} \delta_{P}(\gamma x)^{s}.
$$
Par cons\'equent
$$
\widetilde{u}_{\mathfrak{c}}(s) =
\int_{P_{k} Z_{\Ad} \backslash G_{\Ad}} \ 
u_{\mathfrak{c}}(x) \, \delta_{P}(x)^{s} \, dx.
$$
Mais $Z_{k} \backslash P_{k}$ est un syst\`eme de repr\'esentants de $T_{k}
\backslash G_{k}$, et on a donc
$$
u_{\mathfrak{c}}(x) =
\sum_{\eta \in Z_{k} \backslash P_{k}} u(x^{-1} \eta^{-1} \gamma \eta x),
$$
d'o\`u la deuxi\`eme formule, d'abord pour $\Re(s) > 1$ et par prolongement
analytique en g\'en\'eral. 
\end{proof}


\begin{thebibliography}{99}

\bibitem{Arthur0}
Arthur, James. \emph{The Selberg trace formula for groups of $F$-rank one}.
Ann. of Math. 100 (1974), 326--385.

\bibitem{Arthur1}
Arthur, J., \emph{Eisenstein Series and the Trace Formula}. Proc. Symp. Pure
Math. \textbf{33} (1979), 253--274.

\bibitem{Arthur2}
Arthur, J., \emph{A Trace Formula for Reductive groups II : Applications of
a Truncation Operator}. Compositio Math. \textbf{40} (1980), 87-121.

\bibitem{Arthur3}
Arthur, J., \emph{On the inner product of truncated Eisenstein series}.
Duke Math. J. \textbf{49} (1982), 35-70.

\bibitem{Borel3}
Borel, A., \emph{Linear Algebraic Groups}, 2nd ed., Graduate Texts
in Math. vol. 126, Springer, Berlin 1991.

\bibitem{Connes1}
Connes, A., \emph{Formule de Trace en g\'eom\'etrie non-commutative et
hypoth\`ese de Riemann}. C.R. Acad. Sci. Paris, S\'er. I, \textbf{323}
(1996), 1231-1236.

\bibitem{Connes2}
Connes, A., \emph{Noncommutative geometry and the Riemann zeta function}.
Mathematics: frontiers and perspectives, 35--54, Amer. Math. Soc.,
Providence, RI, 2000.

\bibitem{Connes3}
Connes, A., \emph{Trace formula in noncommutative geometry and the zeros of
the Riemann zeta function}. Sel. Math., New Ser. \textbf{5}. No 1 (1999),
29-106.

\bibitem{Godement1}
Godement, R., \textit{Analyse spectrale des fonctions modulaires}. S\'em.
Bourbaki, Vol. 9, Exp. No 278, 15--40, Soc. Math. France, Paris, 1995.

\bibitem{Godement2}
Godement, R., \textit{The decomposition of $L^{2}(G/\Gamma)$ for $\Gamma =
\SL(2,\ZZ)$}. Proc. Symp. Pure Math. \textbf{9} (1966), 211--224.

\bibitem{Goss}
Goss, D., \textit{Basic Structures of Function Field Arithmetic}. Erg.
der Math. und ihrer Grenz., 3.Folge, Vol. 35, Springer, 1996.

\bibitem{Harish2}
Harish-Chandra. Harmonic analysis on reductive $p$-adic groups. Notes by G.
van Dijk. Lecture Notes in Mathematics, Vol. 162. Springer-Verlag,
Berlin-New York, 1970

\bibitem{Helgason}
Helgason, Sigurdur. Groups and geometric analysis.
Integral geometry, invariant differential operators, and spherical
functions. Pure and Applied Mathematics, 113. Academic Press,
Orlando, 1984 ; = Mathematical Surveys and Monographs, 83. American
Mathematical Society, Providence, 2000.

\bibitem{Knapp2}
Knapp, A.W., \textit{Theoretical aspects of the Trace formula for $\GL(2)$}.
Proc. Symp. Pure Math. \textbf{61} (1997), 355-405.

\bibitem{Lachaud1}
Lachaud, Gilles. \textit{Z\'eros des fonctions $L$ et formes toriques}. C. R. Math. Acad. Sci. Paris \textbf{335} (2002), no. 3, 219--222. 

\bibitem{Lachaud2}
Lachaud, Gilles. \textit{Spectral analysis and the Riemann hypothesis}. Proceedings of the International Conference on Special Functions and their Applications (Chennai, 2002). J. Comput. Appl. Math. \textbf{160} (2003), no. 1-2, 175--190.

\bibitem{Selberg}
Selberg, A., \textit{Harmonic analysis and discontinuous groups in weakly
symmetric riemannian spaces with applications to Dirichlet Series}. J.
Indian Math. Soc., \textbf{20} (1956), 47--87 ; = Collected Papers, nr. 27,
423--463.

\bibitem{Soule}
Soul\'e, C., \textit{Sur les z\'eros des fonctions $L$ automorphes}. C. R.
Acad. Sci. Paris, S\'er. I, \textbf{328} (1999), 955--958.

\bibitem{Siegel}
Siegel, C.L., \textit{Advanced analytic number theory}. Studies in Mathematics,
Nr. 9. Tata Institute of Fundamental Research, Bombay, 1980.

\bibitem{Terras}
Terras, A., \textit{Harmonic analysis on symmetric spaces I \& II}.
Springer-Verlag, New-York, 1985 \& 1988.

\bibitem{Warner}
Warner, Garth. Harmonic analysis on semi-simple Lie groups. II. Die
Grundlehren der mathematischen Wissenschaften, Band 189. Springer-Verlag,
New York-Heidelberg, 1972

\bibitem{BNT}
Weil, A., \textit{Basic Number Theory}. Grund. math. Wiss. Einzel., Band
144, Springer, Berlin, 1967.

\bibitem{Wielonsky1}
Wielonsky, F., \textit{Int\'egrales toro\"{i}dales des s\'eries
d'Eisenstein et fonctions z\^eta}. C. R. Acad. Sci. Paris, S\'er. I,
\textbf{299} (1984), 727--730.

\bibitem{Wielonsky2}
Wielonsky, F., \textit{S\'eries d'Eisenstein, int\'egrales toro\"{i}dales
et une formule de Hecke}. Enseign. Math. (2) \textbf{31} (1985), 93--135.

\bibitem{Zagier}
Zagier, D., \textit{Eisenstein Series and the Riemann zeta function}. In
``Automorphic Forms, representation theory and arithmetic'' (Bombay, 1979),
275--301, Tata Inst. Fund. Res. Studies in Math., \textbf{10}, T.I.F.R.,
Bombay, 1981.

\end{thebibliography}
\end{document}